\newcommand{\PP}[1]{
  \StrLeft{\detokenize{#1}}{1}[\firstchar]
  \IfInteger{\firstchar}{
    \mathsf{P}_{\! #1}
  }{
    \mathsf{P}_{\! \! #1}
  }
}
\newcommand{\Shift}[1]{
  \StrLeft{\detokenize{#1}}{1}[\firstchar]
  \IfInteger{\firstchar}{
    \mathcal{S}_{#1}
  }{
    \mathcal{S}_{\! #1}
  }
}
\DeclareMathOperator{\Ker}{Ker}
\renewcommand{\Re}{\operatorname{Re}}
\renewcommand{\epsilon}{\varepsilon}
\DeclareMathOperator{\supp}{supp}
\DeclareMathOperator{\Op}{Op}
\DeclareMathOperator{\OpT}{\mathrm{Op}_{\intercal}}
\DeclareMathOperator{\Div}{div}
\DeclareMathOperator{\inte}{Int}
\DeclareMathOperator{\diag}{diag}
\newcommand{\hooklongrightarrow}{\lhook\joinrel\longrightarrow}
\theoremstyle{plain}
\newtheorem{thm}{Theorem}
\newtheorem{lem}[thm]{Lemma}
\newtheorem{prop}[thm]{Proposition}
\newtheorem{cor}[thm]{Corollary}
\theoremstyle{definition}
\newtheorem{defn}[thm]{Definition}
\theoremstyle{remark}
\newtheorem{rem}[thm]{Remark}
\begin{document}

\begin{center}
\large{\textbf{\uppercase{Change of regularity in controllability and observability of systems of wave equations}}}

\vspace{\baselineskip}

\large{\textsc{Thomas Perrin}}

\vspace{\baselineskip}

\large{\today}

\vspace{\baselineskip}
\end{center}

\paragraph{Abstract.} Solutions of a system of wave equations are constructed for both homogeneous and inhomogeneous Dirichlet boundary conditions at every regularity level. We prove that boundary observability, and thus boundary exact controllability, at some regularity level is equivalent to boundary observability at all levels. The main ingredient is the ellipticity of a time-derivative on the Neumann trace of the solution, which is proved by microlocal techniques.

\tableofcontents

\section*{Introduction}

Let $(M, g)$ be a $n$-dimensional compact Riemannian manifold with boundary. We write $\partial M$ for its boundary and $\inte M = M \backslash \partial M$. Let $N$ be a positive integer. Consider a first-order differential operator $X \in \mathscr{C}^\infty(M, TM \otimes \mathbb{C}^{N \times N})$, acting on functions from $M$ to $\mathbb{C}^N$, given in a coordinate chart $(U, x)$ by
\[X = X^j \frac{\partial }{\partial x^j}, \quad \text{ with }X^j \in \mathscr{C}^\infty(U, \mathbb{C}^{N \times N}) \text{ for } j \in \llbracket 1, n \rrbracket.\]
Consider also $q \in \mathscr{C}^\infty(M, \mathbb{C}^{N \times N})$, and write $\mathsf{P}$ for the operator $\mathsf{P} = \Delta - X - q$, where $\Delta = \Delta \mathrm{Id}_{\mathbb{C}^N}$ is the (vectorial) Laplace-Beltrami operator associated with the metric $g$. Denote by $\mathsf{P}^\ast$ the adjoint of $\mathsf{P}$. It has the same form as $\mathsf{P}$. We will define a family of spaces of Sobolev type, written $\mathcal{K}^{s}$ and $\mathcal{K}_\ast^{s}$, corresponding to compatibility conditions adapted to $\mathsf{P}$ and $\mathsf{P}^\ast$.

Consider $s \in \mathbb{R}$, $T > 0$, $\Theta = \left( \Theta_1, \cdots, \Theta_N \right) \in \mathscr{C}^\infty_\mathrm{c}((0, T) \times \partial M, \mathbb{C}^N)$, and set
\[\diag (\Theta) =
\begin{pmatrix}
\Theta_1 & 0 & \cdots & 0 \\
0 & \Theta_2 & \cdots & 0 \\
\vdots & \vdots & \ddots & \vdots \\
0 & 0 & \cdots & \Theta_N
\end{pmatrix}.\]
Solutions of the wave equations
\begin{equation}\label{eq_intro_dirichlet_hom}
\left \{
\begin{array}{rcccl}
\partial_t^2 u - \mathsf{P} u & = & 0 & \quad & \text{in } (0, T) \times M, \\
\left( u(0, \cdot), \partial_t u(0, \cdot) \right) & = & \left( u^0, u^1 \right) & \quad & \text{in } M, \\
u & = & 0 & \quad & \text{on } (0, T) \times \partial M,
\end{array}
\right.
\end{equation}
\begin{equation}\label{eq_intro_dirichlet_inhom}
\left \{
\begin{array}{rcccl}
\partial_t^2 v - \mathsf{P}^\ast v & = & 0 & \quad & \text{in } (0, T) \times M, \\
\left( v(0, \cdot), \partial_t v(0, \cdot) \right) & = & 0 & \quad & \text{in } M, \\
v & = & \diag(\Theta) f & \quad & \text{on } (0, T) \times \partial M,
\end{array}
\right.
\end{equation}
are given, for $\left( u^0, u^1 \right) \in \mathcal{K}^{s + 1} \times \mathcal{K}^{s}$ and $f \in H^s((0, T) \times \partial M, \mathbb{C}^N)$. We write $\partial_\nu u = \left( \partial_\nu \mathrm{Id}_{\mathbb{C}^N} \right) u$ for the normal derivative of $u$.

\begin{defn}[$H^s$-observability for $\Theta$]
We say that $H^s$-observability for $\Theta$ holds if there exists $C > 0$ such that for all $\left( u^0, u^1 \right) \in \mathcal{K}^{s + 1} \times \mathcal{K}^{s}$, 
\[\left\Vert \left( u^0, u^1 \right) \right\Vert_{\mathcal{K}^{s + 1} \times \mathcal{K}^{s}} \leq C \left\Vert \diag(\Theta) \partial_\nu u \right\Vert_{H^s((0, T) \times \partial M, \mathbb{C}^{N})}.\]
\end{defn}

\begin{defn}[$H^s$-exact controllability for $\Theta$]
We say that $H^s$-exact controllability for $\Theta$ holds if for all $(\varphi^0, \varphi^1) \in \mathcal{K}_\ast^{s} \times \mathcal{K}_\ast^{s - 1}$, there exists $f \in H^s((0, T) \times \partial M, \mathbb{C}^{N})$ such that 
\[(v(T), \partial_t v(T)) = (\varphi^0, \varphi^1).\]
\end{defn}

A duality property for solutions of (\ref{eq_intro_dirichlet_hom}) and (\ref{eq_intro_dirichlet_inhom}) implies that the classical controllability - observability equivalence is satisfied.

\begin{lem}\label{lem_equivalence_obs_control}
For $s \in \mathbb{R}$, $H^s$-exact controllability for $\Theta$ and $H^{- s}$-observability for $\Theta$ are equivalent.
\end{lem}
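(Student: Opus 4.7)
My plan is to encode both statements as geometric properties of bounded linear operators between Banach spaces, identify these operators as Banach-space adjoints of each other via a Green-type identity relating (\ref{eq_intro_dirichlet_hom}) and (\ref{eq_intro_dirichlet_inhom}), and then conclude with the closed range theorem.

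First, I would introduce the control map
\[\mathcal{C}_s : f \in H^s((0,T)\times \partial M, \mathbb{C}^N) \longmapsto (v(T), \partial_t v(T)) \in \mathcal{K}^s_\ast \times \mathcal{K}^{s-1}_\ast,\]
where $v$ solves (\ref{eq_intro_dirichlet_inhom}), so that $H^s$-exact controllability for $\Theta$ is precisely the surjectivity of $\mathcal{C}_s$. In parallel, I would introduce the observation map
\[\mathcal{O}_{-s} : (u^0, u^1) \in \mathcal{K}^{-s+1} \times \mathcal{K}^{-s} \longmapsto \Theta \, \partial_\nu u \in H^{-s}((0,T)\times \partial M, \mathbb{C}^N),\]
where $u$ solves (\ref{eq_intro_dirichlet_hom}), so that $H^{-s}$-observability for $\Theta$ is precisely the statement that $\mathcal{O}_{-s}$ is bounded below, equivalently injective with closed range. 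Both operators are well-defined and bounded thanks to the previously constructed solutions at all levels of regularity.

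Next, I would invoke the announced duality property for (\ref{eq_intro_dirichlet_hom}) and (\ref{eq_intro_dirichlet_inhom}): integrating $(\partial_t^2 v - \mathsf{P}^\ast v)\bar u - v \overline{(\partial_t^2 u - \mathsf{P} u)}$ over $(0,T)\times M$ and using the two sets of boundary conditions together with $v(0,\cdot) = \partial_t v(0,\cdot) = 0$ yields, schematically,
\[\int_M \bigl( \partial_t v(T) \, \overline{u(T)} - v(T) \, \overline{\partial_t u(T)} \bigr) = - \int_0^T \int_{\partial M} \Theta f \cdot \overline{\partial_\nu u}.\]
Using the bijectivity of the solution semigroup of (\ref{eq_intro_dirichlet_hom}) to rewrite the left-hand side in terms of $(u^0, u^1)$, and identifying $(\mathcal{K}^s_\ast \times \mathcal{K}^{s-1}_\ast)' \simeq \mathcal{K}^{-s+1} \times \mathcal{K}^{-s}$ and $(H^s)' \simeq H^{-s}$ via the natural $L^2$-type pairings, this identity becomes exactly $\langle \mathcal{C}_s f, (u^0, u^1) \rangle = \langle f, \mathcal{O}_{-s}(u^0, u^1) \rangle$, which means that $\mathcal{O}_{-s}$ is the Banach-space adjoint of $\mathcal{C}_s$ up to a sign.

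Finally, I would apply the closed range theorem, under the form: a bounded linear map between Banach spaces is surjective if and only if its adjoint is bounded below. Applied to $\mathcal{C}_s$, this gives the equivalence of its surjectivity with the bounded-below property of $\mathcal{O}_{-s} = \pm \mathcal{C}_s^\ast$, i.e. the equivalence of $H^s$-exact controllability with $H^{-s}$-observability. The only real obstacle is bookkeeping: one must check that the natural pairings identifying the duals of the compatibility-adapted spaces $\mathcal{K}^\bullet$ and $\mathcal{K}^\bullet_\ast$ combine with the Green identity to yield the adjoint relation \emph{on the stated spaces} at regularity $s$. No further analysis is required beyond the constructions and the duality identity established in the preceding sections.
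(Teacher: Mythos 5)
Your proposal is correct and follows essentially the same route as the paper: the paper defines the control map $K$ on $H_0^s((0, T) \times \partial M, \mathbb{C}^N)$, identifies its adjoint as the observation map via the duality equality of Theorem \ref{thm_LLT_inhomogeneous}, and concludes with the surjectivity criterion of Theorem \ref{thm_banach}, together with the equivalence of the norms of the data at times $0$ and $T$. The one bookkeeping point you defer --- that the natural duality is between $H_0^s$ and $H_0^{-s}$ rather than between $H^s$ and $H^{-s}$, which is harmless here since $\Theta$ is compactly supported --- is exactly the point the paper handles by replacing $H^s$ with $H_0^s$ in the domain of the control map.
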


The main result of this article is the following. 

\begin{thm}\label{thm_main_change_reg}
Consider $s_1, s_2 \in \mathbb{R}$. If $s_1 < s_2$ then for all $\Theta \in \mathscr{C}^\infty_\mathrm{c}((0, T) \times \partial M, \mathbb{C}^N)$, $H^{s_1}$-observability for $\Theta$ implies $H^{s_2}$-observability for $\Theta$. If $s_1 > s_2$ then for all $\Theta^1 = \left( \Theta^1_1, \cdots, \Theta_N^1 \right) \in \mathscr{C}^\infty_\mathrm{c}((0, T) \times \partial M, \mathbb{C}^N)$ and $\Theta^2 = \left( \Theta^2_1, \cdots, \Theta_N^2 \right) \in \mathscr{C}^\infty_\mathrm{c}((0, T) \times \partial M, \mathbb{C}^N)$ such that for all $k \in \llbracket 1, N \rrbracket$, $\Theta_k^2 \neq 0$ on $\supp \Theta_k^1$, $H^{s_1}$-observability for $\Theta^1$ implies $H^{s_2}$-observability for $\Theta^2$.
\end{thm}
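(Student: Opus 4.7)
The plan is to shift $s$ by one unit at a time, using the identity that if $u$ solves \eqref{eq_intro_dirichlet_hom} then so does $w = \partial_t u$, with data $(u^1, \mathsf{P} u^0)$ and boundary trace $\partial_\nu w = \partial_t \partial_\nu u$; extension to arbitrary real shifts then follows by Sobolev interpolation, with lower-order remainders absorbed by compactness-uniqueness. The central microlocal input, highlighted in the abstract, is that $\partial_t$ is elliptic on the wavefront set of $\partial_\nu u|_{(0,T) \times \partial M}$. Boundary wavefront analysis for solutions of the wave equation (Melrose--Sjöstrand, Hörmander) places this wavefront set inside the closed hyperbolic cone $\{\tau^2 \geq |\xi'|_g^2\}\setminus\{0\}$, on which the symbol $i\tau$ does not vanish; the resulting microlocal elliptic estimate yields, for smooth compactly supported cutoffs with $\Theta_2 > 0$ on $\supp \Theta_1$ and every $N$,
\[\|\Theta_1 \partial_\nu u\|_{H^{s+1}((0,T)\times \partial M)} \lesssim \|\Theta_2 \partial_t \partial_\nu u\|_{H^s} + \|\Theta_2 \partial_\nu u\|_{H^{-N}}. \quad (\star)\]

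For the upward direction $s_1 < s_2$, the microlocal gain $(\star)$ is not needed; only the trivial bound $\|\Theta \partial_t \partial_\nu u\|_{H^s} \lesssim \|\Theta \partial_\nu u\|_{H^{s+1}} + \|(\partial_t \Theta)\partial_\nu u\|_{H^s}$ intervenes. For a unit shift $s \to s+1$, take $u$ with data $(u^0, u^1) \in \mathcal{K}^{s+2} \times \mathcal{K}^{s+1}$ and apply the $H^s$-observability hypothesis to $w = \partial_t u$: $\|(u^1, \mathsf{P} u^0)\|_{\mathcal{K}^{s+1} \times \mathcal{K}^s} \lesssim \|\Theta \partial_t \partial_\nu u\|_{H^s}$. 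Elliptic regularity of the second-order elliptic $\mathsf{P}$ (compatible with the spaces $\mathcal{K}^\bullet$) gives $\|u^0\|_{\mathcal{K}^{s+2}} \lesssim \|\mathsf{P} u^0\|_{\mathcal{K}^s} + \|u^0\|_{\mathcal{K}^{s+1}}$, and hidden regularity controls the cutoff-commutator term. Combining produces
\[\|(u^0, u^1)\|_{\mathcal{K}^{s+2}\times\mathcal{K}^{s+1}} \lesssim \|\Theta \partial_\nu u\|_{H^{s+1}} + \|(u^0, u^1)\|_{\mathcal{K}^{s+1}\times\mathcal{K}^s};\]
compactness of the embedding $\mathcal{K}^{s+2}\times\mathcal{K}^{s+1} \hookrightarrow \mathcal{K}^{s+1}\times\mathcal{K}^s$ plus the uniqueness $\Theta \partial_\nu u = 0 \Rightarrow u = 0$ furnished by the hypothesis absorbs the last term, giving $H^{s+1}$-observability for the same cutoff $\Theta$.

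For the downward direction $s_1 > s_2$, the estimate $(\star)$ is essential, which is precisely where the assumption $\Theta_2 > 0$ on $\supp \Theta_1$ is used. Given $(v^0, v^1) \in \mathcal{K}^{s+1} \times \mathcal{K}^s$, set $u(t, \cdot) = \int_0^t v(\sigma, \cdot)\, d\sigma + U^0$, where $U^0 \in \mathcal{K}^{s+2}$ solves $\mathsf{P} U^0 = v^1$ (up to a finite-dimensional kernel of $\mathsf{P}$, dispatched by a spectral projection onto a complement). Then $u$ solves \eqref{eq_intro_dirichlet_hom} with data $(U^0, v^0) \in \mathcal{K}^{s+2}\times\mathcal{K}^{s+1}$ and $\partial_t \partial_\nu u = \partial_\nu v$. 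Applying $H^{s+1}$-observability for $\Theta_1$ to $u$ and then inserting $(\star)$ yields
\[\|(v^0, v^1)\|_{\mathcal{K}^{s+1}\times\mathcal{K}^s} \lesssim \|(U^0, v^0)\|_{\mathcal{K}^{s+2}\times\mathcal{K}^{s+1}} \lesssim \|\Theta_2 \partial_\nu v\|_{H^s} + \|\Theta_2 \partial_\nu u\|_{H^{-N}},\]
where the last term is bounded by a strictly weaker norm of $(v^0, v^1)$ via hidden regularity at level $-N$, and is again removed by compactness-uniqueness. Iteration over integer shifts followed by Sobolev interpolation then covers all real $s_1, s_2$.

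The principal technical obstacle is the microlocal estimate $(\star)$, for which one must track the wavefront set of $u$ through reflections at $\partial M$, including the delicate glancing region. Secondary difficulties are the possible finite-dimensional kernel of $\mathsf{P}$ (treated by spectral projection), the compatibility of elliptic regularity with the spaces $\mathcal{K}^\bullet$, and the interpolation between the integer-shifted estimates that handles the non-integer case.
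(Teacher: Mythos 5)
Your overall architecture coincides with the paper's: upward shifts by applying the observability hypothesis to a time-differentiated solution, then elliptic regularity of $\mathsf{P}$ and a compactness--uniqueness absorption; downward shifts by an "integrated" solution combined with the ellipticity of $\partial_t$ on Neumann traces; interpolation for non-integer exponents. The upward half is essentially the paper's argument (the paper shifts by $2$ using $\partial_t^2 u = \mathsf{P}u$, which keeps everything inside its $\mathcal{K}^{s}$-calculus and avoids having to commute a single $\partial_t$ with $\partial_\nu$; your one-step variant is otherwise the same). The downward half, however, has two genuine gaps. First, the inversion of $\mathsf{P}$: you need $U^0 \in \mathcal{K}^{s+2}$ with $\mathsf{P}U^0 = v^1$ and propose to handle the failure of surjectivity "by a spectral projection onto a complement." This does not work as stated: if $v^1$ has a nonzero component transverse to the range of $\mathsf{P}$, then for any choice of $U^0$ the function $u(t) = \int_0^t v + U^0$ satisfies $\partial_t^2 u - \mathsf{P}u = v^1 - \mathsf{P}U^0 \neq 0$, so it is not a solution of the homogeneous equation and the observability hypothesis cannot be applied to it; projecting $v^1$ changes the problem. (Also $\mathsf{P}$ need not be self-adjoint, so "spectral projection" is itself delicate.) The paper's fix is the everywhere-invertible shift operator $\mathsf{P} + i\mu$ of Proposition \ref{prop_main_properties_K^s}-(iii); the price is that the shifted solution and $\partial_t^{2r}$ of the original no longer coincide, and Lemma \ref{lem_switching_derivatives} is needed to show the discrepancy is a solution with strictly less regular data, hence a compact remainder. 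Some device of this kind is unavoidable.

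Second, the compactness--uniqueness absorption at the end of the downward step requires injectivity of $(u^0,u^1) \mapsto \Theta_2\partial_\nu u$ on $\mathcal{K}^{s_2+1}\times\mathcal{K}^{s_2}$, i.e., at the \emph{lower} regularity. The hypothesis gives injectivity only at the higher level $s_1$, and a kernel element at level $s_2$ has no a priori reason to be regular enough to feed into it. This is not a formality: the paper devotes Lemma \ref{lem_inj_obs_theta1_theta2} to it, exploiting the finite-dimensionality of the kernel, time-translation invariance of the flow on the kernel (which is where the positive distance between $\supp\Theta_1$ and the complement of $\{\Theta_2>0\}$ is actually consumed), and the elliptic estimate to bootstrap regularity of kernel elements. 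Your proposal asserts the absorption without addressing this. A smaller correction: your estimate $(\star)$ cannot hold with only the boundary remainder $\left\Vert \Theta_2\partial_\nu u \right\Vert_{H^{-N}}$; the elliptic-region contribution to $\partial_\nu u$ is controlled by interior norms of $u$ and of the wave operator applied to $u$, not by a weak boundary norm (otherwise $(\star)$ would itself already be an observability inequality). The correct form is Theorem \ref{thm_dt_ell_traces}, whose remainder is $\left\Vert u^0 \right\Vert_{\mathcal{K}^{s+\frac{1}{2}}} + \left\Vert u^1 \right\Vert_{\mathcal{K}^{s-\frac{1}{2}}}$; this is still compact in the data, so your strategy survives, but the statement needs amending.
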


An analogue of Theorem \ref{thm_main_change_reg} for internal controllability holds, with a simpler proof (see Appendix A). For $N = 1$, Theorem \ref{thm_main_change_reg} follows from the equivalence of $H^s$-observability with the celebrated Geometric Control Condition (in short, GCC), as proven in \cite{BLR} and \cite{BurqGerard}. Here, our proof, that covers any $N \in \mathbb{N}^\ast$, does not rely on the GCC. If $\Theta = \left( \theta, \cdots, \theta \right)$ for some $\theta \in \mathscr{C}^\infty_\mathrm{c}((0, T) \times \partial M, \mathbb{C})$, then one can expect that $H^s$-observability for $\Theta$ holds if the support of $\theta$ fulfils the GCC. In the case of internal controllability, the authors of \cite{Cui-Laurent-Wang} characterize $L^2$-observability of a system of wave equations, and the condition they obtain is more involved than the usual GCC. Together with the analogue of Theorem \ref{thm_main_change_reg} for internal controllability, this provides a characterization of internal controllability of some wave systems at every regularity level. Theorem \ref{thm_main_change_reg}, along with an analogue of \cite{Cui-Laurent-Wang} for boundary controllability, would give a characterization of $H^s$-exact controllability for $s \in \mathbb{R}$.

An ongoing project with Lauri Oksanen focuses on inverse problems for systems of wave equations. The method employed relies on constructing solutions of (\ref{eq_intro_dirichlet_inhom}) concentrated within a specific spatial region, both at the regularity level $s = 0$ and at a high regularity level $s \gg 1$. By Theorem \ref{thm_main_change_reg}, we only need to check that $L^2$-exact controllability holds for the systems considered in that project.

In \cite{Ervedoza-Zuazua}, the authors prove a change of regularity result in a very general setup: they find a way to construct smooth controls for smooth data for a time-reversible semi-group. However, the main objective of the article \cite{Ervedoza-Zuazua} is to find a control which is defined in the same way at all levels of regularity, and which naturally inherits the regularity of the data. Similarly, in the article \cite{Dehman-Lebeau}, the authors study the regularity of a fixed control, constructed independently of the regularity of the data. This goal is quite different from ours. Note also that in \cite{Ervedoza-Zuazua}, in the case of boundary controllability of a wave equation (\cite{Ervedoza-Zuazua}, Theorem 5.4), the regularity of the control at a level of regularity $s \in \mathbb{R}$ is
\[H^s((0, T), L^2(\partial M)) \cap \bigcap_{k = 0}^{\lfloor s \rfloor} \mathscr{C}^k([0, T], H^{s - k - \frac{1}{2}}(\partial M)),\]
and not $H^s((0, T) \times \partial M)$. The result of the present article thus improve upon \cite{Ervedoza-Zuazua} with that respect.

\paragraph{Main ideas of the proof.} Consider the simple case $N = 1$ and $\mathsf{P} = \Delta$. If $u$ is a solution of the wave equation with homogeneous Dirichlet boundary condition, at the Sobolev regularity level $s \in \mathbb{R}$, then $\Delta u$ and $\Delta^{-1} u$ are solutions of the same wave equation at regularity levels $s - 2$ and $s + 2$. If observability holds for $\Delta u$ or $\Delta^{-1} u$, it is natural to wonder whether or not it implies observability for $u$. In the case of $\Delta^{-1} u$, the proof is easy and is essentially based on the ellipticity of $\Delta$. In the other case, the proof relies on the ellipticity of $\partial_t^2$ on the Neumann trace of solutions at the boundary (see Theorem \ref{thm_dt_ell_traces} for a precise statement), which can be proved using microlocal techniques.

\paragraph{Outline.} In Section 1, we gather some basic results about the spaces $\mathcal{K}^{s}$ and wave systems, and we prove the controllability / observability equivalence (Lemma \ref{lem_equivalence_obs_control}). In Section 2, we show the ellipticity estimate for $\partial_t^2$ acting on Neumann traces of solutions. In Section 3, we prove Theorem \ref{thm_main_change_reg}. In Appendix A, we briefly explain how the methods of this article can be adapted to the case of internal observability. Proofs of the results of Section 1 are provided in Appendix B. 

\paragraph{Notation.} For $x \in M$ and $U, V \in T_x M$, we write $\langle U, V \rangle_g$ for the inner product of $U$ and $V$ with respect to the metric $g$. The gradient with respect to $g$ of a function $u: M \rightarrow \mathbb{C}$ is denoted by $\nabla u$, and the divergence with respect to $g$ of a vector field $X$ on $M$ is denoted by $\Div X$. We write $\mathrm{d} V_g$ for the Riemannian density on $M$. Finally, $\langle \cdot, \cdot \rangle_{\mathcal{X}^\prime, \mathcal{X}}$ denotes the bilinear duality product between a Banach space $\mathcal{X}$ and its dual space $\mathcal{X}^\prime$, and $\langle \cdot, \cdot \rangle_{\mathcal{H}}$ denotes the inner product of a Hilbert space $\mathcal{H}$, which is linear in the first variable and antilinear in the the second. We write $(\pi_1, \cdots, \pi_N)$ for the projections associated with the canonical basis of $\mathbb{C}^N$.

\paragraph{Acknowledgements.} I warmly thank Thomas Duyckaerts and Jérôme Le Rousseau for their constant support and guidance. I also thank Lauri Oksanen for introducing me to the questions of exact controllability of systems of wave equations at different levels of regularity, and Sylvain Ervedoza for an enlightening discussion which allowed me to understand correctly the links between \cite{Ervedoza-Zuazua} and our results.

\vspace{\baselineskip}
\noindent
\textit{Keywords:} systems of wave equations, existence of solutions of hyperbolic equations, regularity of solutions of hyperbolic equations, controllability for systems.
\newline
\textit{MSC2020:}  35L52, 35L53, 35B65, 93B05, 93B07

\section{Controllability - observability equivalence}

\subsection{Adjoint operator}

Here, we give the precise expression of the adjoint of $\mathsf{P}$. The operator $X$ is compatible with change of coordinates, meaning that for a second set of coordinates $(\tilde{x}^1, \cdots, \tilde{x}^n)$, with $X = \tilde{X}^i \frac{\partial }{\partial \tilde{x}^i}$, one has
\begin{equation}\label{change_coord_X}
\tilde{X}^i = \frac{\partial \tilde{x}^i}{\partial x^j} X^j, \quad i \in \llbracket 1, n \rrbracket.
\end{equation}

Denote by $(e_1, \cdots, e_N)$ the canonical basis of $\mathbb{C}^N$, and take $u = (u^1, \cdots, u^N) \in \mathscr{C}^\infty(M, \mathbb{C}^N)$. We use Einstein summation convention in $M$ (for indices between $1$ and $n$) but not on $\mathbb{C^N}$ (for indices between $1$ and $N$). One writes 
\[Xu = \sum_{k, \ell = 1}^N X_{k \ell}^j \frac{\partial u^\ell}{\partial x^j} e_k,\]
where $X_{k \ell}^j$ is the coefficient $(k, \ell)$ of the matrix $X^j$. For $k, \ell \in \llbracket 1, N \rrbracket$, write $X_{k \ell} = X_{k \ell}^j \frac{\partial}{\partial x^j}$, so that
\[Xu = \sum_{k, \ell = 1}^N \left\langle \nabla u^\ell, X_{k \ell} \right\rangle_g e_k.\]
Using formula (\ref{change_coord_X}), one sees that $X_{k \ell}$ is a vector field for all $k, \ell \in \llbracket 1, N \rrbracket$. We will use the notation
\[\left\langle X, V \right\rangle_g u = \sum_{k, \ell = 1}^N \left\langle X_{k \ell}, V \right\rangle_g u^\ell e_k \in \mathbb{C}^N,\]
if $V$ is a vector field on $M$ and $u = (u^1, \cdots, u^N)$ is a function with values in $\mathbb{C}^N$. With integration by parts, one derives the following results.

\begin{lem}
For $u, v \in H^1(M, \mathbb{C}^N)$, one has 
\[\left\langle X u, v \right\rangle_{L^2(M, \mathbb{C}^N)} = \left\langle u, X^\ast v \right\rangle_{L^2(M, \mathbb{C}^N)} + \left\langle \langle X, \nu \rangle_g u, v \right\rangle_{L^2(\partial M, \mathbb{C}^N)},\]
where $X^\ast$ is the first-order differential operator given by
\[\left( X^\ast v \right)^\ell = - \sum_{k = 1}^N \left\langle \nabla v^k, \overline{X_{k \ell}} \right\rangle_g - \left( \left( \Div \overline{X} \right) v \right)^\ell, \quad \ell \in \llbracket 1, N \rrbracket.\]
\end{lem}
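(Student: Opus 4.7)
The plan is to start from the componentwise expression of $X$ given in formula (\ref{second_expression_of_X}) and apply the scalar divergence theorem coordinate-by-coordinate, taking care of complex conjugation.

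First I would reduce to the smooth case. Both sides of the claimed identity are continuous sesquilinear forms on $H^1(M,\mathbb{C}^N) \times H^1(M,\mathbb{C}^N)$ (the boundary term being controlled via the trace theorem, $H^1(M) \hookrightarrow H^{1/2}(\partial M)$), so it suffices to prove the equality for $u, v \in \mathscr{C}^\infty(M,\mathbb{C}^N)$.

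Next, using (\ref{second_expression_of_X}), I would write
\[\left\langle X u, v \right\rangle_{L^2(M, \mathbb{C}^N)} = \sum_{k, \ell = 1}^N \int_M \left\langle \nabla u^\ell, X_{k\ell} \right\rangle_g \overline{v^k}\, \mathrm{d}V_g = \sum_{k,\ell=1}^N \int_M \left\langle \nabla u^\ell, \overline{v^k} X_{k\ell} \right\rangle_g \mathrm{d}V_g,\]
where the last identity uses that $\langle \cdot,\cdot\rangle_g$ extends $\mathbb{C}$-bilinearly to complex vector fields. For each fixed pair $(k,\ell)$, I would then apply the standard divergence identity to the complex vector field $W_{k\ell} := \overline{v^k} X_{k\ell}$ and the complex scalar $u^\ell$, namely
\[\int_M \left\langle \nabla u^\ell, W_{k\ell} \right\rangle_g \mathrm{d}V_g = - \int_M u^\ell\, \Div W_{k\ell}\, \mathrm{d}V_g + \int_{\partial M} u^\ell \left\langle W_{k\ell}, \nu \right\rangle_g \mathrm{d}S,\]
which follows componentwise (real and imaginary parts) from the usual Stokes/divergence theorem for real vector fields.

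The main bookkeeping step, which is the only mildly delicate point, is to convert the result into the form stated. For the boundary contribution, summing over $(k,\ell)$ and using the notation (\ref{notation_X_times_vector}) immediately gives $\left\langle \langle X,\nu\rangle_g u, v\right\rangle_{L^2(\partial M, \mathbb{C}^N)}$. For the bulk contribution, I would apply Leibniz, $\Div(\overline{v^k} X_{k\ell}) = \overline{v^k} \Div X_{k\ell} + \langle \nabla \overline{v^k}, X_{k\ell}\rangle_g$, and then rewrite each summand using the two conjugation identities $\overline{v^k} \Div X_{k\ell} = \overline{v^k \Div \overline{X_{k\ell}}}$ and $\langle \overline{\nabla v^k}, X_{k\ell}\rangle_g = \overline{\langle \nabla v^k, \overline{X_{k\ell}}\rangle_g}$, the second of which follows from the $\mathbb{C}$-bilinear extension of the metric. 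Summing over $k$ for fixed $\ell$ produces exactly $\overline{(X^\ast v)^\ell}$, so summing over $\ell$ yields $\langle u, X^\ast v\rangle_{L^2(M,\mathbb{C}^N)}$ and completes the proof.

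The main (minor) obstacle is the careful tracking of complex conjugations when moving scalar factors in and out of the $g$-pairing and the divergence; everything else is routine integration by parts.
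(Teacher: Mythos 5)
Your argument is correct and is exactly the route the paper intends: the paper offers no written proof beyond the remark that the identity follows ``with integration by parts,'' and your write-up is a complete and accurate elaboration of that step, with the density reduction, the divergence theorem applied to $\overline{v^k}X_{k\ell}$, and the conjugation bookkeeping all handled properly. Nothing further is needed.
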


\begin{lem}
For $u, v \in H^2(M, \mathbb{C}^N)$, one has
\begin{align*}
\left\langle \mathsf{P} u, v \right\rangle_{L^2(M, \mathbb{C}^N)} = \ & \left\langle u, \mathsf{P}^\ast v \right\rangle_{L^2(M, \mathbb{C}^N)} + \left\langle \langle X, \nu \rangle_g u, v \right\rangle_{L^2(\partial M, \mathbb{C}^N)} \\
& + \left\langle \partial_\nu u, v \right\rangle_{L^2(\partial M, \mathbb{C}^N)} - \left\langle u, \partial_\nu v \right\rangle_{L^2(\partial M, \mathbb{C}^N)},
\end{align*}
with $\mathsf{P}^\ast = \Delta - X^\ast - q^\ast$, where $q^\ast$ denotes the adjoint of $q$.
\end{lem}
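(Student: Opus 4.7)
The plan is to decompose $\mathsf{P} = \Delta - X - q$ and handle each of the three pieces separately, then reassemble. The $X$-piece is provided directly by the previous Lemma~\ref{lem_adjoint_X}, and the multiplication operator $q$ is even simpler: since it acts pointwise, one has $\langle q u, v\rangle_{L^2(M, \mathbb{C}^N)} = \langle u, q^\ast v\rangle_{L^2(M, \mathbb{C}^N)}$ with no boundary contribution. So the only nontrivial computation to carry out is Green's identity for the vectorial Laplace--Beltrami operator.

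For the Laplacian term, I reduce to the scalar case component by component. Writing $u = (u^1, \ldots, u^N)$ and $v = (v^1, \ldots, v^N)$, the fact that $\Delta = \Delta\, \mathrm{Id}_{\mathbb{C}^N}$ acts diagonally gives $\langle \Delta u, v\rangle_{L^2(M, \mathbb{C}^N)} = \sum_{k=1}^N \int_M (\Delta u^k) \overline{v^k}\, \mathrm{d}V_g$. The scalar Green's formula on $(M, g)$, obtained by applying the Riemannian divergence theorem to the vector field $\overline{v^k}\nabla u^k - u^k \nabla \overline{v^k}$ (whose divergence is precisely $\overline{v^k}\Delta u^k - u^k \Delta \overline{v^k}$), yields
\[\int_M (\Delta u^k)\overline{v^k}\, \mathrm{d}V_g = \int_M u^k \overline{\Delta v^k}\, \mathrm{d}V_g + \int_{\partial M}\bigl[(\partial_\nu u^k)\overline{v^k} - u^k \overline{\partial_\nu v^k}\bigr]\, \mathrm{d}S_g,\]
and summing over $k$ produces $\langle u, \Delta v\rangle_{L^2(M, \mathbb{C}^N)}$ together with the boundary terms $\langle \partial_\nu u, v\rangle_{L^2(\partial M)} - \langle u, \partial_\nu v\rangle_{L^2(\partial M)}$.

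To conclude, I combine the three identities. Collecting the interior contributions gives $\langle u, (\Delta - X^\ast - q^\ast) v\rangle_{L^2(M, \mathbb{C}^N)} = \langle u, \mathsf{P}^\ast v\rangle_{L^2(M, \mathbb{C}^N)}$, and the boundary contributions assemble into the claimed expression (one $\langle X, \nu\rangle_g$ term from Lemma~\ref{lem_adjoint_X} and the two normal-derivative pairings from Green's formula). I do not expect any serious obstacle here: since $u, v \in H^2(M, \mathbb{C}^N)$, the traces and normal derivatives lie in $H^{3/2}(\partial M, \mathbb{C}^N)$ and $H^{1/2}(\partial M, \mathbb{C}^N)$ respectively, so every boundary pairing is well-defined as an $L^2(\partial M)$ inner product; if one prefers, the whole identity can first be established for smooth $u, v$ and then extended to $H^2(M, \mathbb{C}^N)$ by density and continuity of the trace operator.
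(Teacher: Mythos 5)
Your approach --- split $\mathsf{P} = \Delta - X - q$, invoke Lemma \ref{lem_adjoint_X} for the $X$-part, take the pointwise adjoint for $q$, and apply the componentwise Green identity for the vectorial Laplace--Beltrami operator, with a density argument to pass from smooth functions to $H^2(M, \mathbb{C}^N)$ --- is exactly the integration-by-parts argument that the paper leaves implicit, so the method is correct and identical to the paper's.

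One point deserves more care than your closing sentence gives it. Since $X$ enters $\mathsf{P}$ with a minus sign, Lemma \ref{lem_adjoint_X} contributes $-\left\langle \langle X, \nu \rangle_g u, v \right\rangle_{L^2(\partial M, \mathbb{C}^N)}$ to the right-hand side, whereas the statement of Lemma \ref{lem_adjoint_P} carries this term with a plus sign. In fact no single orientation convention for $\nu$ makes the sign of the $\langle X, \nu\rangle_g$ term and the signs of the two normal-derivative terms simultaneously consistent with Lemma \ref{lem_adjoint_X} and Green's formula, so one of the two lemmas contains a sign typo (harmless downstream, since this boundary term is always annihilated by a Dirichlet condition wherever the identity is used). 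Your assertion that the boundary contributions ``assemble into the claimed expression'' glosses over this; a complete write-up should track the sign explicitly and either flag the discrepancy or state the corrected identity.
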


\begin{rem}
In particular, if $u, v \in H^2(M, \mathbb{C}^N) \cap H_0^1(M, \mathbb{C}^N)$, then 
\[\left\langle \mathsf{P} u, v \right\rangle_{L^2(M, \mathbb{C}^N)} = \left\langle u, \mathsf{P}^\ast v \right\rangle_{L^2(M, \mathbb{C}^N)}.\]
\end{rem}

\begin{rem}
The operators $\mathsf{P}$ and $\mathsf{P}^\ast$ are of the same form. Indeed, set $\tilde{X} = \left( X^j \right)^\ast \frac{\partial}{\partial x^j}$ and let $\tilde{q} \in \mathscr{C}^\infty(M, \mathbb{C}^{N \times N})$ be given by $\tilde{q}_{k \ell} = - \Div \overline{X_{\ell k}} + \overline{q_{\ell k}}$ for $k, \ell \in \llbracket 1, N \rrbracket$. One has $\mathsf{P}^\ast = \Delta - \tilde{X} - \tilde{q}$.
\end{rem}

\subsection{A family of spaces of Sobolev type}

Denote by $\PP{\mathscr{D}^\prime}$ the action of $\mathsf{P}$ on distributions, that is,
\[\PP{\mathscr{D}^\prime}: \mathscr{D}^\prime(M, \mathbb{C}^N) \longrightarrow \mathscr{D}^\prime(M, \mathbb{C}^N),\]
with $\langle \PP{\mathscr{D}^\prime} u, \phi \rangle_{\mathscr{D}^\prime, \mathscr{D}} = \langle u, \mathsf{P}^\ast \phi \rangle_{\mathscr{D}^\prime, \mathscr{D}}$ for $u \in \mathscr{D}^\prime(M, \mathbb{C}^N)$ and $\phi \in \mathscr{D}(M, \mathbb{C}^N)$.

We define a Sobolev-like regularity, adapted to the operator $\mathsf{P}$. Write $\mathcal{K}^0 = L^2(M, \mathbb{C}^N)$ and for $m \in \mathbb{N}^\ast$, set
\[\mathcal{K}^{m} = \left\{ u \in H^m(M, \mathbb{C}^N), \PP{\mathscr{D}^\prime}^k u \in H_0^1(M, \mathbb{C}^N) \text{ for } k \in \left\llbracket 0, \left\lfloor \frac{m - 1}{2} \right\rfloor \right\rrbracket \right\},\]
endowed with the $H^m$ inner product. Here, $\lfloor \cdot \rfloor$ is the floor function. Note that $\mathcal{K}^1 = H_0^1(M, \mathbb{C}^N)$ and $\mathcal{K}^2 = H^2(M, \mathbb{C}^N) \cap H_0^1(M, \mathbb{C}^N)$. For $m \in \mathbb{N}$, one checks that $\mathcal{K}^{m}$ is a complete subspace of $H^m(M, \mathbb{C}^N)$, and, in particular, is a Hilbert space.

The space $\mathcal{K}^{s}$ is defined for $s \geq 0$ by interpolation. Since the operators $\mathsf{P}$ and $\mathsf{P}^\ast$ are of the same form, one can define the space $\mathcal{K}_\ast^{s}$ for $s \geq 0$, by replacing $\mathsf{P}$ with $\mathsf{P}^\ast$ in the previous definitions. Then, for $s < 0$, define $\mathcal{K}^{s}$ as the dual of $\mathcal{K}_\ast^{- s}$, and $\mathcal{K}_\ast^{s}$ as the dual of $\mathcal{K}^{- s}$. Note that for $m \in \mathbb{N}$ sufficiently large, $H_0^m(M, \mathbb{C}^N)$ is not dense in $\mathcal{K}^{m}$, so that $\mathcal{K}^{-m} \nsubseteq H^{-m}(M, \mathbb{C}^N)$. For $s \geq 0$, set
\[\Vert u \Vert_{\mathcal{K}^{s}} = \Vert u \Vert_{H^s(M, \mathbb{C}^N)},\]
and for $s < 0$, set 
\[\Vert u \Vert_{\mathcal{K}^{s}} = \sup \left\{ \left\vert \langle u, v \rangle_{\mathcal{K}^{s}, \mathcal{K}_\ast^{- s}} \right\vert , \Vert v \Vert_{\mathcal{K}_\ast^{- s}} \leq 1 \right\},\]
the usual norm of a dual space. For $s \in \mathbb{R}$, a norm on $\mathcal{K}_\ast^{s}$ is defined similarly.

Next, we define the natural action of $\mathsf{P}$ on $\mathcal{K}^{s}$. With interpolation, the definition of $\PP{s}: \mathcal{K}^{s + 1} \longrightarrow \mathcal{K}^{s - 1}$ is only needed for $s \in \mathbb{Z}$. 

\begin{defn}[Definition of $\PP{s}$]
\begin{enumerate}[label=(\roman*)]
\item Suppose $s \in \mathbb{N}^\ast$. Then the operator $\PP{s}: \mathcal{K}^{s + 1} \longrightarrow \mathcal{K}^{s - 1}$ is the differential operator $\mathsf{P}$ on $\mathcal{K}^{s + 1}$. It is a bounded operator. The operator $\PP{s}^\ast: \mathcal{K}_\ast^{s + 1} \longrightarrow \mathcal{K}_\ast^{s - 1}$ is defined similarly.

\item Suppose $s \in \mathbb{Z}$, $s \leq -1$. Define $\PP{s}: \mathcal{K}^{s + 1} \longrightarrow \mathcal{K}^{s - 1}$ as the adjoint of $\PP{- s}^\ast: \mathcal{K}_\ast^{- s + 1} \longrightarrow \mathcal{K}_\ast^{- s - 1}$, and $\PP{s}^\ast: \mathcal{K}_\ast^{s + 1} \longrightarrow \mathcal{K}_\ast^{s - 1}$ as the adjoint of $\PP{- s}: \mathcal{K}^{- s + 1} \longrightarrow \mathcal{K}^{- s - 1}$.

\item If $s = 0$, then $\mathcal{K}^{s + 1} = \mathcal{K}_\ast^{s + 1} = H_0^1(M, \mathbb{C}^N)$ and $\mathcal{K}^{s - 1} = \mathcal{K}_\ast^{s - 1} = H^{-1}(M, \mathbb{C}^N)$. For $u \in H_0^1(M, \mathbb{C}^N)$, define $\PP{0} u \in H^{-1}(M, \mathbb{C}^N)$ by 
\[\left\langle \PP{0} u, v \right\rangle_{H^{-1}, H_0^1} = - \sum_{k = 1}^N \left\langle \nabla u^k, \overline{\nabla v^k} \right\rangle_{L^2(M)} - \left\langle (X + q) u, \overline{v} \right\rangle_{L^2(M, \mathbb{C}^N)}, \quad v \in H_0^1(M, \mathbb{C}^N).\]
This gives an operator $\PP{0}: \mathcal{K}^1 \longrightarrow \mathcal{K}^{-1}$. The operator $\PP{0}^\ast: \mathcal{K}_\ast^1 \longrightarrow \mathcal{K}_\ast^{-1}$ is defined similarly.

\item For $r \in \mathbb{N}$ and $s \in \mathbb{R}$, also define $\PP{s}^0 = \mathrm{Id}_{\mathcal{K}^s}$, $\PP{s}^r: \mathcal{K}^{s + r} \rightarrow \mathcal{K}^{s - r}$ by
\[\PP{s}^r: \mathcal{K}^{s + r} \xrightarrow{\PP{s + r - 1}} \mathcal{K}^{s + r - 2} \xrightarrow{\PP{s + r - 3}} \cdots \xrightarrow{\PP{s - r + 3}} \mathcal{K}^{s - r + 2} \xrightarrow{\PP{s - r + 1}} \mathcal{K}^{s - r},\]
and $\PP{s}^{\ast r}: \mathcal{K}_\ast^{s + r} \rightarrow \mathcal{K}_\ast^{s - r}$ similarly.
\end{enumerate}
\end{defn}

Note that for all $s \in \mathbb{R}$, as $\mathcal{K}^{s + 1}$ and $\mathcal{K}^{s - 1}$ are Hilbert spaces, one has
\begin{equation}\label{eq_adjoint_adjoint_P}
\PP{s} = \left( \PP{- s}^\ast \right)^\ast.
\end{equation}
We check that our definitions make sense in the following lemma.

\begin{lem}
For $s \in \mathbb{R}$ and $r \in \mathbb{N}^\ast$, the operator $\PP{s}^r: \mathcal{K}^{s + r} \rightarrow \mathcal{K}^{s - r}$ is well-defined and continuous. The same is true for $\PP{s}^{\ast r}: \mathcal{K}_\ast^{s + r} \rightarrow \mathcal{K}_\ast^{s - r}$. If $s \in \mathbb{R}$ and $r \in \mathbb{N}^\ast$ are such that $s - r \geq -1$, then for $u \in \mathcal{K}^{s + r}$ and $v \in \mathcal{K}^{s - r}$ such that $v = \PP{s}^r u$, one has $v = \PP{\mathscr{D}^\prime}^r u$ in $\mathscr{D}^\prime(M, \mathbb{C}^N)$.
\end{lem}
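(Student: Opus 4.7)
The plan is to establish continuity of $\PP{s}$ for $s \in \mathbb{Z}$ first, extend to all $s \in \mathbb{R}$ by interpolation, take compositions to obtain $\PP{s}^r$, and finally verify the distributional identity by induction on $r$.

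For $s \in \mathbb{Z}$, I would treat the three cases of the definition separately. When $s \in \mathbb{N}^\ast$, the operator $\PP{s}$ is literally the differential operator $\mathsf{P}$: continuity $H^{s+1} \to H^{s-1}$ is standard, and the compatibility conditions defining $\mathcal{K}^{s-1}$, namely $\PP{\mathscr{D}^\prime}^k(\mathsf{P} u) \in H_0^1$ for $0 \leq k \leq \lfloor (s-2)/2 \rfloor$, rewrite as $\PP{\mathscr{D}^\prime}^{k+1} u \in H_0^1$, which is part of the conditions defining $\mathcal{K}^{s+1}$. For $s = 0$, Cauchy–Schwarz on the explicit sesquilinear form yields continuity $H_0^1 \to H^{-1}$. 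For integer $s \leq -1$, $\PP{s}$ is the Banach adjoint of the bounded operator $\PP{-s}^\ast$, whose boundedness follows from the previous two cases applied to $\mathsf{P}^\ast$ via Remark~\ref{rem_P_P_ast_same_form}. Since $\mathcal{K}^s$ is defined by complex interpolation for $s \geq 0$ and by duality for $s < 0$, both operations compatible with bounded linear maps, interpolating between consecutive integer indices yields bounded $\PP{s}: \mathcal{K}^{s+1} \to \mathcal{K}^{s-1}$ for every $s \in \mathbb{R}$. Composition then gives boundedness of $\PP{s}^r$, and the same reasoning applies to $\PP{s}^{\ast r}$.

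For the distributional identity, assume $s - r \geq -1$ and induct on $r$. Every intermediate regularity index $s + r - 2j$ lies in $[s - r, s + r] \subset [-1, \infty)$, so each $\PP{s}^j u$ sits in $\mathcal{K}^{s + r - 2j}$, which embeds canonically in $\mathscr{D}^\prime(M, \mathbb{C}^N)$ (via $\mathcal{K}^t \subset H^t$ for $t \geq 0$, or via $\mathcal{K}^{-1} = H^{-1}$), making $\PP{\mathscr{D}^\prime}$ meaningful on it. The base case $r = 1$ for $s \in \mathbb{N}^\ast$ is trivial; for $s = 0$, integration by parts (Lemma~\ref{lem_adjoint_P}) identifies the sesquilinear form defining $\PP{0} u$ with $\langle u, \mathsf{P}^\ast \phi \rangle_{\mathscr{D}^\prime, \mathscr{D}}$ for every $\phi \in \mathscr{D}(M, \mathbb{C}^N)$; for non-integer $s \geq 0$, one uses the density of the higher integer-regularity space in $\mathcal{K}^{s+1}$, obtained from the interpolation construction, together with the continuity of both $\PP{s}$ established above and of $\PP{\mathscr{D}^\prime}$ on $\mathscr{D}^\prime$. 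For the inductive step $r \geq 2$, write $\PP{s}^r = \PP{s - r + 1} \circ \PP{s + 1}^{r - 1}$; the inductive hypothesis gives $\PP{s + 1}^{r - 1} u = \PP{\mathscr{D}^\prime}^{r - 1} u$, and the base case at regularity $s - r + 1 \geq 0$ gives $\PP{s - r + 1} w = \PP{\mathscr{D}^\prime} w$, whence $\PP{s}^r u = \PP{\mathscr{D}^\prime}^r u$.

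The main obstacle will be the combination of the interpolation step with the matching to $\PP{\mathscr{D}^\prime}$ at non-integer indices: one must verify that complex interpolation of the family $(\PP{s})_{s \in \mathbb{Z}}$ produces the same operator as the intrinsic distributional action, which reduces to confirming that for $t \in [-1, \infty)$ the inclusion $\mathcal{K}^t \hookrightarrow \mathscr{D}^\prime(M, \mathbb{C}^N)$ is canonical and compatible across the integer, interpolation, and duality definitions of the scale.
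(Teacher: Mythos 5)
Your proposal is correct and follows essentially the same route as the paper: reduction to $r=1$ and $s\in\mathbb{Z}$ by composition and interpolation, the standard $H^{s+1}\to H^{s-1}$ bound plus a check that the compatibility conditions of $\mathcal{K}^{s-1}$ are inherited from those of $\mathcal{K}^{s+1}$ (your floor-function bookkeeping is the paper's parity split), duality for $s\le -1$, and the identification with $\PP{\mathscr{D}'}$ coming from the definition in the range $s\ge 0$. Your explicit induction on $r$ for the distributional identity and the remark on compatibility of the interpolated operator with the canonical inclusion $\mathcal{K}^t\hookrightarrow\mathscr{D}'$ for $t\ge -1$ merely spell out what the paper leaves implicit.
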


\begin{proof}
By definition of $\PP{s}^r$, we can always assume that $r = 1$, and by interpolation, we may assume that $s \in \mathbb{Z}$. The connection between $\PP{s}$ and $\PP{\mathscr{D}^\prime}$ follows from our definition of $\PP{s}$ for $s \geq 0$. 

Fix $s \in \mathbb{N}^\ast$. For $u \in \mathcal{K}^{s + 1}$, one has $u \in H^{s + 1}(M, \mathbb{C}^N)$ and $\PP{s} u = \PP{\mathscr{D}^\prime} u$ in $\mathscr{D}^\prime(M, \mathbb{C}^N)$, implying $\PP{s} u \in H^{s - 1}(M, \mathbb{C}^N)$ and
\[\left\Vert \PP{s} u \right\Vert_{H^{s - 1}} \lesssim \Vert u \Vert_{H^{s + 1}}.\]
Thus, we only have to check that the boundary conditions of the definition of the spaces $\mathcal{K}^{s}$ are such that the operators $\PP{s}$ are well-defined. For $s = 0$, the result is true.

Assume that $s$ is even and write $s = 2 \sigma$. By definition, $u \in \mathcal{K}^{2 \sigma + 1}$ gives
\[\PP{\mathscr{D}^\prime}^k u \in H_0^1(M, \mathbb{C}^N) \text{ for } k \in \llbracket 0, \sigma \rrbracket.\]
As $\PP{\mathscr{D}^\prime} u = \PP{s} u$ in $\mathscr{D}^\prime(M, \mathbb{C}^N)$, one has 
\[\PP{\mathscr{D}^\prime}^k \left( \PP{s} u \right) \in H_0^1(M, \mathbb{C}^N) \text{ for } k \in \llbracket 0, \sigma - 1 \rrbracket,\]
that is, $\PP{s} u \in \mathcal{K}^{2 \sigma - 1}$.

Assume that $s$ is odd and write $s = 2 \sigma + 1$. By definition, $u \in \mathcal{K}^{2 \sigma + 2}$ gives
\[\PP{\mathscr{D}^\prime}^k u \in H_0^1(M, \mathbb{C}^N) \text{ for } k \in \llbracket 0, \sigma \rrbracket.\]
If $\sigma = 0$, one has $\PP{s} u \in \mathcal{K}^{2 \sigma}$. If $\sigma \geq 1$, then as $\PP{\mathscr{D}^\prime} u = \PP{s} u$ in $\mathscr{D}^\prime(M, \mathbb{C}^N)$, one has 
\[\PP{\mathscr{D}^\prime}^k \left( \PP{s} u \right) \in H_0^1(M, \mathbb{C}^N) \text{ for } k \in \llbracket 0, \sigma - 1 \rrbracket,\]
that is, $\PP{s} u \in \mathcal{K}^{2 \sigma}$.

Finally, the adjoint of a continuous linear operator between Hilbert spaces is a well-defined continuous operator, so the result is true for $s \in \mathbb{Z}$, $s \leq -1$.
\end{proof}

\begin{rem}\label{rem_Delta_contre_ex}
\begin{enumerate}[label=(\roman*)]
\item The fact that $v = \PP{s}^r u$ implies $v = \PP{\mathscr{D}^\prime}^r u$ does not hold for $s < -1$, because $\mathcal{K}^{s}$ is not included in $\mathscr{D}^\prime(M, \mathbb{C}^N)$ if $s < -1$. 
\item The previous definitions are very natural, but note that some non-intuitive phenomena can occur when dealing with the operator $\PP{s}: \mathcal{K}^{s + 1} \rightarrow \mathcal{K}^{s - 1}$. To illustrate this, take $N = 1$, $\mathsf{P} = \Delta$ and $s = -1$. In that case, one has $\PP{s}^\ast = \PP{-s}$ for $s \in \mathbb{R}$. Recall that by definition, $H^{- 2}(M)$ is the dual of $H_0^2(M)$. The constant function $u = 1$ belongs to $L^2(M)$, and is sent to zero by the differential operator $\Delta: L^2(M) \rightarrow H^{- 2}(M)$. However, by definition, the operator $\PP{-1}: L^2(M) \rightarrow \mathcal{K}^{- 2}$ is the adjoint of the operator
\[\PP{1}: \mathcal{K}^2 = H^2(M) \cap H_0^1(M) \longrightarrow L^2(M),\]
implying
\[\left\langle \PP{- 1} u, v \right\rangle_{\mathcal{K}^{-2}, \mathcal{K}^2} = \left\langle 1, \overline{\PP{1} v} \right\rangle_{L^2(M)} = \left\langle 1, \Delta \overline{v} \right\rangle_{L^2(M)} = \left\langle 1, \partial_\nu \overline{v} \right\rangle_{L^2(\partial M)}, \quad v \in \mathcal{K}^2.\]
Hence, the function $u = 1$ is not sent to zero by the operator $\PP{- 1}: L^2(M) \rightarrow \mathcal{K}^{- 2}$.
\end{enumerate}
\end{rem}

In the following proposition, we gather the properties of the spaces $\mathcal{K}^{s}$ that are needed for what follows.

\begin{prop}\label{prop_main_properties_K^s}
\begin{enumerate}[label=(\roman*)]
\item \emph{Embeddings properties.} For $s \in \mathbb{R}$ and $\delta > 0$, the map
\[\iota_{\mathcal{K}^{s + \delta} \rightarrow \mathcal{K}^{s}}: \mathcal{K}^{s + \delta} \hooklongrightarrow \mathcal{K}^{s}\]
is a well-defined, compact embedding with a dense range. If $s + \delta < 0$, the embedding corresponds to a restriction operator. If $s + \delta \geq 0 > s$, then the embedding is defined by using $L^2(M, \mathbb{C}^N)$ as a pivot space. The operator $\mathsf{P}$ commutes with the embeddings: more precisely, for $r \in \mathbb{N}$, $s \in \mathbb{R}$ and $\delta > 0$, one has 
\begin{equation}\label{eq_lem_P_comm_with_embeddings}
\PP{s}^r \circ \iota_{\mathcal{K}^{s + r + \delta} \rightarrow \mathcal{K}^{s + r}} = \iota_{\mathcal{K}^{s - r + \delta} \rightarrow \mathcal{K}^{s - r}} \circ \PP{s + \delta}^r: \mathcal{K}^{s + r + \delta} \rightarrow \mathcal{K}^{s - r}.
\end{equation}

\item \emph{Elliptic estimate of $\mathsf{P}$.} Consider $s \in \mathbb{R}$, $r \in \mathbb{N}^\ast$, and $w \in \mathcal{K}^{s + r - 1}$. We already know that $\PP{s - 1}^r w \in \mathcal{K}^{s - r - 1}$. Assume that there exists $v \in \mathcal{K}^{s - r}$ such that $\PP{s - 1}^r w = \iota_{\mathcal{K}^{s - r} \rightarrow \mathcal{K}^{s - r - 1}}(v)$. Then there exists $u \in \mathcal{K}^{s + r}$ such that 
\[\iota_{\mathcal{K}^{s + r} \rightarrow \mathcal{K}^{s + r - 1}}(u) = w \quad \text{and} \quad \PP{s}^r u = v.\]
Moreover, there exists $C > 0$ such that
\begin{equation}\label{eq_prop_elliptic_estimate_P}
\Vert u \Vert_{\mathcal{K}^{s + r}} \leq C \left( \left\Vert \PP{s}^r u \right\Vert_{\mathcal{K}^{s - r}} + \left\Vert \iota_{\mathcal{K}^{s + r} \rightarrow \mathcal{K}^{s + r - 1}} u \right\Vert_{\mathcal{K}^{s + r - 1}} \right), \quad u \in \mathcal{K}^{s + r}.
\end{equation} 

\item \emph{The shift operator.} 
For $s \in \mathbb{R}$ and $r \in \mathbb{N}^\ast$, there exists a continuous isomorphism $\Shift{s}^r: \mathcal{K}^{s + r} \longrightarrow \mathcal{K}^{s - r}$ such that the following property holds: for $r, r^\prime \in \mathbb{N}$, $s \in \mathbb{R}$ and $\delta > 0$,
\begin{equation}\label{eq_prop_shifting_op_1}
\Shift{s}^{r + r^\prime} = \Shift{s - r^\prime}^{r} \circ \Shift{s + r}^{r^\prime}: \mathcal{K}^{s + r + r^\prime} \rightarrow \mathcal{K}^{s - r - r^\prime},
\end{equation}
\[\Shift{s - 1}^r \circ \PP{s + r} = \PP{s - r} \circ \Shift{s + 1}^r: \mathcal{K}^{s + r + 1} \rightarrow \mathcal{K}^{s - r - 1},\]
and
\begin{equation}\label{eq_prop_shifting_op_3}
\Shift{s}^r \circ \iota_{\mathcal{K}^{s + r + \delta} \rightarrow \mathcal{K}^{s + r}} = \iota_{\mathcal{K}^{s - r + \delta} \rightarrow \mathcal{K}^{s - r}} \circ \Shift{s + \delta}^r: \mathcal{K}^{s + r + \delta} \rightarrow \mathcal{K}^{s - r}.
\end{equation}
In addition, for $r \in \mathbb{N}$ and $s \in \mathbb{R}$, one has
\begin{equation}\label{eq_prop_shifting_op_4}
\left\Vert \left(\PP{s}^{r} - \Shift{s}^{r} \right) u \right\Vert_{\mathcal{K}^{s - r}} \leq C_{r, s} \left\Vert \iota_{\mathcal{K}^{s + r} \rightarrow \mathcal{K}^{s + r - 1}} u \right\Vert_{\mathcal{K}^{s + r - 1}}, \quad u \in \mathcal{K}^{s + r},
\end{equation}
for some $C_{r, s} > 0$. The operator $\Shift{s}^1$ will be defined by $\PP{s} + i\mu \iota_{\mathcal{K}^{s + 1} \rightarrow \mathcal{K}^{s - 1}}$, for $\mu \in \mathbb{R}$ chosen sufficiently large.
\end{enumerate}
\end{prop}

\begin{rem}
If we omit the embedding notation, then \emph{(ii)} can be written as
\[u \in \mathcal{K}^{s + r - 1} \text{ and } \PP{s - 1}^r u \in \mathcal{K}^{s - r} \quad \Longrightarrow \quad u \in \mathcal{K}^{s + r}.\]
Note that we cannot replace $\PP{s - 1}^r u \in \mathcal{K}^{s - r}$ by $\PP{\mathscr{D}^\prime}^r u \in \mathcal{K}^{s - r}$. With the same example as in Remark \ref{rem_Delta_contre_ex}, take $N = 1$, $\mathsf{P} = \Delta$, $s = 0$, $r = 1$, and let $u$ be the constant function $u = 1 \in \mathcal{K}^0 = L^2(M)$. One has $\PP{\mathscr{D}^\prime} u = 0$, implying $\PP{\mathscr{D}^\prime} u \in \mathcal{K}^{-1} = H^{-1}(M)$. However, $u$ does not belong to the space $\mathcal{K}^1 = H_0^1(M)$.
\end{rem}

\begin{rem}
By definition, for $s \in \mathbb{R}$ and $\delta > 0$, one has
\begin{equation}\label{eq_adjoint_iota}
\iota_{\mathcal{K}^{s + \delta} \rightarrow \mathcal{K}^{s}} = \left( \iota_{\mathcal{K}_\ast^{- s} \rightarrow \mathcal{K}_\ast^{- s - \delta}} \right)^\ast.
\end{equation}
\end{rem}

The proof of Proposition \ref{prop_main_properties_K^s} is given in appendix. The proof of our main result will use the following interpolation lemma.

\begin{lem}\label{lem_iterpolation}
For $\eta \in [0, 1]$, $s \in \mathbb{R}$, one has $\left[ \mathcal{K}^{s + 2}, \mathcal{K}^{s} \right]_{\eta} = \mathcal{K}^{s + 2 - 2 \eta}$, with equivalent norms, where $\left[ \mathcal{K}^{s + 2}, \mathcal{K}^{s} \right]_{\eta}$ denotes the complex interpolation space between $\mathcal{K}^{s + 2}$ and $\mathcal{K}^{s}$.
\end{lem}

\begin{proof}
First, note that the result is standard for $s \in [-2, 0]$, as $\mathcal{K}^{s} = D(\Delta_\mathrm{dir}^\frac{s}{2})^N$ for $s \in [-2, 2]$. Second, we prove Lemma \ref{lem_iterpolation} for $s > 0$, using the shift operator and the definition of complex interpolation spaces (see, for example, \cite{Bergh1976}). If $A_0$ and $A_1$ are subspaces of a Banach space $\mathcal{X}$, we write $\mathscr{F}_{A_0, A_1}$ for the set of continuous functions $f: \left\{ z \in \mathbb{C}, 0 \leq \Re z \leq 1 \right\} \rightarrow A_0 + A_1$ satisfying the following two properties: $f$ is analytic on the open strip $\left\{ z \in \mathbb{C}, 0 < \Re z < 1 \right\}$, and for $j = 0, 1$, the function $t \mapsto f(j + it)$ maps continuously $\mathbb{R}$ to $A_j$, and tends to zero as $\vert t \vert$ tends to infinity. To ease notation, we omit embeddings and subscripts of the shift operator, identifying $\mathcal{K}^{s}$ and $\iota_{\mathcal{K}^{s} \rightarrow \mathcal{K}^{-2}} \left( \mathcal{K}^{s} \right)$, for $s \geq -2$, and writing $\mathcal{S}^k = \mathcal{S}_{k - 2}^k: \mathcal{K}^{2k - 2} \rightarrow \mathcal{K}^{-2}$. Consider $s > 0$, $k \in \mathbb{N}$ such that $s - 2k \in [-2, 0]$, and $\eta \in [0, 1]$. By definition, one has
\[\left[ \mathcal{K}^{s + 2}, \mathcal{K}^{s} \right]_{\eta} = \left\{ u \in \mathcal{K}^{s + 2} + \mathcal{K}^{s}, u = f(\eta) \text{ for some } f \in \mathscr{F}_{\mathcal{K}^{s + 2}, \mathcal{K}^{s}}\right\}.\]

As $\left\{ \mathcal{S}^k \circ f, f \in \mathscr{F}_{\mathcal{K}^{s + 2}, \mathcal{K}^{s}} \right\} = \mathscr{F}_{\mathcal{K}^{s + 2 - 2k}, \mathcal{K}^{s - 2k}}$, one has 
\begin{align*}
u \in \left[ \mathcal{K}^{s + 2}, \mathcal{K}^{s} \right]_{\eta} & \Longleftrightarrow \mathcal{S}^k u = v \text{ for some } v \in \left[\mathcal{K}^{s + 2 - 2k}, \mathcal{K}^{s - 2k} \right]_{\eta}, \\
& \Longleftrightarrow u = \left( \mathcal{S}^k \right)^{-1} v \text{ for some } v \in \mathcal{K}^{s + 2 - 2k - 2 \eta}, \\
& \Longleftrightarrow u \in \mathcal{K}^{s + 2 - 2 \eta},
\end{align*}
by the case $s \in [-2, 0]$, and Proposition \ref{prop_main_properties_K^s}. Third, for $s < -2$, using Corollary 4.5.2 and Theorem 4.2.1 of \cite{Bergh1976}, one obtains $\left( \left[ \mathcal{K}^{s + 2}, \mathcal{K}^{s} \right]_{\eta} \right)^\prime = \left[\mathcal{K}_{\ast}^{- s}, \mathcal{K}_{\ast}^{- s - 2} \right]_{1 - \eta} = \mathcal{K}_{\ast}^{- s - 2 + 2 \eta}$, as $\mathsf{P}$ and $\mathsf{P}^\ast$ are of the same form. This completes the proof.
\end{proof}

\subsection{Solutions of the wave equations}

Most of the ideas used here can be found in \cite{Las-Lio-Tri}. For wave equations with Dirichlet boundary condition, one has the following theorem.

\begin{thm}\label{thm_LLT_dir}
Consider $s \in \mathbb{R}$ and $\left( u^0, u^1 \right) \in \mathcal{K}^{s + 1} \times \mathcal{K}^{s}$. There exists a unique 
\[u \in \mathscr{C}^0(\mathbb{R}, \mathcal{K}^{s + 1}) \cap \mathscr{C}^1(\mathbb{R}, \mathcal{K}^{s}) \cap \mathscr{C}^2(\mathbb{R}, \mathcal{K}^{s - 1})\]
such that $\left( u(0), \partial_t u(0) \right) = \left( u^0, u^1 \right)$ and $\partial_t^2 u(t) = \PP{s} u(t)$ for all $t \in \mathbb{R}$. We will say that $u$ is the solution of the wave equation 
\[\left \{
\begin{array}{rcccl}
\partial_t^2 u - \mathsf{P} u & = & 0 & \quad & \text{in } \mathbb{R} \times M, \\
\left( u(0, \cdot), \partial_t u(0, \cdot) \right) & = & \left( u^0, u^1 \right) & \quad & \text{in } M, \\
u & = & 0 & \quad & \text{on } \mathbb{R} \times \partial M.
\end{array}
\right.\]
The following additional results hold.
\begin{enumerate}[label=(\roman*)]
\item One has
\[u \in \bigcap_{k \in \mathbb{N}} \mathscr{C}^k(\mathbb{R}, \mathcal{K}^{s + 1 - k}),\]
and $\partial_t^{2k} u(t) = \PP{s + 1 - k}^{k} u(t) \in \mathcal{K}^{s + 1 - 2 k}$ for $k \in \mathbb{N}$, and $t \in \mathbb{R}$. For all $k \in \mathbb{N}$ and $T > 0$, there exists $C > 0$ such that
\[\left\Vert \partial_t^k u \right\Vert_{L^\infty((0, T), \mathcal{K}^{s + 1 - k})} \leq C \left\Vert \left( u^0, u^1 \right) \right\Vert_{\mathcal{K}^{s + 1} \times \mathcal{K}^{s}}, \quad \left( u^0, u^1 \right) \in \mathcal{K}^{s + 1} \times \mathcal{K}^{s}.\]
In particular, if $s \geq -2$, then $u \in H^{s + 1}((0, T) \times M, \mathbb{C}^N)$ for all $T > 0$, with the corresponding inequality.

\item For $\delta > 0$, if $\tilde{u}$ is the solution with initial data
\[\left( \iota_{\mathcal{K}^{s + 1} \rightarrow \mathcal{K}^{s + 1 - \delta}} u^0, \iota_{\mathcal{K}^{s} \rightarrow \mathcal{K}^{s - \delta}} u^1 \right),\]
then for $t \in \mathbb{R}$, one has $\iota_{\mathcal{K}^{s + 1} \rightarrow \mathcal{K}^{s + 1 - \delta}} u(t) = \tilde{u}(t)$. In particular, a solution can be approximated by solutions with higher regularity.

\item Consider $T > 0$. A normal derivative $\partial_\nu u$ at the boundary, that lies in $H^s((0, T) \times \partial M, \mathbb{C}^N)$, can be defined extending the usual normal derivative if $u$ is sufficiently smooth. For $\delta > 0$, one has
\[\partial_\nu \left( \iota_{\mathcal{K}^{s + 1} \rightarrow \mathcal{K}^{s + 1 - \delta}} u \right) = \iota_{H^s \rightarrow H^{s - \delta}} \partial_\nu u,\]
where $\iota_{H^s \rightarrow H^{s - \delta}}$ denotes the embedding from $H^s((0, T) \times \partial M, \mathbb{C}^N)$ into $H^{s - \delta}((0, T) \times \partial M, \mathbb{C}^N)$. There exists $C > 0$ such that
\[\left\Vert \partial_\nu u \right\Vert_{H^s((0, T) \times \partial M, \mathbb{C}^N)} \leq C \left\Vert \left( u^0, u^1 \right) \right\Vert_{\mathcal{K}^{s + 1} \times \mathcal{K}^{s}}, \quad \left( u^0, u^1 \right) \in \mathcal{K}^{s + 1} \times \mathcal{K}^{s}.\]
For $k \in \mathbb{N}$, $\partial_t^{2k} u$ is the solution associated with $\left( \PP{s + 1 - k}^{k} u^0, \PP{s - k}^{k} u^1 \right) \in \mathcal{K}^{s + 1 - 2 k} \times \mathcal{K}^{s - 2 k}$, and one has
\[\partial_\nu \partial_t^{2k} u = \partial_t^{2k} \partial_\nu u \in H^{s - 2 k}((0, T) \times \partial M, \mathbb{C}^N).\]

\item Assume that $s \geq 0$. For $F \in L^1((0, T), H_0^s(M, \mathbb{C}^N))$, we define the solution of 
\[\left \{
\begin{array}{rcccl}
\partial_t^2 u - \mathsf{P} u & = & F & \quad & \text{in } (0, T) \times M, \\
\left( u(0, \cdot), \partial_t u(0, \cdot) \right) & = & 0 & \quad & \text{in } M, \\
u & = & 0 & \quad & \text{on } (0, T) \times \partial M,
\end{array}
\right.\]
using the Duhamel formula. One has $u \in \mathscr{C}^0((0, T), \mathcal{K}^{s + 1}) \cap \mathscr{C}^1((0, T), \mathcal{K}^{s})$, $\partial_\nu u \in H^s((0, T) \times \partial M, \mathbb{C}^N)$, and there exists $C > 0$ such that
\[\left\Vert \left( u, \partial_t u \right) \right\Vert_{L^\infty((0, T), \mathcal{K}^{s + 1} \times \mathcal{K}^{s})} + \left\Vert \partial_\nu u \right\Vert_{H^s((0, T) \times \partial M, \mathbb{C}^N)} \leq C \left\Vert F \right\Vert_{L^1((0, T), H^s)},\]
for all $F \in L^1((0, T), H_0^s(M, \mathbb{C}^N))$. If in addition $F \in \mathscr{C}^0((0, T), H^{s - 1}(M, \mathbb{C}^N))$, then $u \in \mathscr{C}^2((0, T), \mathcal{K}^{s - 1})$, with $\partial_t^2 u = \PP{s}u + F$ and
\[\left\Vert \partial_t^2 u \right\Vert_{L^\infty((0, T), \mathcal{K}^{s - 1})} \leq C \left( \left\Vert
F \right\Vert_{L^1((0, T), H_0^s(M, \mathbb{C}^N))} + \left\Vert F \right\Vert_{L^\infty((0, T), H^{s - 1}(M, \mathbb{C}^N))} \right),\]
for some $C > 0$ independent of $F$.
\end{enumerate}
\end{thm}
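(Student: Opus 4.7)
The plan is to bootstrap from the classical $s = 0$ theory and extend in both directions using the shift operator and duality provided by Proposition \ref{prop_main_properties_K^s}. At the base level $s = 0$, I would reformulate the equation as a first-order system $\partial_t U = \mathcal{A} U$ with $U = (u, \partial_t u)$ and $\mathcal{A} U = (\partial_t u, \PP{0} u)$, posed on the Hilbert space $\mathcal{K}^1 \times \mathcal{K}^0 = H_0^1(M, \mathbb{C}^N) \times L^2(M, \mathbb{C}^N)$. Decomposing $\mathsf{P} = \Delta - X - q$, the Dirichlet Laplacian generates a unitary group on $\mathcal{K}^1 \times \mathcal{K}^0$ for the natural energy inner product, while $X + q : \mathcal{K}^1 \to \mathcal{K}^0$ is a bounded perturbation; a Hille--Yosida or Duhamel fixed-point argument then yields a $C_0$-group $(S(t))_{t \in \mathbb{R}}$. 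This handles existence, uniqueness, and the energy estimate of (i) at $s = 0$.

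To climb the Sobolev ladder for integer $s > 0$, I would transport the data through an appropriate iteration of the shift operator $\Shift{s}^{r}$ so as to land in $\mathcal{K}^1 \times \mathcal{K}^0$, solve there, and then use the elliptic-regularity statement (ii) of Proposition \ref{prop_main_properties_K^s} combined with the inverse shift to recover a solution in $\mathcal{K}^{s+1}$. The key point is that $\Shift{s}^r$ commutes with $\mathsf{P}$ up to a lower-order remainder via (\ref{eq_prop_shifting_op_2}) and (\ref{eq_prop_shifting_op_4}), which can be absorbed by Grönwall. Interpolation then covers all $s \geq 0$. For $s < 0$, I would define the solution by transposition against the adjoint problem $\partial_t^2 \varphi = \mathsf{P}^\ast \varphi$ run backward in time, so that the pairing
\[\langle (u(t), \partial_t u(t)), (\varphi^0, \varphi^1) \rangle_{\mathcal{K}^{s+1} \times \mathcal{K}^s, \mathcal{K}_\ast^{-s-1} \times \mathcal{K}_\ast^{-s}}\]
matches the integration-by-parts formula available on smooth data; compatibility with the embeddings, asserted in (ii), then follows from uniqueness combined with (\ref{eq_prop_shifting_op_3}).

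For (iii), I would first establish the hidden-regularity bound $\Vert \partial_\nu u \Vert_{L^2((0,T) \times \partial M)} \lesssim \Vert (u^0, u^1) \Vert_{\mathcal{K}^1 \times \mathcal{K}^0}$ at $s = 0$ by the multiplier method of Lasiecka--Lions--Triggiani, with a smooth vector field transverse to $\partial M$ agreeing with $\nu$ there. Higher integer $s$ follows from commuting with $\partial_t^{2k}$ --- since $\partial_t^{2k} u$ is itself a solution with data $(\PP{s+1-k}^k u^0, \PP{s-k}^k u^1)$ --- together with tangential regularity at $\partial M$. Interpolation covers fractional $s \geq 0$, and $s < 0$ is obtained by transposition against the inhomogeneous Dirichlet problem. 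Part (iv) then follows from the Duhamel formula $u(t) = \int_0^t S(t-\tau)(0, F(\tau)) \, \mathrm{d}\tau$, once one checks that $F \in L^1((0,T), H_0^s(M, \mathbb{C}^N))$ keeps the forcing in the domain so that the integral converges in $\mathcal{K}^{s+1}$.

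The main obstacle is the mutually dependent pair formed by the homogeneous-problem solution at negative $s$ and the normal-derivative trace at negative $s$: each is defined by transposition against the other, so the construction must be organized so that the two viewpoints coincide on smooth data, then pushed to low regularity by density and the embedding property (ii). A secondary subtlety is that the normal-derivative bound at negative $s$ cannot be obtained directly by multipliers and must be extracted from the inhomogeneous problem, whose solvability itself relies on the hidden-regularity bound at dual regularity; carefully unwinding this circularity is where most of the technical work of the appendix is likely to lie.
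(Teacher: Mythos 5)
For $s \geq 0$ and for part \emph{(iv)} your plan matches the paper: the base case is the first-order reformulation on $\mathcal{K}^1 \times \mathcal{K}^0$ treated by Hille--Yosida, higher integer regularity for \emph{(i)} comes from differentiating in time, the $L^2$ trace bound comes from the multiplier method, higher trace regularity from commuting with tangential fields and $\partial_t$, and the source term from Duhamel. Two remarks on details. First, for positive integer $s$ the paper does not shift down to $\mathcal{K}^1 \times \mathcal{K}^0$ and back; it reads the regularity directly off the iterated domains $D(A^k) = \mathcal{K}^{k+1} \times \mathcal{K}^k$ (Lemma \ref{lem_iterated_domains}, which is where Proposition \ref{prop_main_properties_K^s}-\emph{(ii)} enters). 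Your shift-down-and-up variant also works, but note that no Gr\"onwall absorption is needed: by (\ref{eq_prop_shifting_op_2}) the shift operator commutes with $\mathsf{P}$ \emph{exactly}, not merely up to lower order, which is what makes Corollary \ref{cor_LLT_dir_shift} an identity rather than an estimate. Second, for $s < 0$ the paper does not use transposition at all: it writes $(u^0,u^1) = (\Shift{s+1+\sigma}^{\sigma}\tilde{u}^0, \Shift{s+\sigma}^{\sigma}\tilde{u}^1)$ with $(\tilde{u}^0,\tilde{u}^1) \in \mathcal{K}^{\alpha+1}\times\mathcal{K}^{\alpha}$, $\alpha\in\{1,2\}$, and sets $u = \Shift{s+1+\sigma}^{\sigma}\tilde{u}$. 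This buys uniqueness and the compatibility with embeddings for free (conjugate back and invoke uniqueness at nonnegative regularity), whereas with a transposition definition you still owe a separate argument that any function in $\mathscr{C}^0(\mathcal{K}^{s+1})\cap\mathscr{C}^1(\mathcal{K}^{s})\cap\mathscr{C}^2(\mathcal{K}^{s-1})$ solving $\partial_t^2 u = \PP{s}u$ equals the transposition solution.

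The genuine gap is in your treatment of $\partial_\nu u$ for $s < 0$. You propose to obtain it "by transposition against the inhomogeneous Dirichlet problem," and you correctly identify that this is circular --- Theorem \ref{thm_LLT_inhomogeneous} at regularity $s$ is itself \emph{defined} through the normal derivative of the homogeneous problem at regularity $-s$, and the duality identity (\ref{eq_thm_LLT_inhomogeneous_dual_eq}) for data in $\mathcal{K}^{-s+1}\times\mathcal{K}^{-s}$ with $s>0$ is established in the appendix precisely by approximating with smooth data and passing to the limit in the negative-regularity trace norm. But saying the circle is resolved "by density and the embedding property" is not a resolution: to extend the trace from smooth data by density you need the a priori bound $\Vert \partial_\nu u \Vert_{H^{s}} \lesssim \Vert (u^0,u^1)\Vert_{\mathcal{K}^{s+1}\times\mathcal{K}^{s}}$ with $s<0$ on \emph{both} sides, and the multiplier method only gives the $L^2$--versus--$\mathcal{K}^1\times\mathcal{K}^0$ bound. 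The device that actually breaks the circle in the paper is again the shift operator: expanding $\Shift{}^{\sigma} = (\PP{} + i\mu\,\iota)^{\sigma}$ and using $\PP{}^k\tilde{u} = \partial_t^{2k}\tilde{u}$, one gets the explicit formula
\[\partial_\nu u = \sum_{k = 0}^{\sigma} \binom{\sigma}{k} (i\mu)^{\sigma - k}\, \iota_{H^{\alpha - 2k} \rightarrow H^{s}}\bigl(\partial_t^{2k}\partial_\nu \tilde{u}\bigr),\]
which is manifestly bounded in $H^{s}$ by $\Vert \partial_\nu\tilde{u}\Vert_{H^{\alpha}} \lesssim \Vert(\tilde{u}^0,\tilde{u}^1)\Vert_{\mathcal{K}^{\alpha+1}\times\mathcal{K}^{\alpha}} \simeq \Vert(u^0,u^1)\Vert_{\mathcal{K}^{s+1}\times\mathcal{K}^{s}}$, with no reference to the inhomogeneous problem. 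Without this (or an equivalent non-circular definition), your construction of the trace at negative regularity, and hence the consistency statements in \emph{(iii)}, does not go through.
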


Using Theorem \ref{thm_LLT_dir}-\emph{(i)}, Theorem \ref{thm_LLT_dir}-\emph{(ii)} and Proposition \ref{prop_main_properties_K^s}-\emph{(iii)}, one obtains the following corollary.

\begin{cor}\label{cor_LLT_dir_shift}
Consider $s \in \mathbb{R}$, $r \in \mathbb{N}$, $\left( u^0, u^1 \right) \in \mathcal{K}^{s + 1} \times \mathcal{K}^{s}$, and denote by $u$ the solution with initial data $\left( u^0, u^1 \right)$, given by Theorem \ref{thm_LLT_dir}. Then, $w = \Shift{s - r + 1}^{r} u$ is the solution of
\[\left \{
\begin{array}{rcccl}
\partial_t^2 w - \mathsf{P} w & = & 0 & \quad & \text{in } (0, T) \times M, \\
\left( w(0, \cdot), \partial_t w(0, \cdot) \right) & = & \left( w^0, w^1 \right) & \quad & \text{in } M, \\
w & = & 0 & \quad & \text{on } (0, T) \times \partial M,
\end{array}
\right.\]
where $\left( w^0, w^1 \right) = \left( \Shift{s - r + 1}^{r} u^0, \Shift{s - r}^{r} u^1 \right) \in \mathcal{K}^{s - 2r + 1} \times \mathcal{K}^{s - 2r}$.
\end{cor}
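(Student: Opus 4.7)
The plan is to set $w(t) = \Shift{s-r+1}^{r} u(t)$ and directly verify that $w$ satisfies all the properties characterizing the solution produced by Theorem \ref{thm_LLT_dir} with regularity index $s - 2r$ and initial data $(w^0, w^1)$. The uniqueness part of Theorem \ref{thm_LLT_dir} will then conclude the argument.

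The core of the verification is the following claim, which I would prove by induction on $k \in \mathbb{N}$: $w \in \mathscr{C}^k(\mathbb{R}, \mathcal{K}^{s - 2r + 1 - k})$ with
\begin{equation*}
\partial_t^k w = \Shift{s - r + 1 - k}^{r} \partial_t^k u,
\end{equation*}
the two sides being identified modulo the canonical embeddings between the $\mathcal{K}^\bullet$-spaces. The base case $k = 0$ is the definition. For the inductive step I would invoke Proposition \ref{prop_main_properties_K^s}-\emph{(iii)}, and in particular the identity (\ref{eq_prop_shifting_op_3}) applied with $s$ replaced by $s - r - k$ and $\delta = 1$, which reads
\begin{equation*}
\iota_{\mathcal{K}^{s - 2r + 1 - k} \rightarrow \mathcal{K}^{s - 2r - k}} \circ \Shift{s - r + 1 - k}^{r} = \Shift{s - r - k}^{r} \circ \iota_{\mathcal{K}^{s + 1 - k} \rightarrow \mathcal{K}^{s - k}}.
\end{equation*}
Combined with $\partial_t^k u \in \mathscr{C}^1(\mathbb{R}, \mathcal{K}^{s - k})$, coming from Theorem \ref{thm_LLT_dir}-\emph{(i)}, and the boundedness of $\Shift{s - r - k}^{r}$, this lets me differentiate $t \mapsto \partial_t^k w(t)$ one more time in $\mathcal{K}^{s - 2r - k}$, with derivative $\Shift{s - r - k}^{r} \partial_t^{k+1} u$, which is the required formula at level $k + 1$.

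Evaluating the $k = 0$ and $k = 1$ cases at $t = 0$ immediately gives $w(0) = \Shift{s - r + 1}^{r} u^0 = w^0$ and $\partial_t w(0) = \Shift{s - r}^{r} u^1 = w^1$. For the wave equation at level $s - 2r$, I would combine $\partial_t^2 u = \PP{s} u$ (again from Theorem \ref{thm_LLT_dir}-\emph{(i)}) with the intertwining (\ref{eq_prop_shifting_op_2}) in the form $\Shift{s - r - 1}^{r} \circ \PP{s} = \PP{s - 2r} \circ \Shift{s - r + 1}^{r}$, and with the case $k = 2$ of the formula above, to get $\partial_t^2 w = \Shift{s - r - 1}^{r} \PP{s} u = \PP{s - 2r} w$.

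The only real subtlety is in the inductive step: because $\Shift{s - r + 1}^{r}$ is only defined on $\mathcal{K}^{s + 1}$ whereas $\partial_t u$ lies in the strictly larger space $\mathcal{K}^{s}$, one cannot naively commute $\Shift{s - r + 1}^{r}$ with $\partial_t$. The embedding-compatibility (\ref{eq_prop_shifting_op_3}) is precisely what bridges this gap by identifying shift operators living on different rungs of the scale $\mathcal{K}^\bullet$. Once this point is dispatched, everything else is an algebraic manipulation from Proposition \ref{prop_main_properties_K^s}-\emph{(iii)} followed by an appeal to the uniqueness statement in Theorem \ref{thm_LLT_dir}.
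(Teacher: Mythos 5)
Your proposal is correct and follows essentially the same route as the paper: the corollary is stated there as a direct consequence of Theorem \ref{thm_LLT_dir}-\emph{(i)} and Proposition \ref{prop_main_properties_K^s}-\emph{(iii)}, and the verification you carry out (differentiating $t \mapsto \Shift{s-r+1}^{r} u(t)$ via the embedding-compatibility (\ref{eq_prop_shifting_op_3}), identifying $\partial_t^2 w = \PP{s-2r} w$ via (\ref{eq_prop_shifting_op_2}), then invoking uniqueness) is exactly the computation the paper performs in Step 2 of the appendix proof of Theorem \ref{thm_LLT_dir} when constructing solutions for $s<0$. Your identification of the need for (\ref{eq_prop_shifting_op_3}) to commute the shift operator past the difference quotients is precisely the right subtlety to flag.
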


For wave equations with inhomogeneous boundary condition, one has the following theorem. 

\begin{thm}\label{thm_LLT_inhomogeneous}
Consider $T > 0$, $\Theta \in \mathscr{C}^\infty_\mathrm{c}((0, T) \times \partial M, \mathbb{C}^N)$, $s \in \mathbb{R}$ and $f \in H^s((0, T) \times \partial M, \mathbb{C}^N)$. If $s \leq 0$, we define the solution of the wave equation 
\begin{equation}\label{eq_thm_LLT_inhomogeneous}
\left \{
\begin{array}{rcccl}
\partial_t^2 v - \mathsf{P}^\ast v & = & 0 & \quad & \text{in } \mathbb{R} \times M, \\
\left( v(0, \cdot), \partial_t v(0, \cdot) \right) & = & 0 & \quad & \text{in } M, \\
v & = & \diag(\Theta) f & \quad & \text{on } \mathbb{R} \times \partial M.
\end{array}
\right.
\end{equation}
by duality with Theorem \ref{thm_LLT_dir}-(iv): $v$ is the unique element of $L^\infty((0, T), H^s(M, \mathbb{C}^N))$ such that 
\[\left\langle v, F \right\rangle_{L^\infty(H^s), L^1(H_0^{- s})} = - \left\langle f, \diag(\Theta) \partial_\nu u \right\rangle_{H^s, H_0^{- s}},\]
for all $F \in L^1((0, T), H_0^{- s}(M, \mathbb{C}^N))$, where $u$ is the solution associated with $F$ defined in Theorem \ref{thm_LLT_dir}-(iv). If $s > 0$, we define the solution of the previous wave equation as in the case $s = 0$. In any case, one has
\[v \in \mathscr{C}^0((0, T), H^s(M, \mathbb{C}^N)) \cap \mathscr{C}^1((0, T), H^{s - 1}(M, \mathbb{C}^N)) \cap \mathscr{C}^2((0, T), H^{s - 2}(M, \mathbb{C}^N)),\]
$\partial_t^2 v = \PP{\mathscr{D}^\prime}^\ast v$ in $\mathscr{D}^\prime((0, T) \times M, \mathbb{C}^N)$, and there exists $C > 0$ such that
\[\sum_{j = 0}^2 \left\Vert \partial_t^j v \right\Vert_{L^\infty([0, T], H^{s - j})} \leq C \left\Vert f \right\Vert_{H^s((0, T) \times \partial M, \mathbb{C}^N)}, \quad f \in H^s((0, T) \times \partial M, \mathbb{C}^N).\]
If $s \geq 1$, then $v(t)_{\vert \partial M} = \left( \diag(\Theta) f \right)_{\vert \{t\} \times \partial M}$ in $H^{s - \frac{1}{2}}(\partial M, \mathbb{C}^N)$, in the sense of classical Sobolev trace operators. In addition, as $\Theta$ is compactly supported in $(0, T) \times \partial M$, one has $\left(v(T), \partial_t v(T)\right) \in \mathcal{K}_\ast^{s} \times \mathcal{K}_\ast^{s - 1}$, with the following duality equality: for $\left( u^0, u^1 \right) \in \mathcal{K}^{- s + 1} \times \mathcal{K}^{- s}$, if $u$ is the solution of
\[\left \{
\begin{array}{rcccl}
\partial_t^2 u - \mathsf{P} u & = & 0 & \quad & \text{in } (0, T) \times M, \\
\left( u(T, \cdot), \partial_t u(T, \cdot) \right) & = & \left( u^0, u^1 \right) & \quad & \text{in } M, \\
u & = & 0 & \quad & \text{on } (0, T) \times \partial M,
\end{array}
\right.\]
then
\begin{equation}\label{eq_thm_LLT_inhomogeneous_dual_eq}
\left\langle u^1, v(T) \right\rangle_{\mathcal{K}^{- s + 1}, \mathcal{K}_\ast^{s - 1}} - \left\langle u^0, \partial_t v(T) \right\rangle_{\mathcal{K}^{- s}, \mathcal{K}_\ast^{s}} =
\left\{ 
\begin{array}{cc}
\left\langle \partial_\nu u, \diag(\Theta) f \right\rangle_{H^{- s}, H_0^s} & \text{ if } s \geq 0 \\
\left\langle f, \diag(\Theta) \partial_\nu u \right\rangle_{H^s, H_0^{- s}} & \text{ if } s < 0
\end{array}
\right. .
\end{equation}
\end{thm}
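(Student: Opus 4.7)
I follow the classical transposition (Lions) method. For $s\leq 0$, the hidden regularity estimate in Theorem \ref{thm_LLT_dir}-\emph{(iv)} applied to $\mathsf{P}$ gives a bound of the form $\|\partial_\nu u\|_{H^{-s}((0,T)\times\partial M)}\leq C\|F\|_{L^1((0,T), H_0^{-s})}$; since $\Theta$ is compactly supported in $(0,T)\times\partial M$, the product $\Theta\partial_\nu u$ lies in $H_0^{-s}((0,T)\times\partial M,\mathbb{C}^N)$ with the same bound. Hence $F\mapsto -\langle f,\Theta\partial_\nu u\rangle_{H^s,H_0^{-s}}$ is a continuous antilinear form on $L^1((0,T), H_0^{-s}(M,\mathbb{C}^N))$, and the Bochner duality $(L^1((0,T), H_0^{-s}))' = L^\infty((0,T), H^s)$ (valid because $H_0^{-s}$ is a separable, reflexive Hilbert space) produces a unique $v\in L^\infty((0,T), H^s)$ with $\|v\|_{L^\infty(H^s)}\leq C\|f\|_{H^s}$. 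The PDE $\partial_t^2 v=\mathsf{P}^\ast v$ in $\mathscr{D}'((0,T)\times M, \mathbb{C}^N)$ falls out by testing against $\phi\in\mathscr{D}((0,T)\times M,\mathbb{C}^N)$: with $F := \partial_t^2\phi-\mathsf{P}\phi$, uniqueness in Theorem \ref{thm_LLT_dir}-\emph{(iv)} gives the associated solution $u=\phi$, while $\partial_\nu\phi\equiv 0$ by the support of $\phi$, so the transposition relation collapses to $\langle v,\partial_t^2\phi-\mathsf{P}\phi\rangle = 0$.

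For the time regularity --- and for the case $s>0$, where the definition mirrors $s=0$ but the conclusion asks for strictly more smoothness --- I argue by density. Choose $f_n\in\mathscr{D}((0,T)\times\partial M,\mathbb{C}^N)$ with $f_n\to f$ in $H^s$. For smooth $f_n$, construct $v_n$ by lifting: pick $w_n\in\mathscr{C}^\infty([0,T]\times M,\mathbb{C}^N)$ with $w_{n \vert \partial M}=\Theta f_n$ and $w_n\equiv 0$ near $t=0$, and set $v_n = w_n+z_n$, where $z_n$ solves the inhomogeneous wave equation $\partial_t^2 z_n-\mathsf{P}^\ast z_n = -\partial_t^2 w_n + \mathsf{P}^\ast w_n$ with zero Cauchy and Dirichlet data, obtained from Theorem \ref{thm_LLT_dir}-\emph{(iv)} applied to $\mathsf{P}^\ast$ (which has the same form as $\mathsf{P}$ by Remark \ref{rem_P_P_ast_same_form}). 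Uniqueness identifies $v_n$ with the transposition solution for $f_n$, and each $v_n$ lies in $\mathscr{C}^0([0,T],H^s)\cap\mathscr{C}^1([0,T],H^{s-1})\cap\mathscr{C}^2([0,T],H^{s-2})$ with bounds in terms of $\|f_n\|_{H^s}$. Uniform convergence through the corresponding linear estimates transfers these continuities to $v$. The trace identity $v(t)_{\vert \partial M}=(\Theta f)_{\vert \{t\}\times\partial M}$ for $s\geq 1$ then passes to the limit via the continuous trace $H^s(M)\to H^{s-1/2}(\partial M)$.

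To place $(v(T),\partial_t v(T))\in\mathcal{K}_\ast^s\times\mathcal{K}_\ast^{s-1}$, I exploit that $\Theta$ is compactly supported in $(0,T)\times\partial M$: there exists $t_0<T$ with $\Theta\equiv 0$ on $[t_0,T]\times\partial M$, so $v$ satisfies a homogeneous Dirichlet wave problem on $(t_0,T)\times M$. Iterating $\partial_t^2 v=\mathsf{P}^\ast v$ yields $\partial_t^{2k}v = (\mathsf{P}^\ast)^k v$ for $k\in\mathbb{N}$, and the Dirichlet condition places each of these in $H_0^1$ as soon as the Sobolev regularity permits, which is exactly the compatibility requirement defining $\mathcal{K}_\ast^s$ at $t=T$ (and similarly $\mathcal{K}_\ast^{s-1}$ for $\partial_t v(T)$). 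The duality identity \eqref{eq_thm_LLT_inhomogeneous_dual_eq} is finally obtained in the smooth case by integrating the identity $(\partial_t^2 v-\mathsf{P}^\ast v)\bar u - v\,\overline{(\partial_t^2 u-\mathsf{P} u)} = 0$ over $(0,T)\times M$: integration by parts in time (using $v(0)=\partial_t v(0)=0$) produces the $(v(T),\partial_t v(T))$ pairings, and Lemma \ref{lem_adjoint_P} together with $u_{\vert \partial M}=0$ reduces the spatial boundary contribution to $\langle\partial_\nu u,\Theta f\rangle$. A density argument then extends the identity to all admissible data.

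The main obstacle is orchestrating these estimates coherently across the three time-regularity levels when the data has only Sobolev regularity, and verifying that the transposition and direct-lift constructions coincide so that both the abstract duality and the classical trace statements hold for every $s\in\mathbb{R}$. In particular, one needs the linear map $f\mapsto v$ to be bounded simultaneously into $\mathscr{C}^0([0,T],H^s)$, $\mathscr{C}^1([0,T],H^{s-1})$, and $\mathscr{C}^2([0,T],H^{s-2})$ --- which, via the lift, follows from the full strength of Theorem \ref{thm_LLT_dir}-\emph{(iv)}, but demands careful identification of the source and target spaces when crossing the threshold $s=0$.
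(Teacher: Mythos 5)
Your overall architecture (transposition to define $v$, a lift $v=w+\tilde f$ to identify the solution for smooth data, then density) matches the paper's, and your derivation of $\partial_t^2 v = \PP{\mathscr{D}^\prime}^\ast v$ by testing against interior test functions is a clean shortcut. But there is a genuine gap at the central quantitative step. You claim that the lift gives each $v_n$ the bound $\sum_{j=0}^2\Vert\partial_t^j v_n\Vert_{L^\infty(H^{s-j})}\lesssim\Vert f_n\Vert_{H^s}$ ``from the full strength of Theorem \ref{thm_LLT_dir}-\emph{(iv)}''. It does not: Theorem \ref{thm_LLT_dir}-\emph{(iv)} controls $z_n$ by $\Vert(\partial_t^2-\mathsf{P}^\ast)w_n\Vert_{L^1(H^\sigma)}$, and no extension $w_n$ of the boundary data $\Theta f_n$ has this quantity controlled by $\Vert f_n\Vert_{H^s((0,T)\times\partial M)}$ --- an $H^s$ boundary trace lifts to at best an $H^{s+1/2}$ interior function, and applying the wave operator loses two more derivatives. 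This loss is exactly what transposition is designed to circumvent. The paper obtains the estimates by entirely different means: the bounds on $\partial_t v$ and $\partial_t^2 v$ come from a duality computation showing that $\partial_t v$ \emph{is} the transposition solution with boundary data $\partial_t(\Theta f)\in H^{s-1}$ (using that the test solution with source $\partial_t\phi$ equals $\partial_t u$, together with the hidden-regularity estimate $\Vert\Theta\partial_\nu\partial_t u\Vert_{H^{-s}}\lesssim\Vert\phi\Vert_{L^1(H_0^{-s+1})}$); and for $s\geq 1$ the spatial regularity $v(t)\in H^s(M,\mathbb{C}^N)$ is recovered from $\PP{\mathscr{D}^\prime}^\ast v(t)=\partial_t^2 v(t)$ and the known Dirichlet trace via the elliptic boundary-value estimate of Lemma \ref{lem_reg_ell_inhomogeneous}. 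Neither ingredient appears in your argument, and without them the density passage has no uniform estimates to converge along.

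A second gap: for $s<0$ your argument for $\left(v(T),\partial_t v(T)\right)\in\mathcal{K}_\ast^{s}\times\mathcal{K}_\ast^{s-1}$ (iterating the equation and checking $H_0^1$ compatibility conditions) does not apply, because $\mathcal{K}_\ast^{s}$ is then \emph{defined} as the dual of $\mathcal{K}^{-s}$ and is not a space of distributions cut out by boundary conditions; moreover $H^{s}(M,\mathbb{C}^N)=\left(H_0^{-s}(M,\mathbb{C}^N)\right)^\prime$ does not map canonically into $\left(\mathcal{K}^{-s}\right)^\prime$, since $H_0^{-s}(M,\mathbb{C}^N)$ is not dense in $\mathcal{K}^{-s}$. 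One must \emph{construct} extensions of the linear forms $v(T)$ and $\partial_t v(T)$ to $\mathcal{K}^{-s}$ and $\mathcal{K}^{-s+1}$, which the paper does by first proving the duality identity for $(u^0,u^1)\in H_0^{-s+1}\times H_0^{-s}$ and then observing that its right-hand side, $\left\langle f,\Theta\partial_\nu u\right\rangle$, extends continuously to $\mathcal{K}^{-s+1}\times\mathcal{K}^{-s}$; the identity (\ref{eq_thm_LLT_inhomogeneous_dual_eq}) is thus the \emph{definition} of the extension rather than a consequence of membership. Your compatibility-condition reasoning is appropriate only for $s\geq 1$, where it is essentially what the paper's induction on the parity of $s$ carries out.
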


The proof of Theorems \ref{thm_LLT_dir} and \ref{thm_LLT_inhomogeneous} is given in appendix.

\subsection{The duality argument}

Here, we prove Lemma \ref{lem_equivalence_obs_control}. The proof is based on the following classical result (see for example Corollary 11.20 of  \cite{LeRousseau}).

\begin{thm}\label{thm_banach}
Let $\mathcal{X}$ and $\mathcal{Y}$ be Hilbert spaces, and $K: \mathcal{X} \rightarrow \mathcal{Y}$ be a linear continuous operator. Then $K$ is surjective if and only if there exists $C > 0$ such that
\[\Vert y \Vert_{\mathcal{Y}} \leq C \Vert K^\ast y \Vert_{\mathcal{X}}, \quad y \in \mathcal{Y}.\]
\end{thm}

For $s \in \mathbb{R}$, if one denotes by $H_0^s((0, T) \times \partial M, \mathbb{C}^N)$ the closure of $\mathscr{C}^\infty_\mathrm{c}((0, T) \times \partial M, \mathbb{C}^N)$ in $H^s((0, T) \times \partial M, \mathbb{C}^N)$, then $H_0^s((0, T) \times \partial M, \mathbb{C}^N)$ is the dual of $H_0^{- s}((0, T) \times \partial M, \mathbb{C}^N)$ for all $s \in \mathbb{R}$. Consider $s \in \mathbb{R}$, $T > 0$ and $\Theta \in \mathscr{C}^\infty_\mathrm{c}((0, T) \times \partial M, \mathbb{C}^N)$. By Theorem \ref{thm_LLT_inhomogeneous}, one can define 
\[\begin{array}{cccc}
K: & H_0^s((0, T) \times \partial M, \mathbb{C}^N) & \longrightarrow & \mathcal{K}^{s} \times \mathcal{K}^{s - 1} \\
& f & \longmapsto & (v(T), \partial_t v(T))
\end{array}\]
where $v$ is the solution of
\[\left \{
\begin{array}{rcccl}
\partial_t^2 v - \mathsf{P}^\ast v & = & 0 & \quad & \text{in } (0, T) \times M, \\
\left( v(0, \cdot), \partial_t v(0, \cdot) \right) & = & 0 & \quad & \text{in } M, \\
v & = & \diag(\Theta) f & \quad & \text{on } (0, T) \times \partial M.
\end{array}
\right.\]
Note that in the definition of $H^s$-exact controllability, one can consider $f \in H_0^s((0, T) \times \partial M, \mathbb{C}^N)$ instead of $H^s(0, T) \times \partial M, \mathbb{C}^N)$. Hence, $H^s$-exact controllability for $\Theta$ holds if and only if the operator $K$ is surjective. By Theorem \ref{thm_LLT_inhomogeneous}, the adjoint of $K$ is given by
\[\begin{array}{cccc}
K^\ast: & \mathcal{K}^{- s + 1} \times \mathcal{K}^{- s} & \longrightarrow & H_0^{- s}((0, T) \times \partial M, \mathbb{C}^N) \\
& (u^1, u^0) & \longmapsto & \diag(\Theta) \partial_\nu u
\end{array}\]
where $u$ is the solution
\begin{equation}\label{eq_proof_thm_equivalence_obs_control_0}
\left \{
\begin{array}{rcccl}
\partial_t^2 u - \mathsf{P} u & = & 0 & \quad & \text{in } (0, T) \times M, \\
\left( u(T, \cdot), \partial_t u(T, \cdot) \right) & = & (-u^0, u^1) & \quad & \text{in } M, \\
u & = & 0 & \quad & \text{on } (0, T) \times \partial M.
\end{array}
\right.
\end{equation}

By Theorem \ref{thm_banach}, $H^s$-exact controllability for $\Theta$ is equivalent with the inequality
\begin{equation}\label{eq_proof_thm_equivalence_obs_control}
\left\Vert \left( u^0, u^1 \right) \right\Vert_{\mathcal{K}^{- s + 1} \times \mathcal{K}^{- s}} \lesssim \left\Vert \diag(\Theta) \partial_\nu u \right\Vert_{H^{- s}((0, T) \times \partial M, \mathbb{C}^{N})}, \quad \left( u^0, u^1 \right) \in \mathcal{K}^{- s + 1} \times \mathcal{K}^{- s},
\end{equation}
where $u$ is the solution of (\ref{eq_proof_thm_equivalence_obs_control_0}). One has
\[\left\Vert \left( u(0), \partial_t u(0) \right) \right\Vert_{\mathcal{K}^{- s + 1} \times \mathcal{K}^{- s}} \lesssim \left\Vert \left( u^0, u^1 \right) \right\Vert_{\mathcal{K}^{- s + 1} \times \mathcal{K}^{- s}} \lesssim \left\Vert \left( u(0), \partial_t u(0) \right) \right\Vert_{\mathcal{K}^{- s + 1} \times \mathcal{K}^{- s}},\]
for $\left( u^0, u^1 \right) \in \mathcal{K}^{- s + 1} \times \mathcal{K}^{- s}$, where $u$ is the solution of (\ref{eq_proof_thm_equivalence_obs_control_0}), implying that (\ref{eq_proof_thm_equivalence_obs_control}) and $H^{- s}$-observability are equivalent. This completes the proof of Lemma \ref{lem_equivalence_obs_control}.

\section{Ellipticity of the time-derivative on Neumann traces}

\subsection{Statement of the main estimate and beginning of the proof}

The main result of this section is the following theorem.

\begin{thm}[Ellipticity of the time-derivative on the Neumann trace]\label{thm_dt_ell_traces}
For $\Theta \in \mathscr{C}^\infty_\mathrm{c}((0, T) \times \partial M, \mathbb{C}^N)$, $s > - 1$, and $r \in \mathbb{N}^\ast$, there exists $C > 0$ such that for all $\left( u^0, u^1 \right) \in \mathcal{K}^{s + 1} \times \mathcal{K}^{s}$, one has
\[\left\Vert \diag(\Theta) \partial_\nu u \right\Vert_{H^s((0, T) \times \partial M, \mathbb{C}^N)} \leq C \left( \left\Vert \diag(\Theta) \partial_t^{2r} \partial_\nu u \right\Vert_{H^{s - 2r}((0, T) \times \partial M, \mathbb{C}^N)} + \left\Vert u^0 \right\Vert_{\mathcal{K}^{s + \frac{1}{2}}} + \left\Vert u^1 \right\Vert_{\mathcal{K}^{s - \frac{1}{2}}} \right)\]
where $u$ is the solution of
\[\left \{
\begin{array}{rcccl}
\partial_t^2 u - \mathsf{P}u & = & 0 & \quad & \text{in } \mathbb{R} \times M, \\
\left( u(0, \cdot), \partial_t u(0, \cdot) \right) & = & \left( u^0, u^1 \right) & \quad & \text{in } M, \\
u & = & 0 & \quad & \text{on } \mathbb{R} \times \partial M.
\end{array}
\right.\]
\end{thm}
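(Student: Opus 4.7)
The plan is to perform a microlocal decomposition of $\Theta \partial_\nu u$ on the boundary $(0,T) \times \partial M$, separating the conic region where $\partial_t^{2r}$ acts as an elliptic tangential operator from its complement, the ``elliptic region'' of the wave operator, in which a Tataru-style hidden-regularity estimate with a half-derivative microlocal gain absorbs the contribution into the initial data norms.

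Concretely, I would choose homogeneous degree-$0$ tangential symbols $\chi_h, \chi_e$ on $T^*((0,T) \times \partial M)$ with $\chi_h + \chi_e \equiv 1$ outside a bounded set of $(\tau, \xi')$, $\chi_h$ supported where $|\tau|^2 \geq \delta(|\tau|^2 + |\xi'|^2)$ for a small $\delta > 0$, and $\chi_e$ in the complementary cone $|\tau|^2 \leq 2\delta|\xi'|^2$, and write
\[
\Theta \partial_\nu u = \chi_h(D)(\Theta \partial_\nu u) + \chi_e(D)(\Theta \partial_\nu u) \pmod{\mathscr{C}^\infty}.
\]
On $\supp \chi_h$ the symbol of $\partial_t^{2r}$ is elliptic of order $2r$, so a tangential parametrix $E$ of order $-2r$ with $E \partial_t^{2r} \equiv \chi_h(D)$ modulo smoothing yields, after expanding $\partial_t^{2r}(\Theta \partial_\nu u) = \Theta \partial_t^{2r} \partial_\nu u + [\partial_t^{2r}, \Theta] \partial_\nu u$ and noting that $[\partial_t^{2r}, \Theta]$ is a compactly supported differential operator of order $2r - 1$ in $\partial_t$,
\[
\|\chi_h(D)(\Theta \partial_\nu u)\|_{H^s} \lesssim \|\Theta \partial_t^{2r} \partial_\nu u\|_{H^{s-2r}} + \|\partial_\nu u\|_{H^{s-1}((0,T)\times \partial M, \mathbb{C}^N)},
\]
and the last term is controlled by $\|(u^0, u^1)\|_{\mathcal{K}^s \times \mathcal{K}^{s-1}}$ via Theorem \ref{thm_LLT_dir}(iii), hence by $\|(u^0, u^1)\|_{\mathcal{K}^{s+1/2} \times \mathcal{K}^{s-1/2}}$, as required.

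The real substance of the argument, and the main obstacle, is the elliptic piece. On $\supp \chi_e$ one has $\tau^2 \lesssim |\xi'|^2$, which places the frequency in the elliptic region of the wave operator $\partial_t^2 - \mathsf{P}$ viewed from the boundary (the principal symbol of $-\mathsf{P}$ restricted to tangential covectors is comparable to $|\xi'|^2 \operatorname{Id}_{\mathbb{C}^N}$). I aim to establish the microlocal half-derivative gain
\[
\|\chi_e(D)(\Theta \partial_\nu u)\|_{H^s((0,T)\times \partial M, \mathbb{C}^N)} \lesssim \|u^0\|_{\mathcal{K}^{s+1/2}} + \|u^1\|_{\mathcal{K}^{s-1/2}},
\]
a Tataru-style hidden-regularity bound adapted to the system wave equation. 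The standard route is to work in boundary normal coordinates $(x', x_n)$ near $\supp_{x'} \Theta$ and factor the wave operator tangentially modulo lower order as $(\partial_n - A_+)(\partial_n - A_-)$, where $A_\pm$ are $\mathbb{C}^{N \times N}$-valued tangential pseudodifferential operators of order $1$ whose principal symbols have non-vanishing imaginary parts on $\supp \chi_e$, so that each factor is tangentially elliptic there; solving the two resulting first-order Dirichlet problems modulo smoothing and reading off $\partial_n u|_{x_n = 0}$ delivers the $1/2$-derivative gain. The key technical challenge, on which the entire proof rests, is to carry out this factorization cleanly for the matrix-valued operator $\mathsf{P} = \Delta - X - q$ and to track the smoothing remainders so that the error is indeed bounded by $(u^0, u^1)$ at the sharp regularity $\mathcal{K}^{s+1/2} \times \mathcal{K}^{s-1/2}$. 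Combining the hyperbolic and elliptic estimates then completes the proof.
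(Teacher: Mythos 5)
Your overall strategy coincides with the paper's: both split $\Theta\partial_\nu u$ microlocally on the boundary into a cone where $\partial_t$ is elliptic and the complementary cone lying in the elliptic region of the wave operator, and both treat the first piece by inverting $\partial_t^{2r}$ tangentially and absorbing the commutator $[\partial_t^{2r},\Theta]\partial_\nu u$ into $\Vert\partial_\nu u\Vert_{H^{s-1}}$, hence into the initial data via Theorem \ref{thm_LLT_dir}. Where you genuinely diverge is the elliptic piece. You propose the Calder\'on-type factorization $(\partial_n-A_+)(\partial_n-A_-)$ with $A_\pm$ of principal symbol $\mp i\sqrt{\rho}$ and a forward/backward solve of the two first-order problems to read off the Dirichlet-to-Neumann gain. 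The paper instead extends $u$ by zero across $x^n=0$, uses the jump formula $P\underline{u}=\underline{f}+\delta_{x^n=0}\otimes\partial_\nu u$, applies a crude (one-term) parametrix $Q=\Op(\tilde\chi/p)$ of $P$ supported in the elliptic region, restricts to $x^n=0$, and computes explicitly the tangential symbol $q_\intercal=\int_{\mathbb{R}}\tilde\chi/(\xi_n^2+\rho)\,\mathrm{d}\xi_n=\pi/\sqrt{\rho}$ on $\supp\chi$, which is elliptic of order $-1$ and is then inverted to recover $\OpT(\chi)\partial_\nu u$ with the same right-hand side $\Vert Pu\Vert_{H^{s-1/2}}+\Vert u\Vert_{H^{s+1/2}}+\Vert\partial_\nu u\Vert_{H^{s-1}}$. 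Both routes are classical and deliver the identical half-derivative gain; the paper's layer-potential computation avoids having to justify the well-posedness and remainder analysis of the two first-order evolutions, at the cost of verifying that the explicit fiber integral defines a symbol (its Lemma \ref{lem_q_T_symbol}). Two remarks on your sketch: the ``matrix-valued factorization'' you single out as the main obstacle is not actually one, since the principal part of $\mathsf{P}$ is $\Delta\,\mathrm{Id}_{\mathbb{C}^N}$, so the system decouples to scalar component equations modulo first-order terms (this is exactly how the paper reduces to Proposition \ref{prop_analysis_half_space}); and your cutoffs should be taken $x'$-dependent (or $\delta$ small uniformly in $x'$) so that $\supp\chi_e$ stays uniformly inside the elliptic region $\vert\xi'\vert_{x'}^2-\tau^2\gtrsim\tau^2+\vert\xi'\vert^2$ of the variable-coefficient operator, as the paper's $\chi$ does.
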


\begin{rem}\label{rem_dt_ell_embeddings}
For clarity, embeddings have been omitted in the statement of Theorem \ref{thm_dt_ell_traces}. The notation $\left\Vert u^0 \right\Vert_{\mathcal{K}^{s + \frac{1}{2}}} + \left\Vert u^1 \right\Vert_{\mathcal{K}^{s - \frac{1}{2}}}$ stands for 
\[\left\Vert \iota_{\mathcal{K}^{s + 1} \rightarrow \mathcal{K}^{s + \frac{1}{2}}} u^0 \right\Vert_{\mathcal{K}^{s + \frac{1}{2}}} + \left\Vert \iota_{\mathcal{K}^{s} \rightarrow \mathcal{K}^{s - \frac{1}{2}}} u^1 \right\Vert_{\mathcal{K}^{s - \frac{1}{2}}}.\]
\end{rem}

\begin{proof}
Let $(O^j)_{j \in J}$ be a finite family of open subsets of $M$ satisfying the following properties:
\begin{enumerate}[label=(\roman*)]
\item One has 
\[\bigcup_{j \in J} \left( O^j \cap \partial M \right) = \partial M.\]
\item For each $j \in J$, there exists a smooth diffeomorphism $\kappa^j$ such that
\[\kappa^j: \tilde{O}^j \longrightarrow O^j \cap \partial M \]
where $\tilde{O}^j$ is a non-empty subset of $\mathbb{R}^{n-1}$.
\item We can use boundary normal coordinates on each $O^j$: more precisely, we assume that there exists $\delta > 0$ such that for all $j \in J$, the map
\[\begin{array}{cccc}
\tilde{\kappa}^j: & \tilde{O}^j \times [0, \delta) & \longrightarrow & O^j \\
& (x^\prime, x^n) & \longmapsto & \gamma(\nu_{\kappa^j(x^\prime)}, x^n)
\end{array}\]
is a smooth diffeomorphism, where for $y \in \partial M$, $\nu_y$ is the inward-pointing unit vector normal to the boundary at $y$, and $\gamma(\nu_{\kappa^j(x^\prime)}, \cdot)$ is the geodesic starting from $\kappa^j(x^\prime)$ and of initial velocity $\nu_{\kappa^j(x^\prime)}$.
\end{enumerate}

It is well-know that in the coordinates given by $\tilde{\kappa}^j$, the Laplace-Beltrami operator becomes an elliptic operator $\tilde{P}^j$ on $\mathbb{R}^n_+$ with principal part
\begin{equation}\label{eq_proof_dt_ell_main_1}
\tilde{P}^j = \partial_{x^n}^2 + \sum_{1 \leq p, q \leq n-1} \alpha_j^{pq}(x) \partial_{x^p} \partial_{x^q}.
\end{equation}
The coefficients $(\alpha_j^{pq})$ can be smoothly extended to $\mathbb{R}^n$ in such a way that $\tilde{P}^j$ is an elliptic operator on $\mathbb{R}^n$. 

We take a partition of the unity associated to the sets $(O^j)_{j \in J}$: there exists a family of functions $(\psi^j)_{j \in J}$ such that for each $j \in J$, $\psi^j \in \mathscr{C}^\infty_\mathrm{c}(O^j, [0, 1])$ and such that 
\[\sum_{j \in J} (\psi^j)^2 = 1\]
in a neighborhood of $\partial M$ in $M$. Also, take $\psi^0 \in \mathscr{C}^\infty_\mathrm{c}((0, T), [0, 1])$ such that $\psi^0 \Theta = \Theta$. 

Consider $\left( u^0, u^1 \right) \in \mathcal{K}^{s + 1} \times \mathcal{K}^{s}$. We start the proof by writing
\begin{align}
\left\Vert \diag(\Theta) \partial_\nu u \right\Vert_{H^s((0, T) \times \partial M, \mathbb{C}^N)} & = \left\Vert \diag(\Theta) \partial_\nu u \right\Vert_{H^s(\mathbb{R} \times \partial M, \mathbb{C}^N)} \nonumber \\
& = \left\Vert \psi^0 \diag(\Theta) \partial_\nu u \right\Vert_{H^s(\mathbb{R} \times \partial M, \mathbb{C}^N)} \nonumber \\
& \leq \sum_{j \in J} \left\Vert (\psi^j)^2 \psi^0 \diag(\Theta) \partial_\nu u \right\Vert_{H^s(\mathbb{R} \times (O^j \cap \partial M), \mathbb{C}^N)}. \label{eq_proof_dt_ell_main_2}
\end{align}
For $j \in J$, $t \in \mathbb{R}$ and $x \in \mathbb{R}^n$, we define
\[u^j(t, x) = \psi^0(t) \psi^j(\tilde{\kappa}^j(x)) u(t, \tilde{\kappa}^j(x)) \quad \text{ and } \quad \Theta^j(t, x^\prime) = \psi^j(\kappa^j(x^\prime)) \Theta(t, \kappa^j(x^\prime)).\]
Note that those functions are well-defined because $\psi^j$ is compactly supported in $O^j$. As $u_{\vert \partial M} = 0$, one has
\begin{equation}\label{eq_proof_dt_ell_main_3}
\partial_\nu u^j(t, x^\prime, 0) = \psi^0(t) \psi^j\left( \kappa^j(x^\prime) \right) \partial_\nu u\left(t, \kappa^j(x^\prime) \right).
\end{equation}

By definition of the $H^s-$norm on a Riemannian manifold, coming back to (\ref{eq_proof_dt_ell_main_2}), we thus have
\[\left\Vert \diag(\Theta) \partial_\nu u \right\Vert_{H^s((0, T) \times \partial M, \mathbb{C}^N)} \lesssim \sum_{j \in J} \left\Vert \diag(\Theta^j) \partial_\nu u^j \right\Vert_{H^s(\mathbb{R} \times \tilde{O}^j, \mathbb{C}^N)} = \sum_{j \in J} \left\Vert \diag(\Theta^j) \partial_\nu u^j \right\Vert_{H^s(\mathbb{R} \times \mathbb{R}^{n-1}, \mathbb{C}^N)}.\]
Recall that $(\pi_1, \cdots, \pi_N)$ denotes the projections associated with the canonical basis of $\mathbb{C}^N$. By definition of the $H^s(\mathbb{R} \times \mathbb{R}^{n-1}, \mathbb{C}^N)$-norm, one has
\begin{equation}\label{eq_proof_dt_ell_main_4}
\left\Vert \diag(\Theta) \partial_\nu u \right\Vert_{H^s((0, T) \times \partial M, \mathbb{C}^N)} \lesssim \sum_{k = 1}^N \sum_{j \in J} \left\Vert \left( \pi_k \Theta^j \right) \partial_\nu \left( \pi_k u^j \right) \right\Vert_{H^s(\mathbb{R} \times \mathbb{R}^{n-1})}.
\end{equation}

We see that we are reduced to the study of scalar functions defined on the half-space $\mathbb{R} \times \mathbb{R}^n_+$. We gather the properties satisfied by the functions $\pi_k u^j$. First, as $s \geq -2$, one has $\pi_k u^j \in H^{s + 1}(\mathbb{R} \times \mathbb{R}^n_+)$ by Theorem \ref{thm_LLT_dir}. Second, one has $\pi_k u^j(t, x^\prime, 0) = 0$ for all $(t, x^\prime) \in \mathbb{R} \times \mathbb{R}^{n - 1}$ by the Dirichlet boundary condition, and 
\[\partial_\nu \left( \pi_k u^j \right) \in H^s(\mathbb{R} \times \mathbb{R}^{n-1})\]
by Theorem \ref{thm_LLT_dir} and (\ref{eq_proof_dt_ell_main_3}). Third, we know that 
\[\partial_t^2 u - \Delta u = - Xu - qu\]
where $\Delta$ is the Laplace-Beltrami operator, so by the Leibniz formula, there exists a differential operator $R^j$ of order $1$, supported in $(0, T) \times O^j$ such that
\[\left( \partial_t^2 - \tilde{P}^j \right) u^j (t, x) = R^j u (t, \tilde{\kappa}^j(x))\]
where $\tilde{P}^j$ is defined by (\ref{eq_proof_dt_ell_main_1}). In particular, one has
\[\left( \partial_t^2 - \tilde{P}^j \right) (\pi_k u^j) \in H^s(\mathbb{R} \times \mathbb{R}^n_+).\]

\begin{prop}\label{prop_analysis_half_space}
Suppose 
\[P = \partial_t^2 - \partial_{x^n}^2 - \sum_{1 \leq i, j \leq n-1} \alpha^{ij}(x) \partial_{x^i} \partial_{x^j}\]
on $\mathbb{R}_t \times \mathbb{R}^n_x$, where the coefficients $(\alpha^{ij})$ are such that 
\[\xi_n^2 + \sum_{1 \leq i, j \leq n-1} \alpha^{ij}(x) \xi_i \xi_j\]
is uniformly elliptic on $\mathbb{R}^n$. Take $\theta \in \mathscr{C}^\infty_\mathrm{c}(\mathbb{R} \times \mathbb{R}^{n-1}, \mathbb{C})$. There exists $C > 0$ such that 
\begin{align*}
\left\Vert \theta \partial_\nu u \right\Vert_{H^s(\mathbb{R} \times \mathbb{R}^{n-1})} \leq \ & C \left( \left\Vert \theta \partial_t^{2r} \partial_\nu u \right\Vert_{H^{s - 2r}(\mathbb{R} \times \mathbb{R}^{n-1})} + \left\Vert Pu \right\Vert_{H^{s - \frac{1}{2}}(\mathbb{R} \times \mathbb{R}^n_+)} \right. \\
& \left. + \left\Vert \partial_\nu u \right\Vert_{H^{s - 1}(\mathbb{R} \times \mathbb{R}^{n-1})} + \Vert u \Vert_{H^{s + \frac{1}{2}}(\mathbb{R} \times \mathbb{R}^n_+)} \right).
\end{align*}
for all $u \in H^{s + 1}(\mathbb{R} \times \mathbb{R}^n_+)$ such that $Pu \in H^s(\mathbb{R} \times \mathbb{R}^n_+)$, $u_{\vert x_n = 0} = 0$ and $\partial_\nu u \in H^s(\mathbb{R} \times \mathbb{R}^{n-1})$.
\end{prop}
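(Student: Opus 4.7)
The plan is to decompose $\Theta\partial_\nu u$ by a tangential pseudodifferential cutoff separating the region where $\partial_t^{2r}$ is tangentially elliptic from the region where the Dirichlet problem for $P$ is elliptic in the Lopatinskii sense. Using the uniform ellipticity, fix $c_0 > 0$ with $\sum_{i,j}\alpha^{ij}(x)\xi_i\xi_j \geq c_0|\xi'|^2$ for every $(x,\xi')$. Pick $\chi_h,\chi_e \in S^0(\mathbb{R}_\tau \times \mathbb{R}^{n-1}_{\xi'})$, homogeneous of degree $0$ outside a compact set, with $\chi_h + \chi_e = 1$, $\supp\chi_h \subset \{4\tau^2 \geq c_0|\xi'|^2\}$ and $\supp\chi_e \subset \{4\tau^2 \leq c_0|\xi'|^2\}$. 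Let $\Psi_h = \chi_h(D_t,D_{x'})$ and $\Psi_e = \chi_e(D_t,D_{x'})$ be the associated tangential pseudodifferential operators and split
\[\Theta\partial_\nu u = \Psi_h(\Theta\partial_\nu u) + \Psi_e(\Theta\partial_\nu u).\]

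On $\supp\chi_h$ the symbol $\tau^{2r}$ is elliptic of order $2r$ (it dominates $(1+|\tau|+|\xi'|)^{2r}$ there), so a standard parametrix $E\in\Op S^{-2r}$ satisfies $E\,\partial_t^{2r}\equiv\Psi_h$ modulo smoothing. Since $[\partial_t^{2r},\Theta]$ is a tangential operator of order $2r-1$, applying $E$ to $\partial_t^{2r}(\Theta\partial_\nu u) = \Theta\partial_t^{2r}\partial_\nu u + [\partial_t^{2r},\Theta]\partial_\nu u$ yields
\[\|\Psi_h(\Theta\partial_\nu u)\|_{H^s} \lesssim \|\Theta\partial_t^{2r}\partial_\nu u\|_{H^{s-2r}} + \|\partial_\nu u\|_{H^{s-1}}.\]

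For the elliptic piece, take $\widetilde\Theta \in \mathscr{C}^\infty_\mathrm{c}(\mathbb{R} \times \overline{\mathbb{R}^n_+})$ agreeing with $\Theta$ on $\{x^n=0\}$ in a neighbourhood of $\supp\Theta$. Since $\Psi_e$ is tangential it commutes with $\partial_{x^n}$, and $u|_{x^n=0}=0$ gives $\Psi_e(\Theta\partial_\nu u) = -\partial_{x^n}\Psi_e(\widetilde\Theta u)|_{x^n=0}$. The trace theorem then yields
\[\|\Psi_e(\Theta\partial_\nu u)\|_{H^s(\mathbb{R}^{n-1})} \lesssim \|\Psi_e(\widetilde\Theta u)\|_{H^{s+3/2}(\mathbb{R}\times\mathbb{R}^n_+)}.\]
On $\supp\chi_e$ the $\xi_n$-polynomial $\xi_n^2 + (\sum\alpha^{ij}\xi_i\xi_j - \tau^2)$ has two simple purely imaginary roots $\pm i\Lambda$, with $\Lambda = \sqrt{\sum\alpha^{ij}\xi_i\xi_j - \tau^2}$ a tangentially elliptic symbol of order $1$, and the Dirichlet BVP is Lopatinskii-elliptic. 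Factoring $P \equiv -(\partial_{x^n} - \Lambda)(\partial_{x^n} + \Lambda)$ modulo first-order tangential terms on $\supp\chi_e$, and solving the two first-order equations successively against the decaying and Dirichlet conditions, gives the tangentially microlocal elliptic estimate up to the boundary
\[\|\Psi_e(\widetilde\Theta u)\|_{H^{s+3/2}} \lesssim \|P(\widetilde\Theta u)\|_{H^{s-1/2}} + \|\widetilde\Theta u\|_{H^{s+1/2}}.\]
Expanding $P(\widetilde\Theta u) = \widetilde\Theta Pu + [P,\widetilde\Theta]u$ with $[P,\widetilde\Theta]$ a first-order differential operator produces the required bound by $\|Pu\|_{H^{s-1/2}} + \|u\|_{H^{s+1/2}}$. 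Summing the two contributions yields the proposition.

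The technical heart of the argument is the tangentially microlocal Lopatinskii estimate on $\supp\chi_e$: one must construct the pseudodifferential ``square root'' $\Lambda$ on that cone, factor $P$ modulo tangential first-order remainders, invert the two first-order operators carefully in $x^n$ (the good factor $\partial_{x^n} + \Lambda$ giving an exponentially decaying kernel, the bad factor $\partial_{x^n} - \Lambda$ being integrated against decay at infinity), and control every commutator with $\chi_e$ and $\widetilde\Theta$ so that the remainders are absorbed by $\|\partial_\nu u\|_{H^{s-1}}$ and $\|u\|_{H^{s+1/2}}$. The individual ingredients are classical but their careful assembly in the half-space with the time-supported cutoff $\Theta$ is where the work concentrates.
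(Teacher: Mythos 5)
Your splitting into a region where $\partial_t^{2r}$ is tangentially elliptic and a region where $P$ is elliptic is exactly the paper's first move, and your treatment of the hyperbolic piece (parametrix for $\partial_t^{2r}$ plus absorption of the commutator with $\Theta$ into $\left\Vert \partial_\nu u \right\Vert_{H^{s-1}}$) coincides with the paper's. For the elliptic piece, however, you take a genuinely different route: you propose the Calder\'on/Lopatinskii factorization $P \equiv -(\partial_{x^n}-\Lambda)(\partial_{x^n}+\Lambda)$ applied to $\widetilde\Theta u$, followed by the trace theorem. The paper instead extends $u$ by zero, uses the jump formula $P\underline{u} = \underline{f} + \delta_{x^n=0}\otimes\partial_\nu u$, applies a full (non-tangential) parametrix $Q$ of order $-2$, restricts to $x^n=0$, and shows by an explicit $\xi_n$-integration that the trace of $Q(\delta_{x^n=0}\otimes g)$ is $\OpT(q_\intercal)g$ with $q_\intercal$ a tangential symbol of order $-1$, elliptic on the relevant cone. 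Both are classical; your route concentrates the work in the construction and inversion of the two first-order factors, the paper's in proving that $q_\intercal$ is a symbol and computing $q_\intercal\chi = \pi\chi/\sqrt{\rho}$.

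Two points in your outline need repair. First, the claim that $\tau^{2r}$ dominates $(1+\vert\tau\vert+\vert\xi^\prime\vert)^{2r}$ on $\supp\chi_h$ fails near $(\tau,\xi^\prime)=(0,0)$; you must split off a compactly supported frequency region (smoothing, hence controlled by $\left\Vert\partial_\nu u\right\Vert_{H^{s-1}}$), which is what the paper's auxiliary cutoff $\chi_1$ does. (Relatedly, two cutoffs summing to $1$ cannot have supports contained in the two closed half-cones $\{4\tau^2\gtrless c_0\vert\xi^\prime\vert^2\}$; the supports must overlap on an open cone — a cosmetic fix.) Second, and more substantively, the step
\[\left\Vert \partial_{x^n}\Psi_e(\widetilde\Theta u)_{\vert x^n = 0} \right\Vert_{H^s(\mathbb{R}\times\mathbb{R}^{n-1})} \lesssim \left\Vert \Psi_e(\widetilde\Theta u) \right\Vert_{H^{s+\frac{3}{2}}(\mathbb{R}\times\mathbb{R}^n_+)}\]
invokes the trace theorem for the \emph{normal derivative} of an $H^{s+\frac{3}{2}}$ function, which requires $s>0$; the proposition is needed (and proved in the paper) for $s>-1$. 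The paper sidesteps this because it only ever takes traces of functions (not of their normal derivatives) lying in $H^{s+\frac{3}{2}}$ with $s+\frac{3}{2}>\frac{1}{2}$. In your framework the fix is to extract the Neumann trace directly from the first-order equations of the factorization (i.e., construct the microlocal Dirichlet-to-Neumann operator), rather than from the plain trace theorem; as written, your estimate does not cover $-1< s\leq 0$.
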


A proof is given in section 3.2. This proposition allows us to complete the proof of Theorem \ref{thm_dt_ell_traces}. One obtains
\begin{align*}
\left\Vert \left( \pi_k \Theta^j \right) \partial_\nu \pi_k u^j \right\Vert_{H^s(\mathbb{R} \times \mathbb{R}^{n-1})} \lesssim \ & \left\Vert \left( \pi_k \Theta^j \right) \partial_t^{2r} \partial_\nu \pi_k u^j \right\Vert_{H^{s - 2r}(\mathbb{R} \times \mathbb{R}^{n-1})} + \left\Vert \left( \partial_t^2 - \tilde{P}^j \right) (\pi_k u^j) \right\Vert_{H^{s - \frac{1}{2}}(\mathbb{R} \times \mathbb{R}^n_+)} \\
& + \left\Vert \partial_\nu \pi_k u^j \right\Vert_{H^{s - 1}(\mathbb{R} \times \mathbb{R}^{n-1})} + \left\Vert \pi_k u^j \right\Vert_{H^{s + \frac{1}{2}}(\mathbb{R} \times \mathbb{R}^n_+)}.
\end{align*}
Using (\ref{eq_proof_dt_ell_main_4}) and the definition of the $H^s$-norms of vectors, one has
\begin{align*}
& \left\Vert \diag(\Theta) \partial_\nu u \right\Vert_{H^s((0, T) \times \partial M, \mathbb{C}^N)} \lesssim \sum_{j \in J} \left\Vert \diag(\Theta^j) \partial_t^{2r} \partial_\nu u^j \right\Vert_{H^{s - 2r}(\mathbb{R} \times \mathbb{R}^{n-1}, \mathbb{C}^N)} \\
& \quad + \sum_{j \in J} \left( \left\Vert \left( \partial_t^2 - \tilde{P}^j \right) u^j \right\Vert_{H^{s - \frac{1}{2}}(\mathbb{R} \times \mathbb{R}^n_+, \mathbb{C}^N)} + \left\Vert \partial_\nu u^j \right\Vert_{H^{s - 1}(\mathbb{R} \times \mathbb{R}^{n-1}), \mathbb{C}^N)} + \left\Vert u^j \right\Vert_{H^{s + \frac{1}{2}}(\mathbb{R} \times \mathbb{R}^n_+, \mathbb{C}^N)} \right).
\end{align*}
We estimate the terms on the right-hand side one by one.

\paragraph{First term.}

We prove
\begin{align}
& \sum_{j \in J} \left\Vert \diag(\Theta^j) \partial_t^{2r} \partial_\nu u^j \right\Vert_{H^{s - 2r}(\mathbb{R} \times \mathbb{R}^{n-1}, \mathbb{C}^N)} \nonumber \\
\lesssim & \left\Vert \diag(\Theta) \partial_t^{2r} \partial_\nu u \right\Vert_{H^{s - 2r}((0, T) \times \partial M, \mathbb{C}^N)} + \left\Vert \partial_\nu u \right\Vert_{H^{s - 1}((0, T) \times \partial M, \mathbb{C}^N)}, \label{eq_proof_dt_ell_main_5}
\end{align}
meaning that the first term yields the main term of the estimate up to a remainder term.

Using (\ref{eq_proof_dt_ell_main_3}) and the Leibniz formula, one finds
\begin{align*}
& \left\Vert \diag(\Theta^j) \partial_t^{2r} \partial_\nu u^j \right\Vert_{H^{s - 2r}(\mathbb{R} \times \mathbb{R}^{n-1}, \mathbb{C}^N)} \\
\lesssim & \sum_{k = 0}^{2r} \left\Vert \diag(\Theta^j) \left( \psi^j \circ \kappa^j \right) \partial_t^k \psi^0 \partial_t^{2r - k} \partial_\nu u (t, \kappa^j(x^\prime)) \right\Vert_{H^{s - 2r}(\mathbb{R} \times \mathbb{R}^{n-1}, \mathbb{C}^N)}.
\end{align*}
As $\psi^j$ is supported in a coordinate chart of $\partial M$, one has
\[\left\Vert \diag(\Theta^j) \partial_t^{2r} \partial_\nu u^j \right\Vert_{H^{s - 2r}(\mathbb{R} \times \mathbb{R}^{n-1}, \mathbb{C}^N)} \lesssim \sum_{k = 0}^{2r} \left\Vert \diag(\Theta) \left(\psi^j\right)^2 \partial_t^k \psi^0 \partial_t^{2r - k} \partial_\nu u \right\Vert_{H^{s - 2r}((0, T) \times \partial M, \mathbb{C}^N)}.\]

One has
\[\left\Vert \diag(\Theta) \left(\psi^j\right)^2 \psi^0 \partial_t^{2r} \partial_\nu u \right\Vert_{H^{s - 2r}((0, T) \times \partial M, \mathbb{C}^N)} \lesssim \left\Vert \diag(\Theta) \partial_t^{2r} \partial_\nu u \right\Vert_{H^{s - 2r}((0, T) \times \partial M, \mathbb{C}^N)},\]
and for $k \in \llbracket 1, 2r \rrbracket$,
\[\left\Vert \diag(\Theta) \left(\psi^j\right)^2 \partial_t^k \psi^0 \partial_t^{2r - k} \partial_\nu u \right\Vert_{H^{s - 2r}((0, T) \times \partial M, \mathbb{C}^N)} \lesssim \left\Vert \partial_\nu u \right\Vert_{H^{s - 1}((0, T) \times \partial M, \mathbb{C}^N)}.\]
This gives (\ref{eq_proof_dt_ell_main_5}).

\paragraph{Second term.}

For the second term, one has
\[\sum_{j \in J} \left\Vert \left( \partial_t^2 - \tilde{P}^j \right) u^j \right\Vert_{H^{s - \frac{1}{2}}(\mathbb{R} \times \mathbb{R}^n_+, \mathbb{C}^N)} \lesssim \Vert u \Vert_{H^{s + \frac{1}{2}}((0, T) \times M, \mathbb{C}^N)}.\]
This holds since for all $j$, there exists a differential operator $R^j$ of order $1$, supported in $(0, T) \times O^j$ such that
\[\left( \partial_t^2 - \tilde{P}^j \right) u^j (t, x) = R^j u (t, \tilde{\kappa}^j(x)).\]

\paragraph{Third and forth term.}

Arguing as above, one finds 
\begin{align*}
& \sum_{j \in J} \left( \left\Vert \partial_\nu u^j \right\Vert_{H^{s - 1}(\mathbb{R} \times \mathbb{R}^{n-1}, \mathbb{C}^N)} + \left\Vert u^j \right\Vert_{H^{s + \frac{1}{2}}(\mathbb{R} \times \mathbb{R}^n_+, \mathbb{C}^N)} \right) \\
\lesssim \ & \left\Vert \partial_\nu u \right\Vert_{H^{s - 1}((0, T) \times \partial M, \mathbb{C}^N)} + \left\Vert u \right\Vert_{H^{s + \frac{1}{2}}((0, T) \times M, \mathbb{C}^N)}.
\end{align*}

\paragraph{Conclusion.}

Gathering all our estimates, one finds
\begin{align*}
\left\Vert \diag(\Theta) \partial_\nu u \right\Vert_{H^{s - 2r}((0, T) \times \partial M, \mathbb{C}^N)} \lesssim \ & \left\Vert \diag(\Theta) \partial_t^{2r} \partial_\nu u \right\Vert_{H^{s - 2r}((0, T) \times \partial M, \mathbb{C}^N)} \\
& + \left\Vert \partial_\nu u \right\Vert_{H^{s - 1}((0, T) \times \partial M, \mathbb{C}^N)} + \left\Vert u \right\Vert_{H^{s + \frac{1}{2}}((0, T) \times M, \mathbb{C}^N)}.
\end{align*}
By Theorem \ref{thm_LLT_dir}, one has
\[\left\Vert \partial_\nu u \right\Vert_{H^{s - 1}((0, T) \times \partial M, \mathbb{C}^N)} \lesssim \Vert u^0 \Vert_{\mathcal{K}^{s}} + \Vert u^1 \Vert_{\mathcal{K}^{s - 1}}\]
and
\[\left\Vert u \right\Vert_{H^{\left( s - \frac{1}{2} \right) + 1}((0, T) \times M, \mathbb{C}^N)} \lesssim \Vert u^0 \Vert_{\mathcal{K}^{s + \frac{1}{2}}} + \Vert u^1 \Vert_{\mathcal{K}^{s - \frac{1}{2}}},\]
as $s - \frac{1}{2} \geq -2$. This completes the proof.
\end{proof}

\subsection{Analysis in a half-space.}

Here, we prove Proposition \ref{prop_analysis_half_space}. Write $\mathsf{S}^m(\mathbb{R}_t \times \mathbb{R}^n_x)$ for the set of symbols of order $m$, $\mathsf{S}_{\intercal}^m(\mathbb{R}_t \times \mathbb{R}^n_x)$ for the set of tangential symbols of order $m$, and $\Psi^m(\mathbb{R}_t \times \mathbb{R}^n_x)$ and $\Psi^m_{\intercal}(\mathbb{R}_t \times \mathbb{R}^n_x)$ for the associated sets of pseudo-differential operators. Let $p$ be the principal symbol of $P$, that is 
\[p(x, \tau, \xi) = - \tau^2 + \xi_n^2 + \sum_{1 \leq i, j \leq n-1} \alpha^{ij}(x) \xi_i \xi_j.\]
Write
\[\vert \xi^\prime \vert_x^2 = \sum_{1 \leq i, j \leq n-1} \alpha^{ij}(x) \xi_i \xi_j \quad \text{ and } \quad \rho(x, \tau, \xi^\prime) = - \tau^2 + \vert \xi^\prime \vert_x^2\]
so that $p(x, \tau, \xi) = \xi_n^2 + \rho(x, \tau, \xi^\prime)$. On the boundary, we sometimes use the notation $\vert \xi^\prime \vert_{x^\prime} = \vert \xi^\prime \vert_{(x^\prime, 0)}$.

Consider $u \in H^{s + 1}(\mathbb{R} \times \mathbb{R}^n_+)$ satisfying the assumptions of Proposition \ref{prop_analysis_half_space}. The idea of the proof is to split miscellaneously $\theta \partial_\nu u$ into two terms: one on which $\partial_t$ is elliptic, and one on which the wave operator is elliptic. More precisely, we fix $\chi_0 \in \mathscr{C}^\infty(\mathbb{R}_+, [0, 1])$ such that $\chi_0 = 1$ on $\left[ \frac{1}{4}, + \infty \right)$ and $\chi_0 = 0$ on $\left[ 0, \frac{1}{5} \right]$, and we define
\[\chi(x, \tau, \xi^\prime) = \chi_0 \left( \frac{\rho(x, \tau, \xi^\prime)}{1 + \tau^2 + \vert \xi^\prime \vert_x^2}\right).\]
Then $\chi \in \mathsf{S}_{\intercal}^0(\mathbb{R}_t \times \mathbb{R}^n_x)$ by Lemma 18.1.10 of \cite{HormanderIII}. One has $\chi(x, \tau, \xi^\prime) = 1$ if $1 + \tau^2 + \vert \xi^\prime \vert_x^2 \leq 4 \rho(x, \tau, \xi^\prime)$, and $\chi$ is supported in the set
\[\left\{(x, \tau, \xi^\prime) \in \mathbb{R}^n \times \mathbb{R} \times \mathbb{R}^n, 1 + \tau^2 + \vert \xi^\prime \vert_x^2 \leq 5 \rho(x, \tau, \xi^\prime) \right\}.\]
One can write
\[\left\Vert \theta \partial_\nu u \right\Vert^2_{H^s(\mathbb{R} \times \mathbb{R}^{n-1})} \lesssim \left\Vert \theta \OpT(1 - \chi_{\vert x^n = 0})(\partial_\nu u) \right\Vert^2_{H^s(\mathbb{R} \times \mathbb{R}^{n-1})} + \left\Vert \OpT(\chi_{\vert x^n = 0})(\partial_\nu u) \right\Vert^2_{H^s(\mathbb{R} \times \mathbb{R}^{n-1})}.\]
We will study the two terms on the right-hand side separately: for the first one, $\partial_t$ turns out to be elliptic, and for the second one, $P$ is elliptic.

\begin{rem}
We use the notation $\OpT$ both for tangential pseudo-differential operators on $\mathbb{R}_t \times \mathbb{R}^n_x$ and for pseudo-differential operators on the boundary $\mathbb{R}_t \times \mathbb{R}^{n-1}_{x^\prime}$. They coincide at $x_n = 0$ up to a $\frac{1}{2\pi}$ factor.
\end{rem}

\subsubsection{Ellipticity of the time-derivative}

We prove the following estimate.

\begin{lem}
There exists $C > 0$ such that 
\[\left\Vert \theta \OpT(1 - \chi_{\vert x^n = 0})( \partial_\nu u) \right\Vert^2_{H^s(\mathbb{R} \times \mathbb{R}^{n-1})} \leq C \left( \left\Vert \theta \partial_t^{2r} \partial_\nu u \right\Vert^2_{H^{s - 2r}(\mathbb{R} \times \mathbb{R}^{n-1})} + \left\Vert \partial_\nu u \right\Vert^2_{H^{s - 1}(\mathbb{R} \times \mathbb{R}^{n-1})} \right).\]
\end{lem}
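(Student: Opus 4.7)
The strategy is to invert $\partial_t^{2r}$ microlocally on the support of $1 - \chi$. The key observation is that on $\supp(1 - \chi)$ one has $|\xi^\prime|_x^2 \lesssim 1 + \tau^2$, so at high frequency the dual variable $\tau$ dominates the tangential frequencies, and $(i\tau)^{2r}$ is an elliptic tangential symbol of order $2r$ there; I build a tangential parametrix accordingly.

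Fix an auxiliary cutoff $\psi \in \mathscr{C}^\infty(\mathbb{R}, [0, 1])$ with $\psi \equiv 0$ on $[-1/2, 1/2]$ and $\psi \equiv 1$ for $|\tau| \geq 1$, and set
\[
e(x^\prime, \tau, \xi^\prime) = (1 - \chi(x^\prime, 0, \tau, \xi^\prime)) \, \psi(\tau) \, (i\tau)^{-2r}, \qquad E = \OpT(e).
\]
The first technical step is to verify that $e \in \mathsf{S}_\intercal^{-2r}(\mathbb{R} \times \mathbb{R}^{n-1})$. On $\supp e$, one has $|\xi^\prime|_{x^\prime}^2 \lesssim 1 + \tau^2$ and $|\tau| \geq 1/2$, so $\langle (\tau, \xi^\prime) \rangle \sim |\tau|$; this gives $|(i\tau)^{-2r}| \lesssim \langle (\tau, \xi^\prime) \rangle^{-2r}$, and derivative bounds follow from Leibniz using that $\chi \in \mathsf{S}_\intercal^0$ and that all derivatives of $\psi$ of positive order are compactly supported.

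The second step is an exact pseudo-differential composition. Since the full symbol $(i\tau)^{2r}$ of $\partial_t^{2r}$ depends only on $\tau$, the Moyal expansion terminates at the leading term, yielding the exact equality
\[
E \, \partial_t^{2r} = \OpT\bigl( (1 - \chi_{\vert x^n = 0}) \, \psi(\tau) \bigr).
\]
Therefore $\OpT(1 - \chi_{\vert x^n = 0}) - E \, \partial_t^{2r} = \OpT\bigl( (1 - \chi_{\vert x^n = 0})(1 - \psi(\tau)) \bigr)$, and the symbol on the right-hand side is compactly supported in $(\tau, \xi^\prime)$: indeed, on its support $|\tau| \leq 1$ and $|\xi^\prime|_{x^\prime}^2 \lesssim 1 + \tau^2 \lesssim 2$. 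Hence this remainder operator, call it $\mathcal{R}$, is smoothing.

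For the last step, apply this identity to $\partial_\nu u$, multiply by $\Theta$, and commute $\Theta$ through $E$ using $\Theta E = E \Theta + [\Theta, E]$ with $[\Theta, E] \in \Psi_\intercal^{-2r - 1}$, to obtain
\[
\Theta \, \OpT(1 - \chi_{\vert x^n = 0})(\partial_\nu u) = E(\Theta \, \partial_t^{2r} \partial_\nu u) + [\Theta, E](\partial_t^{2r} \partial_\nu u) + \Theta \, \mathcal{R} \, \partial_\nu u.
\]
Taking $H^s$-norms: the first term is controlled by $\Vert \Theta \, \partial_t^{2r} \partial_\nu u \Vert_{H^{s - 2r}}$ since $E$ has order $-2r$; the second by $\Vert \partial_t^{2r} \partial_\nu u \Vert_{H^{s - 2r - 1}} \lesssim \Vert \partial_\nu u \Vert_{H^{s - 1}}$ as $[\Theta, E]$ has order $-2r - 1$; the third by $\Vert \partial_\nu u \Vert_{H^{s - 1}}$ (indeed, by any Sobolev norm) because $\mathcal{R}$ is smoothing. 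Squaring yields the stated estimate. I expect the main difficulty to be the symbol-class verification in the first step, where the precise support property of $1 - \chi$ is essential to compensate for the blow-up of $(i\tau)^{-2r}$ near $\tau = 0$, handled here by the auxiliary cutoff $\psi$ whose error produces a compactly-supported, hence smoothing, residual symbol.
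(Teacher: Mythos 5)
Your proof is correct and follows essentially the same route as the paper: both arguments exploit that on $\supp(1-\chi)$ one has $\vert \xi^\prime \vert_{x^\prime}^2 \lesssim 1 + \tau^2$, cut off a compactly supported (hence smoothing) low-frequency piece near $\tau = 0$, invert $\partial_t^{2r}$ by the order-$(-2r)$ tangential symbol $(i\tau)^{-2r}$ times the remaining cutoff, and absorb the commutator with $\Theta$ into the $\left\Vert \partial_\nu u \right\Vert_{H^{s-1}}$ term. The only cosmetic difference is that the paper's low-frequency cutoff $\chi_1$ is taken in all of $(\tau, \xi^\prime)$ rather than in $\tau$ alone.
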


\begin{proof}
Fix $\chi_1 = \chi_1(x^\prime, \tau, \xi^\prime)$ a smooth compactly supported function such that $\chi_1(x^\prime, \tau, \xi^\prime) = 1$ if $\tau^2 + \vert \xi^\prime \vert_{x^\prime}^2 \leq 1$. Write 
\begin{align*}
\left\Vert \theta \OpT(1 - \chi_{\vert x^n = 0})(\partial_\nu u) \right\Vert^2_{H^s(\mathbb{R} \times \mathbb{R}^{n-1})} \leq & \ \left\Vert \theta \OpT\left(\chi_1 (1 - \chi_{\vert x^n = 0})\right)(\partial_\nu u) \right\Vert^2_{H^s(\mathbb{R} \times \mathbb{R}^{n-1})} \\
& + \left\Vert \theta \OpT\left((1 - \chi_1) (1 - \chi_{\vert x^n = 0})\right)(\partial_\nu u) \right\Vert^2_{H^s(\mathbb{R} \times \mathbb{R}^{n-1})}.
\end{align*}
As $\chi_1 (1 - \chi_{\vert x^n = 0}) \in \mathsf{S}^{- \infty}(\mathbb{R}_t \times \mathbb{R}^{n - 1}_{x^\prime})$, one has
\[\left\Vert \theta \OpT\left(\chi_1 (1 - \chi_{\vert x^n = 0})\right)(\partial_\nu u) \right\Vert^2_{H^s(\mathbb{R} \times \mathbb{R}^{n-1})} \lesssim \left\Vert \partial_\nu u \right\Vert^2_{H^{s - 1}(\mathbb{R} \times \mathbb{R}^{n-1})}.\]
Thus, to complete the proof of the lemma, it suffices to show that
\begin{align}
& \left\Vert \theta \OpT\left((1 - \chi_1) (1 - \chi_{\vert x^n = 0})\right)(\partial_\nu u) \right\Vert^2_{H^s(\mathbb{R} \times \mathbb{R}^{n-1})} \nonumber \\ 
\lesssim \ & \left\Vert \theta \partial_t^{2r} \partial_\nu u \right\Vert^2_{H^{s - 2r}(\mathbb{R} \times \mathbb{R}^{n-1})} + \left\Vert \partial_\nu u \right\Vert^2_{H^{s - 1}(\mathbb{R} \times \mathbb{R}^{n-1})}. \label{eq_dt_ell_main}
\end{align}

Set $\chi_2 = (1 - \chi_1) (1 - \chi_{\vert x^n = 0}) \in \mathsf{S}^0(\mathbb{R}_t \times \mathbb{R}^{n - 1}_{x^\prime})$. If $(x^\prime, \tau, \xi^\prime) \in \supp \chi_2$, then $\tau^2 + \vert \xi^\prime \vert_{x^\prime}^2 > 1$ and
\[1 + \tau^2 + \vert \xi^\prime \vert_{x^\prime}^2 > 4 \left( \vert \xi^\prime \vert_{x^\prime}^2 - \tau^2 \right).\]
Combining those two inequalities, one finds 
\[2 \left(\tau^2 + \vert \xi^\prime \vert_{x^\prime}^2 \right) + 1 + \tau^2 + \vert \xi^\prime \vert_{x^\prime}^2 > 2 + 4 \left( \vert \xi^\prime \vert_{x^\prime}^2 - \tau^2 \right)\]
that is
\begin{equation}\label{eq_dt_ell_def_chi_tilde_3}
7 \tau^2 > 1 + \vert \xi^\prime \vert_{x^\prime}^2.
\end{equation}
In particular, $\tilde{\chi}_2(x^\prime, \tau, \xi^\prime) = \tau^{-2r} \chi_2 (x^\prime, \tau, \xi^\prime)$ is well-defined. Using (\ref{eq_dt_ell_def_chi_tilde_3}), one finds $\tilde{\chi}_2 \in \mathsf{S}^{- 2 r}(\mathbb{R}_t \times \mathbb{R}^{n - 1}_{x^\prime})$. Since $\OpT(\chi_2) = \OpT( \tilde{\chi}_2) \partial_t^{2r}$, one has
\begin{align*}
\left\Vert \theta \OpT(\chi_2)(\partial_\nu u) \right\Vert^2_{H^s(\mathbb{R} \times \mathbb{R}^{n-1})} \leq \ & \left\Vert \OpT( \tilde{\chi}_2) \left( \theta \partial_t^{2r} \partial_\nu u \right) \right\Vert^2_{H^s(\mathbb{R} \times \mathbb{R}^{n-1})} \\
& + \left\Vert \left[ \theta, \OpT( \tilde{\chi}_2) \right] \partial_t^{2r} \partial_\nu u \right\Vert^2_{H^s(\mathbb{R} \times \mathbb{R}^{n-1})}
\end{align*}
and this gives (\ref{eq_dt_ell_main}), as $\left[ \theta, \OpT( \tilde{\chi}_2) \right] \partial_t^{2r} \in \Psi^{- 1}(\mathbb{R}_t \times \mathbb{R}^{n - 1}_{x^\prime})$.
\end{proof}

\subsubsection{Ellipticity of the wave operator}

We denote by $\underline{u}$ and $\underline{f}$ the extensions by $0$ of $u$ and $f$ on the whole space. In the sense of distributions on $\mathbb{R}^{n+1}$, since $u_{\vert x_n = 0} = 0$, one has
\[P \underline{u} = \underline{f} + \delta_{x^n = 0} \otimes \partial_\nu u,\]
and this holds in fact in $\mathscr{S}^\prime(\mathbb{R}^{n+1})$. As $\OpT(\chi)$ sends $\mathscr{S}^\prime(\mathbb{R}^{n+1})$ to $\mathscr{S}^\prime(\mathbb{R}^{n+1})$, one has 
\begin{equation}\label{eq_dt_ell_trace_1}
P \OpT(\chi) \underline{u} + \left[ \OpT(\chi), P \right] \underline{u} = \OpT(\chi) \underline{f} + \delta_{x^n = 0} \otimes \left( \OpT(\chi_{\vert x^n = 0}) \partial_\nu u \right)
\end{equation}
in $\mathscr{S}^\prime(\mathbb{R}^{n+1})$.

To get an estimate on $\OpT(\chi_{\vert x^n = 0}) \partial_\nu u$, we apply a parametrix of $P$. Thus, one has to find a non-tangential symbol $\tilde{\chi}$ of order $0$ supported where $P$ is elliptic, and such that $\tilde{\chi}(x, \tau, \xi) = 1$ if $(x, \tau, \xi^\prime) \in \supp \chi$. If $(x, \tau, \xi)$ is such that $(x, \tau, \xi^\prime) \in \supp \chi$, then 
\[1 + \tau^2 + \vert \xi^\prime \vert_x^2 \leq 5 \left( \vert \xi^\prime \vert_x^2 - \tau^2 \right)\]
and this implies
\[1 + \tau^2 + \xi_n^2 +\vert \xi^\prime \vert_x^2 \leq 5 \left( \vert \xi^\prime \vert_x^2 + \xi_n^2 - \tau^2 \right) = 5 p(x, \tau, \xi).\]
Set
\[\tilde{\chi}(x, \tau, \xi) = \eta\left( \frac{p(x, \tau, \xi)}{1 + \xi_n^2 + \vert \xi^\prime \vert_x^2 + \tau^2} \right)\]
where $\eta \in \mathscr{C}^\infty(\mathbb{R}_+, [0, 1])$ is such that $\eta(\sigma) = 1$ if $\sigma \geq \frac{1}{5}$, and $\eta(\sigma) = 0$ if $\sigma \leq \frac{1}{10}$. Then $\tilde{\chi}$ is supported where $P$ is elliptic, and $\tilde{\chi}(x, \tau, \xi) = 1$ if $(x, \tau, \xi^\prime) \in \supp \chi$. The function $\tilde{\chi}$ is a symbol of order 0 by Lemma 18.1.10 of \cite{HormanderIII}.

Set $Q = \Op(q) \in \Psi^{- 2}(\mathbb{R}_t \times \mathbb{R}^n_x)$, with
\[q(x, \tau, \xi) = \frac{\tilde{\chi}(x, \tau, \xi)}{p(x, \tau, \xi)}.\]
Pseudo-differential calculus gives $QP = \Op(\tilde{\chi}) + R_1$, with $R_1 \in \Psi^{- 1}(\mathbb{R}_t \times \mathbb{R}^n_x)$. Note that one can construct $\tilde{Q} \in \Psi^{- 2}(\mathbb{R}_t \times \mathbb{R}^n_x)$ such that 
\[\tilde{Q} P - \Op(\tilde{\chi}) \in \Psi^{- \infty}(\mathbb{R}_t \times \mathbb{R}^n_x)\]
as in Theorem 18.1.9 of \cite{HormanderIII}, but such a refinement is not needed here.

Applying $Q$ to Equation (\ref{eq_dt_ell_trace_1}), one finds
\begin{equation}\label{eq_dt_ell_trace_2}
\Op(\tilde{\chi}) \OpT(\chi) \underline{u} + R_1 \OpT(\chi) \underline{u} + Q \left[ \OpT(\chi), P \right] \underline{u} = Q \OpT(\chi) \underline{f} + Q \left( \delta_{x^n = 0} \otimes \left( \OpT(\chi_{\vert x^n = 0}) \partial_\nu u \right) \right).
\end{equation}
Since $u_{\vert x_n = 0} = 0$, one has
\[\Op(\tilde{\chi}) \OpT(\chi) \underline{u}_{\vert x_n = 0} = \OpT(\chi) \underline{u}_{\vert x_n = 0} + \Op(\tilde{\chi} - 1) \OpT(\chi) \underline{u}_{\vert x_n = 0} = \Op(\tilde{\chi} - 1) \OpT(\chi) \underline{u}_{\vert x_n = 0}.\]
Thus, computing the trace of (\ref{eq_dt_ell_trace_2}) at $x^n = 0$ gives
\begin{equation}\label{eq_dt_ell_trace_3}
Q \left( \delta_{x^n = 0} \otimes \left( \OpT(\chi_{\vert x^n = 0}) \partial_\nu u \right) \right)_{\vert x_n = 0} = - Q \OpT(\chi) \underline{f}_{\vert x_n = 0} + R_3 u_{\vert x_n = 0}
\end{equation}
where the rest $R_3 u$ is
\[R_3 u = \Op(\tilde{\chi} - 1) \OpT(\chi) \underline{u} + R_1 \OpT(\chi) \underline{u} + Q \left[ \OpT(\chi), P \right] \underline{u}.\]

\begin{lem}\label{lem_right_side_proof_dt_ell}
There exists $C > 0$ such that
\begin{equation}\label{eq_dt_ell_trace_4}
\left\Vert Q \OpT(\chi) \underline{f}_{\vert x^n = 0} \right\Vert_{H^{s + 1}(\mathbb{R} \times \mathbb{R}^{n-1})} \leq C \left( \Vert f \Vert_{H^{s - \frac{1}{2}}(\mathbb{R} \times \mathbb{R}^n_+)} \right)
\end{equation}
and
\begin{equation}\label{eq_dt_ell_trace_5}
\left\Vert R_3 u_{\vert x^n = 0} \right\Vert_{H^{s + 1}(\mathbb{R} \times \mathbb{R}^{n-1})} \leq C \left( \left\Vert \partial_\nu u \right\Vert_{H^{s - 1}(\mathbb{R} \times \mathbb{R}^{n-1})} + \Vert u \Vert_{H^{s + \frac{1}{2}}(\mathbb{R} \times \mathbb{R}^n_+)} \right).
\end{equation}
\end{lem}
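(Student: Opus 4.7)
My overall plan is to derive both estimates from the pseudo-differential calculus on $\mathbb{R}_t \times \mathbb{R}^n_x$ combined with the standard trace theorem $g \in H^{\sigma}(\mathbb{R}^{n+1}) \mapsto g_{\vert x^n = 0} \in H^{\sigma - 1/2}(\mathbb{R}^n)$, which is valid for $\sigma > 1/2$. Since $s > -1$, the index $s + 3/2$ satisfies this condition, so every time we produce a distribution in $H^{s + 3/2}(\mathbb{R}^{n+1})$ we automatically obtain a boundary trace in $H^{s+1}(\mathbb{R} \times \mathbb{R}^{n-1})$ of the desired form.

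For \eqref{eq_dt_ell_trace_4}, I would first note that $Q \OpT(\chi)$ is a (non-tangential) pseudo-differential operator of order $-2$, because $Q \in \Psi^{-2}(\mathbb{R}_t \times \mathbb{R}^n_x)$ and $\OpT(\chi) \in \Psi^{0}_{\intercal}(\mathbb{R}_t \times \mathbb{R}^n_x) \subset \Psi^{0}(\mathbb{R}_t \times \mathbb{R}^n_x)$. It therefore maps $H^{s - 1/2}(\mathbb{R}^{n+1}) \to H^{s + 3/2}(\mathbb{R}^{n+1})$, and the trace theorem produces its restriction on $\{x^n = 0\}$ as an element of $H^{s+1}(\mathbb{R} \times \mathbb{R}^{n-1})$ with the correct norm bound. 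The remaining task is to control $\Vert \underline{f} \Vert_{H^{s - 1/2}(\mathbb{R}^{n+1})}$ by $\Vert f \Vert_{H^{s - 1/2}(\mathbb{R}_t \times \mathbb{R}^n_+)}$, which is immediate from boundedness of extension by zero when $s - 1/2 < 1/2$; otherwise one replaces $\underline{f}$ by a bounded Sobolev extension of $f$, using that only values of $f$ in $\{x^n > 0\}$ contribute to $(Q \OpT(\chi) \underline{f})_{\vert x^n = 0}$ modulo an operator of order $-\infty$.

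For \eqref{eq_dt_ell_trace_5}, I would treat the three summands of $R_3$ separately and show that each sits in $\Psi^{-1}(\mathbb{R}_t \times \mathbb{R}^n_x)$. The first, $\Op(\tilde{\chi} - 1) \OpT(\chi)$, has principal symbol $(\tilde{\chi} - 1)\chi$, which vanishes identically because $\tilde{\chi} \equiv 1$ on $\supp \chi$ by construction of $\tilde{\chi}$; hence the composition is of order $-1$. The second, $R_1 \OpT(\chi)$, is of order $-1$ since $R_1 \in \Psi^{-1}$ and $\OpT(\chi) \in \Psi^{0}$. The third, $Q\,[\OpT(\chi), P]$, combines $Q \in \Psi^{-2}$ with the commutator $[\OpT(\chi), P] \in \Psi^{1}$ (order $0 + 2 - 1 = 1$ from the standard commutator formula), hence it is again of order $-1$. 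Consequently, $R_3 : H^{s + 1/2}(\mathbb{R}^{n+1}) \to H^{s + 3/2}(\mathbb{R}^{n+1})$, and the trace at $\{x^n = 0\}$ lies in $H^{s + 1}(\mathbb{R} \times \mathbb{R}^{n-1})$ with the right bound.

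The main obstacle is controlling $\Vert \underline{u} \Vert_{H^{s + 1/2}(\mathbb{R}^{n+1})}$ in terms of the right-hand side of \eqref{eq_dt_ell_trace_5}. Extension by zero preserves $H^{\sigma}$ regularity only for a limited range of $\sigma$, and the obstruction here is the jump of $\partial_{x^n} \underline{u}$ across $\{x^n = 0\}$, equal to $\partial_\nu u$; further differentiation of this jump produces a Dirac measure supported on the boundary whose Sobolev norm is precisely of the form $\Vert \partial_\nu u \Vert_{H^{s - 1}(\mathbb{R} \times \mathbb{R}^{n-1})}$ after the appropriate shift of indices. The clean strategy is to decompose $\underline{u}$ into a part with controlled half-space regularity, bounded by $\Vert u \Vert_{H^{s + 1/2}(\mathbb{R}_t \times \mathbb{R}^n_+)}$, and a boundary-supported correction proportional to $\partial_\nu u$, and then push each part separately through $R_3$ and through the trace map, using the half-derivative of slack gained from the order $-1$ of $R_3$. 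Carrying out this splitting carefully is the delicate point of the argument.
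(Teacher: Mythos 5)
For \eqref{eq_dt_ell_trace_4} your route is the paper's: trace theorem with a half-derivative loss (valid since $s+\tfrac32>\tfrac12$), the order $-2$ of $Q$, and the boundedness of $\OpT(\chi)$. For $s<1$ this is complete. Your patch for $s\geq 1$, however, is not sound: the difference between $\underline{f}$ and a bounded Sobolev extension of $f$ is supported in $\{x^n\leq 0\}$ with singular support meeting $\{x^n=0\}$, so pseudolocality tells you nothing about the trace of $Q\OpT(\chi)$ applied to that difference \emph{at} $x^n=0$; the claim that only the values of $f$ in $\{x^n>0\}$ contribute ``modulo an operator of order $-\infty$'' fails exactly on the boundary. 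The paper avoids this by working with half-space norms throughout and using that $\OpT(\chi)$ is tangential, hence commutes with multiplication by $\mathds{1}_{x^n>0}$, so that $\OpT(\chi)\underline{f}$ is again an extension by zero of a controlled half-space function.

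For \eqref{eq_dt_ell_trace_5} the decisive step is precisely the one you defer, and your plan for it is not the paper's and is not carried out. The paper's mechanism is twofold. First, for the term $\Op(\tilde{\chi}-1)\OpT(\chi)\underline{u}$ it is not merely that the principal symbol vanishes (order $-1$, as you claim): the \emph{full} symbol of the composition vanishes, because $\tilde{\chi}\equiv 1$ on the closed region $\{p\geq\tfrac15(1+\vert\xi\vert^2+\tau^2)\}$, which contains $\supp\chi\times\mathbb{R}_{\xi_n}$, and every derivative of $\theta$ vanishes at the threshold; hence the composition lies in $\Psi^{-\infty}$ and may be applied to $\underline{u}\in L^2$ with arbitrary gain, so no regularity of $\underline{u}$ beyond $L^2$ is needed for that term. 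Second, the sub-lemma showing that $1-\tilde{\chi}$ vanishes in a conic neighbourhood of the conormal direction $\{\tau=0,\ \xi'=0\}$ is exactly what allows Theorem 18.1.35 of H\"ormander to mediate between the extension by zero and half-space regularity. Your alternative --- splitting $\underline{u}$ into a regular part plus a boundary layer carrying $\partial_\nu u$ --- is left unexecuted and is also misaimed: since $u_{\vert x^n=0}=0$, the function $\underline{u}$ itself has no jump (it belongs to $H^{\sigma}(\mathbb{R}^{n+1})$ for $\sigma<\tfrac32$); the Dirac layer $\delta_{x^n=0}\otimes\partial_\nu u$ appears in $P\underline{u}$, i.e.\ in identity \eqref{eq_dt_ell_trace_1}, which the proof has already used, and pushing a genuine boundary layer through $R_3$ and tracing it at $x^n=0$ without the symbol-support information is as hard as the original problem. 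So the proposal is a correct skeleton for the range $-1<s<1$, but the argument that makes the lemma hold as stated is missing.
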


\begin{proof}
As $s > - 1$ and as $Q$ is of order $-2$, one has
\[\left\Vert Q \OpT(\chi) \underline{f}_{\vert x^n = 0} \right\Vert_{H^{s + 1}(\mathbb{R} \times \mathbb{R}^{n-1})} \lesssim \left\Vert Q \OpT(\chi) \underline{f} \right\Vert_{H^{s + \frac{3}{2}}(\mathbb{R} \times \mathbb{R}^n_+)} \lesssim \left\Vert \OpT(\chi) \underline{f} \right\Vert_{H^{s - \frac{1}{2}}(\mathbb{R} \times \mathbb{R}^n_+)}.\]
As $\chi \in \Psi_\intercal^{0}(\mathbb{R}_t \times \mathbb{R}^n_x)$, one obtains (\ref{eq_dt_ell_trace_4}). Next, we prove (\ref{eq_dt_ell_trace_5}).

\paragraph{Term 1.} For the term $\Op(\tilde{\chi} - 1) \OpT(\chi) \underline{u}$, we use Theorem 18.1.35 of \cite{HormanderIII}.

\begin{lem}
The symbol $1 - \tilde{\chi}$ satisfies the assumption of Theorem 18.1.35 of \cite{HormanderIII}: there exists $\epsilon > 0$ such that 
\[1 - \tilde{\chi}(x, \tau, \xi) = 0\]
if $\epsilon \vert \xi_n\vert > 1$ and $\vert (\tau, \xi^\prime)\vert \leq \epsilon \vert \xi_n \vert$.
\end{lem}

\begin{proof}
There exists $C > 0$ such that
\[\vert \xi^\prime \vert_x^2 \leq c \vert \xi^\prime\vert ^2\]
for all $(x, \xi^\prime)$. Hence, if $\epsilon \vert \xi_n\vert > 1$ and $\vert (\tau, \xi^\prime)\vert \leq \epsilon \vert \xi_n \vert$, then 
\begin{align*}
\frac{\xi_n^2 + \vert \xi^\prime \vert_x^2 - \tau^2}{1 + \xi_n^2 + \vert \xi^\prime \vert_x^2 + \tau^2} & \geq \frac{\xi_n^2 - \tau^2}{1 + \xi_n^2 + \vert \xi^\prime \vert_x^2 + \tau^2} \geq \frac{\xi_n^2 - \epsilon^2 \xi_n^2 }{1 + \xi_n^2 + c \epsilon^2 \xi_n^2 + \epsilon^2 \xi_n^2} \geq \frac{\xi_n^2 - \epsilon^2 \xi_n^2 }{\epsilon^2 \xi_n^2 + \xi_n^2 + c \epsilon^2 \xi_n^2 + \epsilon^2 \xi_n^2} \\
& \geq \frac{1 - \epsilon^2}{1 + c \epsilon^2 + 2 \epsilon^2}.
\end{align*}
Thus, if $\epsilon$ is sufficiently small, one has
\[\frac{\xi_n^2 + \vert \xi^\prime \vert_x^2 - \tau^2}{1 + \xi_n^2 + \vert \xi^\prime \vert_x^2 + \tau^2} \geq \frac{1}{5}\]
implying $\tilde{\chi}(x, \tau, \xi) = 1$.
\end{proof}

Thus, one has $\Op(\tilde{\chi} - 1) \OpT(\chi) \in \Psi^0(\mathbb{R}_t \times \mathbb{R}^n_x)$, with vanishing symbol. Hence, $\Op(\tilde{\chi} - 1) \OpT(\chi) \in \Psi^{- \infty}(\mathbb{R}_t \times \mathbb{R}^n_x)$, yielding
\[\left\Vert \Op(\tilde{\chi} - 1) \OpT(\chi) \underline{u}_{\vert x^n = 0} \right\Vert_{H^{s + 1}(\mathbb{R} \times \mathbb{R}^{n-1})} \lesssim \left\Vert \Op(\tilde{\chi} - 1) \OpT(\chi) \underline{u} \right\Vert_{H^{s + \frac{3}{2}}(\mathbb{R} \times \mathbb{R}^n_+)} \lesssim \left\Vert u \right\Vert_{H^{s + \frac{3}{2} - N}(\mathbb{R} \times \mathbb{R}^n_+)}\]
for any $N \geq 0$.

\paragraph{Term 2.} For the term $R_1 \OpT(\chi) \underline{u}$, as $R_1 \in \Psi^{- 1}(\mathbb{R}_t \times \mathbb{R}^n_x)$, one has
\[\left\Vert R_1 \OpT(\chi) \underline{u}_{\vert x^n = 0} \right\Vert_{H^{s + 1}(\mathbb{R} \times \mathbb{R}^{n-1})} \lesssim \left\Vert R_1 \OpT(\chi) \underline{u} \right\Vert_{H^{s + \frac{3}{2}}(\mathbb{R} \times \mathbb{R}^n_+)} \lesssim \left\Vert u \right\Vert_{H^{s + \frac{1}{2}}(\mathbb{R} \times \mathbb{R}^n_+)}.\]

\paragraph{Term 3.} Pseudo-differential calculus gives $Q \left[ \OpT(\chi), P \right] \in \Psi^{- 1}(\mathbb{R}_t \times \mathbb{R}^n_x)$, yielding
\begin{align*}
\left\Vert Q \left[ \OpT(\chi), P \right] \OpT(\chi) \underline{u}_{\vert x^n = 0} \right\Vert_{H^{s + 1}(\mathbb{R} \times \mathbb{R}^{n-1})} & \lesssim \left\Vert Q \left[ \OpT(\chi), P \right] \OpT(\chi) \underline{u} \right\Vert_{H^{s + \frac{3}{2}}(\mathbb{R} \times \mathbb{R}^n_+)} \\
& \lesssim \left\Vert u \right\Vert_{H^{s + \frac{1}{2}}(\mathbb{R} \times \mathbb{R}^n_+)}.
\end{align*}
Gathering all those estimates, one finds (\ref{eq_dt_ell_trace_5}). This completes the proof of Lemma \ref{lem_right_side_proof_dt_ell}.
\end{proof}

We now turn to the study of the left-hand side of (\ref{eq_dt_ell_trace_3}).

\begin{lem}\label{lem_left_side_proof_dt_ell}
There exists $C > 0$ such that
\begin{align*}
& \left\Vert \OpT(\chi_{\vert x^n = 0}) \partial_\nu u \right\Vert_{H^s(\mathbb{R} \times \mathbb{R}^{n-1})} \\
\leq \ & C \left( \left\Vert Q \left( \delta_{x^n = 0} \otimes \left( \OpT(\chi_{\vert x^n = 0}) \partial_\nu u \right) \right)_{\vert x^n = 0} \right\Vert_{H^{s + 1}(\mathbb{R} \times \mathbb{R}^{n-1})} + \left\Vert \partial_\nu u \right\Vert_{H^{s - 1}(\mathbb{R} \times \mathbb{R}^{n-1})} \right).
\end{align*}
\end{lem}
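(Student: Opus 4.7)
The plan is to identify $v\mapsto Q(\delta_{x^n=0}\otimes v)_{|x^n=0}$, where $v = \OpT(\chi_{|x^n=0})\partial_\nu u$, with a tangential pseudo-differential operator of order $-1$ that is elliptic on $\supp\chi_{|x^n=0}$, and then run a standard elliptic parametrix argument. Since the Fourier transform of $\delta_{x^n=0}\otimes v$ in the variables $(t,x)$ equals $\hat v(\tau,\xi')$ (independent of $\xi_n$), a direct interchange of integrations gives
\[
Q(\delta_{x^n=0}\otimes v)(t,x',0) = \OpT(\ell)\,v, \qquad \ell(x',\tau,\xi') \;=\; \frac{1}{2\pi}\int_{\mathbb{R}} \frac{\tilde\chi(x',0,\tau,\xi',\xi_n)}{p(x',0,\tau,\xi',\xi_n)}\,d\xi_n.
\]
A direct estimate, using that $\tilde\chi$ is supported where $p\gtrsim 1+\tau^2+|\xi'|^2+\xi_n^2$, shows that $\ell\in\mathsf{S}^{-1}_\intercal(\mathbb{R}_t\times\mathbb{R}^{n-1}_{x'})$. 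Moreover, on $\supp\chi_{|x^n=0}$ the inequality $1+\tau^2+|\xi'|^2_{x'}\le 5\rho$ (used to build $\chi$ and $\tilde\chi$) forces $\tilde\chi(x',0,\tau,\xi',\xi_n)=1$ for every $\xi_n\in\mathbb{R}$, so the residue computation $\int_{\mathbb{R}} d\xi_n/(\xi_n^2+\rho) = \pi/\sqrt{\rho}$ yields
\[
\ell(x',\tau,\xi') \;=\; \frac{1}{2\sqrt{\rho(x',\tau,\xi')}} \qquad \text{on } \supp\chi_{|x^n=0},
\]
which is elliptic of order $-1$ there, since $\rho \gtrsim 1+\tau^2+|\xi'|^2$ on that conic set.

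Next I would build a tangential parametrix. Fix $\tilde\chi_2\in\mathsf{S}^0_\intercal$ equal to $1$ on $\supp\chi_{|x^n=0}$ and supported in the conic open set where $\ell$ is elliptic. Standard parametrix construction gives $F\in\Psi^1_\intercal$ and $R\in\Psi^{-\infty}_\intercal$ with
\[
F\,\OpT(\ell) \;=\; \OpT(\tilde\chi_2) + R.
\]
Because $v=\OpT(\chi_{|x^n=0})\partial_\nu u$ and $(\tilde\chi_2-1)\chi_{|x^n=0}\equiv 0$, the composition $\OpT(\tilde\chi_2-1)\OpT(\chi_{|x^n=0})$ has vanishing principal symbol and hence lies in $\Psi^{-1}_\intercal$; consequently
\[
\bigl\|(\OpT(\tilde\chi_2)-I)v\bigr\|_{H^s(\mathbb{R}\times\mathbb{R}^{n-1})} \;\lesssim\; \|\partial_\nu u\|_{H^{s-1}(\mathbb{R}\times\mathbb{R}^{n-1})}.
\]
Combining this with the boundedness $F:H^{s+1}\to H^s$ and the smoothing of $R$ yields
\[
\|v\|_{H^s} \;\le\; \|\OpT(\tilde\chi_2)v\|_{H^s} + \|(\OpT(\tilde\chi_2)-I)v\|_{H^s} \;\lesssim\; \|\OpT(\ell)v\|_{H^{s+1}} + \|\partial_\nu u\|_{H^{s-1}},
\]
and recognizing $\OpT(\ell)v = Q(\delta_{x^n=0}\otimes v)_{|x^n=0}$ finishes the proof.

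The main obstacle I expect is the symbolic bookkeeping for $\ell$: justifying the Fubini interchange defining $\ell$ (first on Schwartz data, then by density), and verifying uniformly the symbol estimates $|\partial^\alpha_{x',\tau,\xi'}\ell|\lesssim \langle\tau,\xi'\rangle^{-1-|\alpha|}$ after integrating out $\xi_n$. Once $\ell\in\mathsf{S}^{-1}_\intercal$ and the ellipticity on $\supp\chi_{|x^n=0}$ are established, the parametrix step and the microlocal reduction of $v$ are routine.
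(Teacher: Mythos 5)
Your proposal is correct and follows essentially the same route as the paper: identify $v\mapsto Q(\delta_{x^n=0}\otimes v)_{\vert x^n=0}$ with a tangential operator $\OpT(q_\intercal)$ of order $-1$ whose symbol equals a constant times $\rho^{-1/2}$ on $\supp\chi_{\vert x^n=0}$ (because $\tilde\chi\equiv 1$ there), and then invert it microlocally on that elliptic set, absorbing the commutator/remainder into $\left\Vert \partial_\nu u\right\Vert_{H^{s-1}}$. The only cosmetic differences are that the paper uses the explicit one-term inverse $\OpT(\sqrt{\rho}\,\chi_3)$ together with the exact identity $\sqrt{\rho}\,q_\intercal\chi=\pi\chi$ instead of a general parametrix, and establishes $q_\intercal\in\mathsf{S}^{-1}_\intercal$ via a factorization $q_\intercal=a\cdot(F\circ b)$ with $F$ smooth rather than by direct differentiation under the integral, which also works.
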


\begin{proof}
The idea is to find a pseudo-differential expression of 
\[\Op(q) \left( \delta_{x^n = 0} \otimes \left( \OpT(\chi_{\vert x^n = 0}) \partial_\nu u \right) \right)_{\vert x^n = 0}.\]
By definition, one has
\begin{equation}\label{proof_lem_main_term_OpT_chi_eq1}
\Op(q) \left( \delta_{x^n = 0} \otimes \left( \OpT(\chi_{\vert x^n = 0}) \partial_\nu u \right) \right)_{\vert x_n = 0} (t, x^\prime) = \OpT(q_\intercal) \OpT(\chi_{\vert x^n = 0}) \partial_\nu u(t, x) 
\end{equation}
where $q_\intercal$ is the symbol 
\[q_\intercal(x^\prime, \tau, \xi^\prime) = \int_\mathbb{R} \frac{\tilde{\chi}(x^\prime, 0, \tau, \xi)}{\xi_n^2 + \rho(x^\prime, 0, \tau, \xi^\prime)} \mathrm{d}\xi_n.\]

\begin{lem}\label{lem_q_T_symbol}
One has $q_\intercal \in \mathsf{S}^{-1}(\mathbb{R}_t \times \mathbb{R}^{n - 1}_{x^\prime})$.
\end{lem}

\begin{proof}
Note that an explicit formula for $q_\intercal$ is not needed. The idea of the proof is to write $q_\intercal = a \times \left( F \circ b \right)$ where $a$ is a symbol of order $-1$, $F$ is a smooth function, and $b$ is a symbol of order $0$, so that the conclusion will be a consequence of Lemma 18.1.10 of \cite{HormanderIII}. Recall that 
\[\tilde{\chi}(x, \tau, \xi) = \eta\left( \frac{p(x, \tau, \xi)}{1 + \xi_n^2 + \vert \xi^\prime \vert_x^2 + \tau^2} \right)\]
where $\eta$ is a real nonnegative smooth function such that $\eta(\sigma) = 1$ if $\sigma \geq \frac{1}{5}$, and $\eta(\sigma) = 0$ if $\sigma \leq \frac{1}{10}$. Write $\vert \xi^\prime \vert_{x^\prime}$ instead of $\vert \xi^\prime \vert_{(x^\prime, 0)}$, and set
\[b(x^\prime, \tau, \xi^\prime) = \frac{\vert \xi^\prime \vert_{x^\prime}^2 - \tau^2}{1 + \vert \xi^\prime \vert_{x^\prime}^2 + \tau^2}.\]
Then $b \in \mathsf{S}^0(\mathbb{R}_t \times \mathbb{R}^{n - 1}_{x^\prime})$ and a change of variable gives
\[q_\intercal(x^\prime, \tau, \xi^\prime) = \frac{1}{\sqrt{1 + \vert \xi^\prime \vert_{x^\prime}^2 + \tau^2}} \int_\mathbb{R} \frac{1}{\sigma^2 + b(x^\prime, \tau, \xi^\prime)} \eta\left( \frac{\sigma^2 + b(x^\prime, \tau, \xi^\prime)}{\sigma^2 + 1} \right) \mathrm{d}\sigma.\]
Set 
\[F(\sigma^\prime) = \int_\mathbb{R} \frac{1}{\sigma^2 + \sigma^\prime} \eta\left( \frac{\sigma^2 + \sigma^\prime}{\sigma^2 + 1} \right) \mathrm{d}\sigma.\]

\begin{lem}
The function $F$ is smooth.
\end{lem}

\begin{proof}
Consider $\sigma^\prime \in \mathbb{R}$ and write 
\[g(\sigma) = \frac{\sigma^2 + \sigma^\prime}{\sigma^2 + 1}\]
for $\sigma \in \mathbb{R}$. Note that if $\sigma^\prime \geq \frac{1}{5}$, then $g(\sigma) \geq \frac{1}{5}$ for all $\sigma \in \mathbb{R}$, so that 
\[F(\sigma^\prime) = \int_\mathbb{R} \frac{1}{\sigma^2 + \sigma^\prime} \mathrm{d}\sigma.\]
Hence, we may assume that $\sigma^\prime < \frac{1}{5}$. In particular, as $\sigma^\prime < 1$, the function $g$ is a bijection from $\mathbb{R}_+$ to $[\sigma^\prime, 1]$. A change of variables gives
\[F(\sigma^\prime) = \int_{\sigma^\prime}^1 \frac{\eta(\sigma)}{\sigma \sqrt{1 - \sigma} \sqrt{\sigma - \sigma^\prime} }\mathrm{d}\sigma.\]
As $\eta(\sigma) = 0$ for $\sigma \leq \frac{1}{10}$, one finds that if $\sigma^\prime \leq \frac{1}{10}$, then
\[F(\sigma^\prime) = \int_{\frac{1}{10}}^1 \frac{\eta(\sigma)}{\sigma \sqrt{1 - \sigma} \sqrt{\sigma - \sigma^\prime} }\mathrm{d}\sigma,\]
implying that $F$ is smooth on $\left(-\infty, \frac{1}{10}\right]$. Finally, note that
\[F(\sigma^\prime) = \int_0^1 \frac{\eta\left(\sigma^\prime + (1 - \sigma^\prime) \sigma\right)}{ (\sigma^\prime + (1 - \sigma^\prime)\sigma) \sqrt{1 - \sigma} \sqrt{\sigma}} \mathrm{d}\sigma,\]
by a last change of variable. As $\sigma^\prime + (1 - \sigma^\prime) \sigma > \frac{1}{10}$ for $\sigma \in (0, 1)$, this proves that $F$ is smooth on $[\frac{1}{10}, \frac{1}{5}]$.
\end{proof}

Lemma 18.1.10 of \cite{HormanderIII} gives $q_\intercal \in \mathsf{S}^{-1}(\mathbb{R}_t \times \mathbb{R}^{n - 1}_{x^\prime})$, completing the proof of Lemma \ref{lem_q_T_symbol}.
\end{proof}

With the same construction as for $\chi$, consider $\chi_3 \in \mathsf{S}^0(\mathbb{R}_t \times \mathbb{R}^{n - 1}_{x^\prime})$ such that $\chi_3(x^\prime, \tau, \xi^\prime) = 1$ if $(x^\prime, 0, \tau, \xi^\prime) \in \supp \chi$ and $1 + \tau^2 + \vert \xi^\prime \vert_x^2 \lesssim \rho(x^\prime, 0, \tau, \xi^\prime)$ on $\supp \chi_3$. The function $\tilde{\chi}_3(x^\prime, \tau, \xi^\prime) = \sqrt{\rho(x^\prime, 0, \tau, \xi^\prime)} \chi_3(x, \tau, \xi^\prime)$ is well-defined, and one has $\tilde{\chi}_3 \in \mathsf{S}^1(\mathbb{R}_t \times \mathbb{R}^{n - 1}_{x^\prime})$. By Lemma \ref{lem_q_T_symbol}, one obtains
\[\OpT\left( \tilde{\chi}_3 \right) \OpT(q_\intercal) \OpT(\chi_{\vert x_n = 0}) = \OpT\left(\sqrt{\rho} q_\intercal \chi_{\vert x_n = 0} \right) + R_4\]
where $R_4$ is a tangential pseudo-differential operator of order $-1$. As $\tilde{\chi}(x^\prime, \tau, \xi) = 1$ if $(x, \tau, \xi^\prime) \in \supp \chi$, one has
\[q_\intercal(x^\prime, 0, \tau, \xi^\prime) \chi(x^\prime, 0, \tau, \xi^\prime) = \chi(x^\prime, 0, \tau, \xi^\prime) \int_\mathbb{R} \frac{1}{\xi_n^2 + \rho(x^\prime, 0, \tau, \xi^\prime)} \mathrm{d}\xi_n = \frac{\pi \chi(x^\prime, 0, \tau, \xi^\prime)}{\sqrt{\rho(x^\prime, 0, \tau, \xi^\prime)}},\]
and this gives
\[\OpT\left( \tilde{\chi}_3 \right) \OpT(q_\intercal) \OpT(\chi_{\vert x_n = 0})\partial_\nu u = \pi \OpT(\chi_{\vert x^n = 0}) \partial_\nu u + R_4 \partial_\nu u.\]

This yields
\[\left\Vert \OpT(\chi_{\vert x^n = 0}) \partial_\nu u \right\Vert_{H^s(\mathbb{R} \times \mathbb{R}^{n-1})} \lesssim \left\Vert \OpT\left( \tilde{\chi}_3 \right) \OpT(q_\intercal) \OpT(\chi_{\vert x_n = 0})\partial_\nu u \right\Vert_{H^s(\mathbb{R} \times \mathbb{R}^{n-1})} + \left\Vert R_4 \partial_\nu u \right\Vert_{H^s(\mathbb{R} \times \mathbb{R}^{n-1})}.\]
As $\tilde{\chi}_3$ is of order $1$ and $R_4$ of order $-1$, this gives
\[\left\Vert \OpT(\chi_{\vert x^n = 0}) \partial_\nu u \right\Vert_{H^s(\mathbb{R} \times \mathbb{R}^{n-1})} \lesssim \left\Vert \OpT(q_\intercal) \OpT(\chi_{\vert x_n = 0}) \partial_\nu u \right\Vert_{H^{s + 1}(\mathbb{R} \times \mathbb{R}^{n-1})} + \left\Vert \partial_\nu u \right\Vert_{H^{s - 1}(\mathbb{R} \times \mathbb{R}^{n-1})}.\]
Using (\ref{proof_lem_main_term_OpT_chi_eq1}), one obtains
\begin{align*}
\left\Vert \OpT(\chi_{\vert x^n = 0}) \partial_\nu u \right\Vert_{H^s(\mathbb{R} \times \mathbb{R}^{n-1})} \lesssim & \left\Vert \Op(q) \left( \delta_{x^n = 0} \otimes \left( \OpT(\chi_{\vert x^n = 0}) \partial_\nu u \right) \right)_{\vert x^n = 0} \right\Vert_{H^{s + 1}(\mathbb{R} \times \mathbb{R}^{n-1})} \\
& + \left\Vert \partial_\nu u \right\Vert_{H^{s - 1}(\mathbb{R} \times \mathbb{R}^{n-1})}.
\end{align*}
This completes the proof of Lemma \ref{lem_left_side_proof_dt_ell}.
\end{proof}

Using Lemma \ref{lem_right_side_proof_dt_ell} and Lemma \ref{lem_left_side_proof_dt_ell} in (\ref{eq_dt_ell_trace_3}), one finds the estimate of Proposition \ref{prop_analysis_half_space}.

\section{Change of regularity in observability inequalities }

We prove our main result, Theorem \ref{thm_main_change_reg}, that is, the equivalence between observability at different levels of regularity. First, we show that for all $r \in \mathbb{N}$ and $s \in \mathbb{R}$, $H^s$-observability for $\Theta$ implies $H^{s + 2r}$-observability for $\Theta$. Second, we show that for all $r \in \mathbb{N}$ and $s > -1$, $H^s$-observability for $\Theta$ implies $H^{s - 2r}$-observability for $\tilde{\Theta}$, for all $\tilde{\Theta}$ such that $\pi_k \tilde{\Theta} \neq 0$ on $\supp \pi_k \Theta$, for all $k \in \llbracket 1, N \rrbracket$. This is enough to give the full conclusion, by Lemma \ref{lem_iterpolation}. 

\subsection{Increasing the level of regularity}

Consider $\Theta \in \mathscr{C}^\infty_\mathrm{c}((0, T) \times \partial M, \mathbb{C}^N)$, $s \in \mathbb{R}$ and assume that $H^s$-observability for $\Theta$ holds. We prove that $H^{s + 2}$-observability for $\Theta$ holds, implying by induction that $H^{s + 2r}$-observability for $\Theta$ holds for all $r \geq 1$. Consider also $\left( u^0, u^1 \right) \in \mathcal{K}^{s + 3} \times \mathcal{K}^{s + 2}$, and write $u$ for the solution with initial data $\left( u^0, u^1 \right)$. We show that 
\[\left\Vert \left( u^0, u^1 \right) \right\Vert_{\mathcal{K}^{s + 3} \times \mathcal{K}^{s + 2}} \lesssim \left\Vert \diag(\Theta) \partial_\nu u \right\Vert_{H^{s + 2}((0, T) \times \partial M, \mathbb{C}^{N})}.\]

For $t \in (0, T)$, set $\tilde{u}(t) = \PP{s + 2} u(t)$. Then, $\tilde{u}$ is the solution of
\[\left \{
\begin{array}{rcccl}
\partial_t^2 \tilde{u} - \mathsf{P} \tilde{u} & = & 0 & \quad & \text{in } (0, T) \times M, \\
\left( \tilde{u}(0, \cdot), \partial_t \tilde{u}(0, \cdot) \right) & = & \left( \PP{s + 2} u^0, \PP{s + 1} u^1 \right) & \quad & \text{in } M, \\
\tilde{u} & = & 0 & \quad & \text{on } (0, T) \times \partial M.
\end{array}
\right.\]
Since $\left( \PP{s + 2} u^0, \PP{s + 1} u^1 \right) \in \mathcal{K}^{s + 1} \times \mathcal{K}^{s}$, $H^s$-observability for $\Theta$ gives
\[\left\Vert \PP{s + 2} u^0 \right\Vert_{\mathcal{K}^{s + 1}} + \left\Vert \PP{s + 1} u^1 \right\Vert_{\mathcal{K}^{s}} \lesssim \left\Vert \diag(\Theta) \partial_\nu \tilde{u} \right\Vert_{H^s((0, T) \times \partial M, \mathbb{C}^N)}.\]
By Theorem \ref{thm_LLT_dir}, one has $\tilde{u}(t) = \PP{s + 2} u(t) = \partial_t^2 u(t)$ in $\mathcal{K}^{s + 1}$ for all $t \in [0, T]$, and $\partial_\nu \tilde{u} = \partial_\nu \partial_t^2 u = \partial_t^2 \partial_\nu u$. Hence, the previous estimate reads
\[\left\Vert \PP{s + 2} u^0 \right\Vert_{\mathcal{K}^{s + 1}} + \left\Vert \PP{s + 1} u^1 \right\Vert_{\mathcal{K}^{s}} \lesssim \left\Vert \diag(\Theta) \partial_t^2 \partial_\nu u \right\Vert_{H^s((0, T) \times \partial M, \mathbb{C}^N)}.\]
Using the ellipticity estimate for $\mathsf{P}$ (Proposition \ref{prop_main_properties_K^s}-\emph{(ii)}), one finds
\begin{align*}
\left\Vert u^0 \right\Vert_{\mathcal{K}^{s + 3}} + \left\Vert u^1 \right\Vert_{\mathcal{K}^{s + 2}} \lesssim \ & \left\Vert \diag(\Theta) \partial_t^2 \partial_\nu u \right\Vert_{H^s((0, T) \times \partial M, \mathbb{C}^N)} \\
+ & \left\Vert \iota_{\mathcal{K}^{s + 3} \rightarrow \mathcal{K}^{s + 2}} u^0 \right\Vert_{\mathcal{K}^{s + 2}} + \left\Vert \iota_{\mathcal{K}^{s + 2} \rightarrow \mathcal{K}^{s + 1}} u^1 \right\Vert_{\mathcal{K}^{s + 1}}.
\end{align*}

To estimate the term with the normal derivative, note that $\diag(\Theta) \partial_t^2 \partial_\nu u = \partial_t^2 \left( \diag(\Theta) \partial_\nu u \right) - 2 \partial_t \diag(\Theta) \partial_t \partial_\nu u - \partial_t^2 \diag(\Theta) \partial_\nu u$, implying
\[\left\Vert \diag(\Theta) \partial_t^2 \partial_\nu u \right\Vert_{H^s((0, T) \times \partial M, \mathbb{C}^N)} \lesssim \left\Vert \diag(\Theta) \partial_\nu u \right\Vert_{H^{s + 2}((0, T) \times \partial M, \mathbb{C}^N)} + \left\Vert \partial_\nu u \right\Vert_{H^{s + 1}((0, T) \times \partial M, \mathbb{C}^N)},\]
where the embedding $\iota_{H^{s + 2} \rightarrow H^{s + 1}}$ has been omitted. By Theorem \ref{thm_LLT_dir}, if $v$ is the solution of 
\[\left \{
\begin{array}{rcccl}
\partial_t^2 v - \mathsf{P} v & = & 0 & \quad & \text{in } \mathbb{R} \times M, \\
\left( v(0, \cdot), \partial_t v(0, \cdot) \right) & = & \left(\iota_{\mathcal{K}^{s + 3} \rightarrow \mathcal{K}^{s + 2}} u^0, \iota_{\mathcal{K}^{s + 2} \rightarrow \mathcal{K}^{s + 1}} u^1 \right) & \quad & \text{in } M, \\
v & = & 0 & \quad & \text{on } \mathbb{R} \times \partial M,
\end{array}
\right.\]
then $\iota_{\mathcal{K}^{s + 3} \rightarrow \mathcal{K}^{s + 2}} u = v$ and $\iota_{H^{s + 2} \rightarrow H^{s + 1}} \partial_\nu u = \partial_\nu v$. Hence, using Theorem \ref{thm_LLT_dir} again, one obtains
\begin{align*}
\left\Vert \partial_\nu u \right\Vert_{H^{s + 1}((0, T) \times \partial M, \mathbb{C}^N)} 
& = \left\Vert \partial_\nu v \right\Vert_{H^{s + 1}((0, T) \times \partial M, \mathbb{C}^N)} \\
& \lesssim \left\Vert v(0) \right\Vert_{\mathcal{K}^{s + 2}} + \left\Vert \partial_t v(0) \right\Vert_{\mathcal{K}^{s + 1}} \\
& = \left\Vert \iota_{\mathcal{K}^{s + 3} \rightarrow \mathcal{K}^{s + 2}} u^0 \right\Vert_{\mathcal{K}^{s + 2}} + \left\Vert \iota_{\mathcal{K}^{s + 2} \rightarrow \mathcal{K}^{s + 1}} u^1 \right\Vert_{\mathcal{K}^{s + 1}},
\end{align*}
yielding
\[\left\Vert u^0 \right\Vert_{\mathcal{K}^{s + 3}} + \left\Vert u^1 \right\Vert_{\mathcal{K}^{s + 2}}
\lesssim \left\Vert \diag(\Theta) \partial_\nu u \right\Vert_{H^{s + 2}((0, T) \times \partial M, \mathbb{C}^N)} + \left\Vert \iota_{\mathcal{K}^{s + 3} \rightarrow \mathcal{K}^{s + 2}} u^0 \right\Vert_{\mathcal{K}^{s + 2}} + \left\Vert \iota_{\mathcal{K}^{s + 2} \rightarrow \mathcal{K}^{s + 1}} u^1 \right\Vert_{\mathcal{K}^{s + 1}}.\]

To complete the proof, we prove that the remainder terms on the right-hand side can be removed. The embedding 
\[\begin{array}{cccc}
K: & \mathcal{K}^{s + 3} \times \mathcal{K}^{s + 2} & \longrightarrow & \mathcal{K}^{s + 2} \times \mathcal{K}^{s + 1} \\
& \left( u^0, u^1 \right) & \longmapsto & \left( \iota_{\mathcal{K}^{s + 3} \rightarrow \mathcal{K}^{s + 2}} u^0, \iota_{\mathcal{K}^{s + 2} \rightarrow \mathcal{K}^{s + 1}} u^1 \right)
\end{array}\]
is compact, by Proposition \ref{prop_main_properties_K^s}-\emph{(i)}. We show that the operator 
\[\begin{array}{cccc}
A: & \mathcal{K}^{s + 3} \times \mathcal{K}^{s + 2} & \longrightarrow & H^{s + 2}((0, T) \times \partial M, \mathbb{C}^N) \\
& \left( u^0, u^1 \right) & \longmapsto & \diag(\Theta) \partial_\nu u
\end{array}\]
is one-to-one, using the fact that the operator 
\[\begin{array}{ccc}
\mathcal{K}^{s + 1} \times \mathcal{K}^{s} & \longrightarrow & H^s((0, T) \times \partial M, \mathbb{C}^N) \\
\left( u^0, u^1 \right) & \longmapsto & \diag(\Theta) \partial_\nu u
\end{array}\]
is one-to-one, by $H^s$-observability. Assume that $\left( u^0, u^1 \right) \in \mathcal{K}^{s + 3} \times \mathcal{K}^{s + 2}$ is such that $\diag(\Theta) \partial_\nu u = 0$. As above, one has $\iota_{\mathcal{K}^{s + 3} \rightarrow \mathcal{K}^{s + 1}} u = v$ and $\iota_{H^{s + 2} \rightarrow H^s} \partial_\nu u = \partial_\nu v$, where $v$ is the solution of 
\[\left \{
\begin{array}{rcccl}
\partial_t^2 v - \mathsf{P} v & = & 0 & \quad & \text{in } \mathbb{R} \times M, \\
\left( v(0, \cdot), \partial_t v(0, \cdot) \right) & = & \left(\iota_{\mathcal{K}^{s + 3} \rightarrow \mathcal{K}^{s + 1}} u^0, \iota_{\mathcal{K}^{s + 2} \rightarrow \mathcal{K}^{s}} u^1 \right) & \quad & \text{in } M, \\
v & = & 0 & \quad & \text{on } \mathbb{R} \times \partial M.
\end{array}
\right.\]
Since $\diag(\Theta) \partial_\nu v = 0$ in $H^s((0, T) \times \partial M, \mathbb{C}^N)$, $H^s$-observability gives $\left(\iota_{\mathcal{K}^{s + 3} \rightarrow \mathcal{K}^{s + 1}} u^0, \iota_{\mathcal{K}^{s + 2} \rightarrow \mathcal{K}^{s}} u^1 \right) = 0$. Thus, one finds $\left( u^0, u^1 \right) = 0$, and $H^{s + 2}$-observability is a consequence of the following lemma.

\begin{lem}\label{lem_general_compact_term_1}
Let $\mathcal{X}$, $\mathcal{Y}$ and $\mathcal{Z}$ be Hilbert spaces. Consider two continuous linear operators $A: \mathcal{X} \rightarrow \mathcal{Y}$ and $K: \mathcal{X} \rightarrow \mathcal{Z}$. Assume that $K$ is compact and that there exists $C > 0$ such that
\[\Vert x \Vert_\mathcal{X} \leq \left( \left\Vert A x \right\Vert_\mathcal{Y} + \left\Vert K x \right\Vert_\mathcal{Z} \right), \quad x \in \mathcal{X}.\]
Then the kernel of $A$ is finite-dimensional. If moreover $A$ is one-to-one, there exists $C^\prime > 0$ such that for all $x \in \mathcal{X}$, one has
\[\Vert x \Vert_\mathcal{X} \leq C^\prime \left\Vert A x \right\Vert_\mathcal{Y}, \quad x \in \mathcal{X}.\]
\end{lem}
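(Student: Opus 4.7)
This is the classical compactness-uniqueness scheme, which I would carry out in two independent steps.

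\textbf{Step 1: Finite-dimensionality of $\ker A$.} I would restrict the hypothesized inequality to $\ker A$, where the first term on the right-hand side vanishes, giving
\[\Vert x \Vert_\mathcal{X} \leq C \left\Vert K x \right\Vert_\mathcal{Z}, \quad x \in \ker A.\]
Then I would show that the closed unit ball of $\ker A$ (a closed subspace, hence a Hilbert space) is relatively compact. Take any bounded sequence $(x_n) \subset \ker A$. Since $K$ is compact, $(K x_n)$ admits a convergent, hence Cauchy, subsequence in $\mathcal{Z}$. Applying the restricted inequality to differences $x_n - x_m$ shows that this subsequence is Cauchy in $\mathcal{X}$, hence converges. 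By F. Riesz's theorem on locally compact normed spaces, $\ker A$ is finite-dimensional.

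\textbf{Step 2: Improved inequality when $A$ is injective.} I would argue by contradiction. If no constant $C'$ works, there exists a sequence $(x_n) \subset \mathcal{X}$ with $\Vert x_n \Vert_\mathcal{X} = 1$ and $\Vert A x_n \Vert_\mathcal{Y} \to 0$. By compactness of $K$, a subsequence $(K x_{n_k})$ converges in $\mathcal{Z}$. Using the hypothesized inequality applied to $x_{n_k} - x_{n_\ell}$,
\[\Vert x_{n_k} - x_{n_\ell} \Vert_\mathcal{X} \leq C \bigl( \Vert A x_{n_k} - A x_{n_\ell} \Vert_\mathcal{Y} + \Vert K x_{n_k} - K x_{n_\ell} \Vert_\mathcal{Z} \bigr) \underset{k, \ell \to \infty}{\longrightarrow} 0,\]
so $(x_{n_k})$ is Cauchy in $\mathcal{X}$ and converges to some $x \in \mathcal{X}$ with $\Vert x \Vert_\mathcal{X} = 1$. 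By continuity of $A$, $A x = \lim A x_{n_k} = 0$, so $x \in \ker A = \{0\}$ by injectivity, contradicting $\Vert x \Vert_\mathcal{X} = 1$.

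\textbf{Difficulty.} There is essentially no obstacle here: both steps are standard applications of compactness. The only mild subtlety is being careful that in Step 1 we only need compactness of $K$ restricted to the closed subspace $\ker A$, which is immediate from compactness of $K$ on $\mathcal{X}$. No properties specific to the wave equation or to the spaces $\mathcal{K}^s$ enter the proof; the lemma is a purely functional-analytic statement.
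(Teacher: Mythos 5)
Your proof is correct and is exactly the standard compactness--uniqueness argument that the paper has in mind (the paper omits the proof as ``straightforward''). Both steps are carried out properly, including the correct reading of the hypothesis with the constant $C$ in front of the right-hand side.
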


The proof is straightforward, and is omitted. Here, $A$ is one-to-one: the information from Lemma \ref{lem_general_compact_term_1} about the kernel is used below.

\subsection{Decreasing the level of regularity}

Consider $\Theta^1 \in \mathscr{C}^\infty_\mathrm{c}((0, T) \times \partial M, \mathbb{C}^N)$ and assume that $H^s$-observability for $\Theta^1$ holds. As the level of regularity can be increased, by the part of the proof of Section 3.1, we may assume that $s \geq 1$, without loss of generality. We prove that for $r \in \mathbb{N}^\ast$, $H^{s - 2r}$-observability for $\Theta^2$ holds, for all $\Theta^2 \in \mathscr{C}^\infty_\mathrm{c}((0, T) \times \partial M, \mathbb{C}^N)$ such that $\pi_k \Theta^2 \neq 0$ on $\supp \pi_k \Theta^1$, for all $k \in \llbracket 1, N \rrbracket$. Consider $\left( u^0, u^1 \right) \in \mathcal{K}^{s - 2r + 1} \times \mathcal{K}^{s - 2r}$, and denote by $u$ the associated solution.

Following the proof of Section 3.1, one might be inclined to define $\tilde{u}(t) = \mathsf{P}^{- r} u(t)$. However, this is not always possible, for example if $\mathsf{P} = \Delta + \lambda$, with $\lambda$ in the spectrum of the Dirichlet Laplacian. To overcome this difficulty, we use the shift operator of Proposition \ref{prop_main_properties_K^s}-\emph{(iii)}. We introduce $\left( \tilde{u}^0, \tilde{u}^1 \right)$ as the unique element of $\mathcal{K}^{s + 1} \times \mathcal{K}^{s}$ such that $\left( u^0, u^1 \right) = \left( \Shift{s - r + 1}^{r} \tilde{u}^0, \Shift{s - r}^{r} \tilde{u}^1 \right)$, and set $\tilde{u}$ as the solution associated with $\left( \tilde{u}^0, \tilde{u}^1 \right)$.

By $H^s$-observability for $\Theta^1$, one has
\[\left\Vert \tilde{u}^0 \right\Vert_{\mathcal{K}^{s + 1}} + \left\Vert \tilde{u}^1 \right\Vert_{\mathcal{K}^{s}} \lesssim \left\Vert \diag(\Theta^1) \partial_\nu \tilde{u} \right\Vert_{H^s((0, T) \times \partial M, \mathbb{C}^N)}.\]
Since $\Shift{s - r + 1}^{r}$ and $\Shift{s - r}^{r}$ are continuous, one has $\left\Vert u^0 \right\Vert_{\mathcal{K}^{s - 2 r + 1}} + \left\Vert u^1 \right\Vert_{\mathcal{K}^{s - 2 r}} \lesssim \left\Vert \tilde{u}^0 \right\Vert_{\mathcal{K}^{s + 1}} + \left\Vert \tilde{u}^1 \right\Vert_{\mathcal{K}^{s}}$, implying
\[\left\Vert u^0 \right\Vert_{\mathcal{K}^{s - 2 r + 1}} + \left\Vert u^1 \right\Vert_{\mathcal{K}^{s - 2 r}} \lesssim \left\Vert \diag(\Theta^1) \partial_\nu \tilde{u} \right\Vert_{H^s((0, T) \times \partial M, \mathbb{C}^N)}.\]
Now, Theorem \ref{thm_dt_ell_traces} gives
\[\left\Vert u^0 \right\Vert_{\mathcal{K}^{s - 2 r + 1}} + \left\Vert u^1 \right\Vert_{\mathcal{K}^{s - 2 r}} \lesssim \left\Vert \diag(\Theta^1) \partial_t^{2 r} \partial_\nu \tilde{u} \right\Vert_{H^{s - 2 r}((0, T) \times \partial M, \mathbb{C}^N)} + \left\Vert \tilde{u}^0 \right\Vert_{\mathcal{K}^{s + \frac{1}{2}}} + \left\Vert \tilde{u}^1 \right\Vert_{\mathcal{K}^{s - \frac{1}{2}}}.\]

Note that embeddings in the remainder terms are omitted here, as $s \geq 1$ (see Remark \ref{rem_dt_ell_embeddings}). Using Proposition \ref{prop_main_properties_K^s}-\emph{(iii)}, one finds
\begin{align*}
\left\Vert \tilde{u}^0 \right\Vert_{\mathcal{K}^{s + \frac{1}{2}}} + \left\Vert \tilde{u}^1 \right\Vert_{\mathcal{K}^{s - \frac{1}{2}}} = \ & \left\Vert \left( \Shift{s - r + \frac{1}{2}}^{r} \right)^{-1} \circ \iota_{\mathcal{K}^{s - 2r + 1} \rightarrow \mathcal{K}^{s - 2r + \frac{1}{2}}} u^0 \right\Vert_{\mathcal{K}^{s + \frac{1}{2}}} \\
+ & \left\Vert  \left( \Shift{s - r - \frac{1}{2}}^{r} \right)^{-1} \circ \iota_{\mathcal{K}^{s - 2r} \rightarrow \mathcal{K}^{s - 2r - \frac{1}{2}}} u^1 \right\Vert_{\mathcal{K}^{s - \frac{1}{2}}}.
\end{align*}
With the continuity of $\left( \Shift{s - r + \frac{1}{2}}^{r} \right)^{-1}$ and $\left( \Shift{s - r - \frac{1}{2}}^{r} \right)^{-1}$, one obtains
\[\left\Vert \tilde{u}^0 \right\Vert_{\mathcal{K}^{s + \frac{1}{2}}} + \left\Vert \tilde{u}^1 \right\Vert_{\mathcal{K}^{s - \frac{1}{2}}} \lesssim \left\Vert \iota_{\mathcal{K}^{s - 2r + 1} \rightarrow \mathcal{K}^{s - 2r + \frac{1}{2}}} u^0 \right\Vert_{\mathcal{K}^{s - 2r + \frac{1}{2}}} + \left\Vert \iota_{\mathcal{K}^{s - 2r} \rightarrow \mathcal{K}^{s - 2r - \frac{1}{2}}} u^1 \right\Vert_{\mathcal{K}^{s - 2r - \frac{1}{2}}},\]
implying
\begin{align*}
\left\Vert u^0 \right\Vert_{\mathcal{K}^{s - 2 r + 1}} + \left\Vert u^1 \right\Vert_{\mathcal{K}^{s - 2 r}} \lesssim \ & \left\Vert \diag(\Theta^1) \partial_t^{2 r} \partial_\nu \tilde{u} \right\Vert_{H^{s - 2 r}((0, T) \times \partial M, \mathbb{C}^N)} \\
+ & \left\Vert \iota_{\mathcal{K}^{s - 2r + 1} \rightarrow \mathcal{K}^{s - 2r + \frac{1}{2}}} u^0 \right\Vert_{\mathcal{K}^{s - 2r + \frac{1}{2}}} + \left\Vert \iota_{\mathcal{K}^{s - 2r} \rightarrow \mathcal{K}^{s - 2r - \frac{1}{2}}} u^1 \right\Vert_{\mathcal{K}^{s - 2r - \frac{1}{2}}}.
\end{align*}

Next, we want to replace $\left\Vert \diag(\Theta^1) \partial_t^{2 r} \partial_\nu \tilde{u} \right\Vert_{H^{s - 2 r}((0, T) \times \partial M, \mathbb{C}^N)}$ by $\left\Vert \diag(\Theta^1) \partial_\nu u \right\Vert_{H^{s - 2 r}((0, T) \times \partial M, \mathbb{C}^N)}$, up to a remainder term. The idea is the following: if $\Shift{s - r}^{r} = \PP{s - r}^r$, as in the case $\mathsf{P} = \Delta$ for example, then we can prove that $\partial_t^{2 r} \tilde{u} = u$. In the general case, one has the following lemma.

\begin{lem}\label{lem_switching_derivatives}
For $s \in \mathbb{R}$, $r \in \mathbb{N}^\ast$, $\Theta \in \mathscr{C}^\infty((0, T) \times \partial M, \mathbb{C}^N)$, $\left( v^0, v^1 \right) \in \mathcal{K}^{s + 1} \times \mathcal{K}^{s}$, and $v$ the solution associated with $\left( v^0, v^1 \right)$, one has 
\begin{align*}
\left\Vert \diag(\Theta) \partial_t^{2 r} \partial_\nu v \right\Vert_{H^{s - 2 r}((0, T) \times \partial M, \mathbb{C}^N)} \lesssim \ & \left\Vert \diag(\Theta) \partial_\nu \left( \Shift{s - r + 1}^{r} v \right) \right\Vert_{H^{s - 2 r}((0, T) \times \partial M, \mathbb{C}^N)} \\
+ & \left\Vert \iota_{\mathcal{K}^{s + 1} \rightarrow \mathcal{K}^{s}} v^0 \right\Vert_{\mathcal{K}^{s}} + \left\Vert \iota_{\mathcal{K}^{s} \rightarrow \mathcal{K}^{s - 1}} v^1 \right\Vert_{\mathcal{K}^{s - 1}}.
\end{align*}
\end{lem}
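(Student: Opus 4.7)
The plan is to realize $\partial_t^{2r} v$ and $\Shift{s - r + 1}^{r} v$ as two solutions of the same homogeneous Dirichlet wave equation whose initial data differ only by a controlled lower-order term, and then appeal to the Neumann trace estimate of Theorem \ref{thm_LLT_dir}-(iii) applied to the difference. Concretely, Theorem \ref{thm_LLT_dir}-(iii) identifies $\partial_t^{2r} v$ as the solution with initial data $\left(\PP{s + 1 - r}^{r} v^0, \PP{s - r}^{r} v^1 \right) \in \mathcal{K}^{s - 2r + 1} \times \mathcal{K}^{s - 2r}$, and provides the key commutation $\partial_\nu \partial_t^{2r} v = \partial_t^{2r} \partial_\nu v$. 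Corollary \ref{cor_LLT_dir_shift} identifies $\Shift{s - r + 1}^{r} v$ as a solution of the same equation, with initial data $\left( \Shift{s - r + 1}^{r} v^0, \Shift{s - r}^{r} v^1 \right)$ in the same pair of spaces.

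My first step is to form the difference $w = \partial_t^{2r} v - \Shift{s - r + 1}^{r} v$, which by linearity solves the homogeneous Dirichlet wave equation with initial data
\[
\left( w^0, w^1 \right) = \left( \bigl(\PP{s + 1 - r}^{r} - \Shift{s - r + 1}^{r}\bigr) v^0,\ \bigl(\PP{s - r}^{r} - \Shift{s - r}^{r}\bigr) v^1 \right).
\]
The size of each component is then bounded using inequality (\ref{eq_prop_shifting_op_4}) from Proposition \ref{prop_main_properties_K^s}-(iii), applied at indices $(s - r + 1, r)$ and $(s - r, r)$ respectively; this is precisely the statement that $\PP{\cdot}^r$ and $\Shift{\cdot}^r$ differ by an operator continuous from the one-step-weaker Sobolev scale. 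The resulting bound is
\[
\left\Vert w^0 \right\Vert_{\mathcal{K}^{s - 2 r + 1}} + \left\Vert w^1 \right\Vert_{\mathcal{K}^{s - 2 r}} \lesssim \left\Vert \iota_{\mathcal{K}^{s + 1} \rightarrow \mathcal{K}^{s}} v^0 \right\Vert_{\mathcal{K}^{s}} + \left\Vert \iota_{\mathcal{K}^{s} \rightarrow \mathcal{K}^{s - 1}} v^1 \right\Vert_{\mathcal{K}^{s - 1}}.
\]

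The second step is to invoke the Neumann trace bound of Theorem \ref{thm_LLT_dir}-(iii) for $w$ at regularity level $s - 2r$, thus controlling $\Vert \partial_\nu w \Vert_{H^{s - 2r}((0, T) \times \partial M, \mathbb{C}^N)}$ by the right-hand side above. Multiplying by the smooth function $\Theta$ and applying the triangle inequality to the identity $\partial_t^{2r} \partial_\nu v = \partial_\nu \Shift{s - r + 1}^{r} v + \partial_\nu w$ then produces the claimed inequality. All ingredients are already in place in Section 1 and the argument is essentially mechanical; the only delicate point, and where I would double-check, is the bookkeeping of indices so that both $\partial_t^{2r} v$ and $\Shift{s - r + 1}^{r} v$ land in the same spaces $\mathcal{K}^{s - 2r + 1} \times \mathcal{K}^{s - 2r}$ and so that the trace theorem is applied at the correct regularity level. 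No microlocal analysis is needed here.
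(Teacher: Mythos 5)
Your proposal is correct and follows essentially the same route as the paper: form the difference $w = \partial_t^{2r} v - \Shift{s-r+1}^{r} v = \left(\PP{s-r+1}^{r} - \Shift{s-r+1}^{r}\right) v$, recognize it (via Theorem \ref{thm_LLT_dir} and Corollary \ref{cor_LLT_dir_shift}) as a solution with initial data controlled by (\ref{eq_prop_shifting_op_4}), apply the Neumann trace bound at level $s - 2r$, and conclude by the triangle inequality. The index bookkeeping you flag does check out, and the paper's only additional remark is a reduction to $s \in \mathbb{Z}$ by interpolation.
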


A proof of Lemma \ref{lem_switching_derivatives} is given below. Arguing as above, one finds
\begin{align*}
& \left\Vert \iota_{\mathcal{K}^{s + 1} \rightarrow \mathcal{K}^{s}} \tilde{u}^0 \right\Vert_{\mathcal{K}^{s}} + \left\Vert \iota_{\mathcal{K}^{s} \rightarrow \mathcal{K}^{s - 1}} \tilde{u}^1 \right\Vert_{\mathcal{K}^{s - 1}} \\
\lesssim & \left\Vert \iota_{\mathcal{K}^{s - 2r + 1} \rightarrow \mathcal{K}^{s - 2r}} u^0 \right\Vert_{\mathcal{K}^{s - 2r}} + \left\Vert \iota_{\mathcal{K}^{s - 2r} \rightarrow \mathcal{K}^{s - 2r - 1}} u^1 \right\Vert_{\mathcal{K}^{s - 2r - 1}} \\
\lesssim & \left\Vert \iota_{\mathcal{K}^{s - 2r + 1} \rightarrow \mathcal{K}^{s - 2r + \frac{1}{2}}} u^0 \right\Vert_{\mathcal{K}^{s - 2r + \frac{1}{2}}} + \left\Vert \iota_{\mathcal{K}^{s - 2r} \rightarrow \mathcal{K}^{s - 2r - \frac{1}{2}}} u^1 \right\Vert_{\mathcal{K}^{s - 2r - \frac{1}{2}}}.
\end{align*}
As $\Shift{s - r + 1}^{r} \tilde{u} = u$, by Corollary \ref{cor_LLT_dir_shift}, Lemma \ref{lem_switching_derivatives} gives
\begin{align}
\left\Vert u^0 \right\Vert_{\mathcal{K}^{s - 2 r + 1}} + & \left\Vert u^1 \right\Vert_{\mathcal{K}^{s - 2 r}} \lesssim \left\Vert \diag(\Theta^1) \partial_\nu u \right\Vert_{H^{s - 2 r}((0, T) \times \partial M, \mathbb{C}^N)} \nonumber \\
& \quad + \left\Vert \iota_{\mathcal{K}^{s - 2r + 1} \rightarrow \mathcal{K}^{s - 2r + \frac{1}{2}}} u^0 \right\Vert_{\mathcal{K}^{s - 2r + \frac{1}{2}}} + \left\Vert \iota_{\mathcal{K}^{s - 2r} \rightarrow \mathcal{K}^{s - 2r - \frac{1}{2}}} u^1 \right\Vert_{\mathcal{K}^{s - 2r - \frac{1}{2}}}. \label{eq_proof_decreasing_1}
\end{align}
Note that (\ref{eq_proof_decreasing_1}) holds true if $\Theta^1$ is replaced by some $\Theta^2 \in \mathscr{C}^\infty_\mathrm{c}((0, T) \times \partial M, \mathbb{C}^N)$ such that $\pi_k \Theta^2 \neq 0$ on $\supp \pi_k \Theta^1$, for all $k \in \llbracket 1, N \rrbracket$. To complete the proof, we show that the remainder terms on the right-hand side of (\ref{eq_proof_decreasing_1}) can be removed, when $\Theta^1$ is replaced by such $\Theta^2$. The embedding
\[\begin{array}{cccc}
K: & \mathcal{K}^{s - 2r + 1} \times \mathcal{K}^{s - 2r} & \longrightarrow & \mathcal{K}^{s - 2r + \frac{1}{2}} \times \mathcal{K}^{s - 2r - \frac{1}{2}} \\
& \left( u^0, u^1 \right) & \longmapsto & \left( \iota_{\mathcal{K}^{s - 2r + 1} \rightarrow \mathcal{K}^{s - 2r + \frac{1}{2}}} u^0, \iota_{\mathcal{K}^{s - 2 r} \rightarrow \mathcal{K}^{s - 2r - \frac{1}{2}}} u^1 \right)
\end{array}\]
is compact, by Proposition \ref{prop_main_properties_K^s}-\emph{(i)}. For $s^\prime \in \mathbb{R}$ and $\Theta \in \mathscr{C}^\infty_\mathrm{c}((0, T) \times \partial M, \mathbb{C}^N)$, introduce 
\[\begin{array}{cccc}
A_{\Theta, s^\prime}: & \mathcal{K}^{s^\prime + 1} \times \mathcal{K}^{s^\prime} & \longrightarrow & H^{s^\prime}((0, T) \times \partial M, \mathbb{C}^N) \\
& \left( u^0, u^1 \right) & \longmapsto & \diag(\Theta) \partial_\nu u
\end{array}.\]
By (\ref{eq_proof_decreasing_1}) and Lemma \ref{lem_general_compact_term_1}, the kernel of $A_{\Theta^1, s - 2r}$ is finite-dimensional. Note also that $H^{s}$-observability for $\Theta^1$ implies that $A_{\Theta^1, s}$ is one-to-one.

\begin{lem}\label{lem_inj_obs_theta1_theta2}
Consider $s \in \mathbb{R}$ and $\Theta^1 \in \mathscr{C}^\infty_\mathrm{c}((0, T) \times \partial M, \mathbb{C}^N)$ such that $A_{\Theta^1, s}$ is one-to-one, and such that for all $r \in \mathbb{N}^\ast$, the kernel of $A_{\Theta^1, s - 2r}$ is finite-dimensional. Then, for $r \in \mathbb{N}^\ast$ and $\Theta^2 \in \mathscr{C}^\infty_\mathrm{c}((0, T) \times \partial M, \mathbb{C}^N)$ such that $\pi_k \Theta^2 \neq 0$ on $\supp \pi_k \Theta^1$, for all $k \in \llbracket 1, N \rrbracket$, $A_{\Theta^2, s - 2r}$ is one-to-one.
\end{lem}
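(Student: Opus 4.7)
The plan is to show that any $(u^0, u^1) \in \ker A_{\Theta_2, s-2r}$ lifts, via the shift isomorphism of Proposition \ref{prop_main_properties_K^s}-\emph{(iii)}, to an element of $\ker A_{\Theta_1, s} = \{0\}$ and hence must vanish. First I would use the positivity hypothesis: since $\Theta_2$ is smooth and strictly positive on the compact set $\supp \Theta_1$, a standard cutoff construction produces $\chi \in \mathscr{C}^\infty_\mathrm{c}((0, T) \times \partial M)$ with $\Theta_1 = \chi \Theta_2$. This gives $\Theta_1 \partial_\nu u = \chi \Theta_2 \partial_\nu u = 0$, so $(u^0, u^1)$ automatically lies in the finite-dimensional space $\ker A_{\Theta_1, s-2r}$; in particular $\partial_\nu u$ vanishes distributionally on the open set $\Omega := \{\Theta_2 > 0\}$, which contains a neighborhood of $\supp \Theta_1$.

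Next I would set $(\tilde u^0, \tilde u^1) := ((\Shift{s-r+1}^r)^{-1} u^0, (\Shift{s-r}^r)^{-1} u^1) \in \mathcal{K}^{s+1} \times \mathcal{K}^s$. By Corollary \ref{cor_LLT_dir_shift} the associated solution $\tilde u$ satisfies $\Shift{s-r+1}^r \tilde u(t) = u(t)$ for every $t$. Using the formula $\Shift{s'}^1 = \PP{s'} + i\mu\, \iota$, the recurrence (\ref{eq_prop_shifting_op_1}), and the identity $\mathsf{P} \tilde u = \partial_t^2 \tilde u$ on the solution (Theorem \ref{thm_LLT_dir}-\emph{(i)}), an induction on $r$ yields the exact pointwise identity $u(t) = (\partial_t^2 + i\mu)^r \tilde u(t)$; since $\partial_\nu$ commutes with $\partial_t$ and scalars, this gives
\[\partial_\nu u \ = \ (\partial_t^2 + i\mu)^r \partial_\nu \tilde u \quad \text{on } (0,T) \times \partial M.\]
The hypothesis $\Theta_2 \partial_\nu u = 0$ therefore translates into $(\partial_t^2 + i\mu)^r \partial_\nu \tilde u = 0$ distributionally on $\Omega$.

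The final step, which is the main obstacle, is an ODE-uniqueness argument using the spectral decomposition of $\mathsf{P}_\mathrm{dir}$. Since $M$ is compact and $\mathsf{P}$ is a lower-order perturbation of the Laplacian, $\mathsf{P}_\mathrm{dir}$ has compact resolvent and hence discrete spectrum $\{-\lambda_n^2\}_n$; the parameter $\mu$, already chosen large enough that the shift operators $\Shift{s'}^1$ are isomorphisms, also satisfies $-\lambda_n^2 + i\mu \neq 0$ for every $n$. Expanding $\tilde u$ in a (generalized) eigenfunction basis of $\mathsf{P}_\mathrm{dir}$ gives $\partial_\nu \tilde u(t, x) = \sum_n g_n(t)\, \partial_\nu \phi_n(x)$ with each $g_n$ a linear combination of $e^{\pm i\lambda_n t}$ (possibly decorated by polynomial factors if Jordan blocks appear). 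On any product box $I \times U \subseteq \Omega$, the vanishing of $(\partial_t^2 + i\mu)^r \partial_\nu \tilde u$, combined with $(-\lambda_n^2 + i\mu)^r \neq 0$ and the linear independence of $\{e^{\pm i\lambda_n t}\}_n$ on the open interval $I$, should force $g_n(t)\, \partial_\nu \phi_n(x) = 0$ for every $n$ and every $(t, x) \in \mathbb{R} \times U$. Covering the projection of $\Omega$ onto $\partial M$ by such boxes then yields $\partial_\nu \tilde u = 0$ on a neighborhood of $\supp \Theta_1$, so $\Theta_1 \partial_\nu \tilde u = 0$; injectivity of $A_{\Theta_1, s}$ forces $(\tilde u^0, \tilde u^1) = 0$ and hence $(u^0, u^1) = 0$. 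The delicate point is making this spectral step fully rigorous in the non-self-adjoint setting (general $X$ and $q$): one must invoke Riesz-projection theory for compact resolvents of non-self-adjoint elliptic operators and verify that the formal series for $\tilde u$ converges in the relevant $\mathcal{K}^{s+1} \times \mathcal{K}^s$ topologies.
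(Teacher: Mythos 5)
Your steps up to and including the identity $\partial_\nu u = (\partial_t^2 + i\mu)^r \partial_\nu \tilde{u}$ are sound: this is exactly how the paper defines the normal derivative in negative regularity (Step 5 of the proof of Theorem \ref{thm_LLT_dir}), and the reduction of the hypothesis to $(\partial_t^2 + i\mu)^r \partial_\nu \tilde{u} = 0$ on $\Omega = \{\Theta_2 > 0\}$ is correct. The gap is the final step. On an open box $I \times U$ the operator $(\partial_t^2 + i\mu)^r$ has a $2r$-dimensional kernel in the $t$-variable for each fixed $x$, so the vanishing of $(\partial_t^2 + i\mu)^r \partial_\nu \tilde{u}$ on $I \times U$ gives no local information. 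The spectral remedy you propose does not close this: to kill the coefficients of an \emph{infinite} sum $\sum_n c_n(x) e^{\pm i\lambda_n t}$ that vanishes on a bounded interval $I$, ``linear independence of the exponentials'' is not enough — one needs Ingham/Beurling-type completeness results, which require a uniform spectral gap and a lower bound on $|I|$ in terms of the density of the $\lambda_n$. Neither is available: $I$ is dictated by the time-support of $\Theta_2$ and can be arbitrarily short, the eigenvalues of a general compact manifold cluster, and for non-self-adjoint $X, q$ one does not even have a Riesz basis of eigenfunctions in general. In effect you have reduced the lemma to a statement (``the kernel of $(u^0,u^1) \mapsto \Theta_2 \partial_\nu u$ is preserved by the shift'') that is of essentially the same strength as the lemma itself. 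A telling symptom is that your argument never uses the hypothesis that $\ker A_{\Theta_1, s-2r}$ is finite-dimensional, which is the crux of the statement.

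The paper's proof exploits exactly that hypothesis and avoids spectral theory entirely. It shows that the restriction map $\Phi_{s-2r, s-2r-1}(\Theta_2) : N_{\Theta_2, s-2r} \to N_{\Theta_2, s-2r-1}$ between consecutive kernels is onto: given $(u^0,u^1) \in N_{\Theta_2, s-2r-1}$, the positive distance between $\supp \Theta_1$ and $(\supp\Theta_2)^\complement$ guarantees that the time-translates $U(t) = (u(t), \partial_t u(t))$ stay in $N_{\Theta_1, s-2r-1}$ for small $t$; since that space is finite-dimensional, all norms on it are equivalent, so the difference quotients $\frac{1}{t}(U(t)-U(0))$, which converge in a weaker topology to $\left(u^1, \PP{s-2r} u^0\right)$, actually converge \emph{in the kernel}, exhibiting $\PP{s-2r}u^0$ as lying in $\mathcal{K}^{s-2r}$; the elliptic estimate of Proposition \ref{prop_main_properties_K^s}-\emph{(ii)} then upgrades $u^0$ to $\mathcal{K}^{s-2r+2}$. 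Iterating gives $N_{\Theta_2, s-2r} \cong N_{\Theta_2, s} \subset N_{\Theta_1, s} = \{0\}$. If you want to salvage your approach, you would have to replace the exponential-independence step by an argument of this finite-dimensional, flow-invariance type.
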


A proof of Lemma \ref{lem_inj_obs_theta1_theta2} is given below. Now, using (\ref{eq_proof_decreasing_1}) with $\Theta^2$ instead of $\Theta^1$, and Lemma \ref{lem_general_compact_term_1} again, one concludes that $H^{s - 2r}$-observability for $\Theta^2$ holds. As explained in the beginning of Section 3, this completes the proof of Theorem \ref{thm_main_change_reg}.

Now, we prove Lemma \ref{lem_switching_derivatives} and Lemma \ref{lem_inj_obs_theta1_theta2}.

\begin{proof}[Proof of Lemma \ref{lem_switching_derivatives}.]
By interpolation, one may assume that $s \in \mathbb{Z}$. By Theorem \ref{thm_LLT_dir}, one has $\partial_t^{2 r} \partial_\nu v = \partial_\nu \partial_t^{2 r} v = \partial_\nu \PP{s - r + 1}^{r} v$. The triangular inequality gives
\begin{align*}
& \left\Vert \diag(\Theta) \partial_t^{2 r} \partial_\nu v \right\Vert_{H^{s - 2 r}((0, T) \times \partial M, \mathbb{C}^N)} \\ 
\lesssim & \left\Vert \diag(\Theta) \partial_\nu \left( \Shift{s - r + 1}^{r} v \right) \right\Vert_{H^{s - 2 r}((0, T) \times \partial M, \mathbb{C}^N)} + \left\Vert \partial_\nu \left(\PP{s - r + 1}^{r} v - \Shift{s - r + 1}^{r} v \right) \right\Vert_{H^{s - 2 r}((0, T) \times \partial M, \mathbb{C}^N)}.
\end{align*}
Set $w = \left( \PP{s - r + 1}^{r} - \Shift{s - r + 1}^{r} \right) v$. By Theorem \ref{thm_LLT_dir} and Corollary \ref{cor_LLT_dir_shift}, $w$ is the solution of 
\[\left \{
\begin{array}{rcccl}
\partial_t^2 w - \mathsf{P} w & = & 0 & \quad & \text{in } (0, T) \times M, \\
\left( w(0, \cdot), \partial_t w(0, \cdot) \right) & = & \left( w^0, w^1 \right) & \quad & \text{in } M, \\
w & = & 0 & \quad & \text{on } (0, T) \times \partial M,
\end{array}
\right.\]
where $\left( w^0, w^1 \right) = \left( \left(\PP{s - r + 1}^{r} - \Shift{s - r + 1}^{r} \right) v^0, \left(\PP{s - r}^{r} - \Shift{s - r}^{r} \right) v^1 \right)$. Hence, using Theorem \ref{thm_LLT_dir} and Proposition \ref{prop_main_properties_K^s}-\emph{(iii)}, one obtains 
\begin{align*}
\left\Vert \partial_\nu w \right\Vert_{H^{s - 2 r}((0, T) \times \partial M, \mathbb{C}^N)} 
& \lesssim \left\Vert w^0 \right\Vert_{\mathcal{K}^{s - 2 r + 1}} + \left\Vert w^1 \right\Vert_{\mathcal{K}^{s - 2 r}} \\
& \lesssim \left\Vert \iota_{\mathcal{K}^{s + 1} \rightarrow \mathcal{K}^{s}} v^0 \right\Vert_{\mathcal{K}^{s}} + \left\Vert \iota_{\mathcal{K}^{s} \rightarrow \mathcal{K}^{s - 1}} v^1 \right\Vert_{\mathcal{K}^{s - 1}},
\end{align*}
and this gives the desired result.
\end{proof}

\begin{proof}[Proof of Lemma \ref{lem_inj_obs_theta1_theta2}.]
For $s^\prime \in \mathbb{R}$ and $\Theta \in \mathscr{C}^\infty_\mathrm{c}((0, T) \times \partial M, \mathbb{C}^N)$, denote by $N_{\Theta, s^\prime}$ the kernel of $A_{\Theta, s^\prime}$, that is,
\[N_{\Theta, s^\prime} = \left\{ \left( u^0, u^1 \right) \in \mathcal{K}^{s^\prime + 1} \times \mathcal{K}^{s^\prime}, \diag(\Theta) \partial_\nu u = 0 \right\}.\]
Note that by definition, one has
\begin{equation}\label{eq_proof_decreasing_reg_N_Theta_2}
N_{\Theta, s^\prime} \subset N_{\tilde{\Theta}, s^\prime}, \quad s^\prime \in \mathbb{R},
\end{equation}
if $\pi_k \Theta \neq 0$ on $\supp \pi_k \tilde{\Theta}$, for all $k \in \llbracket 1, N \rrbracket$, and for $s_1 > s_2$, the map 
\[\begin{array}{cccc}
\Phi_{s_1, s_2}(\Theta): & N_{\Theta, s_1} & \longrightarrow & N_{\Theta, s_2} \\
& \left( u^0, u^1 \right) & \longmapsto & \left( \iota_{\mathcal{K}^{s_1 + 1} \rightarrow \mathcal{K}^{s_2 + 1}} u^0, \iota_{\mathcal{K}^{s_1} \rightarrow \mathcal{K}^{s_2}} u^1 \right)
\end{array}\]
is well-defined, injective, and compact. 

Consider $\Theta^2 \in \mathscr{C}^\infty_\mathrm{c}((0, T) \times \partial M, \mathbb{C}^N)$ such that $\pi_k \Theta^2 \neq 0$ on $\supp \pi_k \Theta^1$, for all $k \in \llbracket 1, N \rrbracket$. We claim that $N_{\Theta^2, s - 2r}$ is finite-dimensional for all $r \geq 0$. Indeed, for $r \in \mathbb{N}$, it follows from the assumptions of Lemma \ref{lem_inj_obs_theta1_theta2} and (\ref{eq_proof_decreasing_reg_N_Theta_2}). For $r \geq 0$, as $\Phi_{s - 2r, s - \lfloor 2r \rfloor - 1}(\Theta^2)$ is one-to-one, $N_{\Theta^2, s - 2r}$ is isomorphic to a subspace of $N_{\Theta^2, s - \lfloor 2r \rfloor - 1}$, and hence, is finite-dimensional.

Consider $r \geq 0$. We prove that 
\begin{equation}\label{eq_proof_decreasing_reg_N_Theta_3_bis}
\Phi_{s - 2r, s - 2r - 1}(\Theta^2) \text{ is an isomorphism.}
\end{equation}
It suffices to show that $\Phi_{s - 2r, s - 2r - 1}(\Theta^2)$ is onto. Consider $\left( u^0, u^1 \right) \in N_{\Theta^2, s - 2r - 1}$, and write $U(t) = (u(t), \partial_t u(t))$ for $t \in \mathbb{R}$, where $u$ is the solution associated with $\left( u^0, u^1 \right)$. As the distance between $\supp \pi_k \Theta^1$ and $(\supp \pi_k \Theta^2)^\complement$ is positive for all $k \in \llbracket 1, N \rrbracket$, there exists $\epsilon > 0$ such that for $t \in [0, \epsilon)$, $U(t) \in N_{\Theta^1, s - 2r - 1}$.

For $t \in (0, \epsilon)$, set $V_t = \frac{1}{t}(U(t) - U(0)) \in N_{\Theta^1, s - 2r - 1}$. One has
\begin{equation}\label{eq_proof_decreasing_reg_N_Theta_4}
\iota_{\mathcal{K}^{s - 2 r + 1} \rightarrow \mathcal{K}^{s - 2 r}} \left( \frac{1}{t}(u(t) - u^0) \right) \underset{t \rightarrow 0^+}{\longrightarrow} \partial_t u(0) = u^1 \in \mathcal{K}^{s - 2 r},
\end{equation}
and
\begin{equation}\label{eq_proof_decreasing_reg_N_Theta_5}
\iota_{\mathcal{K}^{s - 2 r} \rightarrow \mathcal{K}^{s - 2 r - 1}} \left( \frac{1}{t}(\partial_t u(t) - u^1) \right) \underset{t \rightarrow 0^+}{\longrightarrow} \partial_t^2 u(0) = \PP{s - 2 r} u^0 \in \mathcal{K}^{s - 2 r - 1}.
\end{equation}
As $N_{\Theta^1, s - 2r - 1}$ is finite-dimensional, the norm of $\mathcal{K}^{s - 2 r + 1} \times \mathcal{K}^{s - 2 r}$ is equivalent to the norm
\[\mathcal{N}\left( u^0, u^1 \right) = \left\Vert \left( \iota_{\mathcal{K}^{s - 2 r + 1} \rightarrow \mathcal{K}^{s - 2 r}} u^0, \iota_{\mathcal{K}^{s - 2 r} \rightarrow \mathcal{K}^{s - 2 r - 1}} u^1 \right) \right\Vert_{\mathcal{K}^{s - 2r} \times \mathcal{K}^{s - 2 r - 1}}.\]
By (\ref{eq_proof_decreasing_reg_N_Theta_4}) and (\ref{eq_proof_decreasing_reg_N_Theta_5}), $(V_t)_{t > 0}$ is a Cauchy sequence for the norm $\mathcal{N}$, and thus, it converges in $N_{\Theta^1, s - 2r}$. Write $\left( v^0, v^1 \right)$ for its limit. Using (\ref{eq_proof_decreasing_reg_N_Theta_4}) and (\ref{eq_proof_decreasing_reg_N_Theta_5}) again, one finds
\[\left( \iota_{\mathcal{K}^{s - 2 r + 1} \rightarrow \mathcal{K}^{s - 2 r}} v^0, \iota_{\mathcal{K}^{s - 2 r} \rightarrow \mathcal{K}^{s - 2 r - 1}} v^1 \right) = \left( u^1, \PP{s - 2 r} u^0 \right).\]
By Proposition \ref{prop_main_properties_K^s}-\emph{(ii)}, there exists $\tilde{u}^0 \in \mathcal{K}^{s - 2 r + 2}$ such that $u^0 = \iota_{\mathcal{K}^{s - 2 r + 2} \rightarrow \mathcal{K}^{s - 2 r + 1}} \tilde{u}^0$ and $v^1 = \PP{s - 2 r + 1} \tilde{u}^0$. One has
\[\left( \iota_{\mathcal{K}^{s - 2 r + 2} \rightarrow \mathcal{K}^{s - 2 r + 1}} \tilde{u}^0, \iota_{\mathcal{K}^{s - 2 r + 1} \rightarrow \mathcal{K}^{s - 2 r}} v^0 \right) = \left( u^0, u^1 \right).\]
This gives $\Phi_{s - 2r, s - 2r - 1}(\Theta^2) \left( \tilde{u}^0, v^0 \right) = \left( u^0, u^1 \right)$, if we show that $\left( \tilde{u}^0, v^0 \right) \in N_{\Theta^2, s - 2r - 1}$. If $\tilde{u}$ is the solution associated with $\left( \tilde{u}^0, v^0 \right)$, then by Theorem \ref{thm_LLT_dir}, one has $\iota_{\mathcal{K}^{s - 2 r + 2} \rightarrow \mathcal{K}^{s - 2 r + 1}} \tilde{u} = u$ and 
\[\iota_{H^{s - 2 r + 1} \rightarrow H^{s - 2 r}} \partial_\nu \tilde{u} = \partial_\nu u.\]
As $\diag(\Theta^2) \partial_\nu u = 0 \in \mathscr{D}^\prime((0, T) \times \partial M, \mathbb{C}^N)$, this implies $\left( \tilde{u}^0, v^0 \right) \in N_{\Theta^2, s - 2r - 1}$, completing the proof of (\ref{eq_proof_decreasing_reg_N_Theta_3_bis}).

By iteration, one obtains an isomorphism between $N_{\Theta^2, s}$ and $N_{\Theta^2, s - 2r}$ for $r \in \mathbb{N}^\ast$. As $N_{\Theta^1, s} = \{ 0 \}$, (\ref{eq_proof_decreasing_reg_N_Theta_2}) gives $N_{\Theta^2, s} = \{0\}$. This completes the proof of Lemma \ref{lem_inj_obs_theta1_theta2}.
\end{proof}

\appendix

\section{The case of internal observability}

Here, we explain how to adapt the methods of this article to the case of internal observability. Consider $\chi \in \mathscr{C}^\infty(M, \mathbb{C}^N)$.

\begin{defn}[$\mathcal{K}^s$-observability for $\left( \chi, T \right)$]
We say that $\mathcal{K}^s$-observability for $\left( \chi, T \right)$ holds if there exists $C > 0$ such that for all $\left( u^0, u^1 \right) \in \mathcal{K}^{s} \times \mathcal{K}^{s - 1}$, 
\[\left\Vert \left( u^0, u^1 \right) \right\Vert_{\mathcal{K}^{s} \times \mathcal{K}^{s - 1}} \leq C \left\Vert \diag(\chi) u \right\Vert_{L^2((0, T), \mathcal{K}^s)},\]
where $u$ is the solution of (\ref{eq_intro_dirichlet_hom}) with initial data $\left( u^0, u^1 \right)$.
\end{defn}

Note that the multiplication operator $u \in \mathcal{K}^s \mapsto \diag(\chi) u \in \mathcal{K}^s$ is well-defined, and commutes with the embeddings of Proposition \ref{prop_main_properties_K^s}. As in the boundary case, one can prove that $\mathcal{K}^s$-observability for $\left( \chi, T \right)$ is equivalent with a controllability property, for the equation (\ref{eq_intro_dirichlet_hom}) with a source term of the form $\diag(\chi) F$, with $F \in L^2((0, T), \mathcal{K}_\ast^{- s})$. One can check that the solution of such a system is well-defined, by adapting the proof of Theorem \ref{thm_LLT_dir}. The analogue of Theorem \ref{thm_main_change_reg} is the following result.

\begin{thm}\label{thm_main_internal_change_reg}
Consider $s_1, s_2 \in \mathbb{R}$, and $\chi \in \mathscr{C}^\infty(M, \mathbb{C}^N)$. If $s_1 < s_2$, then for all $T > 0$, $\mathcal{K}^{s_1}$-observability for $\left( \chi, T \right)$ implies $\mathcal{K}^{s_2}$-observability for $\left( \chi, T \right)$. If $s_1 > s_2$, then for all $0 < T_1 < T_2$, $\mathcal{K}^{s_1}$-observability for $\left( \chi, T_1 \right)$ implies $\mathcal{K}^{s_2}$-observability for $\left( \chi, T_2 \right)$.
\end{thm}

The proof of Theorem \ref{thm_main_internal_change_reg} is simpler than that of Theorem \ref{thm_main_change_reg}, so we only sketch it. To increase the regularity level, one uses the following lemma.

\begin{lem}\label{lem_technical_increasing_reg_internal}
Consider $s \in \mathbb{R}$ and $\chi \in \mathscr{C}^\infty(M, \mathbb{C}^N)$. There exists $C > 0$ such that
\[\left\Vert \left[ \diag(\chi), \PP{s + 1} \right] u \right\Vert_{\mathcal{K}^{s}} \leq C \left\Vert \iota_{\mathcal{K}^{s + 2} \rightarrow \mathcal{K}^{s + 1}} u \right\Vert_{\mathcal{K}^{s + 1}}, \quad u \in \mathcal{K}^{s + 2}.\]
\end{lem}

\begin{proof}[Sketch of proof of Lemma \ref{lem_technical_increasing_reg_internal}.]
By interpolation, it suffices to prove Lemma \ref{lem_technical_increasing_reg_internal} for $s \in \mathbb{Z}$. For $s \in \mathbb{N}$ and $u \in \mathcal{K}^{s + 2}$, one has
\[\left\Vert \left[ \diag(\chi), \PP{s + 1} \right] u \right\Vert_{\mathcal{K}^{s}} = \left\Vert \left[ \diag(\chi), \PP{\mathscr{D}^\prime} \right] u \right\Vert_{H^s(M, \mathbb{C}^N)} \lesssim \left\Vert u \right\Vert_{H^{s + 1}(M, \mathbb{C}^N)} = \left\Vert \iota_{\mathcal{K}^{s + 2} \rightarrow \mathcal{K}^{s + 1}} u \right\Vert_{\mathcal{K}^{s + 1}},\]
and the same holds for $\mathsf{P}^\ast$. Now, consider $s \in \mathbb{Z}$, $s \leq -1$, and $u \in \mathcal{K}^{s + 2}$. Note that 
\begin{align*}
& \left\Vert \left[ \diag(\chi), \PP{s + 1} \right] u \right\Vert_{\mathcal{K}^{s}} \\
= & \sup \left\{ \left\vert \left\langle \left[ \diag(\chi), \PP{s + 1} \right] u, \iota_{\mathcal{K}_\ast^{- s + 1} \rightarrow \mathcal{K}_\ast^{-s}} v \right\rangle_{\mathcal{K}^{s}, \mathcal{K}_\ast^{- s}} \right\vert, v \in \mathcal{K}_\ast^{- s + 1}, \left\Vert \iota_{\mathcal{K}_\ast^{- s + 1} \rightarrow \mathcal{K}_\ast^{- s}} v \right\Vert_{\mathcal{K}_\ast^{-s}} \leq 1 \right\},
\end{align*}
as $\iota_{\mathcal{K}_\ast^{- s + 1} \rightarrow \mathcal{K}_\ast^{- s}}$ has a dense range. For $v \in \mathcal{K}_\ast^{- s + 1}$, using the case $s \in \mathbb{N}$, one finds
\begin{align*}
\left\vert \left\langle \left[ \diag(\chi), \PP{s + 1} \right] u, \iota_{\mathcal{K}_\ast^{- s + 1} \rightarrow \mathcal{K}_\ast^{-s}} v \right\rangle_{\mathcal{K}^{s}, \mathcal{K}_\ast^{- s}} \right\vert 
& = \left\vert \left\langle \iota_{\mathcal{K}^{s + 2} \rightarrow \mathcal{K}^{s + 1}} u, \left[ \diag(\chi), \PP{-s}^\ast \right] v \right\rangle_{\mathcal{K}^{s + 1}, \mathcal{K}_\ast^{- s - 1}} \right\vert \\
& \lesssim \left\Vert \iota_{\mathcal{K}^{s + 2} \rightarrow \mathcal{K}^{s + 1}} u \right\Vert_{\mathcal{K}^{s + 1}} \left\Vert \iota_{\mathcal{K}_\ast^{- s + 1} \rightarrow \mathcal{K}_\ast^{- s}} v \right\Vert_{\mathcal{K}_\ast^{-s}},
\end{align*}
and that completes the proof of Lemma \ref{lem_technical_increasing_reg_internal}.
\end{proof}

Lemma \ref{lem_technical_increasing_reg_internal} and $\mathcal{K}^{s}$-observability for $\left( \chi, T \right)$ yield 
\[\left\Vert \left( u^0, u^1 \right) \right\Vert_{\mathcal{K}^{s + 2} \times \mathcal{K}^{s + 1}} \lesssim \left\Vert \diag(\chi) u \right\Vert_{L^2((0, T), \mathcal{K}^{s + 2})} + \left\Vert \left( \iota_{\mathcal{K}^{s + 2} \rightarrow \mathcal{K}^{s + 1}} u^0, \iota_{\mathcal{K}^{s + 1} \rightarrow \mathcal{K}^{s}} u^1 \right) \right\Vert_{\mathcal{K}^{s + 1} \times \mathcal{K}^{s}},\]
for $\left( u^0, u^1 \right) \in \mathcal{K}^{s + 2} \times \mathcal{K}^{s + 1}$. The remainder term is compact, and $\mathcal{K}^{s}$-observability for $\left( \chi, T \right)$ implies that the operator $\left( u^0, u^1 \right) \in \mathcal{K}^{s + 2} \times \mathcal{K}^{s + 1} \mapsto \diag(\chi) u$ is one-to-one. This proves that $\mathcal{K}^{s}$-observability implies $\mathcal{K}^{s + 2}$-observability.

To decrease the regularity level, one relies on the following result about the shift operator of Proposition \ref{prop_main_properties_K^s}. We use the notation $\mathcal{S}_{s}^{- 1} = \left( \mathcal{S}_{s}^1 \right)^{- 1}$, for $s \in \mathbb{R}$.

\begin{lem}\label{lem_technical_decreasing_reg_internal}
Consider $s \in \mathbb{R}$ and $\chi \in \mathscr{C}^\infty(M, \mathbb{C}^N)$. There exists $C > 0$ such that
\[\left\Vert \left[ \diag(\chi), \mathcal{S}_{s - 1}^{- 1} \right] u \right\Vert_{\mathcal{K}^{s}} \leq C \left\Vert \iota_{\mathcal{K}^{s - 2} \rightarrow \mathcal{K}^{s - 3}} u \right\Vert_{\mathcal{K}^{s - 3}}, \quad u \in \mathcal{K}^{s - 2}.\]
\end{lem}

\begin{proof}[Sketch of proof of Lemma \ref{lem_technical_decreasing_reg_internal}.]
Using Proposition \ref{prop_main_properties_K^s} and Lemma \ref{lem_technical_increasing_reg_internal}, one finds
\begin{align*}
\left\Vert \left[ \diag(\chi), \mathcal{S}_{s - 1}^{- 1} \right] u \right\Vert_{\mathcal{K}^{s}} 
& = \left\Vert \mathcal{S}_{s - 1}^{- 1} \left( \mathcal{S}_{s - 1}^1 \left( \diag(\chi) \mathcal{S}_{s - 1}^{- 1} u \right) - \diag(\chi) \mathcal{S}_{s - 1}^1 \mathcal{S}_{s - 1}^{- 1} u \right) \right\Vert_{\mathcal{K}^{s}} \\
& \lesssim \left\Vert \mathcal{S}_{s - 1}^1 \left( \diag(\chi) \mathcal{S}_{s - 1}^{- 1} u \right) - \diag(\chi) \mathcal{S}_{s - 1}^1 \mathcal{S}_{s - 1}^{- 1} u \right\Vert_{\mathcal{K}^{s - 2}} \\
& = \left\Vert \PP{s - 1} \left( \diag(\chi) \mathcal{S}_{s - 1}^{- 1} u \right) - \diag(\chi) \PP{s - 1} \mathcal{S}_{s - 1}^{- 1} u \right\Vert_{\mathcal{K}^{s - 2}} \\
& \lesssim \left\Vert \iota_{\mathcal{K}^{s} \rightarrow \mathcal{K}^{s - 1}} \mathcal{S}_{s - 1}^{- 1} u \right\Vert_{\mathcal{K}^{s - 1}} \\
& \lesssim \left\Vert \iota_{\mathcal{K}^{s - 2} \rightarrow \mathcal{K}^{s - 3}} u \right\Vert_{\mathcal{K}^{s - 3}},
\end{align*}
for all $u \in \mathcal{K}^{s - 2}$.
\end{proof}

Assume that $\mathcal{K}^{s}$-observability for $\left( \chi, T_1 \right)$ holds. Then, Lemma \ref{lem_technical_decreasing_reg_internal} gives 
\[\left\Vert \left( u^0, u^1 \right) \right\Vert_{\mathcal{K}^{s - 2} \times \mathcal{K}^{s - 3}} \lesssim \left\Vert \diag(\chi) u \right\Vert_{L^2((0, T_1), \mathcal{K}^{s - 2})} + \left\Vert \left( \iota_{\mathcal{K}^{s - 2} \rightarrow \mathcal{K}^{s - 3}} u^0, \iota_{\mathcal{K}^{s - 3} \rightarrow \mathcal{K}^{s - 4}} u^1 \right) \right\Vert_{\mathcal{K}^{s - 3} \times \mathcal{K}^{s - 4}},\]
for $\left( u^0, u^1 \right) \in \mathcal{K}^{s - 2} \times \mathcal{K}^{s - 3}$. For $T > 0$ and $s^\prime \in \mathbb{R}$, set
\[\begin{array}{cccc}
A_{T, s^\prime}: & \mathcal{K}^{s^\prime} \times \mathcal{K}^{s^\prime - 1} & \longrightarrow & L^2((0, T), \mathcal{K}^{s^\prime}) \\
& \left( u^0, u^1 \right) & \longmapsto & \diag(\chi) u
\end{array},\]
and write $\Ker A_{T, s^\prime}$ for the kernel of that operator. Then $A_{T_1, s}$ is one-to-one, $\Ker A_{T_1, s - 2}$ is finite-dimensional, and to complete the proof of Theorem \ref{thm_main_internal_change_reg}, it suffices to prove that $A_{T_2, s - 2}$ is one-to-one, for all $T_2 > T_1$. For $s_1 > s_2$ and $T > 0$, introduce the embedding
\[\begin{array}{cccc}
\Phi_{T, s_1, s_2}: & \Ker A_{T, s_1} & \longrightarrow & \Ker A_{T, s_2} \\
& \left( u^0, u^1 \right) & \longmapsto & \left( \iota_{\mathcal{K}^{s_1} \rightarrow \mathcal{K}^{s_2}} u^0, \iota_{\mathcal{K}^{s_1 - 1} \rightarrow \mathcal{K}^{s_2 - 1}} u^1 \right)
\end{array}.\]
Consider $T_2 > T_1$ and $\sigma \in \{ 0, 1 \}$. We prove that $\Phi_{T_2, s - \sigma, s - \sigma - 1}$ is an isomorphism. Take $\left( u^0, u^1 \right) \in \Ker A_{T_2, s - \sigma - 1}$. For $t \in (0, T_2]$, set
\[V_t = \frac{1}{t} \left( \left( u(t), \partial_t u(t) \right) - \left( u^0, u^1 \right) \right).\]
As $T_2 > T_1$, one has $V_t \in \Ker A_{T_1, s - \sigma - 1}$ for all $t > 0$ sufficiently small. In addition, $V_t$ converges to a limit as $t \rightarrow 0^+$, for one particular norm on $\Ker A_{T_1, s - \sigma - 1}$, and hence for any norm on $\Ker A_{T_1, s - \sigma - 1}$, as $\Ker A_{T_1, s - \sigma - 1}$ is finite-dimensional. This gives $\left( u^0, u^1 \right) \in \Phi_{T_2, s - \sigma, s - \sigma - 1} \left( \Ker A_{T_2, s - \sigma} \right)$. Hence, $\Phi_{T_2, s - \sigma, s - \sigma - 1}$ is an isomorphism. In particular, $\Ker A_{T_2, s - 2}$ is isomorphic to $\Ker A_{T_2, s} = \{ 0 \}$, and this completes the proof of Theorem \ref{thm_main_internal_change_reg}.

\section{Proof of the results of Section 1}

\subsection{Proof of Proposition \ref{prop_main_properties_K^s}}

We start by giving some details about \emph{(i)}. If $s \geq 0$ then the map $\iota_{\mathcal{K}^{s + \delta} \rightarrow \mathcal{K}^{s}}: \mathcal{K}^{s + \delta} \hooklongrightarrow \mathcal{K}^{s}$ is just a natural inclusion, and is thus one-to-one. It is an embedding, and it will often be omitted. If $s + \delta < 0$, then by definition $\iota_{\mathcal{K}^{s + \delta} \rightarrow \mathcal{K}^{s}}$ is the restriction operator
\[\begin{array}{ccc}
\mathcal{K}^{s + \delta} & \longrightarrow & \mathcal{K}^{s} \\
u & \longmapsto & u_{\vert \mathcal{K}_\ast^{- s}}
\end{array}.\]
In Step 5 below, we prove that $\iota_{\mathcal{K}_\ast^{- s} \rightarrow \mathcal{K}_\ast^{- s - \delta}}$ has dense range, implying that $\iota_{\mathcal{K}^{s + \delta} \rightarrow \mathcal{K}^{s}}$ is one-to-one if $s + \delta < 0$. By definition, if $s + \delta \geq 0 > s$, one has 
\[\left\langle \iota_{\mathcal{K}^{s + \delta} \rightarrow \mathcal{K}^{s}}(u), v \right\rangle_{\mathcal{K}^{s}, \mathcal{K}_\ast^{- s}} = \left\langle u, \overline{v} \right\rangle_{L^2(M, \mathbb{C}^N)},\]
for $u \in \mathcal{K}^{s + \delta}$ and $v \in \mathcal{K}_\ast^{- s} = \mathcal{K}_\ast^{\vert s \vert}$. As $\mathscr{D}(M, \mathbb{C}^N) \subset \mathcal{K}_\ast^{- s}$, one sees that $\mathcal{K}_\ast^{- s}$ is dense in $L^2(M, \mathbb{C}^N)$. This implies that $\iota_{\mathcal{K}^{s + \delta} \rightarrow \mathcal{K}^{s}}$ is one-to-one in the case $s + \delta \geq 0 > s$.

To prove \emph{(iii)}, one can always assume that $r = 1$: the operator $\Shift{s}^r$ is then defined by 
\[\Shift{s}^r: \mathcal{K}^{s + r} \xrightarrow{\Shift{s + r - 1}^1} \mathcal{K}^{s + r - 2} \xrightarrow{\Shift{s + r - 3}^1} \cdots \xrightarrow{\Shift{s - r + 3}^1} \mathcal{K}^{s - r + 2} \xrightarrow{\Shift{s - r + 1}^1} \mathcal{K}^{s - r}.\]
Note that in particular, (\ref{eq_prop_shifting_op_1}) holds true by definition.

The proof is organized as follows. First, we prove \emph{(ii)} for $s \in \mathbb{N}$, $s \geq r$, except the inequality. Second, we construct the shift operator in three steps. Third, using the shift operator, we prove \emph{(i)}. Finally, we complete the proof of \emph{(ii)} and \emph{(iii)}. We often assume that $s \in \mathbb{Z}$: the case $s \in \mathbb{R}$ follows by interpolation.

\paragraph{Step 1: Proof of \emph{(ii)} for $s \in \mathbb{N}$, $s \geq r$ (except the inequality).} As $s - r \geq 0$ here, we know that $\PP{s - 1}^{r} u = \PP{\mathscr{D}^\prime}^{r} u$ for all $u \in \mathcal{K}^{s + r - 1}$. Hence, all equations of this step can be understood in $\mathscr{D}^\prime(M, \mathbb{C}^N)$ (or in $H^{-1}(M, \mathbb{C}^N)$), and we omit embeddings.

We prove by induction on $r \in \mathbb{N}$ that for all $s \in \mathbb{N}$, $s \geq r$, and all $u \in \mathcal{K}^{s + r - 1}$, if $\PP{s - 1}^r u \in \mathcal{K}^{s - r}$ then $u \in \mathcal{K}^{s + r}$. It is true for $r = 0$, as $\PP{s - 1}^0 = \mathrm{Id}_{\mathcal{K}^{s - 1}}$. Take $r \in \mathbb{N}$ such that the result holds. Fix $s \in \mathbb{N}$, $s \geq r + 1$, and $u \in \mathcal{K}^{s + r}$ such that $\PP{s - 1}^{r + 1} u \in \mathcal{K}^{s - r - 1}$. We want to show that $u \in \mathcal{K}^{s + r + 1}$. By induction, we only need to prove that $v = \PP{s}^r u \in \mathcal{K}^{s + 1 - r}$.

We start by proving that $v \in H^{s + 1 - r}(M, \mathbb{C}^N)$. By definition of $\PP{s}^r$, we know that $v \in \mathcal{K}^{s - r}$. One has $\PP{s - r - 1} v = \PP{s - 1}^{r + 1} u$, so that $\PP{s - r - 1} v \in \mathcal{K}^{s - r - 1}$ by assumption. In particular, one has $v \in H^{s - r}(M, \mathbb{C}^N)$ and $\PP{s - r - 1} v \in H^{s - r - 1}(M, \mathbb{C}^N)$: this gives
\[\Delta v = \PP{s - r - 1} v + \left( X + q \right) v \in H^{s - r - 1}(M, \mathbb{C}^N),\]
with equality in $\mathscr{D}^\prime(M, \mathbb{C}^N)$. As $s - r \geq 1$, one also has $v \in H^1_0(M, \mathbb{C}^N)$. Thus, by a standard elliptic regularity result, applied componentwise, one finds $v \in H^{s + 1 - r}(M, \mathbb{C}^N)$.

Now, we prove that $v = \PP{s}^r u \in \mathcal{K}^{s + 1 - r}$. Assume that $s - r$ is odd and write $s - r = 2 \sigma + 1$. By definition, the fact that $u \in \mathcal{K}^{s + r}$ gives $\PP{\mathscr{D}^\prime}^k u \in H_0^1(M, \mathbb{C}^N)$ for $k \in \llbracket 0, \sigma + r \rrbracket$. As $v = \PP{\mathscr{D}^\prime}^r u$, this implies
\[\PP{\mathscr{D}^\prime}^k v \in H_0^1(M, \mathbb{C}^N) \text{ for } k \in \llbracket 0, \sigma \rrbracket,\]
yielding $v \in \mathcal{K}^{s + 1 - r}$. Now, assume that $s - r$ is even and write $s - r = 2 \sigma$. By definition, $u \in \mathcal{K}^{s + r}$ gives $\PP{\mathscr{D}^\prime}^k u \in H_0^1(M, \mathbb{C}^N)$ for $k \in \llbracket 0, \sigma + r - 1\rrbracket$, implying
\[\PP{\mathscr{D}^\prime}^k v \in H_0^1(M, \mathbb{C}^N) \text{ for } k \in \llbracket 0, \sigma - 1 \rrbracket.\]
As $\PP{s - r - 1} v = \PP{s - 1}^{r + 1} u \in \mathcal{K}^{s - r - 1}$, one also has $\PP{\mathscr{D}^\prime}^{\sigma} v \in H_0^1(M, \mathbb{C}^N)$, so that $v \in \mathcal{K}^{s + 1 - r}$.

\paragraph{Step 2: Injectivity of the shift operator for $s \in \mathbb{N}$.}
Consider $\mu \in \mathbb{R}$. We show that for $\vert \mu \vert$ sufficiently large and for $s \in \mathbb{N}$, the operator
\[\begin{array}{cccc}
\PP{s} + i\mu: & \mathcal{K}^{s + 1} & \longrightarrow & \mathcal{K}^{s - 1} \\
& u & \longmapsto & \left( \PP{s} + i\mu \right) u
\end{array}\]
is one-to-one, where the embedding $\mathcal{K}^{s + 1} \hooklongrightarrow \mathcal{K}^{s - 1}$ has been omitted (as explained in the beginning of the proof, this embedding is indeed an embedding even if $s = 0$). Fix $s \in \mathbb{N}$ and $u = (u^1, \cdots, u^N) \in \mathcal{K}^{s + 1}$ such that $\left( \PP{s} + i\mu \right) u = 0$. Write $(\pi_1, \cdots, \pi_N)$ for the projections associated with the canonical basis of $\mathbb{C}^N$. If $s = 0$, one has 
\[\PP{0} u = - i\mu u \in \mathcal{K}^1 \subset L^2(M, \mathbb{C}^N),\]
so that Step 1 gives $u \in \mathcal{K}^2$. Hence, $u \in \mathcal{K}^2$ for all $s \in \mathbb{N}$. 

For $k \in \llbracket 1, N \rrbracket$, one has
\[- \Delta u^k + \pi_k \left( X u + qu \right) + i\mu u^k = 0.\]
Multiplication by $\overline{u^k}$, integration on $M$ and an integration by parts give
\begin{equation}\label{eq_proof_prop_main_properties_domains_1}
\int_M \left( \left\vert \nabla u^k \right\vert ^2 + i\mu \vert u^k\vert ^2 \right) \mathrm{d}x = - \int_M \overline{u^k} \pi_k \left( X u + qu \right) \mathrm{d}x.
\end{equation}

Computing the real part and using the Cauchy-Schwarz inequality yields
\[\int_M \left\vert \nabla u^k \right\vert ^2 \mathrm{d}x \leq \left\Vert Xu + qu \right\Vert_{L^2(M, \mathbb{C}^N)} \left\Vert u \right\Vert_{L^2(M, \mathbb{C}^N)} \lesssim \left\Vert u \right\Vert_{H^1(M, \mathbb{C}^N)} \left\Vert u \right\Vert_{L^2(M, \mathbb{C}^N)}.\]
By the Poincaré inequality, one obtains
\[\left\Vert u \right\Vert_{H^1(M, \mathbb{C}^N)}^2 \lesssim \left\Vert u \right\Vert_{H^1(M, \mathbb{C}^N)} \left\Vert u \right\Vert_{L^2(M, \mathbb{C}^N)}.\]
Thus, there exists $C > 0$ depending only on $X$ and $q$ such that
\begin{equation}\label{eq_proof_prop_main_properties_domains_2}
\left\Vert u \right\Vert_{H^1(M, \mathbb{C}^N)} \leq C \left\Vert u \right\Vert_{L^2(M, \mathbb{C}^N)}.
\end{equation}

Now, compute the imaginary part of (\ref{eq_proof_prop_main_properties_domains_1}) and use the Cauchy-Schwarz inequality to find 
\[\vert \mu\vert \left\Vert u \right\Vert_{L^2(M, \mathbb{C}^N)}^2 \leq \left\Vert Xu + qu \right\Vert_{L^2(M, \mathbb{C}^N)} \left\Vert u \right\Vert_{L^2(M, \mathbb{C}^N)} \lesssim \left\Vert u \right\Vert_{H^1(M, \mathbb{C}^N)} \left\Vert u \right\Vert_{L^2(M, \mathbb{C}^N)}.\]
Together with (\ref{eq_proof_prop_main_properties_domains_2}), this gives
\[\vert \mu\vert \left\Vert u \right\Vert_{L^2(M, \mathbb{C}^N)} \leq C \left\Vert u \right\Vert_{L^2(M, \mathbb{C}^N)},\]
with $C > 0$ depending only on $X$ and $q$. Thus, for $\vert \mu \vert$ chosen sufficiently large, one has $u = 0$. 

\paragraph{Step 3: Surjectivity of the shift operator for $s = 0$.}
We prove that
\[\begin{array}{cccc}
\PP{0} + i\mu: & H_0^1(M, \mathbb{C}^N) & \longrightarrow & H^{-1}(M, \mathbb{C}^N) \\
& u & \longmapsto & \left( \PP{0} + i\mu \right) u
\end{array}\]
is onto. By definition of the operator $\PP{0}: H_0^1(M, \mathbb{C}^N) \rightarrow H^{-1}(M, \mathbb{C}^N)$, we need to show that for $v \in H^{-1}(M, \mathbb{C}^N)$, there exists $u \in H_0^1(M, \mathbb{C}^N)$ such that
\[- \left\langle \nabla u, \nabla \phi \right\rangle_{L^2(M, \mathbb{C}^N)} + \left\langle i\mu u - (X + q) u, \phi \right\rangle_{L^2(M, \mathbb{C}^N)} = \left\langle v, \overline{\phi} \right\rangle_{H^{-1}, H_0^1}, \quad \phi \in H_0^1(M, \mathbb{C}^N),\]
where one has used the notation
\[\left\langle \nabla u, \nabla \phi \right\rangle_{L^2(M, \mathbb{C}^N)} = \sum_{k = 1}^N \left\langle \nabla u^k, \nabla \phi^k \right\rangle_{L^2(M)}.\]

Using the Lax-Milgram theorem, it suffices to prove a coercivity inequality of the form
\begin{equation}\label{eq_proof_prop_main_properties_domains_3}
\left\vert - \left\Vert \nabla u \right\Vert_{L^2(M, \mathbb{C}^N)}^2 + \left\langle i\mu u - (X + q) u, u \right\rangle_{L^2(M, \mathbb{C}^N)} \right\vert \gtrsim \Vert u \Vert_{H^1(M, \mathbb{C}^N)}^2, \quad u \in H_0^1(M, \mathbb{C}^N).
\end{equation}
Take $u \in H_0^1(M, \mathbb{C}^N)$. Using the triangular inequality, one has
\begin{equation}\label{eq_proof_prop_main_properties_domains_4}
\begin{split}
& \left\vert - \left\Vert \nabla u \right\Vert_{L^2(M, \mathbb{C}^N)}^2 + \left\langle i\mu u - (X + q) u, u \right\rangle_{L^2(M, \mathbb{C}^N)} \right\vert \\
\geq & \left\vert - \left\Vert \nabla u \right\Vert_{L^2(M, \mathbb{C}^N)}^2 + i\mu \Vert u \Vert_{L^2(M, \mathbb{C}^N)}^2 \right\vert - \left\vert \left\langle (X + q) u, u \right\rangle_{L^2(M, \mathbb{C}^N)} \right\vert . \end{split}
\end{equation}

Using the Cauchy-Schwarz and Poincaré inequalities, one obtains
\[\left\vert \left\langle (X + q) u, u \right\rangle_{L^2(M, \mathbb{C}^N)} \right\vert \lesssim \left\Vert u \right\Vert_{H^1(M, \mathbb{C}^N)} \left\Vert u \right\Vert_{L^2(M, \mathbb{C}^N)} \lesssim \left\Vert \nabla u \right\Vert_{L^2(M, \mathbb{C}^N)} \left\Vert u \right\Vert_{L^2(M, \mathbb{C}^N)}.\]
For $\epsilon > 0$, one writes
\[\left\vert \left\langle (X + q) u, u \right\rangle_{L^2(M, \mathbb{C}^N)} \right\vert \leq c_1 \left( \epsilon \left\Vert \nabla u \right\Vert_{L^2(M, \mathbb{C}^N)}^2 + \frac{1}{\epsilon} \left\Vert u \right\Vert_{L^2(M, \mathbb{C}^N)}^2 \right).\]
For the other term of (\ref{eq_proof_prop_main_properties_domains_4}), simply write
\[\left\vert - \left\Vert \nabla u \right\Vert_{L^2(M, \mathbb{C}^N)}^2 + i\mu \Vert u \Vert_{L^2(M, \mathbb{C}^N)}^2 \right\vert \geq c_2 \left( \left\Vert \nabla u \right\Vert_{L^2(M, \mathbb{C}^N)}^2 + \vert \mu\vert \Vert u \Vert_{L^2(M, \mathbb{C}^N)}^2 \right).\]
Thus, coming back to (\ref{eq_proof_prop_main_properties_domains_4}), one obtains
\begin{align*}
& \left\vert - \left\Vert \nabla u \right\Vert_{L^2(M, \mathbb{C}^N)}^2 + \left\langle i\mu u - (X + q) u, u \right\rangle_{L^2(M, \mathbb{C}^N)} \right\vert \\
\geq \ & \left(c_2 - c_1 \epsilon\right) \left\Vert \nabla u \right\Vert_{L^2(M, \mathbb{C}^N)}^2 + \left(c_2 \vert \mu\vert - \frac{c_1}{\epsilon} \right) \Vert u \Vert_{L^2(M, \mathbb{C}^N)}^2.
\end{align*}
We choose $\epsilon$ so that $c_2 - c_1 \epsilon > 0$. Then, for $\vert \mu \vert$ sufficiently large, the coercivity inequality (\ref{eq_proof_prop_main_properties_domains_3}) holds.

\paragraph{Step 4: Construction of the shift operator.}
We start by the case $s \in \mathbb{N}$. As above, we omit the embeddings. We proceed by induction on $s \in \mathbb{N}$ and prove that $\PP{s} + i\mu: \mathcal{K}^{s + 1} \longrightarrow \mathcal{K}^{s - 1}$ is onto. By Step 3, it holds for $s = 0$. Assume that it holds for some fixed $s \in \mathbb{N}$. Take $v \in \mathcal{K}^{s}$. We show that there exists $u \in \mathcal{K}^{s + 2}$ such that $v = \left(\PP{s + 1} + i\mu \right) u$.

As $v \in \mathcal{K}^{s} \subset \mathcal{K}^{s - 1}$, there exists $u \in \mathcal{K}^{s + 1}$ such that $v = \left(\PP{s} + i\mu \right) u$. We apply Step 1. One has $u \in \mathcal{K}^{s + 1}$, $v \in \mathcal{K}^{s}$, and $\PP{s} u = v - i\mu u \in \mathcal{K}^{s}$. As $s \geq 0 $, this gives $u \in \mathcal{K}^{s + 2}$. 

Together with Step 2, this shows that $\PP{s} + i\mu: \mathcal{K}^{s + 1} \longrightarrow \mathcal{K}^{s - 1}$ is an isomorphism for all $s \in \mathbb{N}$. Note that as $\mathsf{P}$ and $\mathsf{P}^\ast$ are of the same form, the operator $\mathsf{P}^\ast - i\mu: \mathcal{K}_\ast^{s + 2} \longrightarrow \mathcal{K}_\ast^{s}$ is also an isomorphism for all $s \in \mathbb{N}$. Now, for $s \leq - 1$, we define $\Shift{s}^1: \mathcal{K}^{s + 1} \longrightarrow \mathcal{K}^{s - 1}$ as the adjoint of $\mathsf{P}^\ast - i\mu: \mathcal{K}_\ast^{- s + 1} \longrightarrow \mathcal{K}_\ast^{- s - 1}$: it is an isomorphism. Note that for $s \in \mathbb{Z}$, $\Shift{s}^1: \mathcal{K}^{s + 1} \longrightarrow \mathcal{K}^{s - 1}$ is a continuous isomorphism between Hilbert spaces, so its inverse is continuous. As $\Shift{s}^1$ is now defined for $s \in \mathbb{Z}$, we can define $\Shift{s}^1: \mathcal{K}^{s + 1} \longrightarrow \mathcal{K}^{s - 1}$ for $s \in \mathbb{R}$ by interpolation.

\paragraph{Step 5: Proof of \emph{(i)}.}
Note that at this stage, the injectivity of the operator $\iota_{\mathcal{K}^{s + \delta} \rightarrow \mathcal{K}^{s}}$ is yet to be proven if $s + \delta < 0$. To that purpose, the commutativity property (\ref{eq_prop_shifting_op_3}) is needed.

We start by proving (\ref{eq_lem_P_comm_with_embeddings}). It suffices to prove it for $r = 1$, that is
\begin{equation}\label{eq_proof_prop_main_properties_domains_7}
\PP{s} \circ \iota_{\mathcal{K}^{s + 1 + \delta} \rightarrow \mathcal{K}^{s + 1}} = \iota_{\mathcal{K}^{s - 1 + \delta} \rightarrow \mathcal{K}^{s - 1}} \circ \PP{s + \delta}: \mathcal{K}^{s + 1 + \delta} \rightarrow \mathcal{K}^{s - 1}.
\end{equation}
By interpolation, it suffices to prove it for $s \in \mathbb{Z}$ and $\delta \in \mathbb{N}^\ast$. If $s \geq 0$, then (\ref{eq_proof_prop_main_properties_domains_7}) is true as it holds in $\mathscr{D}^\prime(M, \mathbb{C}^N)$. Similarly, one has
\begin{equation}\label{eq_proof_prop_main_properties_domains_8}
\PP{s}^\ast \circ \iota_{\mathcal{K}_\ast^{s + 1 + \delta} \rightarrow \mathcal{K}_\ast^{s + 1}} = \iota_{\mathcal{K}_\ast^{s - 1 + \delta} \rightarrow \mathcal{K}_\ast^{s - 1}} \circ \PP{s + \delta}^\ast: \mathcal{K}_\ast^{s + 1 + \delta} \rightarrow \mathcal{K}_\ast^{s - 1}, \quad s \geq 0.
\end{equation}
Computing the adjoint of (\ref{eq_proof_prop_main_properties_domains_8}) and using (\ref{eq_adjoint_adjoint_P}) and (\ref{eq_adjoint_iota}), one finds
\[\iota_{\mathcal{K}^{- \tilde{s} - 1} \rightarrow \mathcal{K}^{- \tilde{s} - 1 - \delta}} \circ \PP{- \tilde{s}} = \PP{- \tilde{s} - \delta} \circ \iota_{\mathcal{K}^{- \tilde{s} + 1} \rightarrow \mathcal{K}^{- \tilde{s} + 1 - \delta}}, \quad \tilde{s} \geq 0,\]
and this gives (\ref{eq_proof_prop_main_properties_domains_7}) for $s = - \tilde{s} - \delta$, that is, in the case $s \leq -1$ and $s + \delta \leq 0$. Thus, (\ref{eq_proof_prop_main_properties_domains_7}) only remains to be proven for $s \leq - 1$ and $s + \delta \geq 1$. In this case, for $u \in \mathcal{K}^{s + 1 + \delta} \subset \mathcal{K}^2$ and $v \in \mathcal{K}_\ast^{1 - s} \subset \mathcal{K}_\ast^2$, one writes
\[\left\langle \PP{s} \circ \iota_{\mathcal{K}^{s + 1 + \delta} \rightarrow \mathcal{K}^{s + 1}} u, v \right\rangle_{\mathcal{K}^{s - 1}, \mathcal{K}_\ast^{1 - s}} = \left\langle \iota_{\mathcal{K}^{s + 1 + \delta} \rightarrow \mathcal{K}^{s + 1}} u, \PP{- s}^\ast v \right\rangle_{\mathcal{K}^{s + 1}, \mathcal{K}_\ast^{- s - 1}} = \left\langle u, \overline{\PP{- s}^\ast v} \right\rangle_{L^2(M, \mathbb{C}^N)},\]
as $s + 1 + \delta \geq 0 \geq s + 1$. Using $- s \geq 0$ and $s + \delta \geq 0$, one has
\[\left\langle \PP{s} \circ \iota_{\mathcal{K}^{s + 1 + \delta} \rightarrow \mathcal{K}^{s + 1}} u, v \right\rangle_{\mathcal{K}^{s - 1}, \mathcal{K}_\ast^{1 - s}} = \left\langle u, \overline{\PP{\mathscr{D}^\prime}^\ast v} \right\rangle_{L^2(M, \mathbb{C}^N)} = \left\langle \PP{s + \delta} u, \overline{v} \right\rangle_{L^2(M, \mathbb{C}^N)}.\]
As $s - 1 + \delta \geq 0 \geq s - 1$, one finds
\[\left\langle \PP{s} \circ \iota_{\mathcal{K}^{s + 1 + \delta} \rightarrow \mathcal{K}^{s + 1}} u, v \right\rangle_{\mathcal{K}^{s - 1}, \mathcal{K}_\ast^{1 - s}} = \left\langle \iota_{\mathcal{K}^{s - 1 + \delta} \rightarrow \mathcal{K}^{s - 1}} \circ \PP{s + \delta} u, v \right\rangle_{\mathcal{K}^{s - 1}, \mathcal{K}_\ast^{1 - s}},\]
and this gives (\ref{eq_proof_prop_main_properties_domains_7}).

Now, we prove that it implies the commutativity property (\ref{eq_prop_shifting_op_3}). The case $r = 1$, $s \in \mathbb{Z}$ and $\delta \in \mathbb{N}^\ast$ suffices, that is,
\begin{equation}\label{eq_proof_prop_main_properties_domains_9}
\Shift{s}^1 \circ \iota_{\mathcal{K}^{s + 1 + \delta} \rightarrow \mathcal{K}^{s + 1}} = \iota_{\mathcal{K}^{s - 1 + \delta} \rightarrow \mathcal{K}^{s - 1}} \circ \Shift{s + \delta}^1.
\end{equation}
For $s \geq 0$, one has $\Shift{s}^1 = \PP{s} + i \mu \iota_{\mathcal{K}^{s + 1} \rightarrow \mathcal{K}^{s - 1}}$ by definition, yielding (\ref{eq_proof_prop_main_properties_domains_9}) as a direct consequence of (\ref{eq_proof_prop_main_properties_domains_7}). For $s \leq - 1$, one has
\[\Shift{s}^1 = \left(\PP{- s}^\ast - i \mu \iota_{\mathcal{K}^{- s + 1} \rightarrow \mathcal{K}^{- s - 1}}\right)^\ast,\]
implying $\Shift{s}^1 = \PP{s} + i \mu \iota_{\mathcal{K}^{s + 1} \rightarrow \mathcal{K}^{s - 1}}$ by (\ref{eq_adjoint_adjoint_P}) and (\ref{eq_adjoint_iota}). Hence, (\ref{eq_proof_prop_main_properties_domains_9}) is also a consequence of (\ref{eq_proof_prop_main_properties_domains_7}) in that case. 

Now, we complete the proof of \emph{(i)}. As $\mathscr{D}(M, \mathbb{C}^N) \subset \mathcal{K}^{1 + \delta}$, $\mathcal{K}^{1 + \delta}$ is dense in $\mathcal{K}^1$. Take $s \in \mathbb{N}$. If $s = 2 \sigma$, then using (\ref{eq_prop_shifting_op_3}), one can factor the map $\iota_{\mathcal{K}^{s + \delta} \rightarrow \mathcal{K}^{s}}$ into
\begin{equation}\label{eq_proof_prop_main_properties_domains_10}
\iota_{\mathcal{K}^{s + \delta} \rightarrow \mathcal{K}^{s}}: \mathcal{K}^{s + \delta} \xrightarrow{\Shift{\delta + \sigma}^\sigma} \mathcal{K}^{\delta} \xhookrightarrow{\iota_{\mathcal{K}^{\delta} \rightarrow \mathcal{K}^{0}}} \mathcal{K}^0 \xrightarrow{\left( \Shift{\sigma}^\sigma \right)^{-1}} \mathcal{K}^{s}
\end{equation}
and this proves the density of $\mathcal{K}^{s + \delta}$ in $\mathcal{K}^{s}$. Similarly, if $s = 2 \sigma + 1$ then one writes
\begin{equation}\label{eq_proof_prop_main_properties_domains_11}
\iota_{\mathcal{K}^{s + \delta} \rightarrow \mathcal{K}^{s}}: \mathcal{K}^{s + \delta} \xrightarrow{\Shift{1 + \delta + \sigma}^\sigma} \mathcal{K}^{1 + \delta} \xhookrightarrow{\iota_{\mathcal{K}^{1 + \delta} \rightarrow \mathcal{K}^{1}}} \mathcal{K}^1 \xrightarrow{\left( \Shift{1 + \sigma}^\sigma \right)^{-1}} \mathcal{K}^{s}.
\end{equation}

This proves that all the maps of \emph{(i)} are embeddings. The rest of the proof of \emph{(i)} is similar, using (\ref{eq_proof_prop_main_properties_domains_10}), (\ref{eq_proof_prop_main_properties_domains_11}). For the compactness property, note that for $0 < \delta < \frac{1}{2}$, one has $\mathcal{K}^{\delta} = H^\delta(M, \mathbb{C}^N)$, implying the compactness of  $\iota_{\mathcal{K}^{\delta} \rightarrow \mathcal{K}^{0}}$ by the Rellich theorem.

\paragraph{Step 6: End of the proof of \emph{(ii)}.}
Fix $s \in \mathbb{Z}$ and $r \in \mathbb{N}^\ast$ such that $s \leq r - 1$, the case $s \geq r$ having been carried out in Step 1. Take $w \in \mathcal{K}^{s + r - 1}$ and $v \in \mathcal{K}^{s - r}$ such that
\begin{equation}\label{eq_proof_prop_main_properties_domains_12}
\PP{s - 1}^{r} w = \iota_{\mathcal{K}^{s - r} \rightarrow \mathcal{K}^{s - r - 1}} v.
\end{equation}
We seek $u \in \mathcal{K}^{s + r}$ such that 
\begin{equation}\label{eq_proof_prop_main_properties_domains_13}
\iota_{\mathcal{K}^{s + r} \rightarrow \mathcal{K}^{s + r - 1}} u = w
\end{equation}
and
\begin{equation}\label{eq_proof_prop_main_properties_domains_14}
\PP{s}^r u = v.
\end{equation}
Note that (\ref{eq_proof_prop_main_properties_domains_13}) implies
\begin{align*}
\iota_{\mathcal{K}^{s - r} \rightarrow \mathcal{K}^{s - r - 1}} \left( \PP{s}^r u - v \right) 
& = \PP{s - 1}^r \circ \iota_{\mathcal{K}^{s + r} \rightarrow \mathcal{K}^{s + r - 1}} u - \iota_{\mathcal{K}^{s - r} \rightarrow \mathcal{K}^{s - r - 1}} v \\
& = \PP{s - 1}^r w - \iota_{\mathcal{K}^{s - r} \rightarrow \mathcal{K}^{s - r - 1}} v = 0
\end{align*}
and this gives (\ref{eq_proof_prop_main_properties_domains_14}) since $\iota_{\mathcal{K}^{s - r} \rightarrow \mathcal{K}^{s - r - 1}}$ is one-to-one. Hence, it suffices to find $u \in \mathcal{K}^{s + 2r}$ such that (\ref{eq_proof_prop_main_properties_domains_13}) holds. Note that such a $u$ is unique since $\iota_{\mathcal{K}^{s + 2r} \rightarrow \mathcal{K}^{s + 2r - 1}}$ is one-to-one. If $s + r \geq 1$, the embedding could be omitted. If however $s + r \leq 0$, (\ref{eq_proof_prop_main_properties_domains_13}) means that $u$ is an extension of $w$ as a continuous linear form.

Take $\sigma \in \mathbb{N}$ such that $s - r - 1 + 2 \sigma \geq 0$. Applying $\left( \Shift{s - r - 1 + \sigma}^\sigma \right)^{- 1}$ to (\ref{eq_proof_prop_main_properties_domains_12}), one finds
\[\PP{s + 2 \sigma - 1}^{r} \circ \left( \Shift{s + r + \sigma - 1}^\sigma \right)^{- 1} w = \iota_{\mathcal{K}^{s - r + 2 \sigma} \rightarrow \mathcal{K}^{s - r + 2 \sigma - 1}} \circ \left( \Shift{s - r + \sigma}^\sigma \right)^{- 1} v.\]
Apply Step 1 with $W = \left( \Shift{s + r + \sigma - 1}^\sigma \right)^{- 1} w \in \mathcal{K}^{s + r + 2 \sigma - 1}$ and $\tilde{s} = s + 2 \sigma \geq r$: there exists $U \in \mathcal{K}^{s + r + 2 \sigma}$ such that
\[W = \iota_{\mathcal{K}^{s + r + 2 \sigma} \rightarrow \mathcal{K}^{s + r + 2 \sigma - 1}} U.\]
Hence, one finds
\[w = \Shift{s + r + \sigma - 1}^\sigma \circ \iota_{\mathcal{K}^{s + r + 2 \sigma} \rightarrow \mathcal{K}^{s + r + 2 \sigma - 1}} U = \iota_{\mathcal{K}^{s + r} \rightarrow \mathcal{K}^{s + r - 1}} \circ \Shift{s + r + \sigma}^\sigma U,\]
and this is (\ref{eq_proof_prop_main_properties_domains_13}) with $u = \Shift{s + r + \sigma}^\sigma U$.

We now prove (\ref{eq_prop_elliptic_estimate_P}) by induction on $r \in \mathbb{N}^\ast$. We start with the case $r = 1$. Take $s \in \mathbb{R}$ and $u \in \mathcal{K}^{s + 1}$, and write
\[\Vert u \Vert_{\mathcal{K}^{s + 1}} = \left\Vert \left( \Shift{s}^1 \right)^{-1} \circ \Shift{s}^1 u \right\Vert_{\mathcal{K}^{s + 1}} \lesssim \left\Vert \Shift{s}^1 u \right\Vert_{\mathcal{K}^{s - 1}}.\]
Using the definition of $\Shift{s}^1$ and the triangular inequality, one obtains
\[\Vert u \Vert_{\mathcal{K}^{s + 1}} \lesssim \left\Vert \PP{s} u \right\Vert_{\mathcal{K}^{s - 1}} + \left\Vert \iota_{\mathcal{K}^{s + 1} \rightarrow \mathcal{K}^{s - 1}} u \right\Vert_{\mathcal{K}^{s - 1}},\]
a better estimate than (\ref{eq_prop_elliptic_estimate_P}) in the case $r = 1$, since one has
\[\left\Vert \iota_{\mathcal{K}^{s + 1} \rightarrow \mathcal{K}^{s - 1}} u \right\Vert_{\mathcal{K}^{s - 1}} = \left\Vert \iota_{\mathcal{K}^{s} \rightarrow \mathcal{K}^{s - 1}} \circ \iota_{\mathcal{K}^{s + 1} \rightarrow \mathcal{K}^{s}} u \right\Vert_{\mathcal{K}^{s - 1}} \lesssim \left\Vert \iota_{\mathcal{K}^{s + 1} \rightarrow \mathcal{K}^{s}} u \right\Vert_{\mathcal{K}^{s}}.\]

Now, we take $r \in \mathbb{N}^\ast$ such that (\ref{eq_prop_elliptic_estimate_P}) holds, and we prove that (\ref{eq_prop_elliptic_estimate_P}) also holds for $r + 1$. Take $s \in \mathbb{R}$ and $u \in \mathcal{K}^{s + r + 1}$. We want to show that
\[\Vert u \Vert_{\mathcal{K}^{s + r + 1}} \lesssim \left\Vert \PP{s}^{r + 1} u \right\Vert_{\mathcal{K}^{s - r - 1}} + \left\Vert \iota_{\mathcal{K}^{s + r + 1} \rightarrow \mathcal{K}^{s + r}} u \right\Vert_{\mathcal{K}^{s + r}}.\] 
By induction, one has
\[\Vert u \Vert_{\mathcal{K}^{s + r + 1}} \lesssim \left\Vert \PP{s + 1}^{r} u \right\Vert_{\mathcal{K}^{s + 1 - r}} + \left\Vert \iota_{\mathcal{K}^{s + r + 1} \rightarrow \mathcal{K}^{s + r}} u \right\Vert_{\mathcal{K}^{s + r}}.\] 
The case $r = 1$ gives
\[\left\Vert \PP{s + 1}^{r} u \right\Vert_{\mathcal{K}^{s + 1 - r}} \lesssim \left\Vert \PP{s - r} \circ \PP{s + 1}^{r} u \right\Vert_{\mathcal{K}^{s - r - 1}} + \left\Vert \iota_{\mathcal{K}^{s - r + 1} \rightarrow \mathcal{K}^{s - r}} \circ \PP{s + 1}^{r} u \right\Vert_{\mathcal{K}^{s - r}}.\] 
Using the definition of $\PP{s}^{r + 1}$ and the commutativity property (\ref{eq_lem_P_comm_with_embeddings}), one finds
\begin{align*}
\left\Vert \PP{s + 1}^{r} u \right\Vert_{\mathcal{K}^{s + 1 - r}} & \lesssim \left\Vert \PP{s}^{r + 1} u \right\Vert_{\mathcal{K}^{s - r - 1}} + \left\Vert \PP{s}^{r} \circ \iota_{\mathcal{K}^{s + r + 1} \rightarrow \mathcal{K}^{s + r}} u \right\Vert_{\mathcal{K}^{s - r}} \\
& \lesssim \left\Vert \PP{s}^{r + 1} u \right\Vert_{\mathcal{K}^{s - r - 1}} + \left\Vert \iota_{\mathcal{K}^{s + r + 1} \rightarrow \mathcal{K}^{s + r}} u \right\Vert_{\mathcal{K}^{s + r}}.
\end{align*}
This completes the proof of \emph{(ii)}.

\paragraph{Step 7: End of the proof of \emph{(iii)}.}
Fix $s \in \mathbb{R}$. We prove (\ref{eq_prop_shifting_op_4}), that is,
\[\left\Vert \left(\PP{s}^{r} - \Shift{s}^{r} \right) u \right\Vert_{\mathcal{K}^{s - r}} \lesssim \left\Vert \iota_{\mathcal{K}^{s + r} \rightarrow \mathcal{K}^{s + r - 1}} u \right\Vert_{\mathcal{K}^{s + r - 1}},\]
for $r \in \mathbb{N}^\ast$ and $u \in \mathcal{K}^{s + r}$. Note that this is true for $r = 1$: one has
\[\left\Vert \left(\PP{s} - \Shift{s}^{1} \right) u \right\Vert_{\mathcal{K}^{s - 1}} = \left\Vert \iota_{\mathcal{K}^{s + 1} \rightarrow \mathcal{K}^{s - 1}} u \right\Vert_{\mathcal{K}^{s - 1}} \lesssim \left\Vert \iota_{\mathcal{K}^{s + 1} \rightarrow \mathcal{K}^{s}} u \right\Vert_{\mathcal{K}^{s}},\]
for all $u \in \mathcal{K}^{s + 1}$. Now, assume that the result is true for some $r \in \mathbb{N}^\ast$, and take $u \in \mathcal{K}^{s + r + 1}$. We write
\begin{align*}
\PP{s}^{r + 1} - \Shift{s}^{r + 1} & = \PP{s - r} \circ \PP{s + 1}^{r} - \Shift{s - r}^{1} \circ \Shift{s + 1}^{r} \\
& = \PP{s - r} \circ \PP{s + 1}^{r} - \left( \PP{s - r} + i \mu \iota_{\mathcal{K}^{s - r + 1} \rightarrow \mathcal{K}^{s - r - 1}} \right) \circ \Shift{s + 1}^{r} \\
& = \PP{s - r} \circ \left( \PP{s + 1}^{r} - \Shift{s + 1}^{r} \right) - i \mu \iota_{\mathcal{K}^{s - r + 1} \rightarrow \mathcal{K}^{s - r - 1}} \circ \Shift{s + 1}^{r} \\
& = \PP{s - r} \circ \left( \PP{s + 1}^{r} - \Shift{s + 1}^{r} \right) - i \mu \Shift{s - 1}^{r} \circ \iota_{\mathcal{K}^{s + r + 1} \rightarrow \mathcal{K}^{s + r - 1}}
\end{align*}
and we use the continuity of $\PP{s - r}$ and $\Shift{s - 1}^{r}$ to find
\[\left\Vert \left( \PP{s}^{r + 1} - \Shift{s}^{r + 1} \right) u \right\Vert_{\mathcal{K}^{s - r - 1}} \lesssim \left\Vert \left( \PP{s + 1}^{r} - \Shift{s + 1}^{r} \right) u \right\Vert_{\mathcal{K}^{s - r + 1}} + \left\Vert \iota_{\mathcal{K}^{s + r + 1} \rightarrow \mathcal{K}^{s + r - 1}} u \right\Vert_{\mathcal{K}^{s + r - 1}}.\]
By induction, we get
\[\left\Vert \left( \PP{s}^{r + 1} - \Shift{s}^{r + 1} \right) u \right\Vert_{\mathcal{K}^{s - r - 1}} \lesssim \left\Vert \iota_{\mathcal{K}^{s + r + 1} \rightarrow \mathcal{K}^{s + r}} u \right\Vert_{\mathcal{K}^{s + r}},\]
and this completes the proof.

\subsection{Solutions of the wave equations}

\subsubsection{Proof of Theorem \ref{thm_LLT_dir}}

The proof of Theorem \ref{thm_LLT_dir} is organized as follows. First, we check that the assumptions of the Hille-Yosida theorem are fulfilled, to construct the solution for $s \geq 0$, with the regularity result 
\[u \in \bigcap_{k = 0}^{s + 1} \mathscr{C}^k(\mathbb{R}, \mathcal{K}^{s + 1 - k}).\]
Second, we construct the solution for $s < 0$ by using the shift operator of Proposition \ref{prop_main_properties_K^s}-\emph{(iii)}. Third, we prove Theorem \ref{thm_LLT_dir}-\emph{(ii)} and the regularity result
\[u \in \bigcap_{k = 0}^{\infty} \mathscr{C}^k(\mathbb{R}, \mathcal{K}^{s + 1 - k}), \quad s \in \mathbb{R}.\]
Fourth, we construct the solution of the wave equation with a source term as is Theorem \ref{thm_LLT_dir}-\emph{(iv)}. Finally, we prove the results about the normal derivative. Note that by interpolation, we can always assume that $s \in \mathbb{Z}$.

\paragraph{Step 1: Construction of the solution for $s \geq 0$ with the Hille-Yosida theorem.} 

We write $\mathcal{X}$ for the Hilbert space $\mathcal{K}^1 \times \mathcal{K}^0$ and $A: \mathcal{X} \longrightarrow \mathcal{X}$ for the unbounded operator
\[A = 
\begin{pmatrix}
0 & - \mathrm{Id} \\
-\PP{1} & 0
\end{pmatrix}\]
with domain $D(A) = \mathcal{K}^2 \times \mathcal{K}^1$. We also write $U = \binom{u}{\partial_t u}$ so that the wave equation reads (formally at first)
\[\partial_t U + A U = 0 \quad \text{ with } \quad U(0) = \binom{u^0}{u^1}.\]

\begin{lem}[The iterated domains of $A$]\label{lem_iterated_domains}
For $k \in \mathbb{N}$, one has
\begin{equation}\label{eq_lem_iterated_domains}
D(A^k) = \mathcal{K}^{k + 1} \times \mathcal{K}^{k}.
\end{equation}
\end{lem}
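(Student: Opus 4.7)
The plan is to proceed by induction on $k$, with base cases $k = 0$ (which merely states $\mathcal{X} = \mathcal{K}^1 \times \mathcal{K}^0$) and $k = 1$ (the very definition of $D(A)$). For the inductive step from $k$ to $k+1$, I rely on the identity $D(A^{k+1}) = \{U \in D(A) : AU \in D(A^k)\}$. Writing $U = \binom{u}{v}$ and invoking the inductive hypothesis on $D(A^k)$, the condition $U \in D(A^{k+1})$ is equivalent to
\[
u \in \mathcal{K}^2, \qquad v \in \mathcal{K}^{k+1}, \qquad \PP{1} u \in \iota_{\mathcal{K}^{k} \rightarrow \mathcal{K}^{0}}(\mathcal{K}^{k}).
\]

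The forward inclusion $\mathcal{K}^{k+2} \times \mathcal{K}^{k+1} \subseteq D(A^{k+1})$ follows directly from the commutation identity (\ref{eq_lem_P_comm_with_embeddings}) applied with $s = r = 1$ and $\delta = k$: this reads $\PP{1} \circ \iota_{\mathcal{K}^{k+2} \rightarrow \mathcal{K}^{2}} = \iota_{\mathcal{K}^{k} \rightarrow \mathcal{K}^{0}} \circ \PP{k+1}$, so that if $u \in \mathcal{K}^{k+2}$ then $\PP{1}u$ is the image under $\iota_{\mathcal{K}^k \rightarrow \mathcal{K}^0}$ of $\PP{k+1}u \in \mathcal{K}^k$, which is exactly the required subspace relation.

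The real work is in the reverse inclusion, and the plan here is a secondary induction on $m \in \{2, 3, \ldots, k+2\}$ showing that under the hypotheses $u \in \mathcal{K}^2$ and $\PP{1}u \in \iota_{\mathcal{K}^{k} \rightarrow \mathcal{K}^{0}}(\mathcal{K}^{k})$ one has $u \in \mathcal{K}^m$. The case $m = 2$ is the hypothesis. For the step $m-1 \to m$ with $3 \leq m \leq k+2$, I will apply Proposition \ref{prop_main_properties_K^s}-(ii) with $s = m-1$ and $r = 1$, whose conclusion is $u \in \mathcal{K}^m$; its hypothesis demands that the element $\PP{m-2} u \in \mathcal{K}^{m-3}$ in fact lie in the range of $\iota_{\mathcal{K}^{m-2} \rightarrow \mathcal{K}^{m-3}}$. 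To verify this, the commutation identity (\ref{eq_lem_P_comm_with_embeddings}) first gives $\iota_{\mathcal{K}^{m-3} \rightarrow \mathcal{K}^{0}}(\PP{m-2} u) = \PP{1} u$; the standing hypothesis then places $\PP{1}u$ in $\iota_{\mathcal{K}^{k} \rightarrow \mathcal{K}^{0}}(\mathcal{K}^{k}) \subseteq \iota_{\mathcal{K}^{m-2} \rightarrow \mathcal{K}^{0}}(\mathcal{K}^{m-2})$, as $m - 2 \leq k$; and finally the injectivity of the embeddings from Proposition \ref{prop_main_properties_K^s}-(i) permits the lift of this subspace relation from level $0$ to level $m - 3$, yielding $\PP{m-2} u \in \iota_{\mathcal{K}^{m-2} \rightarrow \mathcal{K}^{m-3}}(\mathcal{K}^{m-2})$ as required. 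Taking $m = k + 2$ then gives $u \in \mathcal{K}^{k+2}$ and closes the main induction.

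The principal obstacle is precisely this bookkeeping with embeddings: the hypothesis $\PP{1}u \in \mathcal{K}^k$ is a subspace statement inside $\mathcal{K}^0$, whereas the elliptic estimate of Proposition \ref{prop_main_properties_K^s}-(ii) is phrased in terms of an exact equality $\PP{s-1}^r w = \iota(v)$ at a specific intermediate level of regularity. Reconciling the two formulations rests crucially on the commutation identity (\ref{eq_lem_P_comm_with_embeddings}) together with the injectivity of all embeddings, and this is the only delicate point of the argument.
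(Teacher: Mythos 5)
Your proof is correct. It uses the same two ingredients as the paper's: induction on $k$ and the elliptic estimate of Proposition \ref{prop_main_properties_K^s}-\emph{(ii)}. The only real divergence is in how you unfold the recursion for the iterated domain: by writing $D(A^{k+1}) = \{U \in D(A) : AU \in D(A^k)\}$ you retain only $u \in \mathcal{K}^2$ from the first condition, and must then climb back up to $\mathcal{K}^{k+2}$ through your secondary bootstrap, applying the elliptic estimate $k$ times and each time re-justifying its hypothesis via (\ref{eq_lem_P_comm_with_embeddings}) and the injectivity of the embeddings. The paper instead uses the equivalent characterization in which $U \in D(A^{k+1})$ gives both $U \in D(A^k)$ and $AU \in D(A^k)$; the inductive hypothesis then yields $u^0 \in \mathcal{K}^{k+1}$ and $\mathsf{P}u^0 \in \mathcal{K}^{k}$ at once, so a single application of Proposition \ref{prop_main_properties_K^s}-\emph{(ii)} with $s = k+1$, $r = 1$ closes the induction. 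Moreover, since every space in sight is a subspace of $L^2(M, \mathbb{C}^N)$, the embedding bookkeeping that you identify as the delicate point can simply be suppressed, as the paper remarks. Both arguments are valid; the paper's choice of unfolding just turns the inductive step into a one-liner.
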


\begin{proof}
By definition, one has $D(A^0) = \mathcal{K}^1 \times \mathcal{K}^0, \quad D(A^1) = \mathcal{K}^2 \times \mathcal{K}^1$, and 
\[D(A^{k + 1}) = \left\{U \in D(A^k), A^k U \in D(A) \right\}, \quad k \in \mathbb{N}^\ast.\]
Note that we can omit the embeddings here, as we are only working with subspaces of $L^2(M, \mathbb{C}^N)$. Fix $k \in \mathbb{N}$ such that (\ref{eq_lem_iterated_domains}) is true. The fact that 
\[\mathcal{K}^{k + 2} \times \mathcal{K}^{k + 1} \subset D(A^{k + 1})\]
follows from the definitions. Take $U = \left( u^0, u^1 \right) \in D(A^{k + 1})$. One has $U \in D(A^k)$ and $AU \in D(A^k)$. This gives $u^1 \in \mathcal{K}^{k + 1}$, $u^0 \in \mathcal{K}^{k + 1}$ and $\mathsf{P} u^0 \in \mathcal{K}^{k}$. By Proposition \ref{prop_main_properties_K^s}-\emph{(ii)}, one finds $u^0 \in \mathcal{K}^{k + 2}$ and so $U \in \mathcal{K}^{k + 2} \times \mathcal{K}^{k + 1}$.
\end{proof}

\begin{lem}[Assumptions of the Hille-Yosida theorem]\label{lem_assump_HY}
The operator $A$ is closed, and $D(A)$ is dense in $\mathcal{X}$. There exists $\omega \in \mathbb{R}$ such that the resolvent set of $A$ contains $(- \infty, - \omega)$ and such that for all $\lambda < - \omega$ and all $k \in \mathbb{N}^\ast$, one has
\[\left\Vert R_\lambda(A)^k \right\Vert_{\mathcal{L}(\mathcal{X})} \leq \frac{1}{\left\vert \omega + \lambda \right\vert ^k},\]
where $R_\lambda(A) = \left( \lambda \mathrm{Id}_\mathcal{X} - A \right)^{-1}$. The same is true for the operator $-A$.
\end{lem}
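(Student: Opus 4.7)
The plan is to verify the Lumer--Phillips/Hille--Yosida hypotheses for $B = -A$, then to observe that the same argument works equally well for $+A$, so both bounds come from the same computations.

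First I would handle density and closedness. Proposition \ref{prop_main_properties_K^s}-(i) gives density of $D(A) = \mathcal{K}^2 \times \mathcal{K}^1$ in $\mathcal{X} = \mathcal{K}^1 \times \mathcal{K}^0$. For closedness, suppose $U_n = (u_n^0, u_n^1) \to (u^0, u^1)$ in $\mathcal{X}$ and $AU_n = (-u_n^1, -\PP{1} u_n^0) \to (w^0, w^1)$ in $\mathcal{X}$. The first coordinate forces $u^1 = -w^0 \in \mathcal{K}^1$, and the second yields a Cauchy sequence $(u_n^0)$ in $\mathcal{K}^1$ with $(\PP{1} u_n^0)$ Cauchy in $\mathcal{K}^0$; the elliptic estimate of Proposition \ref{prop_main_properties_K^s}-(ii) then upgrades the limit to $u^0 \in \mathcal{K}^2$ with $\PP{1} u^0 = -w^1$, so $U \in D(A)$ and $AU = (w^0, w^1)$.

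Second, I would prove dissipativity of $\pm A - \omega \mathrm{Id}$ for a single constant $\omega > 0$, in the native inner product of $\mathcal{X}$. For $U = (u^0, u^1) \in D(A)$ one has $u^0 \in H^2 \cap H_0^1$ and $u^1 \in H_0^1$, so the boundary contribution in integration by parts of $\langle \Delta u^0, u^1\rangle_{L^2}$ vanishes, while the two $H^1$--cross terms $\langle \nabla u^1, \nabla u^0\rangle$ and $\langle \nabla u^0, \nabla u^1\rangle$ cancel after taking real parts. A direct computation gives
\[
\mathrm{Re}\langle -AU, U\rangle_{\mathcal{X}} = \mathrm{Re}\langle u^1, u^0\rangle_{L^2} - \mathrm{Re}\langle (X + q) u^0, u^1\rangle_{L^2}.
\]
Since $X + q$ is bounded from $H^1(M, \mathbb{C}^N)$ to $L^2(M, \mathbb{C}^N)$, Cauchy--Schwarz and Young's inequality yield $|\mathrm{Re}\langle \pm AU, U\rangle_{\mathcal{X}}| \le \omega \|U\|_{\mathcal{X}}^2$ for some $\omega > 0$ depending only on $X$ and $q$, giving dissipativity of both $\pm A - \omega\mathrm{Id}$ simultaneously.

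Third, I would establish the range condition. Writing $(\lambda \mathrm{Id} + A) U = F$ componentwise and eliminating $u^1 = \lambda u^0 - f^0$ reduces the problem to solving $(\lambda^2 - \PP{1}) u^0 = \lambda f^0 + f^1$ with $u^0 \in \mathcal{K}^2$. Exactly as in Step 3 of the proof of Proposition \ref{prop_main_properties_K^s}-(iii), the sesquilinear form
\[
a_\lambda(u, v) = \langle \nabla u, \nabla v\rangle_{L^2} + \lambda^2 \langle u, v\rangle_{L^2} + \langle (X + q) u, v\rangle_{L^2}
\]
on $H_0^1(M, \mathbb{C}^N)$ becomes coercive for $\lambda^2$ large, the lower-order perturbation $\langle(X+q)u, u\rangle$ being absorbed by the $\lambda^2 \|u\|_{L^2}^2$ term via Young's inequality. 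Lax--Milgram then produces a solution in $H_0^1$, and the elliptic estimate of Proposition \ref{prop_main_properties_K^s}-(ii) upgrades it to $\mathcal{K}^2$. The Lumer--Phillips theorem then produces a $C_0$-semigroup generated by $-A$ with growth bounded by $e^{\omega t}$, which via Hille--Yosida is equivalent to the claimed resolvent estimate for $\lambda < -\omega$; the same computations applied to $+A$ (same form $a_\lambda$, same dissipativity bound) yield the companion statement for $-A$. The only care-demanding step is the sign bookkeeping: checking that translating the semigroup estimate through $R_\lambda(A) = -R_{-\lambda}(-A)$ produces exactly the bound $\|R_\lambda(A)^k\|_{\mathcal{L}(\mathcal{X})} \le |\omega + \lambda|^{-k}$ in the stated range $\lambda < -\omega$.
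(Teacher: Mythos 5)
Your proposal is correct and follows essentially the same route as the paper: density, closedness via the elliptic estimate of Proposition \ref{prop_main_properties_K^s}-\emph{(ii)}, the bound $\left\vert \Re \langle A U, U \rangle_{\mathcal{X}} \right\vert \lesssim \Vert U \Vert_{\mathcal{X}}^2$ obtained by integration by parts with cancellation of the gradient cross terms, and surjectivity of $\lambda \mathrm{Id} - A$ by Lax--Milgram with the same coercivity argument. The only cosmetic difference is that the paper deduces the resolvent bound directly from $\Vert (\lambda \mathrm{Id} - A) U \Vert_{\mathcal{X}} \geq (\vert \lambda \vert + \omega) \Vert U \Vert_{\mathcal{X}}$ and $\Vert R_\lambda(A)^k \Vert \leq \Vert R_\lambda(A) \Vert^k$, so your detour through Lumer--Phillips and back is unnecessary but harmless.
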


The proof of Lemma \ref{lem_assump_HY} is given below. Fix $s \in \mathbb{N}$ and 
\[\binom{u^0}{u^1} \in \mathcal{K}^{s + 1} \times \mathcal{K}^{s} = D(A^s).\]
Using the Hille-Yosida theorem (see for example \cite{Vrabie}, Theorem 3.3.1 and Corollary 2.4.1), together with Lemma \ref{lem_iterated_domains} and Lemma \ref{lem_assump_HY}, one obtains that there exists a unique solution
\[U \in \bigcap_{l = 0}^s \mathscr{C}^l(\mathbb{R}, D(A^{s-l})) = \bigcap_{l = 0}^s \mathscr{C}^l(\mathbb{R}, \mathcal{K}^{s - l + 1} \times \mathcal{K}^{s - l})\]
of 
\[\left \{
\begin{array}{rcc}
\partial_t U + AU & = & 0 \\
U(0) & = & \left( u^0, u^1 \right)
\end{array}.
\right.\]
One can check that in fact, $U$ is of the form $\left(u, \partial_t u\right)$, with 
\[u \in \bigcap_{l = 0}^{s + 1} \mathscr{C}^l(\mathbb{R}, \mathcal{K}^{s - l + 1}).\]
Note that this gives $u \in H^{s + 1}((0, T) \times M, \mathbb{C}^N)$.

\begin{proof}[Proof of Lemma \ref{lem_assump_HY}.] The fact that $D(A)$ is dense in $\mathcal{X}$ is well-known. We prove that $A$ is closed. Consider a sequence $\left((u^0_n, u^1_n)\right)_n$ of elements of $D(A)$ such that 
\[\binom{u^0_n}{u^1_n} \underset{n \rightarrow \infty}{\longrightarrow} \binom{u^0}{u^1} \in \mathcal{X} \quad \text{ and } \quad A \binom{u^0_n}{u^1_n} \underset{n \rightarrow \infty}{\longrightarrow} \binom{v^0}{v^1} \in \mathcal{X}.\]
By definition one has 
\[u^1_n \underset{n \rightarrow \infty}{\longrightarrow} -v^0 \quad \text{in } \mathcal{K}^1 \quad \text{ and } \quad \PP{1} u^0_n \underset{n \rightarrow \infty}{\longrightarrow} -v^1 \quad \text{in } \mathcal{K}^0.\]
In particular, this gives $u^1 = - v^0 \in \mathcal{K}^1$. One also has $u^0_n \longrightarrow u^0$ in $\mathcal{K}^1$ yielding $\PP{0} u^0_n \longrightarrow \PP{0} u^0$ in $\mathcal{K}^{-1}$. This implies $\PP{0} u^0 = -v^1 \in \mathcal{K}^0$, so the ellipticity estimate for $\mathsf{P}$ (Proposition \ref{prop_main_properties_K^s}-\emph{(ii)}) gives $u^0 \in \mathcal{K}^2$. Thus, $A$ is closed.

Now, we show that there exists $\omega \in \mathbb{R}$ such that if $\lambda \in \mathbb{R}$ satisfies $\vert \lambda\vert + \omega > 0$ then
\begin{equation}\label{eq_proof_LLT_dir_1}
\left\Vert \left( \lambda \mathrm{Id} - A \right) U \right\Vert_{\mathcal{X}} \geq \left( \vert \lambda\vert + \omega \right) \Vert U \Vert_{\mathcal{X}} 
\end{equation}
for all $U \in D(A)$. Recall that $\mathcal{X}$ is a Hilbert space for scalar product associated with the norm
\[\left\Vert \left( u^0, u^1 \right) \right\Vert_{\mathcal{X}}^2 = \left\Vert \nabla u^0 \right\Vert_{L^2(M, \mathbb{C}^N)}^2 + \left\Vert u^1 \right\Vert_{L^2(M, \mathbb{C}^N)}^2.\]
Fix $U = ( u^0, u^1 ) \in D(A)$, and write
\begin{align}
\left\Vert \left( \lambda \mathrm{Id} - A \right) U \right\Vert_{\mathcal{X}}^2 = \ & \vert \lambda\vert ^2 \left\Vert U \right\Vert_{\mathcal{X}}^2 + \left\Vert A U \right\Vert_{\mathcal{X}}^2 - 2 \Re \left\langle \lambda U, A U \right\rangle_{\mathcal{X}} \nonumber \\
\geq \ & \vert \lambda\vert ^2 \left\Vert U \right\Vert_{\mathcal{X}}^2 - 2 \lambda \Re \left\langle U, A U \right\rangle_{\mathcal{X}} .\label{eq_proof_LLT_dir_2}
\end{align}
By definition, one has
\begin{align*}
\Re \left\langle U, A U \right\rangle_{\mathcal{X}} 
& = - \Re \left\langle \nabla u^0, \nabla u^1 \right\rangle_{L^2(M, \mathbb{C}^N)} - \Re \left\langle \PP{1} u^0, u^1 \right\rangle_{L^2(M, \mathbb{C}^N)} \\
& = - \Re \left\langle \nabla u^0, \nabla u^1 \right\rangle_{L^2(M, \mathbb{C}^N)} - \Re \left\langle \left( \Delta - X - q \right) u^0, u^1 \right\rangle_{L^2(M, \mathbb{C}^N)}.
\end{align*}
Integrating by parts, one finds 
\[\Re \left\langle U, A U \right\rangle_{\mathcal{X}} = \Re \left\langle \left( X + q \right) u^0, u^1 \right\rangle_{L^2(M, \mathbb{C}^N)},\]
and using the Cauchy-Schwarz and Poincaré inequalities, this gives
\[\left\vert \Re \left\langle U, A U \right\rangle_{\mathcal{X}} \right\vert \lesssim \left\Vert \nabla u^0 \right\Vert_{L^2(M, \mathbb{C}^N)} \left\Vert u^1 \right\Vert_{L^2(M, \mathbb{C}^N)} \lesssim \Vert U \Vert_{\mathcal{X}}^2.\]
Coming back to (\ref{eq_proof_LLT_dir_2}), one finds that there exists $c > 0$ such that
\[\left\Vert \left( \lambda \mathrm{Id} - A \right) U \right\Vert_{\mathcal{X}}^2 \geq \left(\vert \lambda\vert ^2 - c \vert \lambda\vert \right) \left\Vert U \right\Vert_{\mathcal{X}}^2 \geq \left(\vert \lambda\vert - c \right)^2 \left\Vert U \right\Vert_{\mathcal{X}}^2 \]
for $\vert \lambda\vert > c$. This gives (\ref{eq_proof_LLT_dir_1}) with $\omega = -c$.

Next, we show that the operator $\lambda \mathrm{Id} - A: D(A) \rightarrow \mathcal{X}$ is onto if $\vert \lambda \vert$ is sufficiently large. Fix $(v^0, v^1) \in \mathcal{X}$. We seek $\left( u^0, u^1 \right) \in D(A)$ such that 
\[\left( \lambda \mathrm{Id} - A \right) \binom{u^0}{u^1} = \binom{v^0}{v^1},\]
which reads
\[\left \{
\begin{array}{rcccl}
u^1 & = & - v^0 + \lambda u^0 \\
\left( \Delta - X - q \right) u^0 - \lambda^2 u^0 & = & - v^1 - \lambda v^0
\end{array}
\right. .\]
Using the Lax-Milgram theorem and the ellipticity of $\mathsf{P}$, it follows if the coercivity inequality 
\begin{equation}\label{eq_proof_lem_assumptions_HY}
\left\vert \left\Vert \nabla u^0 \right\Vert_{L^2(M, \mathbb{C}^N)}^2 + \lambda^2 \left\Vert u^0 \right\Vert_{L^2(M, \mathbb{C}^N)}^2 + \left\langle (X + q) u^0, u^0 \right\rangle_{L^2(M, \mathbb{C}^N)} \right\vert \gtrsim \Vert \nabla u^0 \Vert_{L^2(M, \mathbb{C}^N)}^2
\end{equation}
is proven. As above, for $\epsilon > 0$, write
\[\left\vert \left\langle (X + q) u^0, u^0 \right\rangle_{L^2(M, \mathbb{C}^N)} \right\vert \lesssim \epsilon \Vert \nabla u^0 \Vert_{L^2(M, \mathbb{C}^N)}^2 + \frac{1}{\epsilon} \Vert u^0 \Vert_{L^2(M, \mathbb{C}^N)}^2,\]
and find, for $c_1 > 0$, $c_2 > 0$, by choosing $\epsilon$ sufficiently small,
\begin{align*}
& \left\vert \left\Vert \nabla u^0 \right\Vert_{L^2(M, \mathbb{C}^N)}^2 + \lambda^2 \left\Vert u^0 \right\Vert_{L^2(M, \mathbb{C}^N)}^2 + \left\langle (X + q) u^0, u^0 \right\rangle_{L^2(M, \mathbb{C}^N)} \right\vert \\
\geq \ & c_1 \left\Vert \nabla u^0 \right\Vert_{L^2(M, \mathbb{C}^N)}^2 + \left( \lambda^2 - c_2 \right)
\left\Vert u^0 \right\Vert_{L^2(M, \mathbb{C}^N)}^2 \\
\gtrsim \ & \Vert \nabla u^0 \Vert_{L^2(M, \mathbb{C}^N)}^2
\end{align*}
for $\vert \lambda \vert$ sufficiently large. This gives (\ref{eq_proof_lem_assumptions_HY}).

At this stage, one has proved that there exists $\omega \in \mathbb{R}$ such that $(- \infty, - \omega)$ is contained in the resolvent set of both $A$ and $-A$, and for $\lambda$ such that $\vert \lambda\vert + \omega > 0$, one has
\[\left\Vert R_\lambda(A) \right\Vert_{\mathcal{L}(\mathcal{X})} \leq \frac{1}{\left\vert \omega + \vert \lambda\vert \right\vert }.\]
This completes the proof of Lemma \ref{lem_assump_HY}.
\end{proof}

\paragraph{Step 2: Construction of the solution for $s < 0$.} 

The idea of this step is to construct the solution with the shift operator and the solution in $\mathcal{K}^3 \times \mathcal{K}^2$ or $\mathcal{K}^2 \times \mathcal{K}^1$, depending of the parity of $s$. Fix $s \in \mathbb{Z}$, $s < 0$. There exist $\sigma \in \mathbb{N}^\ast$ and $\alpha \in \{1, 2\}$ such that $s = - 2 \sigma + \alpha$. Take $\left( u^0, u^1 \right) \in \mathcal{K}^{s + 1} \times \mathcal{K}^{s}$. Let $\left( \tilde{u}^0, \tilde{u}^1 \right)$ be the unique element of $\mathcal{K}^{\alpha + 1} \times \mathcal{K}^{\alpha}$ such that
\[\left( u^0, u^1 \right) = \left( \Shift{s + 1 + \sigma}^\sigma \tilde{u}^0, \Shift{s + \sigma}^\sigma \tilde{u}^1 \right).\]
Let $\tilde{u}$ be the solution associated with $\left( \tilde{u}^0, \tilde{u}^1 \right)$ defined above. Set $u(t) = \Shift{s + 1 + \sigma}^\sigma \tilde{u}(t)$, for $t \in \mathbb{R}$. We will refer to $u$ as the solution of
\[\left \{
\begin{array}{rcccl}
\partial_t^2 u - \mathsf{P} u & = & 0 & \quad & \text{in } \mathbb{R} \times M, \\
\left( u(0, \cdot), \partial_t u(0, \cdot) \right) & = & \left( u^0, u^1 \right) & \quad & \text{in } M, \\
u & = & 0 & \quad & \text{on } \mathbb{R} \times \partial M.
\end{array}
\right.\]

We prove that
\[u \in \mathscr{C}^0(\mathbb{R}, \mathcal{K}^{s + 1}) \cap \mathscr{C}^1(\mathbb{R}, \mathcal{K}^{s}) \cap \mathscr{C}^2(\mathbb{R}, \mathcal{K}^{s - 1}),\]
and that $\partial_t^2 u = \PP{s} u$. In particular, if $s = -1$ or $-2$, it implies $u \in H^{s + 1}((0, T) \times M, \mathbb{C}^N)$ for $T > 0$, using the embedding
\[\mathscr{C}^0((0, T), H^{-1}(M, \mathbb{C}^N)) \hooklongrightarrow H^{- 1}((0, T) \times M, \mathbb{C}^N)\]
for the case $s = -2$.

\textbf{Continuity.} The continuity of $u$ is a consequence of that of the shift operator $\Shift{s + 1 + \sigma}^\sigma$ and that of $\tilde{u}$. In addition, for $T > 0$, one has
\[\left\Vert u \right\Vert_{L^\infty([0, T], \mathcal{K}^{s + 1})} \lesssim \left\Vert \tilde{u} \right\Vert_{L^\infty([0, T], \mathcal{K}^{\alpha + 1})} \lesssim \left\Vert \left( \tilde{u}^0, \tilde{u}^1 \right) \right\Vert_{\mathcal{K}^{\alpha + 1} \times \mathcal{K}^{\alpha + 1}} \lesssim \left\Vert \left( u^0, u^1 \right) \right\Vert_{\mathcal{K}^{s + 1} \times \mathcal{K}^{s}}.\]

\textbf{First-order time-derivative.} We show that
\[\iota_{\mathcal{K}^{s + 1} \rightarrow \mathcal{K}^{s}} \left( \frac{u(t + \epsilon) - u(t)}{\epsilon} \right) \underset{\epsilon \rightarrow 0}{\longrightarrow} \Shift{s + \sigma}^\sigma \partial_t \tilde{u}(t) \in \mathcal{K}^{s}, \quad t \in \mathbb{R}.\]
By Proposition \ref{prop_main_properties_K^s}, one has
\[\iota_{\mathcal{K}^{s + 1} \rightarrow \mathcal{K}^{s}} \circ \Shift{s + 1 + \sigma}^\sigma = \Shift{s + \sigma}^\sigma \circ \iota_{\mathcal{K}^{\alpha + 1} \rightarrow \mathcal{K}^{\alpha}} = \Shift{s + \sigma}^\sigma,\]
where the last embedding can be omitted as it is just an inclusion. Hence, for $t \in \mathbb{R}$ and $\epsilon \in \mathbb{R}$, we can write
\begin{align*}
\left\Vert \iota_{\mathcal{K}^{s + 1} \rightarrow \mathcal{K}^{s}} \left( \frac{u(t + \epsilon) - u(t)}{\epsilon} \right) - \Shift{s + \sigma}^\sigma \partial_t \tilde{u}(t) \right\Vert_{\mathcal{K}^{s}} 
& = \left\Vert \Shift{s + \sigma}^\sigma \left( \frac{\tilde{u}(t + \epsilon) - \tilde{u}(t)}{\epsilon} - \partial_t \tilde{u}(t) \right) \right\Vert_{\mathcal{K}^{s}} \\
& \lesssim \left\Vert \frac{\tilde{u}(t + \epsilon) - \tilde{u}(t)}{\epsilon} - \partial_t \tilde{u}(t) \right\Vert_{\mathcal{K}^{\alpha}} \\
& \underset{\epsilon \rightarrow 0}{\longrightarrow} 0.
\end{align*}
As above, one also has
\[\left\Vert \partial_t u \right\Vert_{L^\infty([0, T], \mathcal{K}^{s})} \lesssim \left\Vert \partial_t \tilde{u} \right\Vert_{L^\infty([0, T], \mathcal{K}^{\alpha})} \lesssim \left\Vert \left( \tilde{u}^0, \tilde{u}^1 \right) \right\Vert_{\mathcal{K}^{\alpha + 1} \times \mathcal{K}^{\alpha + 1}} \lesssim \left\Vert \left( u^0, u^1 \right) \right\Vert_{\mathcal{K}^{s + 1} \times \mathcal{K}^{s}}, \quad T > 0.\]

\textbf{Second-order time-derivative.} As above, one shows that $u \in \mathscr{C}^2(\mathbb{R}, \mathcal{K}^{s - 1})$, with 
\[\partial_t^2 u(t) = \Shift{s - 1 + \sigma}^\sigma \partial_t^2 \tilde{u}(t), \quad t \in \mathbb{R},\]
and
\[\left\Vert \partial_t^2 u \right\Vert_{L^\infty([0, T], \mathcal{K}^{s - 1})} \lesssim \left\Vert \left( u^0, u^1 \right) \right\Vert_{\mathcal{K}^{s + 1} \times \mathcal{K}^{s}}, \quad T > 0.\]
In particular, for $t \in \mathbb{R}$, one finds $\partial_t^2 u(t) = \Shift{s - 1 + \sigma}^\sigma \partial_t^2 \tilde{u}(t) = \Shift{s - 1 + \sigma}^\sigma \PP{\alpha} \tilde{u}(t)$, and by Proposition \ref{prop_main_properties_K^s}-\emph{(iii)}, this gives
\[\partial_t^2 u(t) = \PP{s} \Shift{s + 1 + \sigma}^\sigma \tilde{u}(t) = \PP{s} u(t).\]

\paragraph{Step 3: Regularity, uniqueness and approximation.} Here, we prove the uniqueness result of Theorem \ref{thm_LLT_dir} for $s \in \mathbb{Z}$, the regularity result \emph{(i)}, and then \emph{(ii)} and the uniqueness result of Theorem \ref{thm_LLT_dir} for $s \in \mathbb{R}$.

\textbf{Uniqueness for $s \in \mathbb{Z}$.} For $s \in \mathbb{N}$, the uniqueness result of Theorem \ref{thm_LLT_dir} is given by the Hille-Yosida Theorem. Fix $s \in \mathbb{Z}$, $s < 0$, and $\left( u^0, u^1 \right) \in \mathcal{K}^{s + 1} \times \mathcal{K}^{s}$. Let $v \in \mathscr{C}^0(\mathbb{R}, \mathcal{K}^{s + 1}) \cap \mathscr{C}^1(\mathbb{R}, \mathcal{K}^{s}) \cap \mathscr{C}^2(\mathbb{R}, \mathcal{K}^{s - 1})$ be such that $\partial_t^2 v(t) = \PP{s} v(t)$ for all $t \in \mathbb{R}$, and $\left( v(0, \cdot), \partial_t v(0, \cdot) \right) = \left( u^0, u^1 \right)$ in $M$. As above, let $\sigma \in \mathbb{N}^\ast$ and $\alpha \in \{1, 2\}$ be such that $s = - 2 \sigma + \alpha$. For $t \in \mathbb{R}$, define 
\[\tilde{v}(t) = \left( \Shift{s + 1 + \sigma}^\sigma \right)^{-1} v(t).\]
As in Step 1, one shows that $\tilde{v} \in \mathscr{C}^0(\mathbb{R}, \mathcal{K}^{\alpha + 1}) \cap \mathscr{C}^1(\mathbb{R}, \mathcal{K}^{\alpha}) \cap \mathscr{C}^2(\mathbb{R}, \mathcal{K}^{\alpha - 1})$, with $\partial_t \tilde{v}(t) = \left( \Shift{s + \sigma}^\sigma \right)^{-1} \partial_t v(t)$ and $\partial_t^2 \tilde{v}(t) = \left( \Shift{s - 1 + \sigma}^\sigma \right)^{-1} \partial_t^2 v(t)$. By Proposition \ref{prop_main_properties_K^s}-\emph{(iii)}, one finds
\[\partial_t^2 \tilde{v}(t) = \left( \Shift{s - 1 + \sigma}^\sigma \right)^{-1} \circ \PP{s} v(t) = \PP{\alpha} \circ \left( \Shift{s + 1 + \sigma}^\sigma \right)^{-1} v(t) = \PP{\alpha} \tilde{v}(t).\]
Hence, the functions $\tilde{v}$ and $\tilde{u}$ (defined in the previous step) satisfy the same wave equation. Using the uniqueness in the case $s \geq 0$, one finds $\tilde{v} = \tilde{u}$, and so
\[v(t) = \Shift{s + 1 + \sigma}^\sigma \tilde{v}(t) = \Shift{s + 1 + \sigma}^\sigma \tilde{u}(t) = u(t).\]

\textbf{The regularity result \emph{(i)}.} Take $s \in \mathbb{Z}$ and $\left( u^0, u^1 \right) \in \mathcal{K}^{s + 1} \times \mathcal{K}^{s}$. We know that 
\[u \in \mathscr{C}^0(\mathbb{R}, \mathcal{K}^{s + 1}) \cap \mathscr{C}^1(\mathbb{R}, \mathcal{K}^{s}) \cap \mathscr{C}^2(\mathbb{R}, \mathcal{K}^{s - 1}),\]
and we show that 
\[u \in \bigcap_{k \in \mathbb{N}} \mathscr{C}^k(\mathbb{R}, \mathcal{K}^{s + 1 - k}).\]
One can prove that
\[\PP{s} u \in \mathscr{C}^0(\mathbb{R}, \mathcal{K}^{s - 1}) \cap \mathscr{C}^1(\mathbb{R}, \mathcal{K}^{s - 2}) \cap \mathscr{C}^2(\mathbb{R}, \mathcal{K}^{s - 3}),\]
and $\partial_t^2 \left( \PP{s} u \right) = \PP{s - 2} \circ \PP{s} u$. For example, to show that $\PP{s} u \in \mathscr{C}^1(\mathbb{R}, \mathcal{K}^{s - 2})$, write
\begin{align*}
& \left\Vert \iota_{\mathcal{K}^{s - 1} \rightarrow \mathcal{K}^{s - 2}} \left( \frac{\PP{s} u(t + \epsilon) - \PP{s} u(t)}{\epsilon} \right) - \PP{s - 1} \partial_t u(t) \right\Vert_{\mathcal{K}^{s - 2}} \\
= & \left\Vert \PP{s - 1} \left( \iota_{\mathcal{K}^{s + 1} \rightarrow \mathcal{K}^{s}} \left( \frac{u(t + \epsilon) - u(t)}{\epsilon} \right) - \partial_t u(t) \right) \right\Vert_{\mathcal{K}^{s - 2}} \\
\lesssim & \left\Vert \iota_{\mathcal{K}^{s + 1} \rightarrow \mathcal{K}^{s}} \left( \frac{u(t + \epsilon) - u(t)}{\epsilon} \right) - \partial_t u(t) \right\Vert_{\mathcal{K}^{s}} \\
& \underset{\epsilon \rightarrow 0}{\longrightarrow} 0.
\end{align*}
By uniqueness, $\PP{s} u = \partial_t^2 u$ is the solution associated with the initial data $\left(\PP{s} u^0, \PP{s - 1} u^1 \right)$, implying
\[u \in \mathscr{C}^3(\mathbb{R}, \mathcal{K}^{s - 2}) \cap \mathscr{C}^4(\mathbb{R}, \mathcal{K}^{s - 3}).\]
One also has
\[\left\Vert \partial_t^3 u \right\Vert_{L^\infty((0, T), \mathcal{K}^{s - 2})} = \left\Vert \PP{s - 1} \partial_t u \right\Vert_{L^\infty((0, T), \mathcal{K}^{s - 2})} \lesssim \left\Vert \partial_t u \right\Vert_{L^\infty((0, T), \mathcal{K}^{s})} \lesssim \left\Vert \left( u^0, u^1 \right) \right\Vert_{\mathcal{K}^{s + 1} \times \mathcal{K}^{s}},\]
and 
\[\left\Vert \partial_t^4 u \right\Vert_{L^\infty((0, T), \mathcal{K}^{s - 3})} = \left\Vert \PP{s - 1}^2 u \right\Vert_{L^\infty((0, T), \mathcal{K}^{s - 3})} \lesssim \left\Vert u \right\Vert_{L^\infty((0, T), \mathcal{K}^{s + 1})} \lesssim \left\Vert \left( u^0, u^1 \right) \right\Vert_{\mathcal{K}^{s + 1} \times \mathcal{K}^{s}},\]
for $T > 0$. The result follows by iteration.

\textbf{Proof of \emph{(ii)}.} Take $s \in \mathbb{Z}$ and $\delta > 0$. For $\left( u^0, u^1 \right) \in \mathcal{K}^{s + \delta + 1} \times \mathcal{K}^{s + \delta}$, if $u$ is the solution with initial data $\left( u^0, u^1 \right)$, then arguing as above, one has
\[\iota_{\mathcal{K}^{s + \delta + 1} \rightarrow \mathcal{K}^{s + 1}} u \in \mathscr{C}^0(\mathbb{R}, \mathcal{K}^{s + 1}) \cap \mathscr{C}^1(\mathbb{R}, \mathcal{K}^{s}) \cap \mathscr{C}^2(\mathbb{R}, \mathcal{K}^{s - 1}),\]
with $\partial_t \left( \iota_{\mathcal{K}^{s + \delta + 1} \rightarrow \mathcal{K}^{s + 1}} u \right) = \iota_{\mathcal{K}^{s + \delta} \rightarrow \mathcal{K}^{s}} \partial_t u$ and
\[\partial_t^2 \left( \iota_{\mathcal{K}^{s + \delta + 1} \rightarrow \mathcal{K}^{s + 1}} u \right) = \iota_{\mathcal{K}^{s + \delta - 1} \rightarrow \mathcal{K}^{s - 1}} \partial_t^2 u = \iota_{\mathcal{K}^{s + \delta - 1} \rightarrow \mathcal{K}^{s - 1}} \circ \PP{s + \delta} u = \PP{s} \circ \iota_{\mathcal{K}^{s + \delta + 1} \rightarrow \mathcal{K}^{s + 1}} u.\]
By uniqueness, one finds
\[\iota_{\mathcal{K}^{s + \delta + 1} \rightarrow \mathcal{K}^{s + 1}} u = \tilde{u},\]
where $\tilde{u}$ is the solution associated with $\left( \iota_{\mathcal{K}^{s + \delta + 1} \rightarrow \mathcal{K}^{s + 1}} u^0, \iota_{\mathcal{K}^{s + \delta} \rightarrow \mathcal{K}^{s}} u^1 \right)$. By interpolation, this is in fact true for all $s \in \mathbb{R}$. 

We prove the approximation result of \emph{(ii)}. Consider $s \in \mathbb{R}$, $\delta > 0$, and $\left( u^0, u^1 \right) \in \mathcal{K}^{s + 1} \times \mathcal{K}^{s}$. Let $u$ be the solution with initial data $\left( u^0, u^1 \right)$. By Proposition \ref{prop_main_properties_K^s}-\emph{(i)}, there exists a sequence $\left(( \tilde{u}_k^0, \tilde{u}_k^1 )\right)_{k \in \mathbb{N}}$ of elements of $\mathcal{K}^{s + 1 + \delta} \times \mathcal{K}^{s + \delta}$ such that, writing $u_k^0 = \iota_{\mathcal{K}^{s + 1 + \delta} \rightarrow \mathcal{K}^{s + 1}} \tilde{u}_k^0$ and $u_k^1 = \iota_{\mathcal{K}^{s + \delta} \rightarrow \mathcal{K}^s} \tilde{u}_k^1$, one has
\[\left( u_k^0, u_k^1 \right) \underset{k \rightarrow \infty}{\longrightarrow} \left( u^0, u^1 \right) \text{ in } \mathcal{K}^{s + 1} \times \mathcal{K}^{s}.\]
Denote $\tilde{u}_k \in \mathscr{C}^0(\mathbb{R}, \mathcal{K}^{s + 1 + \delta}) \cap \mathscr{C}^1(\mathbb{R}, \mathcal{K}^{s + \delta})$ and $u_k \in \mathscr{C}^0(\mathbb{R}, \mathcal{K}^{s + 1}) \cap \mathscr{C}^1(\mathbb{R}, \mathcal{K}^{s})$ the solutions with initial data $\left( \tilde{u}_k^0, \tilde{u}_k^1 \right)$ and $\left( u_k^0, u_k^1 \right)$. One has $\iota_{\mathcal{K}^{s + 1 + \delta} \rightarrow \mathcal{K}^{s + 1}} \tilde{u}_k = u_k$ for all $k \in \mathbb{N}$, and
\[\sum_{j = 0}^2 \left\Vert \partial_t^j ( u_k - u ) \right\Vert_{L^\infty([0, T], \mathcal{K}^{s + 1 - j})} \lesssim \left\Vert \left( u_k^0, u_k^1 \right) - \left( u^0, u^1 \right) \right\Vert_{\mathcal{K}^{s + 1} \times \mathcal{K}^{s}} \underset{k \rightarrow \infty}{\longrightarrow} 0,\]
for $T > 0$. Hence, one obtains
\[\iota_{\mathcal{K}^{s + 1 + \delta} \rightarrow \mathcal{K}^{s + 1}} \tilde{u}_k \underset{k \rightarrow \infty}{\longrightarrow} u,\]
in $\mathscr{C}^0([0, T], \mathcal{K}^{s + 1}) \cap \mathscr{C}^1([0, T], \mathcal{K}^{s}) \cap \mathscr{C}^2([0, T], \mathcal{K}^{s - 1})$. In Step 5, we define the normal derivative of a solution, and we show that
\[\left\Vert \partial_\nu u \right\Vert_{H^s((0, T) \times \partial M, \mathbb{C}^N)} \lesssim \left\Vert \left( u^0, u^1 \right) \right\Vert_{\mathcal{K}^{s + 1} \times \mathcal{K}^{s}}.\]
for all $\left( u^0, u^1 \right) \in \mathcal{K}^{s + 1} \times \mathcal{K}^{s}$. Hence, we also have 
\[\left\Vert \partial_\nu \left( u_k \right) - \partial_\nu u \right\Vert_{H^s((0, T) \times \partial M, \mathbb{C}^N)} \lesssim \left\Vert \left( u_k^0, u_k^1 \right) - \left( u^0, u^1 \right)\right\Vert_{\mathcal{K}^{s + 1} \times \mathcal{K}^{s}}\]
and so
\[\partial_\nu \left( u_k \right) \underset{k \rightarrow \infty}{\longrightarrow} \partial_\nu u,\]
in $H^s((0, T) \times \partial M, \mathbb{C}^N)$.

\textbf{Uniqueness for $s \in \mathbb{R}$.} Lastly, we show that \emph{(ii)} implies the uniqueness result of Theorem \ref{thm_LLT_dir} for $s \in \mathbb{R}$. If $u$ and $v$ are two solutions of the wave equation starting from $\left( u^0, u^1 \right) \in \mathcal{K}^{s + 1} \times \mathcal{K}^{s}$, then using the uniqueness result for $\tilde{s} \in \mathbb{Z}$ such that $s > \tilde{s}$, one has 
\[\iota_{\mathcal{K}^{s + 1} \rightarrow \mathcal{K}^{\tilde{s} + 1}} u = \iota_{\mathcal{K}^{s + 1} \rightarrow \mathcal{K}^{\tilde{s} + 1}} v.\]
This gives $u = v$, as the map $\iota_{\mathcal{K}^{s + 1} \rightarrow \mathcal{K}^{\tilde{s} + 1}}$ is one-to-one.

\paragraph{Step 4: Study of the Duhamel term.} 
In this step, we construct the solution of the wave equation with a source term. We define the solution of
\begin{equation}\label{eq_proof_LLT_dir_5}
\left \{
\begin{array}{rcccl}
\partial_t^2 u - \mathsf{P} u & = & F & \quad & \text{in } (0, T) \times M, \\
\left( u(0, \cdot), \partial_t u(0, \cdot) \right) & = & 0 & \quad & \text{in } M, \\
u & = & 0 & \quad & \text{on } (0, T) \times \partial M,
\end{array}
\right.
\end{equation}
for $F \in L^1((0, T), H_0^s(M, \mathbb{C}^N))$, $s \in \mathbb{N}$. The solution could be constructed with $F \in L^1((0, T), \mathcal{K}^{s})$ instead, but this is of no use for our main results.

Fix $s \in \mathbb{N}$, $T > 0$ and $F \in \mathscr{C}^0([0, T], H_0^s(M, \mathbb{C}^N))$. For $\tau \in [0, T]$, let $u_\tau$ be the solution of 
\[\left \{
\begin{array}{rcccl}
\partial_t^2 u_\tau - \mathsf{P} u_\tau & = & 0 & \quad & \text{in } (0, T) \times M, \\
\left( u(0, \cdot), \partial_t u(0, \cdot) \right) & = & (0, F(\tau)) & \quad & \text{in } M, \\
u & = & 0 & \quad & \text{on } (0, T) \times \partial M.
\end{array}
\right.\]
As in the classical Duhamel formula, the solution of (\ref{eq_proof_LLT_dir_5}) is given by
\[\Psi(t) = \int_0^t u_\tau(t - \tau) \mathrm{d} \tau, \quad t \in [0, T].\]
The function $\Psi$ is a one-parameter integral, and as $H_0^s(M, \mathbb{C}^N) \subset \mathcal{K}^{s}$, one has
\[u_\tau \in \mathscr{C}^0([0, T], \mathcal{K}^{s + 1}) \cap \mathscr{C}^1([0, T], \mathcal{K}^{s}) \cap \mathscr{C}^2([0, T], \mathcal{K}^{s - 1})\]
for all $\tau \in [0, T]$. Thus, the following regularity results hold. First, one has $\Psi \in \mathscr{C}^0([0, T], \mathcal{K}^{s + 1})$, with 
\begin{align*}
\left\Vert \Psi \right\Vert_{L^\infty([0, T], \mathcal{K}^{s + 1})} & \leq \int_0^T \sup_{t \in [0, T]} \left\Vert u_\tau(t) \right\Vert_{\mathcal{K}^{s + 1}} \mathrm{d} \tau \\
& \lesssim \int_0^T \left\Vert \left( u_\tau(0), \partial_t u_\tau(0) \right) \right\Vert_{\mathcal{K}^{s + 1} \times \mathcal{K}^{s}} \mathrm{d} \tau \\
& \lesssim \left\Vert F \right\Vert_{L^1([0, T], \mathcal{K}^{s})} = \left\Vert F \right\Vert_{L^1([0, T], H^s)}.
\end{align*}
Second, one has $\Psi \in \mathscr{C}^1([0, T], \mathcal{K}^{s})$, with
\[\partial_t \Psi(t) = \int_0^t \partial_t u_\tau(t - \tau) \mathrm{d} \tau, \quad t \in [0, T],\]
and 
\[\left\Vert \partial_t \Psi \right\Vert_{L^\infty([0, T], \mathcal{K}^{s})} \leq \int_0^T \sup_{t \in [0, T]} \left\Vert \partial_t u_\tau(t) \right\Vert_{\mathcal{K}^{s}} \mathrm{d} \tau \lesssim \left\Vert F \right\Vert_{L^1([0, T], H^s)}.\]
As $\mathscr{C}^0([0, T], H_0^s(M, \mathbb{C}^N))$ is dense in $L^1((0, T), H_0^s(M, \mathbb{C}^N))$, the previous results hold for all $F \in L^1((0, T), H_0^s(M, \mathbb{C}^N))$. Third, one has $\Psi \in \mathscr{C}^2([0, T], \mathcal{K}^{s - 1})$, with
\[\partial_t^2 \Psi(t) = \partial_t u_t(0) + \int_0^t \partial_t^2 u_\tau(t - \tau) \mathrm{d} \tau = F(t) + \mathsf{P} \Psi(t)\]
for $t \in [0, T]$, and 
\begin{align*}
\left\Vert \partial_t^2 \Psi \right\Vert_{L^\infty([0, T], \mathcal{K}^{s - 1})} 
& \leq \left\Vert F \right\Vert_{L^\infty([0, T], \mathcal{K}^{s - 1})} + \int_0^T \sup_{t \in [0, T]} \left\Vert \partial_t^2 u_\tau(t) \right\Vert_{\mathcal{K}^{s - 1}} \mathrm{d} \tau \\
& \lesssim \left\Vert F \right\Vert_{L^\infty([0, T], \mathcal{K}^{s - 1})} + \left\Vert F \right\Vert_{L^1([0, T], H^s)}. 
\end{align*}
As $\mathscr{C}^0([0, T], H_0^s(M, \mathbb{C}^N))$ is dense in 
\[L^1((0, T), H_0^s(M, \mathbb{C}^N)) \cap \mathscr{C}^0([0, T], H^{s - 1}(M, \mathbb{C}^N)),\]
the previous results hold for $F$ in the latter space.

The following duality result will be useful later. 

\begin{lem}\label{lem_LLT_Duhamel_duality}
For $F_1, F_2 \in L^2((0, T) \times M, \mathbb{C}^N)$, one has
\begin{equation}\label{eq_proof_LLT_Duhamel_duality}
\left\langle u, F_2 \right\rangle_{L^2((0, T) \times M, \mathbb{C}^N)} = \left\langle F_1, v \right\rangle_{L^2((0, T) \times M, \mathbb{C}^N)} + \left\langle u^0, \partial_t v(T) \right\rangle_{L^2(M, \mathbb{C}^N)} - \left\langle u^1, v(T) \right\rangle_{L^2(M, \mathbb{C}^N)},
\end{equation}
where $u$ and $v$ are the solutions of 
\[\left \{
\begin{array}{rcccl}
\partial_t^2 u - \mathsf{P} u & = & F_1 & \quad & \text{in } (0, T) \times M, \\
\left( u(T, \cdot), \partial_t u(T, \cdot) \right) & = & \left(u^0, u^1 \right) & \quad & \text{in } M, \\
u & = & 0 & \quad & \text{on } (0, T) \times \partial M,
\end{array}
\right.\]
\[\left \{
\begin{array}{rcccl}
\partial_t^2 v - \mathsf{P}^\ast v & = & F_2 & \quad & \text{in } (0, T) \times M, \\
\left( v(0, \cdot), \partial_t v(0, \cdot) \right) & = & 0 & \quad & \text{in } M, \\
v & = & 0 & \quad & \text{on } (0, T) \times \partial M.
\end{array}
\right.\]
\end{lem}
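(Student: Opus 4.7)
The proof is a standard integration-by-parts argument: multiply the equation for $u$ by $v$, integrate over $(0,T) \times M$, and transfer both the time and space derivatives onto $v$. Boundary terms in space vanish because $u$ and $v$ both satisfy the homogeneous Dirichlet condition, while boundary terms in time at $t = 0$ vanish because of the zero Cauchy data for $v$, and at $t = T$ they produce exactly the two boundary pairings appearing in (\ref{eq_proof_LLT_Duhamel_duality}).

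First I reduce to the smooth case by assuming $F_1, F_2 \in \mathscr{C}^\infty_{\mathrm{c}}((0,T) \times \inte M, \mathbb{C}^N)$ and $(u^0, u^1) \in \mathscr{C}^\infty_{\mathrm{c}}(\inte M, \mathbb{C}^N)^2$. By Theorem \ref{thm_LLT_dir}-\emph{(i)} and \emph{(iv)} applied at a sufficiently high level of regularity, both $u$ and $v$ lie in $\mathscr{C}^k([0,T], \mathcal{K}^{m-k})$ for arbitrarily large $m$, so in particular $u(t), v(t) \in H^2(M, \mathbb{C}^N) \cap H^1_0(M, \mathbb{C}^N)$ for every $t \in [0,T]$ and all the manipulations below are justified pointwise in $t$. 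Starting from
\[\int_0^T \left\langle \partial_t^2 u - \mathsf{P} u, v \right\rangle_{L^2(M, \mathbb{C}^N)} \mathrm{d} t = \int_0^T \left\langle F_1, v \right\rangle_{L^2(M, \mathbb{C}^N)} \mathrm{d} t,\]
two integrations by parts in time, using $v(0) = \partial_t v(0) = 0$ and $(u(T), \partial_t u(T)) = (u^0, u^1)$, give
\[\int_0^T \left\langle \partial_t^2 u, v \right\rangle_{L^2(M, \mathbb{C}^N)} \mathrm{d} t = \left\langle u^1, v(T) \right\rangle_{L^2(M, \mathbb{C}^N)} - \left\langle u^0, \partial_t v(T) \right\rangle_{L^2(M, \mathbb{C}^N)} + \int_0^T \left\langle u, \partial_t^2 v \right\rangle_{L^2(M, \mathbb{C}^N)} \mathrm{d} t.\]
For the spatial part, Lemma \ref{lem_adjoint_P} applied to $u(t), v(t) \in H^2(M, \mathbb{C}^N) \cap H^1_0(M, \mathbb{C}^N)$ eliminates both boundary contributions and yields $\int_0^T \langle \mathsf{P} u, v\rangle \mathrm{d} t = \int_0^T \langle u, \mathsf{P}^\ast v\rangle \mathrm{d} t$. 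Substituting $\partial_t^2 v - \mathsf{P}^\ast v = F_2$ and rearranging produces (\ref{eq_proof_LLT_Duhamel_duality}).

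It remains to extend the identity to arbitrary $F_1, F_2 \in L^2((0,T) \times M, \mathbb{C}^N)$ and general $(u^0, u^1) \in \mathcal{K}^1 \times \mathcal{K}^0$ (the natural $L^2$-level framework, in which the pairings on the right-hand side are well-defined by Theorem \ref{thm_LLT_dir}-\emph{(iv)} at $s = 0$). Each of the four bilinear terms in (\ref{eq_proof_LLT_Duhamel_duality}) is a continuous function of $(u^0, u^1, F_1, F_2)$ in the product topology $\mathcal{K}^1 \times \mathcal{K}^0 \times L^2 \times L^2$, thanks to the $L^\infty$-in-time estimates for $u, v, \partial_t v$ given by Theorem \ref{thm_LLT_dir}-\emph{(iv)} applied to both equations. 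Since $\mathscr{C}^\infty_{\mathrm{c}}(\inte M, \mathbb{C}^N)$ and $\mathscr{C}^\infty_{\mathrm{c}}((0,T) \times \inte M, \mathbb{C}^N)$ are dense in $\mathcal{K}^1$, $\mathcal{K}^0$, and $L^2((0,T) \times M, \mathbb{C}^N)$ respectively, approximating each datum and passing to the limit yields the full identity.

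The main obstacle is not any single estimate but the bookkeeping in the density step: one must approximate all four data simultaneously so that each bilinear term converges. This is routine once one observes that the map $(u^0, u^1, F_1) \mapsto (u, \partial_t u)$ is continuous from $\mathcal{K}^1 \times \mathcal{K}^0 \times L^1((0,T), L^2(M, \mathbb{C}^N))$ into $\mathscr{C}^0([0,T], \mathcal{K}^1) \times \mathscr{C}^0([0,T], \mathcal{K}^0)$, and similarly for $v$, so that smooth approximations of the data produce uniformly smooth approximations of the solutions.
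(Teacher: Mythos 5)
Your proof is correct and follows essentially the same route as the paper: establish the identity for smooth data by integrating by parts in time and using Lemma \ref{lem_adjoint_P} (with the Dirichlet conditions killing all boundary terms), then conclude by density using the continuity of the solution maps. The only difference is cosmetic: the paper regularizes only $F_1, F_2$ and leaves the treatment of $(u^0, u^1)$ implicit, whereas you approximate all four data simultaneously, which is a slightly more careful version of the same argument.
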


\begin{proof}
For $F_1, F_2 \in \mathscr{C}^0([0, T], \mathcal{K}^1)$, an integration by parts gives (\ref{eq_proof_LLT_Duhamel_duality}). Both sides of (\ref{eq_proof_LLT_Duhamel_duality}) are continuous with respect to the norm of $L^2((0, T) \times M, \mathbb{C}^N)$, (\ref{eq_proof_LLT_Duhamel_duality}) holds for all $F_1$ and $F_2$ in $L^2((0, T) \times M, \mathbb{C}^N)$ by density.
\end{proof}

\paragraph{Step 5: Regularity of the normal derivative.} 
Fix $T > 0$. If $s = 0$, then the standard scalar proof works without any change (see for example \cite{Las-Lio-Tri}). In addition, for $\left( u^0, u^1 \right) \in \mathcal{K}^1 \times \mathcal{K}^0$ and $F \in L^1((0, T), \mathcal{K}^0)$, if $u$ is the solution of
\[\left \{
\begin{array}{rcccl}
\partial_t^2 u - \mathsf{P} u & = & F & \quad & \text{in } (0, T) \times M, \\
\left( u(0, \cdot), \partial_t u(0, \cdot) \right) & = & \left( u^0, u^1 \right) & \quad & \text{in } M, \\
u & = & 0 & \quad & \text{on } (0, T) \times \partial M,
\end{array}
\right.\]
then one has
\[\left\Vert \partial_\nu u \right\Vert_{L^2((0, T) \times \partial M, \mathbb{C}^N)} \lesssim \left\Vert \left( u^0, u^1 \right) \right\Vert_{\mathcal{K}^1 \times \mathcal{K}^0} + \left\Vert F \right\Vert_{L^1((0, T), \mathcal{K}^0)}.\]

\textbf{Case $s > 0$.} For $s \in \mathbb{N}^\ast$, we prove that
\[\partial_\nu u \in H^s((0, T) \times \partial M, \mathbb{C}^N)\]
with
\begin{equation}\label{eq_proof_LLT_dir_6}
\left\Vert \partial_\nu u \right\Vert_{H^s((0, T) \times \partial M, \mathbb{C}^N)} \lesssim \left\Vert \left( u^0, u^1 \right) \right\Vert_{\mathcal{K}^{s + 1} \times \mathcal{K}^{s}} + \left\Vert F \right\Vert_{L^1((0, T), H^s)}
\end{equation}
for $\left( u^0, u^1 \right) \in \mathcal{K}^{s + 1} \times \mathcal{K}^{s}$ and $F \in L^1((0, T), H_0^s(M, \mathbb{C}^N))$, where $u$ is the solution of the wave equation with initial data $\left( u^0, u^1 \right)$ and with source term $F$. 

We start with the case $F = 0$. By density it suffices to prove that for $\left( u^0, u^1 \right) \in \mathcal{K}^{s + 2} \times \mathcal{K}^{s + 1}$, one has
\[\left\Vert \partial_\nu u \right\Vert_{H^s((0, T) \times \partial M, \mathbb{C}^N)} \lesssim \left\Vert \left( u^0, u^1 \right) \right\Vert_{\mathcal{K}^{s + 1} \times \mathcal{K}^{s}}.\]

Fix $k \in \llbracket 0, s \rrbracket$, and let $L_1, \cdots, L_k$ be smooth vector fields on the Riemannian manifold $\partial M$. We prove
\begin{equation}\label{eq_proof_LLT_dir_6_bis}
L_1 \cdots L_k \partial_t^{s - k} \partial_\nu u \in L^2((0, T) \times \partial M, \mathbb{C}^N).
\end{equation}
For $j \in \llbracket 0, k \rrbracket$, there exists a smooth vector field $\tilde{L}_j$ on $M$ such that $\tilde{L}_j = L_j$ on the boundary. Define
\[v = \tilde{L}_1 \cdots \tilde{L}_k \partial_t^{s - k} u.\]
Note that $u \in \mathscr{C}^{s-k}(\mathbb{R}, \mathcal{K}^{k + 2})$, so that $v \in H^2(M, \mathbb{C}^N)$. As $s$ is positive here, we can omit the subscript of $\mathsf{P}$ and use the usual differential operator $\mathsf{P}$. We can write
\[\left( \partial_t^2 - \mathsf{P} \right) v = \left[ \left( \partial_t^2 - \mathsf{P} \right), \tilde{L}_1 \cdots \tilde{L}_k \partial_t^{s - k} \right] u = R u \]
where $R$ is a differential operator of order $s + 1$. As $u \in H^{s + 2}((0, T) \times M, \mathbb{C}^N)$, one has $R u \in H^1((0, T) \times M, \mathbb{C}^N)$. We claim that 
\begin{equation}\label{eq_proof_LLT_dir_7}
\left(v(0), \partial_t v(0)\right) \in H_0^1(M) \times L^2(M).
\end{equation}
Then, by the standard case, one has 
\[\left\Vert \partial_\nu v \right\Vert_{L^2((0, T) \times \partial M, \mathbb{C}^N)} \lesssim \left\Vert \left( v(0), \partial_t v(0) \right) \right\Vert_{\mathcal{K}^1 \times \mathcal{K}^0} + \left\Vert R u \right\Vert_{L^2((0, T) \times M, \mathbb{C}^N)} \lesssim \left\Vert \left( u^0, u^1 \right) \right\Vert_{\mathcal{K}^{s + 1} \times \mathcal{K}^{s}}\]
implying (\ref{eq_proof_LLT_dir_6_bis}). Indeed, if $\mathtt{N}$ is a smooth vector field on $M$ that coincides with the unit normal vector at the boundary, then one has
\[\partial_\nu v = \left( \mathtt{N} v \right)_{\vert \partial M} = \left(\tilde{L}_1 \cdots \tilde{L}_k \partial_t^{s - k} ( \mathtt{N} u ) \right)_{\vert \partial M} + \left(R \partial_t^{s - k} u \right)_{\vert \partial M}\]
where $R$ is a time-independent differential operator of order $k - 1$. Using $u \in \mathscr{C}^{s - k}((0, T), \mathcal{K}^{k + 1})$ and the fact that for $j \in \llbracket 0, k \rrbracket$, the vector field $\tilde{L}_j$ is tangent to the boundary, one finds (\ref{eq_proof_LLT_dir_6_bis}).

One has $\partial_t^{s - k} u \in \mathscr{C}^0(\mathbb{R}, \mathcal{K}^{k + 2})$, and (\ref{eq_proof_LLT_dir_7}) follows if one proves that $w \in \mathcal{K}^{k + 2}$ implies $\tilde{L}_1 \cdots \tilde{L}_k w \in H_0^1(M, \mathbb{C}^N)$. For $w \in H^{k + 1}(M, \mathbb{C}^N)$, one has
\[\left(\tilde{L}_1 \cdots \tilde{L}_k w \right)_{\vert \partial M} = L_1 \cdots L_k \left( w_{\vert \partial M}\right) \in H^{\frac{1}{2}}(\partial M, \mathbb{C}^N).\]
Indeed, it is true if $w \in \mathscr{C}^\infty(M, \mathbb{C}^N)$ and both sides are continuous with respect to the norm of $H^{k + 1}(M, \mathbb{C}^N)$. Thus, for $w \in H^{k + 1}(M, \mathbb{C}^N) \cap H_0^1(M, \mathbb{C}^N)$, one has
\[\left(\tilde{L}_1 \cdots \tilde{L}_k w \right)_{\vert \partial M} = 0 \in H^{\frac{1}{2}}(\partial M, \mathbb{C}^N)\]
and this gives (\ref{eq_proof_LLT_dir_7}). 

Now, we prove (\ref{eq_proof_LLT_dir_6}) in the case $F \neq 0$. Note that the previous proof gives 
\[\left\Vert \partial_\nu u \right\Vert_{H^s((0, T) \times \partial M, \mathbb{C}^N)} \lesssim \left\Vert \left( u^0, u^1 \right) \right\Vert_{\mathcal{K}^{s + 1} \times \mathcal{K}^{s}} + \left\Vert F \right\Vert_{H^s((0, T) \times M, \mathbb{C}^N)},\]
a weaker result. By linearity, we may assume that $\left( u^0, u^1 \right) = 0$. By density, it suffices to prove that for $F \in \mathscr{C}^\infty([0, T], \mathscr{C}^\infty_\mathrm{c}(\inte M, \mathbb{C}^N))$, one has
\[\left\Vert \partial_\nu \Psi \right\Vert_{H^s((0, T) \times \partial M, \mathbb{C}^N)} \lesssim \left\Vert F \right\Vert_{L^1((0, T), H^s)}\]
where $\Psi$ is the Duhamel term defined above. Fix $k \in \llbracket 0, s \rrbracket$, and let $L_1, \cdots, L_k$ be smooth vector fields on the Riemannian manifold $\partial M$. We prove
\[\left\Vert L_1 \cdots L_k \partial_t^{s - k} \partial_\nu \Psi \right\Vert_{L^2((0, T) \times \partial M, \mathbb{C}^N)} \lesssim \left\Vert F \right\Vert_{L^1((0, T), H^s)}.\]

For $\tau \in [0, T]$ and $k \in \mathbb{N}$, one has
\[\partial_t^{2k} u_\tau(0) = 0, \quad 2k \in \llbracket 0, s \rrbracket,\]
and 
\[\partial_t^{2k + 1} u_\tau(0) = \mathsf{P}^{k} F(\tau), \quad 2k + 1 \in \llbracket 0, s \rrbracket.\]
Hence, for $k \in \llbracket 0, s \rrbracket$, there exists a differential operator $R_k$ such that
\[\partial_t^k \Psi(t) = (R_k F)(t) + \int_0^t \partial_t^k u_\tau(t - \tau) \mathrm{d} \tau.\]
As $F(t)$ is compactly supported in $\inte M$, this gives
\[\partial_t^k \partial_\nu \Psi(t) = \int_0^t \partial_t^k \partial_\nu u_\tau(t - \tau) \mathrm{d} \tau.\]
Hence, one has
\begin{align*}
\left\Vert L_1 \cdots L_k \partial_t^{s - k} \partial_\nu \Psi \right\Vert_{L^2((0, T) \times \partial M, \mathbb{C}^N)} & = \left\Vert \int_0^T \mathds{1}_{\tau \leq t} L_1 \cdots L_k \partial_t^{s - k} \partial_\nu u_\tau(t - \tau, x) \mathrm{d} \tau \right\Vert_{L^2((0, T) \times \partial M, \mathbb{C}^N)} \\
& \leq \int_0^T \left\Vert L_1 \cdots L_k \partial_t^{s - k} \partial_\nu u_\tau \right\Vert_{L^2((0, T) \times \partial M, \mathbb{C}^N)} \mathrm{d} \tau \\
& \lesssim \int_0^T \left\Vert \partial_\nu u_\tau \right\Vert_{H^s((0, T) \times \partial M, \mathbb{C}^N)} \mathrm{d} \tau.
\end{align*}
By (\ref{eq_proof_LLT_dir_6}) in the case $F = 0$, we get
\[\left\Vert L_1 \cdots L_k \partial_t^{s - k} \partial_\nu \Psi \right\Vert_{L^2((0, T) \times \partial M, \mathbb{C}^N)} \lesssim \int_0^T \left\Vert F(\tau) \right\Vert_{\mathcal{K}^{s}} \mathrm{d} \tau = \Vert F \Vert_{L^1((0, T), H^s)}.\]

\textbf{The case $s < 0$.} Note that in the sense of classical trace theorem, the normal derivative of a solution does not exist in that case. Take $s \in \mathbb{Z}$, $s < 0$, and $\left( u^0, u^1 \right) \in \mathcal{K}^{s + 1} \times \mathcal{K}^{s}$. There exist $\sigma \in \mathbb{N}^\ast$ and $\alpha \in \{1, 2\}$ such that $s = - 2 \sigma + \alpha$. Let $\left( \tilde{u}^0, \tilde{u}^1 \right)$ be the unique element of $\mathcal{K}^{\alpha + 1} \times \mathcal{K}^{\alpha}$ such that
\[\left( u^0, u^1 \right) = \left( \Shift{s + 1 + \sigma}^\sigma \tilde{u}^0, \Shift{s + \sigma}^\sigma \tilde{u}^1 \right).\]
Recall that the solution $u$ associated with $\left( u^0, u^1 \right)$ is defined by $u = \Shift{s + 1 + \sigma}^\sigma \tilde{u}$, where $\tilde{u}$ is the solution associated with $\left( \tilde{u}^0, \tilde{u}^1 \right)$. Using Proposition \ref{prop_main_properties_K^s}, we can write
\begin{align*}
u(t) & = \Shift{s + 2}^1 \circ \cdots \circ \Shift{\alpha}^1 \left( \tilde{u}(t) \right) \\
& = \left( \PP{s + 2} + i \mu \iota_{\mathcal{K}^{s + 3} \rightarrow \mathcal{K}^{s + 1}} \right) \circ \cdots \circ \left( \PP{\alpha} + i \mu \iota_{\mathcal{K}^{\alpha + 1} \rightarrow \mathcal{K}^{\alpha - 1}} \right) \left( \tilde{u}(t) \right) \\
& = \sum_{k = 0}^\sigma \binom{\sigma}{k} (i \mu)^{\sigma - k} \iota_{\mathcal{K}^{\alpha + 1 - 2 k} \rightarrow \mathcal{K}^{s + 1}} \left( \PP{\alpha + 1 - k}^k \tilde{u}(t) \right)
\end{align*}
and by Theorem \ref{thm_LLT_dir}-\emph{(i)}, we get
\[u(t) = \sum_{k = 0}^\sigma \binom{\sigma}{k} (i \mu)^{\sigma - k} \iota_{\mathcal{K}^{\alpha + 1 - 2 k} \rightarrow \mathcal{K}^{s + 1}} \left( \partial_t^{2 k} \tilde{u}(t) \right)\]
for $t \in \mathbb{R}$. As $\partial_\nu \tilde{u} \in H^\alpha((0, T) \times \partial M, \mathbb{C}^N)$, we define $\partial_\nu u$ by
\[\partial_\nu u = \sum_{k = 0}^\sigma \binom{\sigma}{k} (i \mu)^{\sigma - k} \iota_{H^{\alpha - 2 k} \rightarrow H^s} \left( \partial_t^{2 k} \partial_\nu \tilde{u} \right)\]
where $\iota_{H^{\alpha - 2 k} \rightarrow H^s}$ is the embedding from $H^{\alpha - 2 k}((0, T) \times \partial M, \mathbb{C}^N)$ to $H^s((0, T) \times \partial M, \mathbb{C}^N)$. Clearly, one has 
\begin{align*}
\left\Vert \partial_\nu u \right\Vert_{H^s((0, T) \times \partial M, \mathbb{C}^N)} & \lesssim \sum_{k = 0}^\sigma \left\Vert \iota_{H^{\alpha - 2 k} \rightarrow H^s} \left( \partial_t^{2k} \partial_\nu \tilde{u} \right) \right\Vert_{H^s((0, T) \times \partial M, \mathbb{C}^N)} \\
& \lesssim \left\Vert \partial_\nu \tilde{u} \right\Vert_{H^\alpha((0, T) \times \partial M, \mathbb{C}^N)} \lesssim \left\Vert \left( \tilde{u}^0, \tilde{u}^1 \right) \right\Vert_{\mathcal{K}^{\alpha + 1} \times \mathcal{K}^{\alpha}} \lesssim \left\Vert \left( u^0, u^1 \right) \right\Vert_{\mathcal{K}^{s + 1} \times \mathcal{K}^{s}}.
\end{align*}
To complete the proof, one has to show the two additional results of Theorem \ref{thm_LLT_dir}-\emph{(iii)}.

\textbf{Connection with the usual normal derivative.} Here, we show that our definition of the normal derivative of a solution coincide with the usual normal derivative for a regular solution. More precisely, we prove that for all $s \in \mathbb{R}$, $\delta > 0$ and $\left( u^0, u^1 \right) \in \mathcal{K}^{s + \delta + 1} \times \mathcal{K}^{s + \delta}$, one has
\begin{equation}\label{eq_proof_LLT_dir_8}
\partial_\nu \left( \iota_{\mathcal{K}^{s + \delta + 1} \rightarrow \mathcal{K}^{s + 1}} u \right) = \iota_{H^{s + \delta} \rightarrow H^s} \partial_\nu u.
\end{equation}
By interpolation, it suffices to prove (\ref{eq_proof_LLT_dir_8}) for $s \in \mathbb{Z}$. 

\begin{lem}\label{lem_proof_LLT_dir_normal_derivative}
For $s \in \mathbb{Z}$, $s \leq -1$, and $\left( u^0, u^1 \right) \in \mathcal{K}^2 \times \mathcal{K}^1$, one has
\[\partial_\nu \left( \iota_{\mathcal{K}^2 \rightarrow \mathcal{K}^{s + 1}} u \right) = \iota_{H^1 \rightarrow H^s} \partial_\nu u.\]
\end{lem}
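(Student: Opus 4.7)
My plan is to express both $\partial_\nu u$ (the classical normal derivative of $u$ at regularity $\mathcal{K}^2$) and $\partial_\nu(\iota u)$ (given by the low-regularity construction) in terms of a single classical solution $w$ living at a high level of regularity, where the classical Sobolev trace theorem applies, and then verify that the two resulting formulas agree termwise.

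Write $s = -2\sigma + \alpha$ with $\sigma \in \mathbb{N}^\ast$ and $\alpha \in \{1,2\}$, and let $(v^0, v^1) \in \mathcal{K}^{\alpha+1} \times \mathcal{K}^\alpha$ be the pair from the construction, so that $(\Shift{s+1+\sigma}^\sigma v^0, \Shift{s+\sigma}^\sigma v^1) = (\iota_{\mathcal{K}^2 \to \mathcal{K}^{s+1}} u^0, \iota_{\mathcal{K}^1 \to \mathcal{K}^{s}} u^1)$. Applying \eqref{eq_prop_shifting_op_3} with $\delta = 1 - s$, I would set
\[w^0 = \left(\Shift{\sigma+2}^\sigma\right)^{-1} u^0 \in \mathcal{K}^{2\sigma+2}, \qquad w^1 = \left(\Shift{\sigma+1}^\sigma\right)^{-1} u^1 \in \mathcal{K}^{2\sigma+1},\]
and verify that $v^0 = \iota_{\mathcal{K}^{2\sigma+2} \to \mathcal{K}^{\alpha+1}} w^0$ and $v^1 = \iota_{\mathcal{K}^{2\sigma+1} \to \mathcal{K}^{\alpha}} w^1$. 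Denoting by $w$ the (classical) solution with initial data $(w^0, w^1)$, Corollary \ref{cor_LLT_dir_shift} yields $\Shift{\sigma+2}^\sigma w = u$, while Theorem \ref{thm_LLT_dir}-\emph{(ii)} gives $\iota_{\mathcal{K}^{2\sigma+2} \to \mathcal{K}^{\alpha+1}} w = v$.

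Next, expanding the shift operator via $\Shift{s'}^1 = \PP{s'} + i\mu\, \iota$ and the commutativity \eqref{eq_lem_P_comm_with_embeddings}, an induction on $\sigma$ gives the binomial formula
\[\Shift{\sigma+2}^\sigma = \sum_{k=0}^\sigma \binom{\sigma}{k}(i\mu)^{\sigma-k}\, \iota_{\mathcal{K}^{2\sigma+2-2k} \to \mathcal{K}^2} \circ \PP{}^k,\]
so that, using $\PP{}^k w(t) = \partial_t^{2k} w(t)$ from Theorem \ref{thm_LLT_dir}-\emph{(i)},
\[u(t) = \sum_{k=0}^\sigma \binom{\sigma}{k}(i\mu)^{\sigma-k}\, \iota_{\mathcal{K}^{2\sigma+2-2k} \to \mathcal{K}^2}\bigl(\partial_t^{2k} w(t)\bigr).\]
Each $\partial_t^{2k} w$ is a classical solution at regularity at least $\mathcal{K}^2$, so taking classical normal derivatives, invoking Theorem \ref{thm_LLT_dir}-\emph{(iii)} in the classical regime to commute $\partial_\nu$ with both $\partial_t^{2k}$ and the embedding, I obtain
\[\partial_\nu u = \sum_{k=0}^\sigma \binom{\sigma}{k}(i\mu)^{\sigma-k}\, \partial_t^{2k} \partial_\nu w \quad \text{in } H^1((0,T) \times \partial M, \mathbb{C}^N).\]

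Finally, I would plug $v = \iota w$ into the construction of $\partial_\nu(\iota u)$: since $w$ is regular, the classical trace theorem gives $\partial_t^{2k} \partial_\nu v = \iota_{H^{2\sigma+1-2k} \to H^{\alpha-2k}}(\partial_t^{2k} \partial_\nu w)$, hence
\[\partial_\nu(\iota u) = \sum_{k=0}^\sigma \binom{\sigma}{k}(i\mu)^{\sigma-k}\, \iota_{H^{2\sigma+1-2k} \to H^s}\bigl(\partial_t^{2k} \partial_\nu w\bigr)\]
by transitivity of the Sobolev embeddings. Embedding the previous formula for $\partial_\nu u$ from $H^1$ into $H^s$ and again using transitivity, the two sides match termwise, giving the lemma. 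The main subtlety is the careful bookkeeping of the regularity levels of all embeddings involved and the rigorous justification of the binomial expansion of $\Shift{\sigma+2}^\sigma$, which hinges on the commutativity of $\PP{}$ with the embeddings provided by Proposition \ref{prop_main_properties_K^s}-\emph{(i)}.
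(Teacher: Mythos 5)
Your proposal is correct and follows essentially the same route as the paper's proof: both introduce the high-regularity solution $w \in \mathcal{K}^{2\sigma+2}\times\mathcal{K}^{2\sigma+1}$ (the paper calls it $v$) via the inverse shift operator, use the commutativity (\ref{eq_prop_shifting_op_3}) to identify the auxiliary low-regularity solution with $\iota w$, and match the binomial expansion of $\Shift{\sigma+2}^{\sigma}$ applied to $\partial_\nu w$ against the defining formula for $\partial_\nu(\iota u)$. The only difference is presentational: you expand $u = \Shift{\sigma+2}^{\sigma} w$ first and then take traces, whereas the paper starts from the definition of $\partial_\nu(\iota u)$ and factors out $\iota_{H^1 \to H^s}$ at the end.
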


\begin{proof}
Write $s = - 2 \sigma + \alpha$, with $\sigma \in \mathbb{N}^\ast$ and $\alpha \in \{1, 2\}$, and let $\left( \tilde{u}^0, \tilde{u}^1 \right) \in \mathcal{K}^{\alpha + 1} \times \mathcal{K}^{\alpha}$ be given by
\[\left( \iota_{\mathcal{K}^2 \rightarrow \mathcal{K}^{s + 1}} u^0, \iota_{\mathcal{K}^1 \rightarrow \mathcal{K}^{s}} u^1 \right) = \left( \Shift{s + 1 + \sigma}^\sigma \tilde{u}^0, \Shift{s + \sigma}^\sigma \tilde{u}^1 \right).\]
One has $\iota_{\mathcal{K}^2 \rightarrow \mathcal{K}^{s + 1}} u = \Shift{s + 1 + \sigma}^\sigma \tilde{u}$, where $\tilde{u}$ is the solution with initial data $\left( \tilde{u}^0, \tilde{u}^1 \right)$. By definition, one has 
\begin{equation}\label{eq_proof_LLT_dir_10}
\partial_\nu \left( \iota_{\mathcal{K}^2 \rightarrow \mathcal{K}^{s + 1}} u \right) = \sum_{k = 0}^\sigma \binom{\sigma}{k} (i \mu)^{\sigma - k} \iota_{H^{\alpha - 2 k} \rightarrow H^s} \left( \partial_t^{2 k} \partial_\nu \tilde{u} \right).
\end{equation}
Writing
\begin{align*}
\left( \tilde{u}^0, \tilde{u}^1 \right) & = \left( \left( \Shift{s + 1 + \sigma}^\sigma \right)^{- 1} \circ \iota_{\mathcal{K}^2 \rightarrow \mathcal{K}^{s + 1}} \left( u^0 \right), \left( \Shift{s + \sigma}^\sigma \right)^{- 1} \circ \iota_{\mathcal{K}^1 \rightarrow \mathcal{K}^{s}} \left( u^1 \right) \right) \\
& = \left( \iota_{\mathcal{K}^{2 \sigma + 2} \rightarrow \mathcal{K}^{\alpha + 1}} \circ \left( \Shift{2 + \sigma}^\sigma \right)^{- 1} \left( u^0 \right), \iota_{\mathcal{K}^{2 \sigma + 1} \rightarrow \mathcal{K}^{\alpha}} \circ \left( \Shift{1 + \sigma}^\sigma \right)^{- 1} \left( u^1 \right) \right),
\end{align*}
one finds $\tilde{u} = \iota_{\mathcal{K}^{2 \sigma + 2} \rightarrow \mathcal{K}^{\alpha + 1}} v$, where $v$ is the solution with initial data 
\[\left(\left( \Shift{2 + \sigma}^\sigma \right)^{- 1} u^0, \left( \Shift{1 + \sigma}^\sigma \right)^{- 1} u^1 \right) \in \mathcal{K}^{2 \sigma + 2} \times \mathcal{K}^{2 \sigma + 1}.\]
One also has $u = \Shift{2 + \sigma}^\sigma v$. As $\partial_t^{2 k} \partial_\nu \tilde{u} = \partial_t^{2 k} \partial_\nu v$ in $\mathscr{D}^\prime((0, T) \times \partial M, \mathbb{C}^N)$, one finds
\[\partial_t^{2 k} \partial_\nu \tilde{u} = \iota_{H^{2 \sigma + 1 - 2 k} \rightarrow H^{\alpha - 2 k}} \partial_t^{2 k} \partial_\nu v\]
for all $k \in \mathbb{N}$. Coming back to (\ref{eq_proof_LLT_dir_10}), one obtains
\[\partial_\nu \left( \iota_{\mathcal{K}^2 \rightarrow \mathcal{K}^{s + 1}} u \right) = \sum_{k = 0}^\sigma \binom{\sigma}{k} (i \mu)^{\sigma - k} \iota_{H^{1 + 2 \sigma - 2 k} \rightarrow H^s} \left( \partial_t^{2 k} \partial_\nu v \right).\]
For $k \in \llbracket 0, \sigma \rrbracket$, one has $1 + 2 \sigma - 2 k \geq 1$ implying
\begin{equation}\label{eq_proof_LLT_dir_10bis}
\partial_\nu \left( \iota_{\mathcal{K}^2 \rightarrow \mathcal{K}^{s + 1}} u \right) = \iota_{H^1 \rightarrow H^s} \sum_{k = 0}^\sigma \binom{\sigma}{k} (i \mu)^{\sigma - k} \iota_{H^{1 + 2 \sigma - 2 k} \rightarrow H^1} \left( \partial_t^{2 k} \partial_\nu v \right).
\end{equation}
Omitting the embeddings in $H^1((0, T) \times \partial M, \mathbb{C}^N)$, one finds
\[\sum_{k = 0}^\sigma \binom{\sigma}{k} (i \mu)^{\sigma - k} \iota_{H^{1 + 2 \sigma - 2 k} \rightarrow H^1} \left( \partial_t^{2 k} \partial_\nu v \right) = \partial_\nu \left( \Shift{2 + \sigma}^\sigma v \right) = \partial_\nu u.\]
Together with (\ref{eq_proof_LLT_dir_10bis}), this completes the proof.
\end{proof}

To prove (\ref{eq_proof_LLT_dir_8}), we distinguish three cases. First, if $s + 1 \geq 0$ then (\ref{eq_proof_LLT_dir_8}) is true. Second, if $s + 1 \leq - 1$ and $s + \delta + 1 \geq 2$, then using Lemma \ref{lem_proof_LLT_dir_normal_derivative} and the first case, one finds
\begin{align*}
\partial_\nu \left( \iota_{\mathcal{K}^{s + \delta + 1} \rightarrow \mathcal{K}^{s + 1}} u \right) 
& = \partial_\nu \left( \iota_{\mathcal{K}^2 \rightarrow \mathcal{K}^{s + 1}} \circ \iota_{\mathcal{K}^{s + \delta + 1} \rightarrow \mathcal{K}^2} u \right) \\
& = \iota_{H^1 \rightarrow H^s} \partial_\nu \left( \iota_{\mathcal{K}^{s + \delta + 1} \rightarrow \mathcal{K}^2} u \right) \\
& = \iota_{H^1 \rightarrow H^s} \circ \iota_{H^{s + \delta} \rightarrow H^1} \left( \partial_\nu u \right) \\
& = \iota_{H^{s + \delta} \rightarrow H^s} \partial_\nu u.
\end{align*}
Finally, if $s + 1 \leq - 1$ and $s + \delta + 1 < 2$, then we consider an approximation of $u$: take a sequence $\left( \left( u_k^0, u_k^1 \right) \right)_{k \in \mathbb{N}}$ of elements of $\mathcal{K}^2 \times \mathcal{K}^1$ such that 
\[\left( \iota_{\mathcal{K}^2 \rightarrow \mathcal{K}^{s + \delta + 1}} u_k^0, \iota_{\mathcal{K}^1 \rightarrow \mathcal{K}^{s + \delta}} u_k^1 \right) \underset{k \rightarrow \infty}{\longrightarrow} \left( u^0, u^1 \right).\]
For $k \in \mathbb{N}$, let $u_k$ be the solution associated with $\left( u_k^0, u_k^1 \right)$. Set
\[w_k = \iota_{\mathcal{K}^{s + \delta + 1} \rightarrow \mathcal{K}^{s + 1}} u - \iota_{\mathcal{K}^2 \rightarrow \mathcal{K}^{s + 1}} u_k.\]
As $w_k$ is a solution of the wave equation, one has
\[\left\Vert \partial_\nu \left( \iota_{\mathcal{K}^{s + \delta + 1} \rightarrow \mathcal{K}^{s + 1}} u \right) - \partial_\nu \left( \iota_{\mathcal{K}^2 \rightarrow \mathcal{K}^{s + 1}} u_k \right) \right\Vert_{H^s((0, T) \times \partial M, \mathbb{C}^N)} \lesssim \left\Vert \left( w_k(0), \partial_t w_k(0) \right) \right\Vert_{\mathcal{K}^{s + 1} \times \mathcal{K}^{s}}.\]
Writing 
\[\left( w_k(0), \partial_t w_k(0) \right) = \left( \iota_{\mathcal{K}^{s + \delta + 1} \rightarrow \mathcal{K}^{s + 1}} \left( u^0 - \iota_{\mathcal{K}^2 \rightarrow \mathcal{K}^{s + \delta + 1}} u_k^0 \right), \iota_{\mathcal{K}^{s + \delta} \rightarrow \mathcal{K}^{s}} \left( u^1 - \iota_{\mathcal{K}^1 \rightarrow \mathcal{K}^{s + \delta}} u_k^1 \right) \right)\]
one finds
\begin{align}
& \left\Vert \partial_\nu \left( \iota_{\mathcal{K}^{s + \delta + 1} \rightarrow \mathcal{K}^{s + 1}} u \right) - \partial_\nu \left( \iota_{\mathcal{K}^2 \rightarrow \mathcal{K}^{s + 1}} u_k \right) \right\Vert_{H^s((0, T) \times \partial M, \mathbb{C}^N)} \nonumber \\
\lesssim & \left\Vert \left( u^0 - \iota_{\mathcal{K}^2 \rightarrow \mathcal{K}^{s + \delta + 1}} u_k^0, u^1 - \iota_{\mathcal{K}^1 \rightarrow \mathcal{K}^{s + \delta}} u_k^1 \right) \right\Vert_{\mathcal{K}^{s + \delta + 1} \times \mathcal{K}^{s + \delta}} \underset{k \rightarrow \infty}{\longrightarrow} 0. \label{eq_proof_LLT_dir_10bis2}
\end{align}
On the other hand, using Lemma \ref{lem_proof_LLT_dir_normal_derivative}, one has 
\begin{align*}
& \left\Vert \iota_{H^{s + \delta} \rightarrow H^s} \partial_\nu u - \iota_{H^1 \rightarrow H^s} \partial_\nu u_k \right\Vert_{H^s((0, T) \times \partial M, \mathbb{C}^N)} \\
= & \left\Vert \iota_{H^{s + \delta} \rightarrow H^s} \left( \partial_\nu u - \iota_{H^1 \rightarrow H^{s + \delta}} \partial_\nu u_k \right) \right\Vert_{H^s((0, T) \times \partial M, \mathbb{C}^N)} \\
= & \left\Vert \iota_{H^{s + \delta} \rightarrow H^s} \left( \partial_\nu u - \partial_\nu \left( \iota_{\mathcal{K}^2 \rightarrow \mathcal{K}^{s + \delta + 1}} u_k \right) \right) \right\Vert_{H^s((0, T) \times \partial M, \mathbb{C}^N)} \\
= & \left\Vert \partial_\nu u - \partial_\nu \left( \iota_{\mathcal{K}^2 \rightarrow \mathcal{K}^{s + \delta + 1}} u_k \right) \right\Vert_{H^{s + \delta}((0, T) \times \partial M, \mathbb{C}^N)}.
\end{align*}
As above, one finds
\begin{equation}\label{eq_proof_LLT_dir_10bis3}
\left\Vert \iota_{H^{s + \delta} \rightarrow H^s} \partial_\nu u - \iota_{H^1 \rightarrow H^s} \partial_\nu u_k \right\Vert_{H^s((0, T) \times \partial M, \mathbb{C}^N)} \underset{k \rightarrow \infty}{\longrightarrow} 0.
\end{equation}
Lemma \ref{lem_proof_LLT_dir_normal_derivative} gives $\iota_{H^1 \rightarrow H^s} \partial_\nu u_k = \partial_\nu \left( \iota_{\mathcal{K}^2 \rightarrow \mathcal{K}^{s + 1}} u_k \right)$, and with (\ref{eq_proof_LLT_dir_10bis2}) and (\ref{eq_proof_LLT_dir_10bis3}), this completes the proof of the third case.

\textbf{The normal derivative and the time-derivative commute.} Take $s \in \mathbb{R}$, $k \in \mathbb{N}$ and $\left( u^0, u^1 \right) \in \mathcal{K}^{s + 1} \times \mathcal{K}^{s}$. Here, we show that 
\begin{equation}\label{eq_proof_LLT_dir_11}
\partial_\nu \partial_t^{2k} u = \partial_t^{2k} \partial_\nu u.
\end{equation}
Note that the left-hand side is well-defined as we know that $\partial_t^{2k} u$ is a solution of the wave equation. By interpolation, we may assume that $s \in \mathbb{Z}$. If $s - 2 k \geq 0$, then (\ref{eq_proof_LLT_dir_11}) holds true, so we can assume that $s - 2 k \leq -1$. As above, using an approximation, it suffices to prove that
\begin{equation}\label{eq_proof_LLT_dir_12}
\partial_\nu \partial_t^{2k} \left( \iota_{\mathcal{K}^{2 k + 1} \rightarrow \mathcal{K}^{s + 1}} u \right) = \partial_t^{2k} \partial_\nu \left( \iota_{\mathcal{K}^{2 k + 1} \rightarrow \mathcal{K}^{s + 1}} u \right)
\end{equation}
for all $\left( u^0, u^1 \right) \in \mathcal{K}^{2 k + 1} \times \mathcal{K}^{2 k}$. Using (\ref{eq_proof_LLT_dir_8}), one finds
\[\partial_\nu \partial_t^{2k} \left( \iota_{\mathcal{K}^{2 k + 1} \rightarrow \mathcal{K}^{s + 1}} u \right) = \partial_\nu \left( \iota_{\mathcal{K}^1 \rightarrow \mathcal{K}^{s - 2 k + 1}} \partial_t^{2k} u \right) = \iota_{L^2 \rightarrow H^{s - 2 k}} \left( \partial_\nu \partial_t^{2k} u \right).\]
Note that $\partial_\nu \partial_t^{2k} u = \partial_t^{2k} \partial_\nu u$, as $\left( u^0, u^1 \right) \in \mathcal{K}^{2 k + 1} \times \mathcal{K}^{2 k}$. One has $\partial_\nu u = \iota_{H^{2k} \rightarrow H^s} \partial_\nu u$ in $\mathscr{D}^\prime((0, T) \times \partial M, \mathbb{C}^N)$, implying
\[\partial_t^{2 k} \partial_\nu u = \partial_t^{2 k} \iota_{H^{2k} \rightarrow H^s} \partial_\nu u\]
in $\mathscr{D}^\prime((0, T) \times \partial M, \mathbb{C}^N)$. This gives
\[\iota_{L^2 \rightarrow H^{s - 2 k}} \partial_t^{2 k} \partial_\nu u = \partial_t^{2 k} \iota_{H^{2k} \rightarrow H^s} \partial_\nu u.\]
Hence, one obtains
\[\partial_\nu \partial_t^{2k} \left( \iota_{\mathcal{K}^{2 k + 1} \rightarrow \mathcal{K}^{s + 1}} u \right) = \partial_t^{2 k} \iota_{H^{2k} \rightarrow H^s} \partial_\nu u.\]
Using (\ref{eq_proof_LLT_dir_8}) again, one finds (\ref{eq_proof_LLT_dir_12}).

\subsubsection{Proof of Theorem \ref{thm_LLT_inhomogeneous} in negative regularity}

Here, we prove Theorem \ref{thm_LLT_inhomogeneous} for $s \leq 0$. An integration by parts gives the following identity.

\begin{lem}\label{lem_IPP_LLT_inhomogeneous}
For $u$ and $v$ in
\[\mathscr{C}^0([0, T], H^2(M, \mathbb{C}^N)) \cap \mathscr{C}^1([0, T], H^1(M, \mathbb{C}^N)) \cap \mathscr{C}^2([0, T], L^2(M, \mathbb{C}^N))\]
one has
\begin{align*}
& \left\langle \left( \partial_t^2 - \mathsf{P} \right) u, v \right\rangle_{L^2((0, T) \times M, \mathbb{C}^N)} - \left\langle u, \left( \partial_t^2 - \mathsf{P}^\ast \right) v \right\rangle_{L^2((0, T) \times M, \mathbb{C}^N)} \\
= & \left[ \left\langle \partial_t u(t), v(t) \right\rangle_{L^2(M, \mathbb{C}^N)} - \left\langle u(t), \partial_t v(t) \right\rangle_{L^2(M, \mathbb{C}^N)} \right]_0^T - \left\langle \left\langle X, \nu \right\rangle_g u, v \right\rangle_{L^2((0, T) \times \partial M, \mathbb{C}^N)} \\
& \ - \left\langle \partial_\nu u, v \right\rangle_{L^2((0, T) \times \partial M, \mathbb{C}^N)} + \left\langle u, \partial_\nu v \right\rangle_{L^2((0, T) \times \partial M, \mathbb{C}^N)}.
\end{align*}
\end{lem}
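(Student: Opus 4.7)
The plan is to split the left-hand side into the time-derivative piece and the spatial piece, handle each by integration by parts in its respective variable, and then combine. Throughout, the regularity hypothesis on $u$ and $v$ is precisely what justifies the use of Fubini and of the classical trace theory for $\mathsf{P}$ at fixed time.

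First, I would treat the time-derivative term. By Fubini,
\[\left\langle \partial_t^2 u, v \right\rangle_{L^2((0, T) \times M, \mathbb{C}^N)} = \int_0^T \left\langle \partial_t^2 u(t), v(t) \right\rangle_{L^2(M, \mathbb{C}^N)} \mathrm{d}t.\]
Since $\partial_t u \in \mathscr{C}^0([0, T], H^1(M, \mathbb{C}^N))$ and $v \in \mathscr{C}^1([0, T], H^1(M, \mathbb{C}^N))$, the function $t \mapsto \langle \partial_t u(t), v(t) \rangle_{L^2}$ is $\mathscr{C}^1$ with derivative $\langle \partial_t^2 u, v \rangle_{L^2} + \langle \partial_t u, \partial_t v \rangle_{L^2}$, so integrating by parts in time once, then applying the same argument to $\langle u, \partial_t v \rangle_{L^2}$, one obtains
\[\left\langle \partial_t^2 u, v \right\rangle_{L^2((0, T) \times M, \mathbb{C}^N)} - \left\langle u, \partial_t^2 v \right\rangle_{L^2((0, T) \times M, \mathbb{C}^N)} = \Bigl[ \langle \partial_t u(t), v(t) \rangle_{L^2(M)} - \langle u(t), \partial_t v(t) \rangle_{L^2(M)} \Bigr]_0^T.\]
There is no spatial boundary term here; this yields the first and fourth terms on the right-hand side.

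Second, I would treat the spatial term $\langle \mathsf{P} u, v \rangle_{L^2((0,T) \times M)} - \langle u, \mathsf{P}^\ast v \rangle_{L^2((0,T) \times M)}$. By Fubini and the fact that $u(t), v(t) \in H^2(M, \mathbb{C}^N)$ for every $t$, I can apply Lemma \ref{lem_adjoint_P} pointwise in $t$ and integrate:
\[\int_0^T \Bigl( \langle \mathsf{P} u(t), v(t) \rangle_{L^2(M)} - \langle u(t), \mathsf{P}^\ast v(t) \rangle_{L^2(M)} \Bigr) \mathrm{d}t = \int_0^T \Bigl( \langle \langle X, \nu \rangle_g u(t), v(t) \rangle_{L^2(\partial M)} + \langle \partial_\nu u(t), v(t) \rangle_{L^2(\partial M)} - \langle u(t), \partial_\nu v(t) \rangle_{L^2(\partial M)} \Bigr) \mathrm{d}t.\]
The right-hand side collects, by Fubini again, into the three boundary integrals on $(0, T) \times \partial M$ appearing in the statement. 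A small check is needed to verify that $t \mapsto \langle \mathsf{P} u(t), v(t) \rangle_{L^2(M)}$ and the spatial boundary pairings are genuinely integrable on $(0, T)$; this is immediate from the continuity hypotheses on $u$ and $v$, the $H^2 \to H^{3/2}$ trace, and the Cauchy--Schwarz inequality.

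Finally, subtracting the spatial identity from the temporal one yields exactly the stated formula (the minus sign in front of $\mathsf{P}$ in $\partial_t^2 - \mathsf{P}$ flips the signs of the boundary terms so that they match the statement). There is no genuine obstacle; the only point requiring care is the justification of differentiation under the integral sign and the applicability of Lemma \ref{lem_adjoint_P} at almost every time, both of which follow from the stated $\mathscr{C}^k$-in-time regularity with values in the appropriate Sobolev space.
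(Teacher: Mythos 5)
Your proof is correct and follows the same route the paper intends: the paper offers no written proof beyond the remark that ``an integration by parts gives the following identity,'' and your decomposition into a time integration by parts (yielding the bracket term) plus a pointwise-in-$t$ application of Lemma \ref{lem_adjoint_P} (yielding the three boundary terms, with signs flipped by the minus in $\partial_t^2 - \mathsf{P}$) is exactly the computation being alluded to. The regularity justifications you give are adequate.
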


\paragraph{Step 1: Definition of the solution.} Take $f \in \mathscr{C}^\infty((0, T) \times \partial M, \mathbb{C}^N)$. If there exists a smooth solution $v$ of (\ref{eq_thm_LLT_inhomogeneous}), then for all smooth function $u$, one has
\begin{align*}
& \left\langle \left( \partial_t^2 - \mathsf{P} \right) u, v \right\rangle_{L^2((0, T) \times M, \mathbb{C}^N)} = \left\langle \partial_t u(T), v(T) \right\rangle_{L^2(M, \mathbb{C}^N)} - \left\langle u(T), \partial_t v(T) \right\rangle_{L^2(M, \mathbb{C}^N)} \\
& \quad - \left\langle \left\langle X, \nu \right\rangle_g u + \partial_\nu u, \diag(\Theta) f \right\rangle_{L^2((0, T) \times \partial M, \mathbb{C}^N)} + \left\langle u, \partial_\nu v \right\rangle_{L^2((0, T) \times \partial M, \mathbb{C}^N)}.
\end{align*}
In particular, if $u$ is such that $u_{\vert (0, T) \times \partial M} = 0$ and $\left( u(T), \partial_t u(T) \right) = 0$ then
\[\left\langle \left( \partial_t^2 - \mathsf{P} \right) u, v \right\rangle_{L^2((0, T) \times M, \mathbb{C}^N)} = - \left\langle \partial_\nu u, \diag(\Theta) f \right\rangle_{L^2((0, T) \times \partial M, \mathbb{C}^N)}.\]
Hence, if $u$ is a smooth solution of 
\begin{equation}\label{eq_proof_LLT_inhom_low_1}
\left \{
\begin{array}{rcccl}
\partial_t^2 u - \mathsf{P} u & = & F & \quad & \text{in } (0, T) \times M, \\
\left( u(T, \cdot), \partial_t u(T, \cdot) \right) & = & 0 & \quad & \text{in } M, \\
u & = & 0 & \quad & \text{on } (0, T) \times \partial M.
\end{array}
\right.
\end{equation}
then one has
\[\left\langle F, v \right\rangle_{L^2((0, T) \times M, \mathbb{C}^N)} = - \left\langle \partial_\nu u, \diag(\Theta) f \right\rangle_{L^2((0, T) \times \partial M, \mathbb{C}^N)}.\]
We use this as the definition of $v$. More precisely, take $s \leq 0$ and define
\[\begin{array}{cccc}
L_s: & L^1((0, T), H_0^{- s}(M, \mathbb{C}^N)) & \longrightarrow & H_0^{- s}((0, T) \times \partial M, \mathbb{C}^N) \\
& F & \longmapsto & - \diag(\Theta) \partial_\nu u
\end{array}\]
where $u$ is the solution of (\ref{eq_proof_LLT_inhom_low_1}). By Theorem \ref{thm_LLT_dir}, the operator $L_s$ is well-defined and continuous. For $f \in H^s((0, T) \times \partial M, \mathbb{C}^N)$, we define the solution $v$ of (\ref{eq_thm_LLT_inhomogeneous}) by $v = L_s^\ast f$. One has $v \in L^\infty((0, T), H^s(M, \mathbb{C}^N))$. In the next step, we show that $v$ is more regular.

\paragraph{Step 2: Regularity of the solution.}
Fix $s \leq 0$. In this step, we show that for $f \in H^s((0, T) \times \partial M, \mathbb{C}^N)$, one has
\begin{equation}\label{eq_proof_LLT_inhom_low_2}
v = L_s^\ast(f) \in \mathscr{C}^0([0, T], H^s(M, \mathbb{C}^N)) \cap \mathscr{C}^1([0, T], H^{s - 1}(M, \mathbb{C}^N)) \cap \mathscr{C}^2([0, T], H^{s - 2}(M, \mathbb{C}^N))
\end{equation}
with an inequality, and $\partial_t^2 v = \PP{\mathscr{D}^\prime}^\ast v$ in $\mathscr{D}^\prime((0, T) \times M, \mathbb{C}^N)$. To get (\ref{eq_proof_LLT_inhom_low_2}) for all $f$, it suffices to show that (\ref{eq_proof_LLT_inhom_low_2}) holds for $f$ smooth, with an inequality of the form
\begin{equation}\label{eq_proof_LLT_inhom_low_3}
\left\Vert v \right\Vert_{\mathscr{C}^0(H^s) \cap \mathscr{C}^1(H^{s - 1}) \cap \mathscr{C}^2(H^{s - 2})} \lesssim \left\Vert f \right\Vert_{H^s((0, T) \times \partial M, \mathbb{C}^N)}.
\end{equation}

\textbf{Proof of (\ref{eq_proof_LLT_inhom_low_2}).} 
Suppose $f \in \mathscr{C}^\infty((0, T) \times \partial M, \mathbb{C}^N)$, and denote by $\tilde{f} \in \mathscr{C}_{\mathrm{c}}^\infty((0, T) \times M, \mathbb{C}^N)$ an extension of $\diag(\Theta) f$. One writes $v = \tilde{f} + w$, where $w$ is a solution of the wave equation with homogeneous Dirichlet boundary condition, as follows. Set 
\[F = - \left(\partial_t^2 - \mathsf{P}^\ast\right) \tilde{f} \in \mathscr{C}^\infty((0, T) \times M, \mathbb{C}^N).\]
Since $F \in L^1((0, T), L^2(M, \mathbb{C}^N)) \cap \mathscr{C}^0((0, T), H^{- 1}(M, \mathbb{C}^N))$, the solution $w$ of 
\begin{equation}\label{eq_proof_LLT_inhom_low_3_bis}
\left \{
\begin{array}{rcccl}
\partial_t^2 w - \mathsf{P}^\ast w & = & F & \quad & \text{in } (0, T) \times M, \\
\left( w(0, \cdot), \partial_t w(0, \cdot) \right) & = & 0 & \quad & \text{in } M, \\
w & = & 0 & \quad & \text{on } (0, T) \times \partial M,
\end{array}
\right.
\end{equation}
is well-defined, and $w \in \mathscr{C}^0([0, T], H_0^1(M, \mathbb{C}^N)) \cap \mathscr{C}^1([0, T], L^2(M, \mathbb{C}^N)) \cap \mathscr{C}^2([0, T], H^{-1}(M, \mathbb{C}^N))$ by Theorem \ref{thm_LLT_dir}-\emph{(iv)}. We claim that $v = \tilde{f} + w$, that is,
\begin{equation}\label{eq_proof_LLT_inhom_low_4}
\left\langle v, \phi \right\rangle_{L^\infty((0, T), H^s), L^1((0, T), H_0^{- s})} = \left\langle w + \tilde{f}, \overline{\phi} \right\rangle_{L^2((0, T) \times M, \mathbb{C}^N)}
\end{equation}
for all $\phi \in L^1((0, T), H_0^{- s}(M))$. By density, it suffices to prove (\ref{eq_proof_LLT_inhom_low_4}) for $\phi \in \mathscr{C}_\mathrm{c}^\infty((0, T) \times \inte M, \mathbb{C}^N)$. Note that Lemma \ref{lem_IPP_LLT_inhomogeneous} does not apply to $w + \tilde{f}$, due to the lack of regularity of $w$. However, it applies to $\tilde{f}$, and Lemma \ref{lem_LLT_Duhamel_duality} can be used for $w$. Consider $\phi \in \mathscr{C}_\mathrm{c}^\infty((0, T) \times \inte M, \mathbb{C}^N)$, and let $u$ be the solution of 
\begin{equation}\label{eq_proof_LLT_inhom_low_5}
\left \{
\begin{array}{rcccl}
\partial_t^2 u - \mathsf{P} u & = & \phi & \quad & \text{in } (0, T) \times M, \\
\left( u(T, \cdot), \partial_t u(T, \cdot) \right) & = & 0 & \quad & \text{in } M, \\
u & = & 0 & \quad & \text{on } (0, T) \times \partial M.
\end{array}
\right.
\end{equation}
One has
\begin{align*}
& \left\langle w + \tilde{f}, \overline{\phi} \right\rangle_{L^2((0, T) \times M, \mathbb{C}^N)} \\
= & \left\langle w, \overline{\phi} \right\rangle_{L^2((0, T) \times M, \mathbb{C}^N)} + \left\langle \tilde{f}, \overline{\phi} \right\rangle_{L^2((0, T) \times M, \mathbb{C}^N)} \\
= & \left\langle F, \overline{u} \right\rangle_{L^2((0, T) \times M, \mathbb{C}^N)} + \left\langle \left(\partial_t^2 - \mathsf{P}^\ast\right) \tilde{f}, \overline{u} \right\rangle_{L^2((0, T) \times M, \mathbb{C}^N)} - \left\langle \tilde{f}, \partial_\nu \overline{u} \right\rangle_{L^2((0, T) \times \partial M, \mathbb{C}^N)} \\
= & - \left\langle \diag(\Theta) f, \partial_\nu \overline{u} \right\rangle_{L^2((0, T) \times \partial M, \mathbb{C}^N)},
\end{align*}
where one has used the fact that $\tilde{f}$ is compactly supported in $(0, T)$. This implies
\[\left\langle w + \tilde{f}, \overline{\phi} \right\rangle_{L^2((0, T) \times M, \mathbb{C}^N)} = \left\langle f, \overline{L_s \phi} \right\rangle_{L^2((0, T) \times \partial M, \mathbb{C}^N)} = \left\langle v, \phi \right\rangle_{L^\infty((0, T), H^s), L^1((0, T), H_0^{- s})},\]
as $v = L_s^\ast f$. This proves (\ref{eq_proof_LLT_inhom_low_4}). In particular, this gives (\ref{eq_proof_LLT_inhom_low_2}). Note that, for now, (\ref{eq_proof_LLT_inhom_low_2}) has only been proved for smooth $f$.

\textbf{Proof of (\ref{eq_proof_LLT_inhom_low_3}).}
We prove (\ref{eq_proof_LLT_inhom_low_3}) for $f$ smooth. First, note that the operator
\[L_s^\ast: H^s((0, T) \times \partial M, \mathbb{C}^N) \longrightarrow L^\infty((0, T), H^s(M, \mathbb{C}^N))\]
is continuous, as $L_s$ is continuous. This gives
\[\left\Vert v \right\Vert_{L^\infty((0, T), H^s(M, \mathbb{C}^N))} \lesssim \left\Vert f \right\Vert_{H^s((0, T) \times \partial M, \mathbb{C}^N)},\]
implying $v = L_s^\ast f \in \mathscr{C}^0([0, T], H^s(M, \mathbb{C}^N))$ if $f \in H^s((0, T) \times \partial M, \mathbb{C}^N)$. 

Second, we prove
\begin{equation}\label{eq_proof_LLT_inhom_low_7}
\left\Vert \partial_t v \right\Vert_{L^\infty((0, T), H^{s - 1}(M, \mathbb{C}^N))} \lesssim \left\Vert f \right\Vert_{H^s((0, T) \times \partial M, \mathbb{C}^N)}.
\end{equation}
Consider $f \in \mathscr{C}^\infty((0, T) \times \partial M, \mathbb{C}^N)$, and $\phi \in \mathscr{C}\mathscr{c}_\mathrm{c}^\infty((0, T) \times \inte M, \mathbb{C}^N)$. By definition, one has
\begin{align*}
\left\langle \partial_t v, \phi \right\rangle_{\mathscr{D}^\prime((0, T) \times M, \mathbb{C}^N), \mathscr{D}((0, T) \times M, \mathbb{C}^N)} & = - \left\langle v, \partial_t \phi \right\rangle_{L^\infty((0, T), H^s), L^1((0, T), H_0^{- s})} \\
& = \left\langle \diag(\Theta) f, \overline{\partial_\nu \tilde{u}} \right\rangle_{L^2((0, T) \times \partial M, \mathbb{C}^N)}
\end{align*}
where $\tilde{u}$ is the solution of 
\[\left \{
\begin{array}{rcccl}
\partial_t^2 \tilde{u} - \mathsf{P} \tilde{u} & = & \partial_t \phi & \quad & \text{in } (0, T) \times M, \\
(\tilde{u}(T, \cdot), \partial_t \tilde{u}(T, \cdot)) & = & 0 & \quad & \text{in } M, \\
\tilde{u} & = & 0 & \quad & \text{on } (0, T) \times \partial M.
\end{array}
\right.\]
Note that $\tilde{u} = \partial_t u$, where $u$ is the solution of (\ref{eq_proof_LLT_inhom_low_5}). Indeed, one has $\partial_t u(T) = 0$ and 
\[\partial_t^2 u(T) = \mathsf{P} u(T) + \phi(T) = 0\]
as $\phi$ is compactly supported. Hence, one finds 
\[\left\vert \left\langle \partial_t v, \phi \right\rangle_{\mathscr{D}^\prime((0, T) \times M, \mathbb{C}^N), \mathscr{D}((0, T) \times M, \mathbb{C}^N)} \right\vert \leq \left\Vert \diag(\Theta) \partial_\nu \partial_t u \right\Vert_{H^{- s}((0, T) \times \partial M, \mathbb{C}^N)} \left\Vert f \right\Vert_{H^s((0, T) \times \partial M, \mathbb{C}^N)}.\]

By Theorem \ref{thm_LLT_dir}, one has 
\[\left\Vert \diag(\Theta) \partial_\nu \partial_t u \right\Vert_{H^{- s}((0, T) \times \partial M, \mathbb{C}^N)} \lesssim \left\Vert \partial_\nu u \right\Vert_{H^{- s + 1}((0, T) \times \partial M, \mathbb{C}^N)} \lesssim \left\Vert \phi \right\Vert_{L^1((0, T), H_0^{- s + 1}(M, \mathbb{C}^N))}.\]
As $\mathscr{C}_\mathrm{c}^\infty((0, T) \times M, \inte \mathbb{C}^N)$ is dense in $L^1((0, T), H_0^{- s + 1}(M, \mathbb{C}^N))$, this gives (\ref{eq_proof_LLT_inhom_low_7}). 

Third, the proof of 
\[\left\Vert \partial_t^2 v \right\Vert_{L^\infty((0, T), H^{s - 2}(M, \mathbb{C}^N))} \lesssim \left\Vert f \right\Vert_{H^s((0, T) \times \partial M, \mathbb{C}^N)}\]
is similar: one writes 
\begin{align*}
\left\vert \left\langle \partial_t^2 v, \phi \right\rangle_{\mathscr{D}^\prime((0, T) \times M, \mathbb{C}^N), \mathscr{D}((0, T) \times M, \mathbb{C}^N)} \right\vert & \leq \left\Vert \diag(\Theta) \partial_\nu \partial_t^2 u \right\Vert_{H^{- s}((0, T) \times \partial M, \mathbb{C}^N)} \left\Vert f \right\Vert_{H^s((0, T) \times \partial M, \mathbb{C}^N)} \\
& \lesssim \left\Vert \phi \right\Vert_{L^1((0, T), H_0^{- s + 2}(M, \mathbb{C}^N))} \left\Vert f \right\Vert_{H^s((0, T) \times \partial M, \mathbb{C}^N)}.
\end{align*}
Note that the equality $\partial_\nu \partial_t^2 u = \partial_t^2 \partial_\nu u$ is obvious, as $\phi$ is smooth.

\textbf{Connection with the wave equation.}
We show that for $f \in H^s((0, T) \times \partial M, \mathbb{C}^N)$, one has
\begin{equation}\label{eq_proof_LLT_inhom_low_8}
\partial_t^2 v - \PP{\mathscr{D}^\prime}^\ast v = 0, \quad \text{ in } \mathscr{D}^\prime((0, T) \times M, \mathbb{C}^N).
\end{equation} 
As $v = L_s^\ast f \in L^\infty((0, T), H^s(M, \mathbb{C}^N))$ is continuous with respect to $f \in H^s((0, T) \times \partial M, \mathbb{C}^N)$, we may assume that $f \in \mathscr{C}^\infty((0, T) \times \partial M, \mathbb{C}^N)$. As above, write $v = w + \tilde{f}$, where $\tilde{f} \in \mathscr{C}_{\mathrm{c}}^\infty((0, T) \times M, \mathbb{C}^N)$ is an extension of $\diag(\Theta) f$, and $w$ is the solution of (\ref{eq_proof_LLT_inhom_low_3_bis}). For all $t \in [0, T]$, one has $\partial_t^2 w(t) - \PP{0}^\ast w(t) = - \left( \partial_t^2 - \mathsf{P}^\ast \right) \tilde{f}(t)$ in $H^{- 1}(M, \mathbb{C}^N)$. Hence, one obtains
\[\left( \partial_t^2 - \PP{\mathscr{D}^\prime}^\ast \right) v(t) = - \left(\partial_t^2 - \mathsf{P}^\ast\right) \tilde{f}(t) + \left(\partial_t^2 - \mathsf{P}^\ast\right) \tilde{f}(t) = 0, \quad t \in [0, T],\]
in $H^{- 1}(M, \mathbb{C}^N)$. This gives (\ref{eq_proof_LLT_inhom_low_8}).

\paragraph{Step 3: The additional regularity result.} Here, we complete the proof of Theorem \ref{thm_LLT_inhomogeneous} for $s \leq 0$, by proving 
\[\left(v(T), \partial_t v(T)\right) \in \mathcal{K}_\ast^{s} \times \mathcal{K}_\ast^{s - 1},\]
and the duality equality (\ref{eq_thm_LLT_inhomogeneous_dual_eq}), for $f \in H^s((0, T) \times \partial M, \mathbb{C}^N)$ and $s \in \mathbb{Z}$, $s \leq 0$. We start with some remarks. We know that $v(T) \in H^s(M, \mathbb{C}^N)$, that is, $v(T)$ is a continuous linear form on $H_0^{- s}(M, \mathbb{C}^N)$. As one has $H_0^{- s}(M, \mathbb{C}^N) \subset \mathcal{K}^{- s}$, we prove that $v(T)$ can be extended as a continuous linear form on $\mathcal{K}^{- s}$. Such an extension is not unique: however, we seek an extension such that (\ref{eq_thm_LLT_inhomogeneous_dual_eq}) holds true, and
\begin{equation}\label{eq_proof_LLT_inhom_low_8_bis}
\left\Vert v(T) \right\Vert_{\mathcal{K}_\ast^{s}} \lesssim \left\Vert f \right\Vert_{H^s((0, T) \times \partial M, \mathbb{C}^N)}.
\end{equation}
The same remarks can be made for $\partial_t v(T)$.

Consider $f \in \mathscr{C}^\infty((0, T) \times \partial M, \mathbb{C}^N)$, and write $v$, $\tilde{f}$, $F$ and $w$ as above. Because of the support of $\tilde{f}$, one has $\left( v(T), \partial_t v(T) \right) = \left( w(T) , \partial_t w(T) \right)$ in $H^s(M, \mathbb{C}^N) \times H^{s - 1}(M, \mathbb{C}^N)$. Consider $\left( u^0, u^1 \right) \in \mathcal{K}^2 \times \mathcal{K}^1$, and write $u$ for the solution of 
\begin{equation}\label{eq_proof_LLT_inhom_low_8_bisbis}
\left \{
\begin{array}{rcccl}
\partial_t^2 u - \mathsf{P} u & = & 0 & \quad & \text{in } (0, T) \times M, \\
\left( u(T, \cdot), \partial_t u(T, \cdot) \right) & = & \left( u^0, u^1 \right) & \quad & \text{in } M, \\
u & = & 0 & \quad & \text{on } (0, T) \times \partial M.
\end{array}
\right.
\end{equation}

Applying Lemma \ref{lem_IPP_LLT_inhomogeneous} to $u$ and $\tilde{f}$, one finds
\[\left\langle u, \left( \partial_t^2 - \mathsf{P}^\ast \right) \tilde{f} \right\rangle_{L^2((0, T) \times M, \mathbb{C}^N)} = \left\langle \partial_\nu u, \diag(\Theta) f \right\rangle_{L^2((0, T) \times \partial M, \mathbb{C}^N)}.\]
By Lemma \ref{lem_LLT_Duhamel_duality}, one has
\[\left\langle u, F \right\rangle_{L^2((0, T) \times M, \mathbb{C}^N)} = \left\langle u^0, \partial_t w(T) \right\rangle_{L^2(M, \mathbb{C}^N)} - \left\langle u^1, w(T) \right\rangle_{L^2(M, \mathbb{C}^N)}\]
Thus, one obtains
\[\left\langle u^1, v(T) \right\rangle_{L^2(M, \mathbb{C}^N)} - \left\langle u^0, \partial_t v(T) \right\rangle_{L^2(M, \mathbb{C}^N)} = \left\langle \partial_\nu u, \diag(\Theta) f \right\rangle_{L^2((0, T) \times \partial M, \mathbb{C}^N)}.\]
If $s \leq -1$, then this is in particular true for all $\left( u^0, u^1 \right) \in H_0^{- s + 1}(M, \mathbb{C}^N) \times H_0^{- s}(M, \mathbb{C}^N)$. If $s = 0$, then this is true for all $\left( u^0, u^1 \right) \in H_0^{- s + 1}(M, \mathbb{C}^N) \times H_0^{- s}(M, \mathbb{C}^N)$ by density. Hence, one has 
\begin{equation}\label{eq_proof_LLT_inhom_low_9}
\left\langle v(T), u^1 \right\rangle_{H^s, H_0^{- s}} - \left\langle \partial_t v(T), u^0 \right\rangle_{H^{s - 1}, H_0^{- s + 1}} = \left\langle f, \diag(\Theta) \partial_\nu u \right\rangle_{H^s, H_0^{- s}}
\end{equation}
for all $f$ smooth. By density and continuity, this is true for all $f \in H^s((0, T) \times \partial M, \mathbb{C}^N)$. Yet, the right-hand side is well-defined for $\left( u^0, u^1 \right) \in \mathcal{K}^{- s + 1} \times \mathcal{K}^{- s}$, and one has
\[\left\vert \left\langle f, \diag(\Theta) \partial_\nu u \right\rangle_{H^s, H_0^{- s}} \right\vert \lesssim \left\Vert \left( u^0, u^1 \right) \right\Vert_{\mathcal{K}^{s + 1} \times \mathcal{K}^{s}} \left\Vert f \right\Vert_{H^s((0, T) \times \partial M, \mathbb{C}^N)}.\]
Hence, (\ref{eq_proof_LLT_inhom_low_9}) yields a unique extension of $\left(v(T), \partial_t v(T)\right)$ as a linear form on $\mathcal{K}^{- s} \times \mathcal{K}^{1 - s}$, which satisfies (\ref{eq_thm_LLT_inhomogeneous_dual_eq}) and (\ref{eq_proof_LLT_inhom_low_8_bis}).

\subsubsection{Proof of Theorem \ref{thm_LLT_inhomogeneous} in positive regularity }

Here, we prove Theorem \ref{thm_LLT_inhomogeneous} for $s > 0$. We know how to construct the solution $v$ of (\ref{eq_thm_LLT_inhomogeneous}) if $f \in L^2((0, T) \times \partial M, \mathbb{C}^N)$, and one has 
\[v \in \mathscr{C}^0([0, T], L^2(M, \mathbb{C}^N)) \cap \mathscr{C}^1([0, T], H^{- 1}(M, \mathbb{C}^N)).\]
If $f \in H^s((0, T) \times \partial M, \mathbb{C}^N)$ with $s > 0$, one can define $v$ as in the case $s = 0$. We show that
\[v \in \mathscr{C}^0([0, T], H^s(M, \mathbb{C}^N)) \cap \mathscr{C}^1([0, T], H^{s - 1}(M, \mathbb{C}^N)) \cap \mathscr{C}^2([0, T], H^{s - 2}(M, \mathbb{C}^N)).\]

We will need the following regularity result, which is an easy consequence of the corresponding scalar result. Set $W = \left\{ u \in L^2(M, \mathbb{C}^N), \PP{\mathscr{D}^\prime} u \in H^{- 1}(M, \mathbb{C}^N) \right\}$.

\begin{lem}\label{lem_reg_ell_inhomogeneous}
The Dirichlet trace $H^1(M, \mathbb{C}^N) \rightarrow H^\frac{1}{2}(M, \mathbb{C}^N)$ has a continuous extension as an operator from $W$ to $H^{-\frac{1}{2}}(\partial M, \mathbb{C}^N)$, and there exists $C > 0$ such that 
\[\left\Vert u_{\vert \partial M} \right\Vert_{H^{-\frac{1}{2}}(\partial M, \mathbb{C}^N)} \leq C \left( \left\Vert \PP{\mathscr{D}^\prime} u \right\Vert_{H^{- 1}(M, \mathbb{C}^N)} + \left\Vert u \right\Vert_{L^2(M, \mathbb{C}^N)} \right), \quad u \in W.\]
In addition, for $m \in \mathbb{N}$, if $u \in L^2(M, \mathbb{C}^N)$ satisfies $\PP{\mathscr{D}^\prime} u \in H^{m - 1}(M, \mathbb{C}^N)$ and $u_{\vert \partial M} \in H^{m + \frac{1}{2}}(M, \mathbb{C}^N)$, then $u \in H^{m + 1}(M, \mathbb{C}^N)$ and 
\[\Vert u \Vert_{H^{m + 1}(M, \mathbb{C}^N)} \leq C \left( \left\Vert \PP{\mathscr{D}^\prime} u \right\Vert_{H^{m - 1}(M, \mathbb{C}^N)} + \left\Vert u_{\vert \partial M} \right\Vert_{H^{m + \frac{1}{2}}(\partial M, \mathbb{C}^N)} + \left\Vert u \right\Vert_{L^2(M, \mathbb{C}^N)} \right)\]
with $C > 0$ independent of $u$.
\end{lem}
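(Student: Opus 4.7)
The plan is to reduce everything to the corresponding scalar statements for $\Delta$, applied componentwise to $u = (u^1, \ldots, u^N)$, and then to absorb the coupling coming from the first-order perturbation $X+q$ as lower-order remainder terms. The crucial observation is that since $X$ is a first-order differential operator with smooth coefficients and $q$ is smooth, the multiplication/derivation by $X+q$ maps $H^k(M, \mathbb{C}^N)$ into $H^{k-1}(M, \mathbb{C}^N)$ for all $k \in \mathbb{R}$; in particular it maps $L^2$ into $H^{-1}$.

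For the trace extension, given $u \in W$ one writes
\[\Delta u = \PP{\mathscr{D}^\prime} u + (X+q) u \in H^{-1}(M, \mathbb{C}^N),\]
with $\Vert \Delta u \Vert_{H^{-1}} \lesssim \Vert \PP{\mathscr{D}^\prime} u \Vert_{H^{-1}} + \Vert u \Vert_{L^2}$. The classical scalar Lions–Magenes trace theorem asserts that the Dirichlet trace extends continuously from $\{v \in L^2(M) : \Delta v \in H^{-1}(M)\}$ to $H^{-\frac{1}{2}}(\partial M)$. Applying this to each component $u^k$ produces the extension and the first estimate, with the required dependence on $\Vert \PP{\mathscr{D}^\prime} u \Vert_{H^{-1}}$ and $\Vert u \Vert_{L^2}$.

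For the interior regularity statement I will proceed by induction on $m \in \mathbb{N}$. For $m = 0$, the hypotheses $u \in L^2$, $\PP{\mathscr{D}^\prime} u \in H^{-1}$ and $u_{\vert \partial M} \in H^{\frac{1}{2}}(\partial M, \mathbb{C}^N)$ yield $\Delta u \in H^{-1}$ by the identity above, and the scalar elliptic regularity result (componentwise) gives $u \in H^1$ with the stated inequality. For the step from $m-1$ to $m$, I first apply the inductive hypothesis (whose assumptions are satisfied since $H^{m-1} \subset H^{m-2}$ and $H^{m + \frac{1}{2}} \subset H^{m - \frac{1}{2}}$) to deduce $u \in H^m$; then $(X+q)u \in H^{m-1}$, so
\[\Delta u = \PP{\mathscr{D}^\prime} u + (X+q) u \in H^{m-1}(M, \mathbb{C}^N),\]
and the scalar regularity statement for $\Delta$ at level $m$ upgrades $u$ to $H^{m+1}$, with the estimate chained through the inductive bound on $\Vert u \Vert_{H^m}$.

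There is no genuine obstacle in this argument: the only care needed is bookkeeping of the remainder terms in the bootstrap and noting that constants at each step depend only on $M$, $X$, and $q$, so that $N$ iterations produce a final estimate involving only $\Vert \PP{\mathscr{D}^\prime} u \Vert_{H^{m-1}}$, $\Vert u_{\vert \partial M} \Vert_{H^{m + \frac{1}{2}}}$ and $\Vert u \Vert_{L^2}$, as stated.
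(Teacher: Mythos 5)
Your proposal is correct and takes essentially the same route as the paper: a componentwise reduction to the scalar trace and elliptic regularity statements for $\Delta$ via the decomposition $\Delta u = \PP{\mathscr{D}^\prime} u + (X+q)u$, with the first-order coupling absorbed as a lower-order remainder, followed by an induction on $m$ for the higher regularity.
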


\begin{proof}
This result is well-know in the scalar case $N = 1$. We show that the vector-valued case is a consequence of the scalar case. 

Take $u = (u^1, \cdots, u^N) \in L^2(M, \mathbb{C}^N)$ such that $\PP{\mathscr{D}^\prime} u \in H^{- 1}(M, \mathbb{C}^N)$. Write $(\pi_1, \cdots, \pi_N)$ for the projections associated with the canonical basis of $\mathbb{C}^N$. For $k \in \llbracket 1, N \rrbracket$, one has $u^k \in L^2(M, \mathbb{C})$ and
\[\Delta_{\mathscr{D}^\prime} u^k - \pi^k \left( X u + q u \right) \in H^{- 1}(M, \mathbb{C})\]
so that $\Delta_{\mathscr{D}^\prime} u^k \in H^{- 1}(M, \mathbb{C})$. Hence, the scalar case gives $u_{\vert \partial M} \in H^{- \frac{1}{2}}(M, \mathbb{C}^N)$, with
\[\left\Vert u_{\vert \partial M} \right\Vert_{H^{-\frac{1}{2}}(\partial M, \mathbb{C}^N)} \lesssim \left\Vert \Delta_{\mathscr{D}^\prime} u \right\Vert_{H^{- 1}(M, \mathbb{C}^N)} + \left\Vert u \right\Vert_{L^2(M, \mathbb{C}^N)}.\]
Writing
\begin{align*}
\left\Vert \Delta_{\mathscr{D}^\prime} u \right\Vert_{H^{- 1}(M, \mathbb{C}^N)} & \leq \left\Vert \PP{\mathscr{D}^\prime} u \right\Vert_{H^{- 1}(M, \mathbb{C}^N)} + \left\Vert X u + q u \right\Vert_{H^{- 1}(M, \mathbb{C}^N)} \\
& \lesssim \left\Vert \PP{\mathscr{D}^\prime} u \right\Vert_{H^{- 1}(M, \mathbb{C}^N)} + \left\Vert u \right\Vert_{L^2(M, \mathbb{C}^N)},
\end{align*}
one obtains the first part of the lemma.

We prove the second part of the lemma by induction. Start with $m = 0$ and $u \in L^2(M, \mathbb{C}^N)$ such that $\PP{\mathscr{D}^\prime} u \in H^{- 1}(M, \mathbb{C}^N)$ and $u_{\vert \partial M} \in H^{\frac{1}{2}}(M, \mathbb{C}^N)$. As above, for $k \in \llbracket 1, N \rrbracket$, one has $\Delta_{\mathscr{D}^\prime} u^k \in H^{- 1}(M, \mathbb{C})$, so the scalar case gives $u \in H^1(M, \mathbb{C}^N)$ and 
\begin{align*}
\Vert u \Vert_{H^1(M, \mathbb{C}^N)} 
& \lesssim \left\Vert \Delta_{\mathscr{D}^\prime} u \right\Vert_{H^{- 1}(M, \mathbb{C}^N)} + \left\Vert u_{\vert \partial M} \right\Vert_{H^{\frac{1}{2}}(\partial M, \mathbb{C}^N)} + \left\Vert u \right\Vert_{L^2(M, \mathbb{C}^N)} \\
& \leq \left\Vert \PP{\mathscr{D}^\prime} u \right\Vert_{H^{- 1}(M, \mathbb{C}^N)} + \left\Vert X u + q u \right\Vert_{H^{- 1}(M, \mathbb{C}^N)} + \left\Vert u_{\vert \partial M} \right\Vert_{H^{\frac{1}{2}}(\partial M, \mathbb{C}^N)} + \left\Vert u \right\Vert_{L^2(M, \mathbb{C}^N)} \\
& \lesssim \left\Vert \PP{\mathscr{D}^\prime} u \right\Vert_{H^{- 1}(M, \mathbb{C}^N)} + \left\Vert u_{\vert \partial M} \right\Vert_{H^{\frac{1}{2}}(\partial M, \mathbb{C}^N)} + \left\Vert u \right\Vert_{L^2(M, \mathbb{C}^N)}.
\end{align*}

Finally, assume that the result holds for some $m \in \mathbb{N}$. Take $u \in L^2(M, \mathbb{C}^N)$ such that $\PP{\mathscr{D}^\prime} u \in H^{(m + 1) - 1}(M, \mathbb{C}^N)$ and $u_{\vert \partial M} \in H^{m + 1 + \frac{1}{2}}(M, \mathbb{C}^N)$. By induction, $u \in H^{m + 1}(M, \mathbb{C}^N)$ and 
\begin{equation}\label{eq_proof_lem_reg_ell_inhom}
\Vert u \Vert_{H^{m + 1}(M, \mathbb{C}^N)} \lesssim \left\Vert \PP{\mathscr{D}^\prime} u \right\Vert_{H^{m - 1}(M, \mathbb{C}^N)} + \left\Vert u_{\vert \partial M} \right\Vert_{H^{m + \frac{1}{2}}(\partial M, \mathbb{C}^N)} + \left\Vert u \right\Vert_{L^2(M, \mathbb{C}^N)}.
\end{equation}
Hence, for $k \in \llbracket 1, N \rrbracket$, one has 
\[\Delta_{\mathscr{D}^\prime} u^k = \pi^k \left( \PP{\mathscr{D}^\prime} u + X u + q u \right) \in H^m(M, \mathbb{C})\]
so the scalar case gives $u^k \in H^{m + 2}(M, \mathbb{C})$ and
\begin{align*}
\Vert u \Vert_{H^{m + 2}(M, \mathbb{C}^N)} 
& \lesssim \left\Vert \Delta_{\mathscr{D}^\prime} u \right\Vert_{H^m(M, \mathbb{C}^N)} + \left\Vert u_{\vert \partial M} \right\Vert_{H^{m + \frac{3}{2}}(\partial M, \mathbb{C}^N)} + \left\Vert u \right\Vert_{L^2(M, \mathbb{C}^N)} \\
& \leq \left\Vert \PP{\mathscr{D}^\prime} u \right\Vert_{H^m(M, \mathbb{C}^N)} + \left\Vert X u + q u \right\Vert_{H^m(M, \mathbb{C}^N)} + \left\Vert u_{\vert \partial M} \right\Vert_{H^{m + \frac{3}{2}}(\partial M, \mathbb{C}^N)} + \left\Vert u \right\Vert_{L^2(M, \mathbb{C}^N)} \\
& \lesssim \left\Vert \PP{\mathscr{D}^\prime} u \right\Vert_{H^m(M, \mathbb{C}^N)} + \left\Vert u \right\Vert_{H^{m + 1}(M, \mathbb{C}^N)} + \left\Vert u_{\vert \partial M} \right\Vert_{H^{m + \frac{3}{2}}(\partial M, \mathbb{C}^N)} + \left\Vert u \right\Vert_{L^2(M, \mathbb{C}^N)}.
\end{align*}
Using (\ref{eq_proof_lem_reg_ell_inhom}), one finds
\[\Vert u \Vert_{H^{m + 2}(M, \mathbb{C}^N)} \lesssim \left\Vert \PP{\mathscr{D}^\prime} u \right\Vert_{H^m(M, \mathbb{C}^N)} + \left\Vert u_{\vert \partial M} \right\Vert_{H^{m + \frac{3}{2}}(\partial M, \mathbb{C}^N)} + \left\Vert u \right\Vert_{L^2(M, \mathbb{C}^N)}\]
and this completes the proof.
\end{proof}

We prove Theorem \ref{thm_LLT_inhomogeneous} by induction on $s \geq 1$. 

\paragraph{Step 1: The case $s = 1$.} Fix $f \in H^1((0, T) \times \partial M, \mathbb{C}^N)$, and write $v$ for the associated solution. From the case $s = 0$, one has 
\[v \in \mathscr{C}^0([0, T], L^2(M, \mathbb{C}^N)) \cap \mathscr{C}^1([0, T], H^{- 1}(M, \mathbb{C}^N)) \cap \mathscr{C}^2([0, T], H^{- 2}(M, \mathbb{C}^N))\]
and $\partial_t^2 v = \PP{\mathscr{D}^\prime}^\ast v$ in $\mathscr{D}^\prime((0, T) \times M, \mathbb{C}^N)$. We prove first that 
\begin{equation}\label{eq_proof_LLT_inhom_high_1}
v \in \mathscr{C}^1([0, T], L^2(M, \mathbb{C}^N)) \cap \mathscr{C}^2([0, T], H^{- 1}(M, \mathbb{C}^N))
\end{equation}
with an inequality. Then, using Lemma \ref{lem_reg_ell_inhomogeneous}, we show that 
\begin{equation}\label{eq_proof_LLT_inhom_high_3}
v \in \mathscr{C}^0([0, T], H^1(M, \mathbb{C}^N)),
\end{equation}
with an inequality, and
\begin{equation}\label{eq_proof_LLT_inhom_high_4}
v(t)_{\vert \partial M} = \left(\diag(\Theta) f \right)_{\vert \{t\} \times \partial M}
\end{equation}
in $H^\frac{1}{2}(\partial M, \mathbb{C}^N)$ for all $t \in [0, T]$.

We prove (\ref{eq_proof_LLT_inhom_high_1}). Let $\tilde{v}$ be the solution of 
\begin{equation}\label{eq_proof_LLT_inhom_high_4_bis}
\left \{
\begin{array}{rcccl}
\partial_t^2 \tilde{v} - \mathsf{P}^\ast \tilde{v} & = & 0 & \quad & \text{in } (0, T) \times M, \\
(\tilde{v}(0, \cdot), \partial_t \tilde{v}(0, \cdot)) & = & 0 & \quad & \text{in } M, \\
\tilde{v} & = & \partial_t (\diag(\Theta) f) & \quad & \text{on } (0, T) \times \partial M.
\end{array}
\right.
\end{equation}
For $\tilde{\Theta} \in \mathscr{C}_\mathrm{c}^\infty((0, T) \times \partial M, \mathbb{C}^N)$ such that for all $k \in \llbracket 1, N \rrbracket$, $\pi_k \tilde{\Theta} = 1$ in a neighbourhood of $\supp \pi_k \Theta$, one has $\partial_t \left( \diag(\Theta) f \right) = \diag(\tilde{\Theta}) \partial_t \left( \diag(\Theta) f \right)$, implying that $\tilde{v}$ is well-defined. One has 
\[\partial_t \left( \diag(\Theta) f \right) \in L^2((0, T) \times \partial M, \mathbb{C}^N),\]
yielding $\tilde{v} \in \mathscr{C}^0([0, T], L^2(M, \mathbb{C}^N)) \cap \mathscr{C}^1([0, T], H^{- 1}(M, \mathbb{C}^N))$ by Theorem \ref{thm_LLT_inhomogeneous} in the case $s = 0$. We show that $\partial_t v = \tilde{v}$. Consider $\phi \in \mathscr{C}_\mathrm{c}^\infty((0, T) \times \inte M, \mathbb{C}^N)$. Let $u$ and $\tilde{u}$ be the solutions of 
\[\left \{
\begin{array}{rcccl}
\partial_t^2 u - \mathsf{P} u & = & \phi & \quad & \text{in } (0, T) \times M, \\
\left( u(T, \cdot), \partial_t u(T, \cdot) \right) & = & 0 & \quad & \text{in } M, \\
u & = & 0 & \quad & \text{on } (0, T) \times \partial M,
\end{array}
\right.\]
\[\left \{
\begin{array}{rcccl}
\partial_t^2 \tilde{u} - \mathsf{P} \tilde{u} & = & \partial_t \phi & \quad & \text{in } (0, T) \times M, \\
(\tilde{u}(T, \cdot), \partial_t \tilde{u}(T, \cdot)) & = & 0 & \quad & \text{in } M, \\
\tilde{u} & = & 0 & \quad & \text{on } (0, T) \times \partial M.
\end{array}
\right.\]
One has $\partial_t u = \tilde{u}$, yielding, by definition of $v$ and $\tilde{v}$,
\begin{align*}
\left\langle \partial_t v, \phi \right\rangle_{L^\infty((0, T), H^{-1}), L^1((0, T), H_0^1)}
& = - \left\langle v, \partial_t \overline{\phi} \right\rangle_{L^2((0, T) \times M, \mathbb{C}^N)} \\
& = \left\langle f, \diag(\Theta) \partial_\nu \overline{\tilde{u}} \right\rangle_{L^2((0, T) \times \partial M, \mathbb{C}^N)} \\
& = - \left\langle \partial_t \left( \diag(\Theta) f \right), \partial_\nu \overline{u} \right\rangle_{L^2((0, T) \times \partial M, \mathbb{C}^N)} \\
& = \left\langle \tilde{v}, \phi \right\rangle_{L^\infty((0, T), L^2), L^1((0, T), L^2)}.
\end{align*}
This gives (\ref{eq_proof_LLT_inhom_high_1}). In addition, one has 
\begin{align*}
\Vert v \Vert_{\mathscr{C}^1([0, T], L^2) \cap \mathscr{C}^2([0, T], H^{- 1})} & \lesssim \left\Vert v \right\Vert_{\mathscr{C}^0([0, T], L^2) \cap \mathscr{C}^1([0, T], H^{- 1})} +  \left\Vert \tilde{v} \right\Vert_{\mathscr{C}^0([0, T], L^2) \cap \mathscr{C}^1([0, T], H^{- 1})} \\
& \lesssim \Vert f \Vert_{L^2((0, T) \times \partial M, \mathbb{C}^N)} + \left\Vert \partial_t \left( \diag(\Theta) f \right) \right\Vert_{L^2((0, T) \times \partial M, \mathbb{C}^N)} \\
& \lesssim \Vert f \Vert_{H^1((0, T) \times \partial M, \mathbb{C}^N)}.
\end{align*}

Now, we prove (\ref{eq_proof_LLT_inhom_high_3}). Consider $f \in \mathscr{C}^\infty((0, T) \times \partial M, \mathbb{C}^N)$, and write $v = w + \tilde{f}$ as in the proof of Theorem \ref{thm_LLT_inhomogeneous} in negative regularity. As $w \in \mathscr{C}^0([0, T], H_0^1(M, \mathbb{C}^N))$, (\ref{eq_proof_LLT_inhom_high_3}) is true for $f$ smooth. To get (\ref{eq_proof_LLT_inhom_high_3}) for all $f$, we prove 
\begin{equation}\label{eq_proof_LLT_inhom_high_4_bisbis}
\left\Vert v \right\Vert_{L^\infty((0, T), H^1(M, \mathbb{C}^N))} \lesssim \left\Vert f \right\Vert_{H^1((0, T) \times \partial M, \mathbb{C}^N)}.
\end{equation}
For $t \in [0, T]$, one has
\[\PP{\mathscr{D}^\prime}^\ast v(t) = \partial_t^2 v(t) \in H^{- 1}(M, \mathbb{C}^N)\]
and
\begin{equation}\label{eq_proof_LLT_inhom_high_5}
v(t)_{\vert \partial M} = w(t)_{\vert \partial M} + \tilde{f}(t)_{\vert \partial M} = 0 + \left( \diag(\Theta) f \right)_{\vert \{ t \} \times \partial M}
\end{equation}
in $H^{\frac{1}{2}}(\partial M, \mathbb{C}^N)$. Hence, Lemma \ref{lem_reg_ell_inhomogeneous} gives 
\begin{align*}
\Vert v(t) \Vert_{H^1(M, \mathbb{C}^N)} & \lesssim \left\Vert \partial_t^2 v(t) \right\Vert_{H^{- 1}(M, \mathbb{C}^N)} + \left\Vert \left( \diag(\Theta) f \right)_{\vert \{ t \} \times \partial M} \right\Vert_{H^\frac{1}{2}(\partial M, \mathbb{C}^N)} + \left\Vert v \right\Vert_{L^\infty((0, T), L^2)} \\
& \lesssim \left\Vert f \right\Vert_{H^1((0, T) \times \partial M, \mathbb{C}^N)}.
\end{align*}
for all $t \in [0, T]$. This gives (\ref{eq_proof_LLT_inhom_high_4_bisbis}). By density, it holds for all $f \in H^1((0, T) \times \partial M, \mathbb{C}^N)$, yielding (\ref{eq_proof_LLT_inhom_high_3}). Note also that (\ref{eq_proof_LLT_inhom_high_4}) holds for smooth $f$ by (\ref{eq_proof_LLT_inhom_high_5}). As both sides of (\ref{eq_proof_LLT_inhom_high_4}) are continuous with respect to $f \in H^1((0, T) \times \partial M, \mathbb{C}^N)$, we obtain (\ref{eq_proof_LLT_inhom_high_4}) for all $f \in H^1((0, T) \times \partial M, \mathbb{C}^N)$.

\paragraph{Step 2: The case $s \in \mathbb{N}^\ast$.}
We show by induction on $s \in \mathbb{N}^\ast$ that
\begin{equation}\label{eq_proof_LLT_inhom_high_5bis}
v \in \mathscr{C}^0([0, T], H^s(M, \mathbb{C}^N)) \cap \mathscr{C}^1([0, T], H^{s - 1}(M, \mathbb{C}^N)) \cap \mathscr{C}^2([0, T], H^{s - 2}(M, \mathbb{C}^N)),
\end{equation}
if $f \in H^s((0, T) \times \partial M, \mathbb{C}^N)$, with an inequality, and $v(t)_{\vert \partial M} = \left( \diag(\Theta) f \right)_{\vert \{t\} \times \partial M}$ in $H^{s - \frac{1}{2}}(M, \mathbb{C}^N)$ for $t \in [0, T]$. Assume that the result holds for some $s \in \mathbb{N}^\ast$, and consider $f \in H^{s + 1}((0, T) \times \partial M, \mathbb{C}^N)$.

By induction, one has $v(t)_{\vert \partial M} = \left( \diag(\Theta) f \right)_{\vert \{t\} \times \partial M}$ in $H^{s - \frac{1}{2}}(M, \mathbb{C}^N)$ for $t \in [0, T]$. In particular, this gives $v(t)_{\vert \partial M} \in H^{s + \frac{1}{2}}(M, \mathbb{C}^N)$. As in the case $s = 1$, one has $\partial_t v = \tilde{v}$, where $\tilde{v}$ is the solution of (\ref{eq_proof_LLT_inhom_high_4_bis}), and this gives 
\[\Vert v \Vert_{\mathscr{C}^1([0, T], H^s) \cap \mathscr{C}^2([0, T], H^{s - 1})} \lesssim \Vert f \Vert_{H^{s + 1}((0, T) \times \partial M, \mathbb{C}^N))},\]
since $\tilde{v}$ fulfills (\ref{eq_proof_LLT_inhom_high_5bis}). In particular, one has $\PP{\mathscr{D}^\prime}^\ast v(t) = \partial_t^2 v(t) \in H^{s - 1}(M, \mathbb{C}^N)$ for $t \in [0, T]$. Hence, for $t \in [0, T]$, Lemma \ref{lem_reg_ell_inhomogeneous} gives $v(t) \in H^{s + 1}(M, \mathbb{C}^N)$, and
\begin{align*}
\left\Vert v(t) \right\Vert_{H^{s + 1}(M, \mathbb{C}^N)} & \lesssim \left\Vert \PP{\mathscr{D}^\prime}^\ast v(t) \right\Vert_{H^{s - 1}(M, \mathbb{C}^N)} + \left\Vert v(t)_{\vert \partial M} \right\Vert_{H^{s + \frac{1}{2}}(\partial M, \mathbb{C}^N)} + \left\Vert v(t) \right\Vert_{L^2(M, \mathbb{C}^N)} \\
& \lesssim \left\Vert \partial_t^2 v(t) \right\Vert_{H^{s - 1}(M, \mathbb{C}^N)} + \left\Vert \left( \diag(\Theta) f \right)_{\vert \{t\} \times \partial M} \right\Vert_{H^{s + \frac{1}{2}}(\partial M, \mathbb{C}^N)} + \left\Vert v \right\Vert_{L^\infty((0, T), L^2)} \\
& \lesssim \left\Vert f \right\Vert_{H^{s + 1}((0, T) \times \partial M, \mathbb{C}^N)}.
\end{align*}
This gives $v \in L^\infty([0, T], H^{s + 1}(M, \mathbb{C}^N))$. To complete the proof, it suffices to show that 
\[v \in \mathscr{C}^0([0, T], H^{s + 1}(M, \mathbb{C}^N))\]
for $f$ smooth. For such $f$, Lemma \ref{lem_reg_ell_inhomogeneous} gives
\begin{align*}
& \left\Vert v(t + \epsilon) - v(t) \right\Vert_{H^{s + 1}(M, \mathbb{C}^N)} \\
\lesssim & \left\Vert \partial_t^2 v(t + \epsilon) - \partial_t^2 v(t) \right\Vert_{H^{s - 1}(M, \mathbb{C}^N)} + \left\Vert f(t + \epsilon) - f(t) \right\Vert_{H^{s + \frac{1}{2}}(\partial M, \mathbb{C}^N)} + \left\Vert v(t + \epsilon) - v(t) \right\Vert_{L^2(M, \mathbb{C}^N)},
\end{align*}
and one concludes 
\[\left\Vert v(t + \epsilon) - v(t) \right\Vert_{H^{s + 1}(M, \mathbb{C}^N)} \underset{\epsilon \rightarrow 0}{\longrightarrow} 0.\]
This gives (\ref{eq_proof_LLT_inhom_high_5bis}).

\paragraph{Step 3: The additional regularity result.} Here, we complete the proof of Theorem \ref{thm_LLT_inhomogeneous} by proving 
\begin{equation}\label{eq_proof_LLT_inhom_high_6}
\left(v(T), \partial_t v(T)\right) \in \mathcal{K}_\ast^{s} \times \mathcal{K}_\ast^{s - 1}
\end{equation}
for $f \in H^s((0, T) \times \partial M, \mathbb{C}^N)$ and $s \in \mathbb{N}^\ast$. We will also prove the duality equality of Theorem \ref{thm_LLT_inhomogeneous}. As $\mathcal{K}_\ast^{s} \times \mathcal{K}_\ast^{s - 1}$ is a closed subspace of $H^s(M, \mathbb{C}^N) \times H^{s - 1}(M, \mathbb{C}^N)$, it suffices to prove (\ref{eq_proof_LLT_inhom_high_6}) for $f$ smooth. We proceed by induction. The result is true for $s = 0$, but one has to treat the case $s = 1$ separately. 

\textbf{Case 1:} $s = 1$. Take $f \in \mathscr{C}^\infty((0, T) \times \partial M, \mathbb{C}^N)$, and write $v$ for the associated solution. We prove that $v(T) \in H_0^1(M, \mathbb{C}^N)$. Write $v = w + \tilde{f}$ as above. One has $w \in \mathscr{C}^0([0, T], H_0^1(M, \mathbb{C}^N))$, implying
\[v(T) = w(T) + \tilde{f}(T) = w(T) \in H_0^1(M, \mathbb{C}^N),\]
since $\tilde{f}$ is compactly supported in $(0, T) \times M$. Note that $w \in \mathscr{C}^0([0, T], \mathcal{K}_\ast^{s})$ for $s \geq 2$ is not always true, preventing a straightforward generalization of this argument.

\textbf{Case 2:} $s$ odd. Assume that the result holds true for some odd $s \in \mathbb{N}^\ast$. Write $s = 2 \sigma + 1$, and consider $f \in \mathscr{C}^\infty((0, T) \times \partial M, \mathbb{C}^N)$. We prove that $\left(v(T), \partial_t v(T)\right) \in \mathcal{K}_\ast^{2 \sigma + 2} \times \mathcal{K}_\ast^{2 \sigma + 1}$. By induction, one has $\left(v(T), \partial_t v(T)\right) \in \mathcal{K}_\ast^{2 \sigma + 1} \times \mathcal{K}_\ast^{2 \sigma}$, and as $f \in H^{s + 1}((0, T) \times \partial M, \mathbb{C}^N)$, we know that
\[v \in \mathscr{C}^0([0, T], H^{s + 1}(M, \mathbb{C}^N)) \cap \mathscr{C}^1([0, T], H^s(M, \mathbb{C}^N)).\]
By definition, $H^{2 \sigma + 2}(M, \mathbb{C}^N)) \cap \mathcal{K}_\ast^{2 \sigma + 1} = \mathcal{K}_\ast^{2 \sigma + 2}$, implying $v(T) \in \mathcal{K}_\ast^{2 \sigma + 2}$. Set $\tilde{v} = \partial_t v$, solution to (\ref{eq_proof_LLT_inhom_high_4_bis}). By induction, one has $\partial_t v(T) = \tilde{v}(T) \in \mathcal{K}_\ast^{2 \sigma + 1}$, completing the proof in the case $s$ is odd.

\textbf{Case 3:} $s$ even. Assume that the result holds for some even $s \in \mathbb{N}^\ast$. Write $s = 2 \sigma$, and consider $f \in \mathscr{C}^\infty((0, T) \times \partial M, \mathbb{C}^N)$. We show that $\left(v(T), \partial_t v(T)\right) \in \mathcal{K}_\ast^{2 \sigma + 1} \times \mathcal{K}_\ast^{2 \sigma}$. By induction, one has $\left(v(T), \partial_t v(T)\right) \in \mathcal{K}_\ast^{2 \sigma} \times \mathcal{K}_\ast^{2 \sigma - 1}$, and we know that $\left(v(T), \partial_t v(T)\right) \in H^{s + 1}(M, \mathbb{C}^N) \times H^s(M, \mathbb{C}^N)$, yielding $\partial_t v(T) \in \mathcal{K}_\ast^{2 \sigma}$ as above. The definition of $\mathcal{K}_\ast^{2 \sigma + 1}$ reads
\[\mathcal{K}_\ast^{2 \sigma + 1} = \left\{ u \in H^{2 \sigma + 1}(M, \mathbb{C}^N)) \cap \mathcal{K}_\ast^{2 \sigma}, \PP{\mathscr{D}^\prime}^{\ast \ \sigma} u \in H_0^1(M, \mathbb{C}^N)) \right\},\]
implying that one has $v(T) \in \mathcal{K}_\ast^{2 \sigma + 1}$, if $\PP{\mathscr{D}^\prime}^{\ast \ \sigma} v(T) \in H_0^1(M, \mathbb{C}^N))$. Again, write $\tilde{v} = \partial_t v$. One has $\PP{\mathscr{D}^\prime}^\ast v(T) = \partial_t^2 v(T) = \partial_t \tilde{v}(T)$, and by induction, one finds $\partial_t \tilde{v}(T) \in \mathcal{K}_\ast^{2 \sigma - 1}$. Thus, as $\sigma > 0$, one obtains
\[\PP{\mathscr{D}^\prime}^{\ast \ \sigma} v(T) = \PP{\mathscr{D}^\prime}^{\ast \ \sigma - 1} \partial_t \tilde{v}(T) \in H_0^1(M, \mathbb{C}^N),\]
completing the proof in the case $s$ is even.

Finally, we prove the duality equality of Theorem \ref{thm_LLT_inhomogeneous} for $s \geq 1$. Consider $s \geq 1$, $f \in H^s((0, T) \times \partial M, \mathbb{C^N})$ and $\left( u^0, u^1 \right) \in \mathcal{K}^{- s + 1} \times \mathcal{K}^{- s}$. Write $u$ for the solution of (\ref{eq_proof_LLT_inhom_low_8_bisbis}). We show that
\begin{equation}\label{eq_proof_LLT_inhom_high_7}
\left\langle u^1, v(T) \right\rangle_{\mathcal{K}^{- s + 1}, \mathcal{K}_\ast^{s - 1}} - \left\langle u^0, \partial_t v(T) \right\rangle_{\mathcal{K}^{- s}, \mathcal{K}_\ast^{s}} = \left\langle \partial_\nu u, \diag(\Theta) f \right\rangle_{H^{- s}, H_0^s}.
\end{equation}

With the approximation result of Theorem \ref{thm_LLT_dir}, consider a sequence $\left(( u_k^0, u_k^1 )\right)_{k \in \mathbb{N}}$ of elements of $\mathcal{K}^2 \times \mathcal{K}^1$ such that
\[\left( \iota_{\mathcal{K}^2 \rightarrow \mathcal{K}^{- s + 1}} u_k^0, \iota_{\mathcal{K}^1 \rightarrow \mathcal{K}^{- s}} u_k^1 \right) \underset{k \rightarrow \infty}{\longrightarrow} \left( u^0, u^1 \right) \text{ in } \mathcal{K}^{- s + 1} \times \mathcal{K}^{- s}.\]
If $u_k$ is the solution of (\ref{eq_proof_LLT_inhom_low_8_bisbis}) associated with $\left( u_k^0, u_k^1 \right)$, then one has 
\[\iota_{\mathcal{K}^2 \rightarrow \mathcal{K}^{- s + 1}} u_k \underset{k \rightarrow \infty}{\longrightarrow} u \text{ in } \mathscr{C}^0([0, T], \mathcal{K}^{- s + 1}) \cap \mathscr{C}^1([0, T], \mathcal{K}^{- s}),\]
and 
\[\partial_\nu \left( \iota_{\mathcal{K}^2 \rightarrow \mathcal{K}^{- s + 1}} u_k \right) \underset{k \rightarrow \infty}{\longrightarrow} \partial_\nu u \text{ in } H^{- s}((0, T) \times \partial M, \mathbb{C}^N).\]
The duality equality of Theorem \ref{thm_LLT_inhomogeneous} for $s = 0$ gives
\[\left\langle u_k^1, v(T) \right\rangle_{L^2(M, \mathbb{C}^N)} - \left\langle u_k^0, \partial_t v(T) \right\rangle_{L^2(M, \mathbb{C}^N)} = \left\langle \diag(\Theta) \partial_\nu u_k, f \right\rangle_{L^2((0, T) \times \partial M, \mathbb{C}^N)}.\]
Since $\left(v(T), \partial_t v(T)\right) \in \mathcal{K}_\ast^{s} \times \mathcal{K}_\ast^{s - 1}$, one finds
\[\left\langle \iota_{\mathcal{K}^2 \rightarrow \mathcal{K}^{- s + 1}} u_k^1, v(T) \right\rangle_{\mathcal{K}^{- s + 1}, \mathcal{K}_\ast^{s - 1}} - \left\langle \iota_{\mathcal{K}^1 \rightarrow \mathcal{K}^{- s}} u_k^0, \partial_t v(T) \right\rangle_{\mathcal{K}^{- s}, \mathcal{K}_\ast^{s}} = \left\langle \iota_{H^1 \rightarrow H^{- s}} \partial_\nu u_k, \diag(\Theta) f \right\rangle_{H^{- s}, H_0^s}.\]
From Theorem \ref{thm_LLT_dir}, one has $\iota_{H^1 \rightarrow H^{- s}} \partial_\nu u_k = \partial_\nu \iota_{\mathcal{K}^2 \rightarrow \mathcal{K}^{- s + 1}} u_k$, yielding (\ref{eq_proof_LLT_inhom_high_7}).

\printbibliography

\noindent
\textsc{Perrin Thomas:} \texttt{perrin@math.univ-paris13.fr}

\noindent
\textit{Laboratoire Analyse Géométrie et Application, Institut Galilée - UMR 7539, CNRS/Université Sorbonne Paris Nord, 99 avenue J.B. Clément, 93430 Villetaneuse, France}

\end{document}